\title[Ringel duality for perverse sheaves on hypertoric varieties]{Ringel duality for perverse sheaves on hypertoric varieties}
\author{Tom Braden}
\author{Carl Mautner}
\address{Tom Braden \\
Dept.\ of Mathematics and Statistics\\
         University of Massachusetts, Amherst}
\email{braden@math.umass.edu}
\address{Carl Mautner \\
Department of Mathematics\\
University of California, Riverside}
\email{carl.mautner@ucr.edu}
\DeclareFontFamily{U}{mathx}{\hyphenchar\font45}
\DeclareFontShape{U}{mathx}{m}{n}{
      <5> <6> <7> <8> <9> <10>
      <10.95> <12> <14.4> <17.28> <20.74> <24.88>
      mathx10
      }{}
\DeclareSymbolFont{mathx}{U}{mathx}{m}{n}
\DeclareMathAccent{\widecheck}{\mathord}{mathx}{"71}
\newtheorem{theorem}{Theorem}[section]
\newtheorem*{theorem*}{Theorem}
\newtheorem{lemma}[theorem]{Lemma}
\newtheorem{proposition}[theorem]{Proposition}
\newtheorem{corollary}[theorem]{Corollary}
\theoremstyle{definition}
\newtheorem{definition}[theorem]{Definition}
\newtheorem{example}[theorem]{Example}
\newtheorem{remark}[theorem]{Remark}
\newtheorem*{remark*}{Remark}
\newcommand{\excise}[1]{}
\newcommand{\rank}{\operatorname{rk}}
\newcommand{\rk}{\operatorname{rk}}
\newcommand{\im}{\operatorname{im}}
\newcommand{\id}{\operatorname{id}}
\renewcommand{\dim}{\operatorname{dim}}
\newcommand{\codim}{\operatorname{codim}}
\newcommand{\Sym}{\operatorname{Sym}}
\newcommand{\Hom}{\operatorname{Hom}}
\newcommand{\End}{\operatorname{End}}
\newcommand{\RHom}{\operatorname{\mathcal{RH}\mathit{\!om}}}
\newcommand{\C}{{\mathbb{C}}}
\newcommand{\Z}{{\mathbb{Z}}}
\renewcommand{\H}{{\mathbb{H}}}
\newcommand{\R}{{\mathbb{R}}}
\newcommand{\half}{\frac{1}{2}}
\renewcommand{\iff}{\Leftrightarrow}
\newcommand{\M}{\mathfrak{M}}
\newcommand{\B}{\mathcal{B}}
\newcommand{\D}{\mathbb D}
\renewcommand{\a}{\alpha}
\renewcommand{\b}{\beta}
\newcommand{\sig}{\sigma}
\newcommand{\F}{\mathcal{F}}
\renewcommand{\mod}{\mathbin{\!/\!\!/\!}}
\renewcommand{\t}{\mathfrak{t}}
\newcommand{\IC}{\mathop{\mathbf{IC}}}
\newcommand{\cal}{\mathcal}
\newcommand{\cB}{{\cal B}}
\newcommand{\cC}{{\cal C}}
\newcommand{\cE}{{\cal E}}
\newcommand{\cF}{{\cal F}}
\newcommand{\cH}{{\cal H}}
\newcommand{\cO}{{\cal O}}
\newcommand{\cS}{{\mathscr S}}
\newcommand{\cV}{{\cal V}}
\newcommand{\ol}{\overline}
\newcommand{\wt}{\widetilde}
\newcommand{\wh}{\widehat}
\newcommand{\md}{\mathrm{-mod}}
\newcommand{\be}{{\mathbf{e}}}
\renewcommand{\setminus}{\smallsetminus}
\newcommand{\kk}{{ k}}
\renewcommand{\emptyset}{\varnothing}
\DeclareMathOperator{\Perv}{Perv}
\DeclareMathOperator{\coker}{coker}
\newcommand{\T}{\Omega}                  
\newcommand{\Morse}{\bar\Phi}       
\newcommand{\Bas}{\mathop{\mathrm{Bas}}\nolimits}
\newcommand{\cU}{{\mathcal U}}
\newcommand{\cUc}{{\widecheck{\mathcal U}}}
\newcommand{\Rc}{\widecheck{R}}
\newcommand{\ldiv}{\dashv}
\newcommand{\rdiv}{\vdash}
\newcommand{\uc}{{\check{u}}}
\newcommand{\bmu}{{\boldsymbol{\mu}}}
\newcommand{\bV}{\mathbf{V}}
\newcommand{\uk}{\underline{\kk}} 
\newcommand{\ha}{\mathbin{\hat{\ast}}}
\newcommand{\hcd}{\mathbin{\hat{\cdot}}}
\begin{document}

\maketitle

\begin{abstract}
Motivated by the polynomial representation theory of the general linear group and the theory of symplectic singularities, we study a category of perverse sheaves with coefficients in a field $k$ on any affine unimodular hypertoric variety $\M$.  Our main result is that this is a highest weight category whose Ringel dual is the corresponding category for the Gale dual hypertoric variety $\M^!$.  On the way to proving our main result, we confirm a conjecture of Finkelberg--Kubrak in the case of hypertoric varieties.  We also show that our category is equivalent to representations of a combinatorially-defined algebra, recently introduced in a related paper.
\end{abstract}

\section{Introduction}

Let $k$ be a field and let $\M$ be an affine unimodular hypertoric variety.  The variety $\M$ is endowed with a natural torus action and is stratified by symplectic leaves.  We consider the category $\Perv(\M,k)$ of torus equivariant perverse sheaves with coefficients in $k$ that are constructible for the stratification by symplectic leaves.  Our main result is that the category $\Perv(\M,k)$ is a highest weight category whose Ringel dual is the category $\Perv(\M^!,k)$ for the Gale dual hypertoric variety $\M^!$.

Our motivation for this result comes from the observation in \cite{Mautner} that the geometry of the nilpotent cone $\mathcal{N} \subset \mathfrak{gl}_n(\C)$ encodes the polynomial representation theory of the general linear group $GL_n$ over $k$.  More precisely, in \textit{loc. cit.} it is shown that there is an equivalence between the category of $GL_n(\C)$-equivariant perverse sheaves with coefficients in $k$ on $\mathcal{N}$ and the category of finitely generated modules of the Schur algebra $S_k(n,n)$ over $k$.

On the other hand, recent work in geometric representation theory suggests that for any symplectic singularity (or resolution) $X$ and any representation theory encoded in the geometry of the nilpotent cone (or its Springer resolution) there should be similar representation theory to be found in the geometry of $X$.  An example of this phenomenon is the definition of a category $\mathcal O$ associated to any symplectic resolution introduced in~\cite{BLPWgco}.

Following this line of reasoning, one might hope that for such a symplectic singularity there is a category of perverse sheaves with coefficients in $k$ on $\M$ that is equivalent to the category of representations of a "Schur-like" algebra.  The results of the current paper can be viewed as confirming this hope in the case when $X=\M$ is an affine unimodular hypertoric variety.

In addition to interest within geometric representation theory, these results have connections to combinatorics and modular representation theory.  We begin to pursue the connection to combinatorics in~\cite{BMMatroid}, where it leads to new questions about the combinatorics of matroids.  In modular representation theory, the existence of a combinatorial cousin of the Schur algebra also gives hope that it might prove more accessible and in turn provide insight into the Schur algebra itself.

In the remainder of the introduction, we give a more precise formulation of our main result, a description of the strategy and techniques used in the proof, and a brief discussion of connections to other topics.

\subsection{Main results} Let $K \subset (\C^*)^n$ be a connected complex torus.  The induced action of $K$ on $T^* \C^n$ is Hamiltonian with an algebraic moment map $\mu_K:T^* \C^n \to \operatorname{Lie}(K)^*$.  Let $\M$ denote the associated hyperk\"ahler quotient, in other words the categorical quotient $\mu_K^{-1}(0) \mod K$.  The space $\M$ is known as an affine hypertoric variety.  It carries a residual action of the group $T:=(\C^*)^n/K$ and has dimension $\dim \M = 2\dim T$.  We assume that $\M$ is unimodular, or equivalently that $\M$ has a symplectic resolution.

The hypertoric variety $\M$ carries a natural Poisson structure and we let $\cS$ denote the stratification by symplectic leaves.  Let $\Perv(\M,k) = \Perv_{\cS,T}(\M,k)$ be the abelian category of perverse $k$-sheaves on $\M$ which are $T$-equivariant and constructible with respect to this stratification.  The isomorphism classes of simple objects in $\Perv(\M,k)$ are indexed by the set $\cF$ of symplectic leaves.

\begin{theorem*} The category $\Perv_{\cS,T}(\M,k)$ is highest weight with respect to the poset $\cF$, ordered by the closure relation.
\end{theorem*}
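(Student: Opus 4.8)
The plan is to exhibit the standard and costandard objects explicitly and to verify the highest weight axioms by combining the composition--factor estimates furnished by the recollement formalism with an orthogonality computation between them. Throughout, write $\M_F$ for the symplectic leaf indexed by $F\in\cF$, which is smooth and connected, $d_F=\dim_{\C}\M_F$, and $j_F\colon\M_F\hookrightarrow\M$ for the inclusion; the order on $\cF$ is $F\ge G\iff\M_G\subseteq\overline{\M_F}$. The first step is to record that $\Perv_{\cS,T}(\M,k)$ is a finite--length abelian category, equivalent to the category of finite--dimensional modules over a finite--dimensional $k$--algebra, whose simple objects are exactly the intersection cohomology complexes $\IC_F:=\IC(\overline{\M_F};\underline{k})$ for $F\in\cF$; in particular $\End(\IC_F)=k$ and the candidate poset of the highest weight structure is $(\cF,\le)$. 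The one point requiring the geometry of hypertoric varieties is that each leaf $\M_F$, with its residual $T$--action, carries no nontrivial simple $T$--equivariant $k$--local system (the relevant equivariant fundamental group vanishes); this uses the explicit local structure of $\M$ --- every leaf closure $\overline{\M_F}$ is again an affine hypertoric variety, and normal slices to leaves are affine hypertoric cones.

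Next I would set
\[
\Delta(F):={}^p j_{F!}\bigl(\underline{k}_{\M_F}[d_F]\bigr),\qquad
\nabla(F):={}^p j_{F*}\bigl(\underline{k}_{\M_F}[d_F]\bigr),
\]
which are perverse by construction. The recollement attached to the decomposition $\M_F\hookrightarrow\overline{\M_F}\hookleftarrow\overline{\M_F}\smallsetminus\M_F$ produces the canonical morphisms $\Delta(F)\onto\IC_F\hookrightarrow\nabla(F)$ --- so $\Delta(F)$ has simple top $\IC_F$ and $\nabla(F)$ simple socle $\IC_F$ --- and shows that $\IC_F$ occurs with multiplicity one in each of $\Delta(F)$ and $\nabla(F)$ while every other composition factor is some $\IC_G$ with $G<F$. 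Verdier duality interchanges $\Delta(F)$ with $\nabla(F)$ and fixes each $\IC_F$, so it is enough to control one of the two families. Already from the multiplicity estimate together with the simple top/socle one gets formally that $\Hom(\Delta(F),\nabla(G))$ equals $k$ when $F=G$ and $0$ otherwise.

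The crux --- and the step I expect to be the \emph{main obstacle} --- is the higher vanishing $\operatorname{Ext}^i(\Delta(F),\nabla(G))=0$ for all $i>0$ and all $F,G$, equivalently the statement that $\operatorname{RHom}_{D^b_{\cS}(\M,k)}(\Delta(F),\nabla(G))$ is concentrated in degree zero (Ext groups in $\Perv_{\cS,T}(\M,k)$ agree with those in the non--equivariant category since $T$ is connected). The support and cosupport conditions defining ${}^p j_{F!}$ and ${}^p j_{G*}$ reduce this to a cohomological assertion about the restriction of $\nabla(G)$ to $\M_F$ and about the cohomology of the leaves themselves --- in effect, a vanishing statement for the way two symplectic leaves of $\M$ are linked. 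Here one uses the two structural features noted above: leaf closures are affine hypertoric varieties, hence carry contracting $\C^*$--actions and have trivial global cohomology, while normal slices are hypertoric cones, making the relevant links small enough to kill the higher extensions. Braden's hyperbolic localization theorem, applied to a generic cocharacter of $T$, is the natural tool for organizing this computation, since it identifies costalks with stalks up to a shift and thereby collapses the spectral sequences that would otherwise obstruct the vanishing.

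With these facts in hand the conclusion is formal: $\{\Delta(F)\}_{F\in\cF}$ and $\{\nabla(F)\}_{F\in\cF}$ form dual exceptional collections in $D^b_{\cS}(\M,k)$ adapted to the perverse $t$--structure, with composition factors controlled by the poset $(\cF,\le)$, and by the standard dictionary between such data and highest weight categories --- the recollement/gluing formalism of Beilinson--Bernstein--Deligne, or its module--theoretic version due to Cline--Parshall--Scott --- this is precisely a highest weight structure on $\Perv_{\cS,T}(\M,k)$ with poset $\cF$, standard objects $\Delta(F)$, and costandard objects $\nabla(F)$. Concretely, the projective cover of $\IC_F$, which exists by the first step, then acquires a filtration with $\Delta(F)$ once at the top and the remaining subquotients of the form $\Delta(G)$ with $G>F$. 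Alternatively one can argue by induction on $|\cF|$: removing a closed leaf exhibits $\Perv_{\cS,T}(\M,k)$ as a recollement of $\Vect_k$ with the corresponding category for the open complement, which is highest weight by induction, and the perversity of the objects $\Delta(F),\nabla(F)$ established above is exactly the compatibility that makes the glued category highest weight.
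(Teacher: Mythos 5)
Your setup (simple objects $\IC_F$, equivariant simple connectedness of the leaves, standard and costandard objects $\Delta_F = {}^p j_{F!}\uk_{S_F}[2d^F]$, $\nabla_F = {}^p j_{F*}\uk_{S_F}[2d^F]$, and the reduction of the highest weight property to an orthogonality/Ext statement) matches Section 3 of the paper, but the step you yourself flag as the crux is exactly where the argument breaks down, and the paper proves the theorem by an entirely different route. First, the statement you propose to prove --- that $\operatorname{RHom}_{D^b_{\cS}(\M,k)}(\Delta_F,\nabla_G)$ is concentrated in degree zero --- is too strong and is false in general: for $F=G$, adjunction identifies $\Hom(j_{F!}\uk_{S_F}[2d^F],\, j_{F*}\uk_{S_F}[2d^F][i])$ with $H^i(S_F;k)$ (or $H^i_T(S_F;k)$ equivariantly), and hypertoric leaves are not contractible, so these groups are nonzero in positive even degrees. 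What a highest weight structure actually requires is the vanishing of $\operatorname{Ext}^2$ computed \emph{in the abelian category} $\Perv(\M,k)$, which merely injects into the derived-category $\Hom$ in degree $2$; you give no argument for that weaker statement beyond invoking hyperbolic localization and ``small links,'' and hyperbolic localization does not by itself produce it. Your fallback induction is circular for the same reason: gluing a highest weight category to $\Vect_k$ along a closed stratum does not automatically yield a highest weight category --- that compatibility is precisely the content of the theorem, and the perversity of $\Delta_F,\nabla_F$ is true by definition (they are perverse truncations), so it cannot serve as the needed hypothesis.

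The paper's actual proof is a counting argument. It first establishes (Theorem \ref{thm:bound on projective homs}) the inequality $\dim\Hom(P_R,P_S)\le\sum_Q[\Delta_Q:L_R][\Delta_Q:L_S]$, with equality for all $R,S$ if and only if the category is highest weight, and transfers this to the projectives $\Pi_F$ representing the exact hyperbolic-restriction functors $\Morse_F$ (Theorem \ref{thm:bound on Rc}). It then computes $\dim\Hom(\T^E,\T^F)=\sum_{D\le E,F}\dim\cU^E_D\dim\cU^F_D$ exactly via the cellular basis for $\End(\bigoplus_F\T^F)$ (Theorem \ref{thm:cellular homs}), and shows using the explicit combinatorial generators and the Gale duality map $\Gamma$ that $\Gamma(R)\subseteq\Rc^!$, which forces the inequality on the dual side to be an equality. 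The highest weight property thus falls out of matching two dimension counts across Gale duality, not from any direct Ext-vanishing between standards and costandards. To repair your approach you would need either to prove $\operatorname{Ext}^2_{\Perv}(\Delta_F,\nabla_G)=0$ by some genuinely new argument, or to adopt the paper's strategy of producing $\Delta$-filtrations on the projectives $\Pi_F$ via the combinatorial identification of $\End(\bigoplus_F\Pi_F)$.
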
 

\begin{remark} Like the Schur algebra, the category $\Perv(\M,k)$ is semisimple when the characteristic of $k$ is zero or of sufficiently large characteristic.  In fact, $\Perv(\M,k)$ is semisimple if the characteristic of $k$ is greater than $n$.  In~\cite{BMMatroid} we determine the precise set of characteristics of $k$ for which the category is semisimple.
\end{remark}

Consider the dual torus $T^\vee := \Hom(T,\C^*) \otimes_\Z \C^* \subset (\C^*)^n$.  Repeating the construction above for $T^\vee$ in place of $K$, one obtains an affine unimodular hypertoric variety $\M^!$, known as the Gale dual of $\M$, and the corresponding category of perverse sheaves $\Perv(\M^!,k)$.  There is a natural order-reversing bijection between the poset $\cF$ of symplectic leaves of $\M$ and the poset $\cF^!$ symplectic leaves of $\M^!$.

\begin{theorem*} The Ringel dual of $\Perv(\M,k)$ is $\Perv(\M^!,k)$ .
\end{theorem*}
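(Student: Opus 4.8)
The plan is to invoke the standard characterization of the Ringel dual. By the first Theorem the category $\Perv_{\cS,T}(\M,k)$ is highest weight with poset $\cF$, so it has a basic characteristic tilting object $T=\bigoplus_{F\in\cF}T_F$, and its Ringel dual is by definition the module category of $\End_{\Perv(\M,k)}(T)^{\mathrm{op}}$, again highest weight but with the opposite poset $\cF^{\mathrm{op}}$. The order-reversing bijection $\cF\cong\cF^!$ recalled above identifies $\cF^{\mathrm{op}}$ with the poset of $\Perv(\M^!,k)$, so it remains to exhibit an isomorphism of quasi-hereditary algebras identifying $\End_{\Perv(\M,k)}(T)^{\mathrm{op}}$ with the basic algebra of $\Perv(\M^!,k)$, i.e.\ with $\End_{\Perv(\M^!,k)}(P)$ for $P$ a basic projective generator.

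For the right-hand side I would use the equivalence of $\Perv(\M^!,k)$ with modules over the combinatorially-defined algebra attached to the polarized hyperplane arrangement of $\M^!$, which is the Gale dual of the arrangement $\A$ of $\M$; this turns $\End_{\Perv(\M^!,k)}(P)$ into an explicit combinatorial object. For the left-hand side I would identify the tilting object $T$ geometrically, and this is where the confirmation of the Finkelberg--Kubrak conjecture enters: it should assert that the direct image $\pi_*\uk$ along a symplectic resolution $\pi\colon\wt\M\to\M$ (with its $T$-equivariant structure, suitably shifted) is a full tilting object of $\Perv_{\cS,T}(\M,k)$. Granting this, $\End_{\Perv(\M,k)}(T)$ is the basic reduction of the convolution algebra of the Steinberg variety $\wt\M\times_\M\wt\M$ — a $T$-equivariant Borel--Moore homology algebra — which, since hypertoric geometry localizes on the torus fixed points, is again governed by the arrangement $\A$.

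The heart of the argument is then a purely combinatorial identity: the convolution algebra of $\M$, cut down to the summands realizing the indecomposable tiltings, is isomorphic to the combinatorial algebra of the Gale dual arrangement. The mechanism is that Gale duality exchanges the two defining conditions on sign vectors, ``feasible'' and ``bounded'', and this is exactly the exchange of standard and costandard (equivalently projective and tilting) filtrations effected by passing to the Ringel dual. One then checks that this match is compatible with the quasi-hereditary structures and with the order-reversing bijection $\cF\cong\cF^!$, which by construction reverses the highest-weight order, as Ringel duality requires.

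I expect the main obstacle to be the two characteristic-sensitive inputs: proving the Finkelberg--Kubrak statement that $\pi_*\uk$ is tilting when $\mathrm{char}\,k$ is a bad prime of the arrangement — where $\Perv(\M,k)$ is not semisimple, so this is not formal — and computing the Steinberg algebra with $k$-coefficients while controlling torsion in the relevant Borel--Moore homology groups and $\IC$-stalks. I would attack both by localizing near each symplectic leaf $F$, where $\M$ looks \'etale-locally like the product of an affine space with a lower-dimensional affine hypertoric cone with arrangement $\Af$, reducing the general statement to this local model, and then reassembling over the poset $\cF$ by a K\"unneth-type gluing argument — or, alternatively, by an induction via deletion and restriction of the hyperplanes of $\A$, operations under which both the category and its combinatorial model behave functorially.
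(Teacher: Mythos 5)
There are several genuine gaps here, the most serious being your identification of the characteristic tilting object. The pushforward $\pi_*\uk$ along a single symplectic resolution $\wt\M\to\M$ is only \emph{one} summand of the tilting generator, namely $\T^\emptyset$, the one with full support; the paper's tilting generator is $\bigoplus_{F\in\cF}\T^F$, where each $\T^F$ is obtained by separately resolving the stratum closure $\M^F$ (which is itself a hypertoric variety). Without the summands supported on the smaller leaf closures you do not have a generator, so $\End(T)^{\mathrm{op}}$ computes nothing about the Ringel dual. Relatedly, the Finkelberg--Kubrak conjecture is not the statement that $\pi_*\uk$ is tilting: it relates vanishing cycles for a generic covector to hyperbolic restriction, and in the paper it is a \emph{byproduct} of the analysis of the Morse functors $\Morse_F$, not an input. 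Tiltingness of the $\T^F$ instead comes from parity (via equivariant formality of the resolutions) together with the highest weight property, by \cite[3.3]{JMW16} --- and note the logical order: in the paper the highest weight property is itself only \emph{completed} by the same dimension comparison that proves Ringel duality, so you cannot simply quote the first Theorem and then prove the second independently.

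The second gap is your proposed combinatorial model. The ``feasible/bounded'' sign-vector algebras of the polarized arrangement, exchanged by Gale duality, are the algebras of hypertoric category $\cO$, which the paper explicitly distinguishes from $\Perv(\M,k)$: category $\cO$ has simples indexed by the (many) fixed points of a resolution, depends on auxiliary choices, and is defined only over $\C$, whereas $\Perv(\M,k)$ has simples indexed by the poset $\cF$ of coloop-free flats. The actual mechanism is different: one computes $\End(\bigoplus_F\T^F)$ and $\End(\bigoplus_F\Pi_F)$ faithfully on the space $\cB=\bigoplus_{E,F}\Morse_E(\T^F)$ via equivariant localization; the first algebra is generated by multiplication by circuit classes $u_C$ and their adjoints, the second by the dual operators, and matroid duality exchanges circuits with cocircuits (edges), hence exchanges the two generating sets. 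Your convolution-algebra and deletion/restriction suggestions are not unreasonable in spirit (the paper even records the convolution description of its extension maps), but as stated the proposal rests on a wrong tilting object, a wrong reading of Finkelberg--Kubrak, and the wrong combinatorial target, so the core of the argument is missing.
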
 

\begin{remark} This is perhaps surprising as the dimension of $\M$ is generally not the same as the dimension of $\M^!$.
\end{remark}

\subsection{Tilting and projective objects} We prove our theorem by constructing explicit tilting and projective generators, and then computing and identifying their endomorphism rings.

We first describe the construction of the tilting objects.  The closure $\M^F$ of each symplectic leaf $S_F \subset \M$ is itself a unimodular hypertoric variety, so it has a semi-small resolution $p: \widetilde{\M^F} \to \M^F \subset \M$.  Let $\T^F$ denote the pushforward of the constant sheaf along $p$ (after the correct shift).  This object is a perverse parity complex in the sense of \cite{JMW14}, and once we have proved that $\Perv(\M,k)$ is highest weight, \cite[3.3]{JMW16} shows that perverse parity complexes are tilting.  Taking the direct sum of $\T^F$ over all $F \in \cF$ gives the desired tilting generator.

Working still on $\M$, we define a projective object $\Pi_F$ to represent a certain exact functor $\Morse_F$ defined using hyperbolic restriction \cite{Br03} for the action of a cocharacter of $T$ whose fixed points are $\M^F$.  In general, hyperbolic restriction is not $t$-exact, but we show that in our situation it is, using an argument similar to one used by Mirkovic and Vilonen \cite{MV} for perverse sheaves on affine Grassmannians.  Conjecturally this will hold more generally for affinizations of other symplectic resolutions.

Our main theorem is obtained by studying the commuting actions of $\End(\bigoplus_F \T^F)$ and $\End(\bigoplus_E \Pi_{E})$ on the vector space $\bigoplus_{E,F} \Hom(\Pi_{E}, \T^F) = \bigoplus_{E,F} \Morse_{E}(\T^F)$.  Using equivariant localization, we show that these actions are faithful and give formulas for the action of certain generators of each algebra.  Ringel duality then follows from the fact that these generators swap places under Gale duality.

\begin{remark}
To prove that $\T^F$ and $\Pi_F$ are well-defined and independent of choices, we depart from the complex algebraic setting used throughout the rest of the paper in order to work in a topological setting.  In doing so, we obtain descriptions of $\T^F$ and $\Morse_F$ respectively as a nearby cycles sheaf and a vanishing cycles functor for the (real analytic) hyperk\"ahler moment map $\bmu\colon \M \to \mathfrak{t}_\R \otimes_\R \R^3$. The required independence then follows from hyperk\"ahler rotation, since the walls where the sheaf or functor would change have codimension three.  Our proofs of these results use specific properties of hypertoric varieties, but we conjecture that these results should hold for more general holomorphic symplectic quotients of symplectic affine spaces by a reductive group, such as Nakajima's quiver varieties.  As an added bonus, we confirm a conjecture of Finkelberg--Kubrak~\cite{FiKu} in the hypertoric case.

\end{remark}

\begin{remark}
Recall that the Schur algebra $S_k(n,n)$ is self-Ringel dual.  A geometric explanation of this fact was given in \cite{AM}, using a derived equivalence coming from the Fourier transform for sheaves on the Lie algebra $\mathfrak{gl}_n$.  We searched for a similar proof in the hypertoric setting, but were unable to find a comparable derived equivalence. 
\end{remark}

\subsection{Matroidal Schur algebras}  Motivated by our work on this paper, we have defined~\cite{BMMatroid} for any matroid $M$ a quasi-hereditary $k$-algebra $R(M)$.  Theorem~\ref{thm:bridge between papers} below explains the relationship between the two papers.  In particular, it follows that $\Perv(\M,k)$ is equivalent to the category of modules of a matroidal Schur algebra.  In~\cite{BMMatroid}, we show that these algebras are closely connected to the reduction modulo $p$ of integral matrices that were studied by Schechtman--Varchenko~\cite{SV} in the hyperplane setting and by Brylawski--Varchenko~\cite{BV} for matroids more generally.  We also describe connections to formulas of Kook--Reiner--Stanton~\cite{KRSlap} and Denham~\cite{Denham}.

\subsection{Hypertoric category $\cO$} The papers \cite{GDKD,BLPWtorico} define a different highest weight category, called hypertoric category $\cO$, associated to a hypertoric variety $\M$, and prove a Koszul duality relating the categories for $\M$ and $\M^!$.  However, those results are quite different from the ones in this paper.

Unlike $\Perv(\M,k)$, the hypertoric category $\cO$ depends on the choice of a resolution of $\M$ and a cocharacter of $T$ with isolated fixed points.
Category $\cO$ is only defined over $\C$ and is not semisimple.  The category $\Perv(\M,k)$ is semisimple in characteristic zero and for all sufficiently large characteristics.  The category $\cO$ has a graded lift; we do not know of such a lift for $\Perv(\M,k)$.  The simple objects in $\cO$ are indexed by the set of all fixed points of $\wt\M$, which is typically much larger than the set $\cF$ indexing simples in $\Perv(\M,k)$.
On the other hand, in hypertoric category $\cO$ the multiplicity of simples in standard modules are at most one.  In $\Perv(\M,k)$, there is no such bound.  The hypertoric variety associated to the braid arrangement $B_4$ gives one example (see~\cite[5.2.3]{BMMatroid} for the analogous matroid example).

However, we do expect that there is a relationship between our results and hypertoric category $\cO$. This is because the space $\Morse_E(\T^F)$ can be naturally identified with the Grothendieck group
of the category $\cO^F_E$ associated to a normal slice to $S_E$ inside $\overline{S_F}$, which is homeomorphic to a hypertoric variety $\M^F_E$; see \cite[Section 6.3]{BLPWtorico}.
Our generators of the algebra $\End(\bigoplus_F \T^F)$ correspond to classes of finite-dimensional simples in the category $\cO^F_E$, and the generators of $\End(\bigoplus_F \Pi_F)$ correspond to the classes of projective covers of simples with maximal GK dimension. 
It should then be possible to categorify the actions of the rings $\End(\bigoplus_F \Omega^F)$ and $\End(\bigoplus_E \Pi_E)$ by defining appropriate induction and restriction functors between the categories $\cO^F_E$.

\subsection{Contents} The paper is organized as follows.  Section 2 contains the basic combinatorial facts about arrangements that we need.  Section 3 introduces the main actors: hypertoric varieties, the stratification, the resolution sheaves $\T^S$, and the exact functors $\Morse_{F}$.  Section 4 collects some results about
equivariant localization of our sheaves and homomorphisms between them.
Section 5 uses these results to compute homomorphisms between resolution sheaves, while Section 6 computes the action of the generating homomorphism between the functors $\Morse_{F}$.  The final proof that these actually do generate is a by-product of the proof in Section 7 that Gale duality exchanges the actions of projectives and tiltings.  In Section 8 we show that the quasi-hereditary algebra which we get is isomorphic to one which is constructed in \cite{BMMatroid} for any matroid (realizable or not).
Section 9 contains topological and stratification-theoretic proofs and an application to the Finkelberg--Kubrak conjecture.

\section{Arrangements and the poset of flats}\label{arrange}

\subsection{Integral arrangements}\label{sec:arrangements}
Let $I$ be an indexing set with $n=|I|$ elements.  For any subset $S\subset I$ we have the natural coordinate inclusion and projection $\Z^S \hookrightarrow \Z^I$ and $\Z^I \twoheadrightarrow \Z^S$.  

Let $V \subset \Z^I$ be a saturated sublattice of rank $d$.
We think of the embedding of $V$ into $\Z^I$ as defining a central arrangement $\cH$ of hyperplanes in $V$, with the $i$th hyperplane given by the intersection of $V$ with the hyperplane $x_i = 0$ in $\Z^I$.
To be more precise, this is a cooriented multiarrangement: each hyperplane is equipped with a linear form which cuts it out, and each hyperplane can appear multiple times, with the same or different linear forms.  

%
%
The arrangement induces a matroid $M(V) = M(\cH)$ whose independent sets are the subsets $S \subset I$ for which the linear forms $x_i, i\in S$ are independent, or in other words the composition of the inclusion $V \hookrightarrow \Z^I$ and the projection $\Z^I \twoheadrightarrow \Z^S$ has finite cokernel.
A \emph{basis} is a maximal independent set; all bases therefore have $\rank V$ elements.  

We assume throughout this paper that $V$ is \emph{unimodular}, which means that for every basis $B$ the map $V \hookrightarrow \Z^I \twoheadrightarrow \Z^B$ is an isomorphism.  If $V$ is the image of a homomorphism $\Z^m \to \Z^n$, $m \le n$ given by a matrix $A$, then 
 unimodularity of $V$ is equivalent to  all maximal minors of $A$ being in the set $\{-1, 0, 1\}$.  

To avoid degenerate situations, it will be useful for our main results to assume that the matroid $M(V)$ has no loops, which means that $V$ is not contained in any coordinate hyperplane  $\{x_i = 0\}$ and also no coloops, which means that $V$
does not contain any nonzero multiple of a coordinate vector $e_i$. In other words, every $i \in I$ is in some basis and no $i$ is in every basis.
Note that the perpendicular space $V^\bot \subset \Z^I$ taken with respect to the standard pairing will again be unimodular, and it will also have no loops or coloops.

\subsection{Flats}

A \emph{flat} of $V$ is any intersection of a subcollection of the hyperplanes $H_i$.  We will identify each flat with the set of hyperplanes that contain it, so that if we put 
$H_S := \bigcap_{i\in S} H_i$ for any $S \subset I$, a subset $F \subset I$ defines a flat if and only if $F = \{i \mid H_F \subset H_i\}$.

We order the flats by inclusion as subspaces of $V$, i.e.\ by \emph{reverse} inclusion as subsets of $I$.  The arrangement has a unique minimal flat $I$ and unique maximal flat $\emptyset$.

Given a flat $F$, we have the lattice $V^F := H_F$
in $\Z^{I\setminus F}$.
The fact that $F$ is a flat implies that $V^F$ has no loops, and it clearly has no coloops since $V$ is coloop-free.  The \emph{rank} of $F$ is defined to be $r(F) = \rank V^F$.

Dually, we can consider the lattice $V_F := V/V^F$ inside $\Z^F$.  The fact that 
$V$ is unimodular implies that $V_F$ is saturated.

Since $V$ has no loops, $V_F$ also has no loops, but $V_F$ can have coloops.  We then define the poset $\cF = \cF(V)$ to be the collection of all coloop-free flats, meaning flats $F$ such that $V_F$ has no coloops.  Coloop-free flats  are sometimes also known as \emph{cyclic} flats; they can be described alternatively as flats which are unions of circuits (minimal dependent sets).

The poset $\cF_F$ of coloop-free flats of $V_F$ is isomorphic to the interval 
$[F, \emptyset]$ in $\cF$, and the poset $\cF^F$ of coloop-free flats of $V^F$ is isomorphic to the interval $[I,F]$.  Furthermore, for flats $E\le F$ in $\cF$, we have 
$(V^F)_E =(V_E)^F = V^F/V^E$ in  
$\Z^{E \setminus F}$.  We let $V^F_E$ denote this sublattice.

We denote by $d^F_E$ the rank of the lattice $V^F_E$, and we put $d^F := d^F_I$, $d_E := d^\emptyset_E$.

%

\section{Hypertoric varieties}

\subsection{Affine hypertoric varieties}

Let $\M = \M(V)$ be the affine hypertoric variety corresponding to a sublattice $V \subset \Z^I$.  
For this section, we do not assume that $V$ has no loops or coloops, but we do assume that it is saturated and unimodular.

Recall that $\M(V)$ is defined as follows.  Let $K \subset (\C^*)^I$ be the kernel
of the natural homomorphism
\[(\C^*)^I = \Hom_{\mathrm{AbGp}}(\Z^I, \C^*) \to \Hom_{\mathrm{AbGp}}(V, \C^*).\]
It is a connected torus with
Lie algebra $\mathfrak{k} = V_\C^\perp \subset \C^I$.  
The coordinate action of $(\C^*)^I$ on $\C^I$ induces a Hamiltonian action on $T^*\C^I$ with complex moment map
\[\mu_I\colon T^*\C^I \to \C^I, \;\; (z_i,w_i)_{i\in I} \mapsto (z_iw_i)_{i\in I}.\]
The induced action of $K$ on $T^* \C^I$ has moment map
\[ \mu_K \colon T^* \C^I \to \mathfrak{k}^*\]
given by composing $\mu_I$ with the projection $\C^I = \operatorname{Lie}{(\C^*)^I}^*\to \mathfrak{k}^*$.

The affine hypertoric variety $\M = \M(V)$ is the categorical quotient \[\mu^{-1}_K(0) \mod K = \operatorname{Spec}(\C[\mu^{-1}_K(0)]^K).\]  It 
is a singular Poisson variety of 
dimension $2d = 2\rk(V)$, and it has a natural Hamiltonian action of the quotient torus $T= (\C^*)^I/K$.

Requiring $V$ to have no loops does not restrict the class of varieties we are considering: if $i\in I$ is a loop, then the variety $\M$ is isomorphic to the hypertoric variety defined by the inclusion of $V$ into 
$\Z^{I\setminus i}$.  We will see in the next section that
asking that $V$ has no coloops also does not impose a serious restriction.

For more details about hypertoric geometry, see \cite{Pr08}.

\subsection{Stratifications of $\M$}
In this section we describe two stratifications of $\M$ which we will need. 

For any flat $F$ of $V$, let $\M^F \subset \M$ be the subvariety given by the equations $z_i = w_i = 0$ for all $i \in F$.  It is isomorphic to the affine hypertoric variety $\M(V^F)$.
For any flats $E$, $F$, we have $\M^E \subset \M^F$ if and only if $E \le F$.

This family of closed subvarieties induces a disjoint decomposition into locally closed manifolds
\[\breve{S}_F := \M^F \setminus \bigcup_{E < F} \M^E\]
indexed by the set of all flats and another, coarser, decomposition $\M = \bigcup_{F \in \cF} S_F$ indexed by cyclic flats $\cF$, 
where we put 
\[S_F := \M^F \setminus \bigcup_{\substack{E \in \cF \\ E < F}} \M^E.\]

The first decomposition is just the classification of points of $\M$ by their $T$-stabilizers: we have $x\in \breve{S}_F$ if and only if 
\[T_x = T_F := (\C^*)^F/K_F \subset T,\]
where $K_F = K\cap (\C^*)^F$.
On the other hand the varieties $S_F$ are the images of the loci of points in $\mu_K^{-1}(0)$ with fixed $K$-stabilizer.  They can also be described\footnote{This uses the hypothesis that $V$ is unimodular.} as the symplectic leaves for the Poisson structure induced from the natural one on $T^*\C^I$.

We will refer to the strata $\breve S_F$ as ``fine" strata and the strata $S_F$ as ``coarse" strata.  Our sheaves will all be constructible with respect to the stratification by coarse strata; the fine strata will mainly appear as a tool in certain technical proofs.

Let $T_{F,\R}$ denote the maximal compact subgroup of the complex torus $T_F$.  A proof of the following proposition is given in \cite[2.5]{BP09}, and similar results appear in \cite[2.4 and 2.5]{PW}.  We will prove a more general result in Proposition \ref{prop:parametric stratification} below.

\begin{proposition} \label{prop:slice to fine strata}
The decomposition $\M = \bigcup_F \breve{S}_F$ is a topological stratification; 
a normal slice to 
a stratum $\breve{S}_F$ is isomorphic as a $T_{F,\R}$-stratified space to the hypertoric variety $\M_F := \M(V_F)$.
\end{proposition}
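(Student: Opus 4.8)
The plan is to work locally near a point $x \in \breve{S}_F$ and exploit the explicit coordinates coming from the hyperkähler quotient construction. Recall that $x$ has $T$-stabilizer $T_F = (\C^*)^F/K_F$, and the ambient $T^*\C^I$ splits $T$-equivariantly near a lift $\tilde x \in \mu_K^{-1}(0)$ as a product of the directions indexed by $F$ (where $z_i = w_i = 0$ at $\tilde x$) and the directions indexed by $I \setminus F$ (where at least one of $z_i, w_i$ is nonzero). First I would use a slice theorem for the $K$-action — a holomorphic (or real-analytic, since we only need a homeomorphism of stratified spaces) Luna-type slice at $\tilde x$ — to identify a neighborhood of $x$ in $\M$ with a neighborhood of the image point in a product $\M^F \times N$, where $N$ is the quotient of a slice in the $F$-directions by the residual group. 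The key combinatorial input is that the stabilizer subtorus $K_F = K \cap (\C^*)^F$ has Lie algebra $\mathfrak{k}_F = \mathfrak{k} \cap \C^F = (V_\C^\perp) \cap \C^F$, and the transverse slice in the $F$-directions is $T^*\C^F$ with the residual $K_F$-action; its hyperkähler quotient is by definition the hypertoric variety associated to the lattice $V_F = V/V^F \subset \Z^F$, since $\mathfrak{k}_F^\perp$ inside $\Z^F$ is exactly $V_F$ (here one uses unimodularity, which was invoked precisely to guarantee $V_F$ is saturated).

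The key steps, in order: (1) establish the local product decomposition $\M \cong \M^F \times \M(V_F)$ near $\breve S_F$, equivariantly for $T_F \cong T_{F,\R}$-homotopy purposes and compatibly with moment maps; (2) check that under this identification the stratum $\breve S_F$ corresponds to $\M^F \times \{o\}$, where $o \in \M(V_F)$ is the cone point, so that a normal slice is exactly $\M(V_F)$; (3) verify that the decomposition by the $S_E$ (or $\breve S_E$) restricts, under the product decomposition, to the product of the trivial stratification on $\M^F$ with the stratification of $\M(V_F)$ by its own symplectic leaves — this is where the poset identifications $\cF_F \cong [F,\emptyset]$ and $\breve S_E \leftrightarrow \breve S_{E/F}$ from Section~\ref{arrange} get used; and (4) conclude that $\M = \bigcup_F \breve S_F$ is a topological stratification in the sense that each stratum has a locally trivial cone-bundle neighborhood, with cone fiber the (real) link of $o$ in $\M(V_F)$.

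The main obstacle will be step (1): producing an \emph{honest} local product decomposition rather than just a normal slice, and doing so $K$-equivariantly through the singular quotient. The subtlety is that $\mu_K^{-1}(0)$ is singular at $\tilde x$ and the $K$-action is not free there, so one cannot naively apply the smooth slice theorem; one must instead use that the moment map $\mu_K$ splits as $\mu_{K_F}$ on the $F$-block plus a transverse piece, and that the $F$-block and the $(I\setminus F)$-block are $\mu$-independent, to reduce the zero fiber and the quotient to a product. Since the excerpt says a proof of the non-parametric case appears in \cite[2.5]{BP09} and promises the general Proposition~\ref{prop:parametric stratification}, I would either cite that slice construction directly or reprove it by the hyperkähler-rotation / real-analytic argument flagged in the introduction, where the walls of indeterminacy have codimension three. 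Everything downstream — matching strata, identifying the normal slice, and checking the $T_{F,\R}$-equivariant homeomorphism type — is then bookkeeping with the lattice combinatorics already set up in Section~\ref{arrange}.
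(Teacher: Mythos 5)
Your proposal is correct in outline, but it is not the route the paper takes: it is essentially the local-model argument of the cited references \cite[2.5]{BP09} and \cite[2.4--2.5]{PW}, which the paper points to as an existing proof before giving its own more general one. You build the slice by splitting $T^*\C^I$ near a lift $\tilde x$ into the $F$-block and the $(I\setminus F)$-block, gauge-fixing away $K/K_F$, and recognizing the transverse piece as the reduction of $T^*\C^F$ by $K_F$, i.e.\ $\M(V_F)$. The paper instead proves the stronger Proposition \ref{prop:parametric stratification}: it pulls the decomposition back from the arrangement of tripled flats in $\bV = V_\R\otimes\R^3$ via the hyperk\"ahler moment map $\bmu_T$, reduces to $\M = T^*\C^I$ by Lemma \ref{inheriting G-stratifications}, and obtains local triviality and the cone structure by lifting paths in $\bV$ horizontally (orthogonally to the $T_\R$-orbits). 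The paper's argument buys the parametric version over a base $Z$, which is what is actually needed later to show that $\T^F$ and $\Phi_F$ are independent of choices; your argument buys a more direct identification of the normal slice with $\M(V_F)$, which the paper's own proof leaves to the references. Three points in your write-up need care: (i) as you yourself flag, Luna's slice theorem cannot be applied naively at the singular point $\tilde x$ of $\mu_K^{-1}(0)$ --- the correct substitute is to choose $S\subset I\setminus F$ with $K\to(\C^*)^S$ surjective with kernel $K_F$ and normalize $z_i=1$ for $i\in S$, exactly as in the proof of Lemma \ref{lem:Morse slice compatibility}; (ii) the inductive definition of topological stratification also requires showing that $\M(V_F)$ is the open cone on a stratified link, so you must check that the contracting $\R_{>0}$-action on $\M_F$ is compatible with its stratification (the paper does this by lifting radial paths); and (iii) the identification is only $T_{F,\R}$-equivariant, not $T_F$-equivariant, which is why the statement is phrased for the compact torus.
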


We give a precise definition of topological stratifications in section \ref{sec:big proofs} below; for what follows it is enough to know that a topological stratification is topologically locally constant along strata, and that the functors $j_*$, $j_!$, $j^*$, $j^!$ and Verdier duality $\D$ preserve constructibility for the stratification, where $j$ is the inclusion of a locally closed union of strata into $\M$.

\begin{lemma}\label{lem:coloop decomposition}
If $C\subset I$ is the set of all coloops of $V$, then there is an isomorphism
\[\M \cong \M_E \times T^*(\C^{C}),\]
where $E = I\setminus C$.  Under this isomorphism
the stratum $\breve{S}_F$ of $\M$ is sent to  $\breve{S}_{E,F} \times T^*\C^{C}$, where 
 $\breve{S}_{E,F}$ is the stratum of $\M_E$ indexed by $F \subset E$.
\end{lemma}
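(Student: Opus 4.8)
The plan is to reduce everything to the definition of the hypertoric variety as a categorical quotient and to track how coloops interact with the quotient construction. Recall $K \subset (\C^*)^I$ is the kernel of $(\C^*)^I \to \Hom(V,\C^*)$, with Lie algebra $\mathfrak{k} = V_\C^\perp \subset \C^I$. If $i \in C$ is a coloop, then $e_i$ (or a nonzero multiple of it, but by saturation $e_i$ itself) lies in $V$, hence $\mathfrak{k} \subset \{x_i = 0\}$, i.e.\ the $i$th coordinate of $\mathfrak{k}$ vanishes identically. This means $K$ is contained in the subtorus $(\C^*)^{I \setminus i}$, and since this holds for every $i \in C$, we get $K \subset (\C^*)^{E}$ where $E = I \setminus C$. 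In fact, writing $V = V' \oplus \Z^C$ for an appropriate saturated unimodular $V' \subset \Z^E$ (this splitting is exactly what the coloop condition gives: the coordinate projection $V \to \Z^C$ is surjective with kernel $V \cap \Z^E = H_C = V^C$, and unimodularity makes the sequence split), one checks that $V' = V^C$ defines the arrangement $V_E$ in the notation of the lemma — more precisely $\M_E = \M(V_E)$ where $V_E = V/V^C$, but since $C$ consists of coloops $V_E \cong \Z^C$ and the interesting part is $V^C \subset \Z^E$; I should be careful to match the lemma's indexing, and the cleanest statement is that $\M(V) \cong \M(V^C) \times \M(\Z^C)$ where $V^C \subset \Z^E$ is loop-free and coloop-free, and I then identify $\M(V^C)$ with $\M_E$ (note $M(V^C)$ on $E$ is the matroid $M_E$ obtained by deleting the coloops, whose lattice is what the paper calls $V_E$ after discarding the trivial $\Z^C$ summand — I'll phrase this to avoid notational friction).

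The first concrete step is therefore to establish the product decomposition $\mu_K^{-1}(0) \cong \mu_{K}^{-1}(0)' \times T^*\C^C$ at the level of the zero fiber: since $K$ acts trivially on the $\C^C$-coordinates $(z_i, w_i)_{i \in C}$ of $T^*\C^I = T^*\C^E \times T^*\C^C$ (because $K \subset (\C^*)^E$), and since the moment map $\mu_K$ factors through the projection $\C^I \to \mathfrak{k}^*$ which kills the $C$-coordinates, the condition $\mu_K = 0$ only constrains the $T^*\C^E$ factor. Hence $\mu_K^{-1}(0) = \bigl(\mu_K^{-1}(0) \cap T^*\C^E\bigr) \times T^*\C^C$ as $K$-varieties with trivial $K$-action on the second factor. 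Taking categorical quotients, $\C[\mu_K^{-1}(0)]^K = \C[\mu_K^{-1}(0) \cap T^*\C^E]^K \otimes \C[T^*\C^C]$, which gives $\M(V) \cong \M(V^C) \times T^*\C^C$. (I should double-check that the $K$-action on $T^*\C^E$ is the one defining $\M(V^C)$: the relevant torus there is the kernel of $(\C^*)^E \to \Hom(V^C, \C^*)$, and since $V^C = V \cap \Z^E$ with $V/V^C$ free, one verifies this kernel equals $K$ under the inclusion $K \subset (\C^*)^E$.)

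The second step is to track the strata. A flat $F$ of $V$ is defined by $z_i = w_i = 0$ for $i \in F$; since $C$ consists of coloops and every coloop lies in every basis, no coloop lies in any proper flat, so $F \subset E$ for every flat $F \neq$ (the unique minimal flat needs care, but $I$ itself is a flat only if... actually the minimal flat is $I$ and $I \not\subset E$ unless $C = \emptyset$; however $\breve S_I$ in the decomposition corresponds to the deepest stratum, and the statement of the lemma implicitly ranges over flats $F$ that remain flats after deletion — I will restrict attention to flats $F \subset E$, which are exactly the flats of $V$, since $C$-elements are coloops hence in the closure of nothing). Then $\M^F \subset \M(V)$ is cut out by $z_i = w_i = 0$, $i \in F \subset E$, so under the product decomposition $\M^F = \M(V^C)^F \times T^*\C^C$, and passing to the open strata $\breve S_F = \M^F \setminus \bigcup_{E' < F}\M^{E'}$ commutes with the product since the second factor is untouched. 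Identifying $\M(V^C)$ with $\M_E$ and its stratum with $\breve S_{E,F}$ finishes the matching.

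The main obstacle I anticipate is purely bookkeeping: making the identification $\M(V^C) \cong \M_E$ precise and matching the two indexing conventions for flats and lattices (the paper's $V_F$ versus $V^F$ versus the $V^F_E$ notation), and checking that the torus $K$, when viewed inside $(\C^*)^E$, is exactly the torus that defines $\M(V^C)$ — this requires the unimodularity hypothesis to guarantee $V \cong V^C \oplus \Z^C$ splits compatibly, and one must verify the perpendicular lattices match: $(V^C)^\perp$ in $\Z^E$ equals the image of $V^\perp$ under $\Z^I \to \Z^E$, which again uses unimodularity/saturation. None of this is deep, but it is where the care is needed; the geometric content — that coloop directions contribute free cotangent factors untouched by the moment map and the quotient — is immediate once the algebra is set up.
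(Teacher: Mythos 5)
Your proof of the product decomposition itself is correct and is essentially the paper's argument: coloops give $\Z^C\subset V$ (saturation upgrading "a multiple of $e_i$" to $e_i$ itself), hence $\mathfrak k=V_\C^\perp\subset\C^E$ and $K\subset(\C^*)^E$, the moment map ignores the $T^*\C^C$ coordinates, and the categorical quotient factors. Your identification of the first factor with $\M_E$ via the splitting $V=(V\cap\Z^E)\oplus\Z^C$ is equivalent to the paper's observation that $\Z^I\to\Z^E$ induces $\Z^I/V\cong\Z^E/V_E$, so no issue there.

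The gap is in the stratum-matching step. Your justification rests on the claim that "no coloop lies in any proper flat," so that the flats of $V$ are exactly the flats contained in $E$; this is false. The minimal flat $I$ contains every coloop, and more generally for a coloop $i$ the closure of $F\cup\{i\}$ is a flat strictly below $F$ that meets $C$ (e.g.\ for $V=\Z e_1\oplus\Z(e_2+e_3)\subset\Z^3$ the set $\{1\}$ is a flat containing the coloop $1$). In fact the flats of $V$ are in bijection with pairs consisting of a flat of $V_E$ and an arbitrary subset of $C$. Consequently your assertion that "passing to the open strata commutes with the product since the second factor is untouched" fails: $\breve S_F=\M^F\setminus\bigcup_{F'<F}\M^{F'}$ removes $\M^{F'}$ for flats $F'$ meeting $C$, and those removals do cut into the $T^*\C^C$ factor. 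What one actually gets for $F\subset E$ is $\breve S_F=\breve S_{E,F}\times\prod_{i\in C}(T^*\C\setminus\{0\})$, i.e.\ $\breve S_{E,F}$ times the open fine stratum of $T^*\C^C$; the full fine stratification of $\M$ matches the product of the fine stratifications of $\M_E$ and of $T^*\C^C=\M(\Z^C)$. The statement with the whole factor $T^*\C^C$ is the one that holds for the coarse strata $S_F$, $F\in\cF$ (which is what the subsequent Corollary uses), precisely because coloop-free flats can never contain a coloop of $V$ and hence are automatically contained in $E$ — that is the observation your argument is missing.
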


\begin{proof}
The fact that elements of $C$ are coloops means that
$\Z^{C} \subset V$, so the projection $\Z^I \to \Z^{E}$ 
induces an isomorphism $\Z^I/V \stackrel{\sim}{\to} \Z^E/V_E$.  It follows that $K_{E}$ is isomorphic to $K$, acting trivially on the coordinates in $C$, and the moment map $\mu_K$ does not depend on the entries in $T^*(\C^C)$.  The result follows.
\end{proof}

\begin{corollary}
The decomposition $\M = \bigcup_{F\in \cF} {S}_F$ is a topological stratification; 
a normal slice to 
a stratum ${S}_F$ is isomorphic as a $T_{F,\R}$-stratified space to $\M_F$.
\end{corollary}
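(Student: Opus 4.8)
The plan is to deduce the corollary from its fine-strata counterpart, Proposition~\ref{prop:slice to fine strata}, together with the combinatorics of cyclic flats, arguing by induction on $\dim\M$. (The base case $\M=\{0\}$ is vacuous, and loops may be ignored throughout, since dropping a loop from $I$ alters neither $\M$ nor its stratifications.) The combinatorial input I would record first: for a flat $G$ of $V$, let $\phi(G)$ be obtained by deleting from $G$ the coloops of the restricted lattice $V_G$. A short matroid argument --- deleting all coloops leaves a coloop-free, hence cyclic, set, whose closure contains no coloop of $V_G$ --- shows that $\phi(G)$ is the \emph{largest} cyclic flat of $V$ contained in $G$, and the same circuit computation shows that the cyclic flats of $V_G$ are precisely the cyclic flats of $V$ contained in $G$; in particular $\phi$ ``localizes'', i.e.\ $\phi_{V_G}$ agrees with $\phi$ on flats of $V_G$.

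From this I extract what I need about the coarse strata. First, each coarse stratum is a union of fine strata, $S_F=\bigsqcup_{G:\,\phi(G)=F}\breve S_G$, because $\breve S_G\subseteq S_{\phi(G)}$ and $\phi(G)$ is the unique cyclic flat with that property. Second, for a cyclic flat $F$ and an arbitrary flat $G$, one has $\phi(G)\le F$ in $\cF$ if and only if $G\le F$ among all flats --- both being equivalent to $F\subseteq G$. Since $S_F$ is open in the closed subvariety $\M^F$ it is locally closed, and the second fact yields $\overline{S_F}=\M^F=\bigsqcup_{F'\le F,\,F'\in\cF}S_{F'}$, the frontier condition; moreover each $S_F$ is a manifold, being a symplectic leaf.

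The heart of the argument is the local product structure and its compatibility with the coarse stratification. Fix $x\in\breve S_G\subseteq S_F$, so $F=\phi(G)$. Proposition~\ref{prop:slice to fine strata} provides a $T_{G,\R}$-equivariant stratified homeomorphism of a neighborhood of $x$ in $\M$ with $\M_G\times\R^k$, where $\R^k$ is unstratified, $k=\dim\breve S_G$, and $\M_G$ carries its fine stratification; since the coarse strata are the unions of fine strata prescribed by $\phi$ and $\phi$ localizes, the same homeomorphism carries the coarse stratification of $\M$ to the coarse stratification of $\M_G$ times $\R^k$. If $V$ has a coloop then, using that the symplectic leaves of a product are the products of the leaves, Lemma~\ref{lem:coloop decomposition} realizes $\M$ with its coarse stratification as the product of $\M_E$ with its coarse stratification and a smooth symplectic factor $T^*\C^C$; since $\dim\M_E<\dim\M$ and $(V_E)_F=V_F$, the corollary for $\M$ follows from the inductive hypothesis for $\M_E$. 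So assume $V$ has no coloop. Then the only flat $G$ with $V^G=0$, equivalently $\dim\M_G=\dim\M$, is the ground set $G=I$, for which $S_I=\{0\}$ is a point: local triviality is automatic and the normal slice is all of $\M=\M_I$. For every flat $G\ne I$ one has $\dim\M_G<\dim\M$, so the inductive hypothesis applies to $\M_G$. Hence the coarse decomposition is topologically locally constant along each $S_F$, and with the frontier condition and the manifold property this shows it is a topological stratification.

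Finally I would identify the slice. At $x\in\breve S_G\subseteq S_F$, the local product structure identifies the normal slice to $S_F$ in $\M$ with the normal slice --- taken inside $\M_G$ --- to the coarse stratum of $\M_G$ through $0$; by the inductive hypothesis this is $\M\big((V_G)_F\big)$, and $(V_G)_F=V_F$ since both are the image of $V$ under the projection $\Z^I\twoheadrightarrow\Z^F$. So the normal slice is $\M_F$, the same for every $x\in S_F$, and its $T_{F,\R}$-equivariance (note $T_{F,\R}\subseteq T_{G,\R}$) is inherited from Proposition~\ref{prop:slice to fine strata} and the inductive hypothesis. The step I expect to be the real obstacle is purely bookkeeping: verifying carefully that passing to the transverse slice $\M_G$ sends coarse strata of $\M$ to coarse strata of $\M_G$ --- that $G\mapsto\phi(G)$ is compatible with restriction --- and arranging the induction so that it always descends in dimension, which is precisely why the coloop case must be separated off, leaving $G=I$ as the only flat with $\dim\M_G=\dim\M$.
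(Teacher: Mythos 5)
Your proposal is correct and takes essentially the same route as the paper: the paper's proof also rests on Proposition \ref{prop:slice to fine strata} combined with Lemma \ref{lem:coloop decomposition}, applied to the slice $\M_{F'}$ at a point of $\breve S_{F'}\subset S_F$ via the observation that $F'\setminus F$ is exactly the set of coloops of $V_{F'}$ when $F$ is the largest cyclic flat contained in $F'$. You simply make explicit the induction on dimension and the combinatorial bookkeeping (the map $G\mapsto\phi(G)$, the grouping of fine strata into coarse ones, the frontier condition) that the paper's three-sentence argument leaves implicit.
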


\begin{proof}
Given a point $p\in \breve{S}_{F'} \subset S_F$, a normal slice $N$ to $\breve{S}_{F'}$ at $p$ is 
isomorphic to $\M_{F'}$.  Since $F$ is the smallest flat in $\cF$ lying above $F'$, the set $F' \setminus F$ consists of all coloops of $V_{F'}$.   Then the Lemma implies that $N$ is isomorphic as a stratified space to 
$\M_F$ times a symplectic affine space. 
\end{proof}

\begin{remark}
If $V$ is not unimodular, then $V_F$ may not be a saturated sublattice, which means that $K_F$ is not connected.  The slice $N$ will be isomorphic to the quotient of $\M_F \times T^*\C^{F'\setminus F}$ by a finite group.
 As a result the coarse strata $S_F$ are only orbifolds.  
\end{remark}

Let $\cS$ denote the stratification by $S_F$, $F \in \cF$, and let $\breve{\cS}$ denote the stratification by
$\breve{S}_F$.  
For any flat $F \in \cF$, the same construction gives stratifications
$\cS^F$, $\breve{\cS}^F$ of $\M^F = \overline{S_F}$ and stratifications $\cS_F$, $\breve{\cS}_F$ of $\M_F$.  Since the coloop-free flats of $V^F$ are just the coloop-free flats of $V$ which are contained in $F$, the stratification $\cS^F$ is just the restriction of $\cS$ to $\M^F$.

\subsection{Perverse sheaves on $\M$}
Fix a field $\kk$.  Consider the triangulated category $D^b_{T,\cS}(\M, \kk)$ of equivariant complexes which are constructible with respect to the stratification $\cS$.  Throughout the paper, we use $f_*$, $f_!$, etc. to denote the derived pushforward and proper pushforward functors.

In this paper, we always use the middle perversity.
Let $\Perv(\M,\kk)$ denote the abelian category of perverse objects in $D^b_{T,\cS}(\M, \kk)$.

\begin{lemma}\label{lem:equivariantly simply connected}
 The equivariant fundamental group $\pi_1(ET \times^T S_F)$ is trivial.  In particular,
 any $T$-equivariant local system on $S_F$ is constant.
\end{lemma}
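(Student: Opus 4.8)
The plan is to compute the equivariant fundamental group $\pi_1(ET\times^T S_F)$ by using the fibration $S_F \to ET\times^T S_F \to BT$ and the resulting long exact sequence of homotopy groups. The base $BT$ has $\pi_1(BT) = 0$ and $\pi_2(BT) = \pi_1(T) \cong \Z^{\dim T}$, so the sequence reads
\[
\pi_2(BT) \to \pi_1(S_F) \to \pi_1(ET\times^T S_F) \to \pi_1(BT) = 0,
\]
and thus it suffices to show that the map $\pi_2(BT) \to \pi_1(S_F)$ is surjective. Equivalently, since this connecting map is (up to the identification $\pi_2(BT)\cong\pi_1(T)$) induced by the orbit map $T \to S_F$ through a chosen basepoint, it suffices to show that the orbit map $T\to S_F$ is surjective on $\pi_1$.

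To analyze $\pi_1(S_F)$, I would first reduce to the case $F = I$, i.e.\ the analysis of $\pi_1$ of the open dense stratum $S_I \subset \M$ (the ``big'' stratum where $z_i, w_i$ are all nonzero for $i$ in some basis); the general case follows by replacing $V$ with $V^F$, since $S_F$ is the open stratum of $\M^F = \M(V^F)$ and the residual torus acting on $\M(V^F)$ is $(\C^*)^{I\setminus F}/K^F$, which surjects onto $T_{F,\R}$-complement appropriately. For the open stratum, I would use the concrete description of $\M$ as a quotient $\mu_K^{-1}(0)\mod K$: on the open locus the quotient map $\mu_K^{-1}(0)^{\circ} \to S_I$ is a principal $K$-bundle (here unimodularity of $V$, hence connectedness of $K$ and freeness of the action on the stable locus, is essential), so one gets a fibration with fiber $K$ and total space $\mu_K^{-1}(0)^{\circ}$. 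Combined with the $T$-orbit map and the inclusion into $(\C^*)^I \times \C^I$-coordinates, this identifies $S_I$ with an iterated torus-type quotient; the key point is that $\pi_1$ of the total space $\mu_K^{-1}(0)^{\circ}$ is generated by the coordinate loops $z_i \neq 0$, which map onto $\pi_1(T) = \Z^I/V$ via the $T$-action, so every loop in $S_I$ lifts to a loop in a single $T$-orbit.

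A cleaner route, which I would actually prefer, is to invoke the coarse-stratum normal slice description already established: the corollary above shows a normal slice to $S_F$ in $\M$ is $\M_F = \M(V_F)$, and more relevantly, $S_F$ itself, as a manifold with residual $T$-action, is the open stratum of $\M^F$. The open stratum of any affine hypertoric variety $\M(W)$, $W\subset \Z^J$ saturated unimodular with no loops, is the locus of free $T_W = (\C^*)^J/K_W$ action, and there it is classical (see e.g.\ \cite{BD,Pr08}) that $S_{\text{open}}\cong T_W \times (\C^*)^r$ is a product of tori where the $(\C^*)^r$ factor absorbs into $T_W$ via an explicit splitting coming from a choice of basis $B$ of the matroid, using that $V\hookrightarrow\Z^B$ is an isomorphism (unimodularity). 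Hence $S_{\text{open}}$ is itself a torus on which $T_W$ acts transitively up to finite index — in fact transitively, by unimodularity — so the orbit map $T_W \to S_{\text{open}}$ is surjective on $\pi_1$, and the connecting map $\pi_2(BT_W)\to\pi_1(S_{\text{open}})$ is onto.

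Finally, once $\pi_1(ET\times^T S_F) = 0$ is established, the statement about local systems is automatic: a $T$-equivariant local system on $S_F$ with coefficients in $\kk$ is the same as a local system on $ET\times^T S_F$, hence a representation of its fundamental group, which is trivial, so the local system is constant. The main obstacle is the explicit identification of the open stratum as a single torus (equivalently, the transitivity of the $T_W$-action up to the connecting-map surjectivity); this is where unimodularity of $V$ enters decisively, and one must be careful that a choice of matroid basis $B$ really does split off the ambiguous $(\C^*)$ factors and that no finite stabilizers appear — both of which fail without unimodularity, consistent with the remark above that the coarse strata become orbifolds in the non-unimodular case.
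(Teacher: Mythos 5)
Your reduction is the same as the paper's: both arguments boil down to showing that the orbit map $T\to S_F$ is surjective on $\pi_1$ (you phrase it via the connecting map $\pi_2(BT)\to\pi_1(S_F)$ of the Borel fibration; the paper uses the $T$-fibration $ET\times S_F\to ET\times^T S_F$, which gives the identical statement). The problem is how you then establish that surjectivity. Your preferred ``cleaner route'' rests on the claim that the open stratum of $\M(W)$ is a single torus, on which $T_W$ acts transitively. This is false for dimension reasons: the open stratum is dense in $\M(W)$, hence of complex dimension $2\dim T_W$, while any $T_W$-orbit has dimension at most $\dim T_W$. (Already for $V=\Z(1,1)\subset\Z^2$, i.e.\ $\M\cong\C^2/\{\pm1\}$, the open stratum is the $2$-dimensional complement of the origin and $T\cong\C^*$ is $1$-dimensional.) Unimodularity makes the action on the fine open stratum \emph{free}, not transitive, so the argument collapses at its key step. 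Your fallback route --- that $\pi_1$ of the semistable locus $\mu_K^{-1}(0)^{\circ}$ is ``generated by the coordinate loops'' --- is exactly the nontrivial assertion that needs proof, and you give no argument for it.

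What is missing is the mechanism the paper uses to control the $3d$ real dimensions of $S_F$ transverse to the $T_\R$-orbits. The paper first passes from $S_F$ to the fine stratum $\breve S_F$ (a Zariski open subset, so $\pi_1(\breve S_F)\to\pi_1(S_F)$ is surjective), where the $T_\R$-stabilizer is constant, so $\breve S_F\to\breve S_F/T_\R$ is a fibration with fiber $T_\R/T_{\R,x}$. The crucial input is then that the base $\breve S_F/T_\R$ is \emph{simply connected}: via the hyperk\"ahler moment map it is homeomorphic to the complement in a real affine space of a union of affine subspaces of codimension at least three. Combined with connectedness of the stabilizer $T_{\R,x}$ (this is where unimodularity enters, not through any transitivity), the homotopy exact sequence gives surjectivity of $\pi_1(T_\R)\to\pi_1(\breve S_F)$. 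You would need to supply this codimension-three statement, or some substitute for it, to close the gap.
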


\begin{proof}  From the $T$-fibration $ET \times S_F \to ET \times^T S_F$, we obtain a long exact sequence
\[ \dots \to \pi_1(T) \to \pi_1(ET \times S_F) \to \pi_1(ET \times^T S_F) \to \pi_0(T).\]
As the last term is trivial, we must show that the homomorphism $\pi_1(T) \to \pi_1(ET \times S_F) = \pi_1(S_F)$ induced by the map from $T$ to any orbit is surjective.

The ``fine stratum" $\breve{S}_F$ 
is a Zariski open subset of $S_F$, hence $\pi_1(\breve{S}_F) \to \pi_1(S_F)$ is surjective
(see \cite[2.10]{Kol95}, for instance). 
Thus it is enough to show that 
$\pi_1(T)\to \pi_1(\breve{S}_F)$ is surjective.

If $T_\R$ denotes the maximal compact torus in $T$, the
stabilizer $T_{\R,x}$ of $x\in S_F$ does not depend on the
point $x$, so $\breve{S}_F \to \breve{S}_F/T_\R$ is a fibration with fiber $T_\R/T_{\R,x}$. The base of this fibration simply connected, since it is homeomorphic to the complement in a real affine space of a union of affine subspaces of codimension at least three --- see the proof of \cite[Corollary 2.6]{BP09} or Section \ref{sec:hk moment map} below.
  Thus by the long exact homotopy sequence,  $\pi_1(T_\R/T_{\R,x}) \to \pi_1(\breve{S}_F)$ is surjective.  Since the stabilizer $T_{\R,x}$ is connected, $\pi_1(T_\R) \to \pi_1(T_\R/T_{\R,x})$ is also surjective, proving the result.
\end{proof}

\begin{proposition}\label{prop:forgetting equivariance}
The forgetful functor from $\Perv(\M,k)$ to the category of non-equivariant perverse $k$-sheaves on $\M$ is a fully faithful embedding.
\end{proposition}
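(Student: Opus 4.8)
The plan is to use the general principle that forgetting equivariance is fully faithful on a category of perverse sheaves whenever the group acting is connected and the relevant equivariant local systems are all trivial. First I would recall the standard setup: since $T = (\C^*)^I/K$ is a connected torus, the forgetful functor $\Perv_{\cS,T}(\M,\kk) \to \Perv_{\cS}(\M,\kk)$ is always \emph{faithful} and, moreover, the obstruction to full faithfulness on a pair of objects $\mathcal{F}, \mathcal{G}$ lives in the equivariant cohomology $H^\bullet_T(\M)$ acting on $\Ext$-groups; concretely, one has a spectral sequence (or the long exact sequence coming from the fibration $ET \times^T \M \to BT$) relating $\Hom_{D^b_{T}}(\mathcal{F},\mathcal{G})$ and $\Hom_{D^b}(\mathcal{F},\mathcal{G})$. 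The key input needed to collapse this is that all the relevant equivariant local systems are trivial, which is exactly what Lemma~\ref{lem:equivariantly simply connected} provides: every $T$-equivariant local system on each coarse stratum $S_F$ is constant.

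The key steps, in order, would be: (1) reduce to checking the statement on simple objects, or rather on a set of generators of the category — since both categories are finite-length abelian categories it suffices to show that for all objects $\mathcal{F},\mathcal{G}$ the map $\Hom_{D^b_{T,\cS}(\M)}(\mathcal{F},\mathcal{G}) \to \Hom_{D^b_{\cS}(\M)}(\mathcal{F},\mathcal{G})$ is an isomorphism, and by devissage along the perverse filtration it is enough to treat $\mathcal{F},\mathcal{G}$ supported on single strata, i.e.\ of the form $(i_F)_{!*}$ of the constant sheaf (up to shift) on $S_F$, using Lemma~\ref{lem:equivariantly simply connected} to know these are the only simples; (2) for such objects, use the fact that $\mathrm{Hom}$ between perverse sheaves sits in degree $0$, so the forgetful map on $\mathrm{Hom}$ is identified with the degree-$0$ part of the map $\R\Hom_T \to \R\Hom$, and one shows the fiber sequence $\R\Hom_T(\mathcal{F},\mathcal{G}) \to \R\Hom(\mathcal{F},\mathcal{G}) \to \R\Hom_T(\mathcal{F},\mathcal{G})[2] \to \cdots$ (coming from the Gysin/Euler class sequence for the circle, iterated, or more simply from $H^\bullet_T(pt) = \kk[u_1,\dots,u_{\dim T}]$) induces an isomorphism in degree $0$; (3) conclude that the map is bijective on $\mathrm{Hom}$-spaces between all objects, hence the functor is fully faithful. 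Alternatively — and this may be cleaner — one can invoke the result of Bernstein--Lunts: for a connected group $G$ acting on $X$, the forgetful functor $D^b_G(X) \to D^b(X)$ restricted to the hearts of the perverse $t$-structures is fully faithful onto the subcategory of perverse sheaves admitting a $G$-equivariant structure, and since $G$ is connected a perverse sheaf admits at most one such structure; then the only thing to check is that the equivariant structure, when it exists, is unique and that $\Hom$'s agree, both of which follow from connectedness of $T$ together with Lemma~\ref{lem:equivariantly simply connected} ensuring there is no local-system monodromy obstruction.

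The main obstacle I expect is step (2): controlling the higher $\Ext$ groups enough to see that the forgetful map is an isomorphism precisely in cohomological degree $0$. The subtlety is that $\R\Hom$ and $\R\Hom_T$ differ in higher degrees (the equivariant one is a module over $H^\bullet_T(pt)$), and one must argue that the canonical map is an iso in degree $0$ — this is where positivity is needed: because $\mathcal{F},\mathcal{G}$ are perverse and the strata have the right parity/dimension properties (the normal slices $\M_F$ being hypertoric, in particular even-dimensional with only even cohomology on the relevant loci), the negative $\Ext$ groups vanish and the relevant piece of the spectral sequence degenerates. I would handle this by reducing, via Proposition~\ref{prop:slice to fine strata} and its corollary, to a local computation on a product of a hypertoric variety with an affine space, where $H^\bullet_T$ is explicitly a polynomial ring in non-negative degrees, making the degree-$0$ comparison immediate. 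A secondary (but routine) point is checking faithfulness, which follows formally from the fact that $\M$ admits a $T$-equivariant "free resolution" $ET \times^T \M \to \M/\!\!\sim$ in the usual approximation sense, so no object becomes zero after forgetting.
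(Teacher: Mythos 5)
Your primary route is in essence the paper's own: stratify, compare the equivariant and non-equivariant $\RHom$'s via the spectral sequence over the stratification, and use Lemma \ref{lem:equivariantly simply connected} to remove any monodromy obstruction. But as written there is a genuine gap in step (2): you only aim to show that the comparison map is an isomorphism \emph{in cohomological degree $0$}, and that is not enough information to propagate. If you reduce to simple objects by d\'evissage in the abelian category, the five lemma also requires \emph{injectivity} of the comparison map on $\operatorname{Ext}^1$ between simples --- otherwise two inequivalent equivariant extensions could become isomorphic after forgetting, and fullness fails on non-semisimple objects. If instead you argue directly for arbitrary $A,B$ via the stratification spectral sequence, the abutment in total degree $0$ is controlled not only by the $E_1$-terms in total degree $0$ but also by differentials into total degree $1$, so again you must track an injectivity statement one degree up. The paper's proof is organized around exactly this point: an elementary lemma saying that a map of complexes concentrated in degrees $\ge 0$ which is an isomorphism in degree $0$ and an injection in degree $1$ induces the same on $H^0$ and $H^1$, applied twice (once to the stratification spectral sequence, once to the local-to-global spectral sequence on each stratum). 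The geometric inputs are then that $H^0_T(S_F;\kk)\to H^0(S_F;\kk)$ is an isomorphism and that $H^1_T(S_F;\kk)=0$ --- both consequences of equivariant simple connectedness of the strata, not of any parity or evenness of the slices, which is what your proposal leans on and which is neither needed nor sufficient here (perversity supplies the vanishing in negative degrees). Your outline becomes a proof once "isomorphism in degree $0$" is everywhere upgraded to "isomorphism in degree $0$ and injection in degree $1$," with $H^1_T(S_F;\kk)=0$ as the input.

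Your alternative route via Bernstein--Lunts is genuinely different from what the paper does, and it is sound: for a connected group the heart of the equivariant derived category is identified with naively equivariant perverse sheaves, the equivariant structure is unique when it exists, and every morphism of underlying perverse sheaves is automatically equivariant because $a^*[\dim T]$ and $pr^*[\dim T]$ are fully faithful on perverse sheaves (smooth pullback with connected fibers). This buys a shorter, softer argument at the cost of importing the identification of the Bernstein--Lunts heart with naively equivariant perverse sheaves; the paper instead gives a self-contained spectral-sequence proof, which has the side benefit of setting up machinery it reuses elsewhere.
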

\begin{proof}
Take $Q, \bar{Q} \in \Perv(\M,k)$.  The natural homomorphism from the space of equivariant homomorphisms $Q \to \bar{Q}$ to non-equivariant homomorphisms can be identified with the homomorphism 
\begin{equation*} 
\H_T^0(\RHom(Q,\bar{Q})) \to \H^0(\RHom(Q,\bar{Q}))
\end{equation*}
from equivariant to non-equivariant hypercohomology.

We will make repeated use of the following easy homological fact: if $\phi\colon C^\bullet \to D^\bullet$ is a map of cochain complexes such that $C^i = D^i = 0$ for $i <0$, and $\phi^0$ is an isomorphism and $\phi^1$ is an injection, then $H^0(\phi)\colon H^0(C^\bullet) \to H^0(D^\bullet)$ is an isomorphism and
$H^1(\phi)\colon  H^1(C^\bullet) \to H^1(D^\bullet)$ is an injection.

Choose an ordering $F_1, F_2,\dots, F_r$ of the elements of $\cF$ so that the union $\bigcup_{j=1}^s S_{F_j}$ is closed for $s = 1,\dots, r$.  There is a spectral sequence abutting to $\H_T^{p+q}(\RHom(Q,\bar{Q}))$ with $E_1$-term
\[E_1^{p,q} = \H_T^{p+q}\RHom(j_{F_p}^*Q, j_{F_p}^!\bar{Q}),\]
which maps to another spectral sequence abutting to $\H^{p+q}(\RHom(Q,\bar{Q}))$ with $E_1$-term $\bar{E}_1^{p,q} = \H^{p+q}\RHom(j_{F_p}^*Q, j_{F_p}^!\bar{Q})$ (see \cite[3.4]{BGS}, for instance).
By the fact above, our result will follow if we can show that
\begin{enumerate}
\item[(a)] $E_1^{p,q} = \bar{E}_1^{p,q} = 0$ if $p+q < 0$, and
\item[(b)] $E_1^{p,q} \to \bar{E}_1^{p,q}$ is an isomorphism if $p+q = 0$, and an injection if $p+q = 1$.
\end{enumerate}

Let $A = \RHom(j_{F_p}^*Q, j_{F_p}^!\bar{Q})$.  The $\cS$-constructibility of $Q, \bar{Q}$ implies that
the cohomology sheaves $\cH^j(A)$ are equivariant local systems, and so they are sums of constant sheaves by Lemma \ref{lem:equivariantly simply connected}.  Furthermore, we have  $\cH^j(A)=0$ if $j<0$ since $Q$, $\bar{Q}$ are perverse.  We have spectral sequences
\['\!E_2^{p,q} = \H^p_T(\cH^q A) \Rightarrow \H^{p+q}_T(A), \,\text{and}\]
\['\!\bar{E}_2^{p,q} = \H^p(\cH^q A) \Rightarrow \H^{p+q}(A).\]
These facts, along with the fact that $H^0_T(S_F;k) \to H^0(S_F;k)$ is an isomorphism and
$H^1_T(S_F, k) = 0$ (which follows from Lemma \ref{lem:equivariantly simply connected}) imply that
the map $'\!E \to {}'\!\bar{E}$ of spectral sequences satisfies (a) and (b).  Using our homological fact again 
shows that 
(a) and (b) hold for $E \to \bar{E}$ as well.
\end{proof}

Let $\IC_F \in \Perv(\M,\kk)$ denote the simple perverse sheaf that is the Goresky-MacPherson extension of the equivariant constant sheaf $\uk_{S_F, T}[2d^F]$ on $S_F$.

\begin{corollary}
Every simple object in $\Perv(\M,\kk)$ is isomorphic to $\IC_F$ for some $F \in \cF$. \qed
\end{corollary}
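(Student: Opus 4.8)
The plan is to combine the fully faithful embedding of Proposition~\ref{prop:forgetting equivariance} with the standard classification of simple perverse sheaves on a stratified space. First I would recall that for the stratification $\cS$ of $\M$ by coarse strata $S_F$, $F\in\cF$, the simple objects of the non-equivariant category $\Perv_{\cS}(\M,\kk)$ are in bijection with pairs $(F,\cL)$ where $F\in\cF$ and $\cL$ is an irreducible $\kk$-local system on $S_F$: each such pair gives the intermediate extension $j_{!*}(\cL[2d^F])$, and these exhaust the simples. This is the general theory of perverse sheaves on a topologically stratified space, and we are entitled to invoke it because, by the Corollary following Lemma~\ref{lem:coloop decomposition}, $\cS$ is indeed a topological stratification, so the six functors preserve $\cS$-constructibility.

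Next I would cut down the local-system data. By Lemma~\ref{lem:equivariantly simply connected}, every $T$-equivariant local system on $S_F$ is constant; moreover that lemma is proved by showing $\pi_1(ET\times^T S_F)$ is trivial, so in fact every irreducible object of the equivariant constructible category supported on a single stratum $S_F$ is (a shift of) the constant equivariant sheaf $\uk_{S_F,T}$. Hence the only candidate simple in $\Perv_{\cS,T}(\M,\kk)$ with support $\overline{S_F}=\M^F$ and generic rank-one datum on $S_F$ is the intermediate extension $\IC_F$ of $\uk_{S_F,T}[2d^F]$, which is exactly the object defined just before the Corollary.

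Now I would run the usual argument that $\IC_F$ is simple and that these are all the simples. Given any simple $L\in\Perv_{\cS,T}(\M,\kk)$, let $F$ be maximal (in $\cF$, i.e.\ giving the open stratum of the support) such that $L|_{S_F}\neq 0$; then $L|_{S_F}$ is a shifted equivariant local system, hence a sum of shifts of $\uk_{S_F,T}$ by Lemma~\ref{lem:equivariantly simply connected}, and by perversity and simplicity it must be $\uk_{S_F,T}[2d^F]$ up to the obvious normalization. The adjunction maps then exhibit a nonzero morphism $L\to \IC_F$ or $\IC_F\to L$ in the equivariant category; by Proposition~\ref{prop:forgetting equivariance} the Hom-spaces agree with the non-equivariant ones, where both $L$ (via the forgetful functor, which is exact and kills nothing) and $\IC_F$ are simple, so any nonzero morphism between them is an isomorphism. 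Conversely $\IC_F$ is simple because its image under the (exact, faithful) forgetful functor is the non-equivariant simple $j_{!*}\uk_{S_F}[2d^F]$; a proper nonzero subobject in the equivariant category would map to one non-equivariantly. This gives $L\cong\IC_F$, and distinctness of the $\IC_F$ for distinct $F$ is clear from their supports.

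The main obstacle is purely bookkeeping: making sure that "every equivariant simple supported on the closure of a single stratum is the constant-sheaf intermediate extension" is correctly deduced from the triviality of $\pi_1(ET\times^T S_F)$ rather than merely from triviality of $\pi_1(S_F)$ — i.e.\ ruling out equivariant simples that are non-equivariantly decomposable. Since the forgetful functor is faithful and exact (Proposition~\ref{prop:forgetting equivariance} gives more, full faithfulness), a simple object maps to a semisimple one, and combined with the local-system computation this forces it to be a single non-equivariant $\IC$; the equivariant structure is then unique because $\mathrm{Hom}$'s agree. Everything else is the standard recollement/intermediate-extension formalism.
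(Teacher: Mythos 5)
Your proposal is correct and follows exactly the route the paper intends: the corollary is stated with \qed precisely because it is immediate from the standard classification of simple perverse sheaves on a topologically stratified space by intermediate extensions of irreducible local systems on strata, combined with Lemma~\ref{lem:equivariantly simply connected} (all equivariant local systems on $S_F$ are constant) and Proposition~\ref{prop:forgetting equivariance} (the forgetful functor is fully faithful). Your extra bookkeeping about equivariant versus non-equivariant simples is sound but not a departure from the paper's implicit argument.
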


For each $F\in \cF$, let $j_F \colon S_F \to \M$ denote the inclusion.  We consider the standard objects $\Delta_F := {}^p j_{F!} \uk_{S_F,T}[2d^F]$ and costandard objects $\nabla_F := {}^p j_{F*} \uk_{S_F,T}[2d^F]$.  Adjunction gives a natural map
\[ \Delta_F \to \nabla_F \]
with image isomorphic to $\IC_F$, and $\Delta_F \to \IC_F$ is a projective cover in the subcategory $\Perv(\M^F,\kk)$.

\begin{proposition}
\label{prop:enoughprojs}
The category $\Perv(\M,\kk)$ has enough projective objects.
\end{proposition}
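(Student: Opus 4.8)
The plan is to prove this by induction on the number of symplectic leaves, using the recollement obtained by peeling off an open stratum. Since every object of $\Perv(\M,\kk)$ is constructible with respect to the finite stratification $\cS$, the category is finite length and $\Hom$-finite, so a Nakayama-type argument shows it has enough projectives as soon as every simple object $\IC_F$ is a quotient of a projective. I would in fact prove the following more flexible statement: for every closed union of leaves $Y\subseteq\M$ the category $\Perv_{\cS,T}(Y,\kk)$ has enough projectives, and then apply it with $Y=\M$.

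The induction is on the number of leaves in $Y$. If $Y$ is a single leaf $S_F$, then Lemma~\ref{lem:equivariantly simply connected} says every $T$-equivariant local system on $S_F$ is constant, so $\Perv_{\cS,T}(S_F,\kk)\simeq\Vect_\kk$, which has enough projectives. For the inductive step, choose a flat $F_0$ maximal (in the order on $\cF$) among the flats $F$ with $S_F\subseteq Y$; then, using maximality of $F_0$, $j\colon S_{F_0}\hookrightarrow Y$ is open and $i\colon Y':=Y\setminus S_{F_0}\hookrightarrow Y$ is closed, so we obtain a recollement of $\Perv_{\cS,T}(Y,\kk)$ with open part $\Perv_{\cS,T}(S_{F_0},\kk)\simeq\Vect_\kk$ and closed part $\Perv_{\cS,T}(Y',\kk)$, the latter having enough projectives by the inductive hypothesis.

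It then remains to build, for each simple, a projective that surjects onto it. For $\IC_{F_0}$, the standard object ${}^pj_!\,\uk_{S_{F_0},T}[2d^{F_0}]$ is projective in $\Perv_{\cS,T}(Y,\kk)$: indeed ${}^pj_!$ is left adjoint to the exact functor $j^*$ (here $j$ is an open immersion) and $\uk_{S_{F_0},T}[2d^{F_0}]$ is projective in $\Vect_\kk$; and this standard object surjects onto $\IC_{F_0}$ (in particular, for $Y=\M$ the object $\Delta_\emptyset$ is projective). For a simple $i_*L'$ supported on $Y'$, I would take a projective cover $P'\twoheadrightarrow L'$ in $\Perv_{\cS,T}(Y',\kk)$ and glue it, through the recollement functors, to a suitable projective pulled from $S_{F_0}$, obtaining a projective $P$ of $\Perv_{\cS,T}(Y,\kk)$ with $P\twoheadrightarrow i_*L'$. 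This is the standard construction of projective covers across a recollement of finite-length, $\Hom$-finite abelian categories, and all of its hypotheses hold here because $\cS$ has finitely many strata, the $\Hom$- and $\Ext^1$-groups between the finitely many simples are finite dimensional (constructibility together with $T$-equivariance), and everything takes place inside the bounded derived category $D^b_{\cS,T}(\M,\kk)$.

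The one genuinely delicate point is this last gluing step for the simples supported on the closed part: one must check that the glued object is actually projective and of \emph{finite} length — equivalently, that the iterated universal-extension process producing the radical layers of the projective cover of $i_*L'$ terminates. This is where finiteness of the stratification and boundedness of $D^b_c$ are essential; note that without $T$-equivariance the strata need not be equivariantly simply connected, $\Perv(\M,\kk)$ could acquire infinitely many simple objects, and the statement would fail. (An alternative, once the $t$-exact hyperbolic-restriction functors $\Morse_F\colon\Perv(\M,\kk)\to\Vect_\kk$ are available, is to observe that each $\Morse_F$ is representable by a projective object $\Pi_F$ — the left adjoint of $\Morse_F$ on perverse sheaves carries the projective $\uk$ to a projective — and that $\bigoplus_{F\in\cF}\Pi_F$ is a projective generator because $\bigoplus_{F\in\cF}\Morse_F$ is faithful; this route avoids the recollement bookkeeping but relies on machinery developed later in the paper.)
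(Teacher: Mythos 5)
Your proof is correct and is essentially the paper's own argument unpacked: the paper disposes of the proposition in one line by citing point (3) of the remarks following Theorem 3.2.1 of \cite{BGS}, and that reference carries out precisely the recollement induction you describe, with constructibility supplying the $\operatorname{Ext}$-finiteness and the vanishing of $H^1_T(S_F;k)$ (Lemma \ref{lem:equivariantly simply connected}) guaranteeing that a single universal extension at each stratum yields the projective cover. Your parenthetical alternative via the functors $\Morse_F$ would be circular in the paper's logical order, since the representability of $\Morse_F$ by $\Pi_F$ is itself deduced from this proposition; you correctly flag that caveat.
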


\begin{proof}
The proposition is immediate from point (3) of the remarks following Theorem 3.2.1 in \cite{BGS}.
\end{proof}

For any flat $F$ (not necessarily in $\cF$), we have a restriction functor
\[\operatorname{Rest}_F\colon D^b_{T,\breve\cS}(\M,k) \to D^b_{T_F,\breve\cS_F}(\M_F,k)\]
given by restricting to a $T_F$-equivariant structure, pulling back by the inclusion of $\M_F$ as a normal slice to $S_F$ in $\M$, and shifting degree up by $d-d_F$.  (More precisely, since
the identification of normal slices given by Proposition \ref{prop:slice to fine strata} is only equivariant for the compact torus $T_{F,\R}$, here we must use the 
fact that for $\breve\cS$-constructible sheaves, $T$-equivariance and $T_\R$-equivariance are the same.)
This functor is $t$-exact, and it sends $\cS$-constructible objects to $\cS_F$-constructible objects.

Finally, we will need to know the following parity vanishing statement for our strata, which means that the theory of parity sheaves \cite{JMW14} applies to our situation.

\begin{proposition}\label{prop:parity vanishing}
The equivariant cohomology $H^\bullet_T(\breve S_F)$
of any fine stratum vanishes in odd degrees.
\end{proposition}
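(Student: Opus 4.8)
The plan is to reduce the statement about the equivariant cohomology of a fine stratum $\breve S_F$ to a concrete topological computation, using the fibration structure that already appeared in the proof of Lemma \ref{lem:equivariantly simply connected}. Recall that $\breve S_F \to \breve S_F/T_\R$ is a fibre bundle with fibre the compact torus $T_\R/T_{F,\R}$, and that the base $\breve S_F / T_\R$ is homeomorphic to the complement in a real affine space of an arrangement of affine subspaces of codimension at least three (this is the hypertoric "polytope complement" picture used in \cite{BP09}). The key point about such a complement is that its rational (in fact integral) cohomology is torsion-free and concentrated in degrees that are multiples of $2$ after accounting for the codimension-$3$ condition; more precisely, each affine flat of the arrangement contributes cohomology classes whose degree equals twice something, because the local model near a codimension-$3k$ flat is a product of $k$ copies of $\R^3\setminus\{0\}\simeq S^2$, contributing $H^2$. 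So the cohomology of the base vanishes in odd degrees, and it is free over $\kk$ — which will matter for the spectral sequence step.

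First I would set up the Leray–Serre spectral sequence (with $\kk$-coefficients) for the Borel construction: using $ET \times^T \breve S_F \to BT_F$ — or equivalently the fibration $\breve S_F \to \breve S_F/T_\R$ combined with the $BT$-direction — to get $H^\bullet_{T}(\breve S_F)$ from $H^\bullet$ of the base and $H^\bullet$ of the torus $T_\R/T_{F,\R}$ together with $H^\bullet(BT)$. Since $T$ is a torus, $H^\bullet(BT;\kk)$ is a polynomial ring on even generators, and $H^\bullet_T(\text{pt},\kk)$ is concentrated in even degrees. The subtle point is the cohomology of the fibre of $\breve S_F \to \breve S_F/T_\R$, which is an honest (odd-dimensional in general) compact torus and therefore does have odd cohomology. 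The standard resolution of this, exactly as in \cite{BP09} and in the analogous affine-Grassmannian arguments, is that the equivariant version kills the fibre's odd cohomology: equivariantly, $\breve S_F$ is built out of the base and the $BT$-direction in such a way that $H^\bullet_T(\breve S_F)$ is isomorphic (as a graded vector space) to $H^\bullet(\breve S_F/T_\R;\kk) \otimes H^\bullet_{T_F}(\text{pt};\kk)$, or — even more simply — one uses that $\breve S_F$ is $T$-equivariantly formal and that the quotient $\breve S_F / T_\R$ carries all the information, so that $H^\bullet_T(\breve S_F) \cong H^\bullet(\breve S_F/T_\R) \otimes H^\bullet(BT_F)$ with both tensor factors in even degrees.

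Concretely, I would argue as follows. The variety $\breve S_F$ is the image in $\M$ of a locus in $\mu_K^{-1}(0)$ with constant $K$-stabilizer; standard hypertoric bookkeeping (see \cite{BP09}, \cite{Pr08}) identifies $\breve S_F$ with a space of the form $\mu^{-1}_{K'}(0)^{\circ}/K'$ for a subtorus, and its $T_F$-equivariant cohomology with that of a toric-like "polynomial-times-complement" model. The cleanest route is: (i) reduce to the case $F = I$ (the minimal flat) by the isomorphism $\M^F \cong \M(V^F)$ and the identification of $\breve S_F$ with the open fine stratum of $\M_F = \M(V_F)$, using that restriction along normal slices is compatible with these stratifications (this is exactly Proposition \ref{prop:slice to fine strata} and Lemma \ref{lem:coloop decomposition}); (ii) for the minimal fine stratum $\breve S_I = \M \setminus \bigcup_{E<I}\M^E$, observe that $T$ acts on it with connected stabilizers (all equal to $T_I$), so $H^\bullet_T(\breve S_I) = H^\bullet_{T_I}(\text{pt}) \otimes H^\bullet_{T/T_I}(\breve S_I)$ and $T/T_I$ acts on $\breve S_I$ with finite stabilizers, whence $H^\bullet_{T/T_I}(\breve S_I) \cong H^\bullet(\breve S_I / (T/T_I)_\R; \kk)$ — and this last quotient is the arrangement complement, with odd cohomology vanishing.

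The main obstacle, and the place where I would spend the most care, is showing that the odd cohomology of the quotient $\breve S_F / T_\R$ (equivalently, the hypertoric polytope complement) vanishes with coefficients in an arbitrary field $\kk$, not just $\Q$. Over $\Q$ this follows from a deletion–restriction / Orlik–Solomon-type long exact sequence or from the known formula for the Poincaré polynomial of such complements; the issue is potential torsion in integral cohomology that could produce odd classes after reduction mod $p$. Here the codimension-$\ge 3$ hypothesis is what saves us: the relevant complement deformation-retracts onto a CW complex whose cells all sit in even real dimension (local models are products of $S^2$'s), or — more robustly — one runs the long exact sequence of the pair $(\text{bigger complement}, \text{smaller complement})$ where the "difference" is an $S^2$-bundle, so each step only adds cohomology in even degrees and the argument is coefficient-independent. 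I would present the induction over the poset of flats of the affine arrangement, with the base case a point, and with each inductive step controlled by a Gysin/Mayer–Vietoris sequence for an $S^2$-bundle; because every connecting map lands between groups in degrees of the same parity pattern, no odd cohomology is ever created, regardless of $\kk$. Unimodularity guarantees the arrangement is "simple" in the relevant sense (no unexpected coincidences that would break the clean local $S^2$-product structure), which is where that hypothesis enters.
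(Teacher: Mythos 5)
Your proposal is correct and is essentially the argument of the cited reference: the paper's own ``proof'' of Proposition \ref{prop:parity vanishing} is just a pointer to \cite[Proposition 5.22]{BP09}, and what you reconstruct --- $\breve S_F/T_\R\cong \breve\bV^F$ is the complement of the tripled hyperplane arrangement $\cH^F\otimes\R^3$, whose cohomology is even and torsion-free by a deletion--restriction induction with a degree-$3$ Thom/Gysin shift, tensored with $H^\bullet(BT_F)$ via the K\"unneth/free-quotient argument --- is exactly the content of that proof, together with the observation (which the paper makes explicitly) that nothing in it uses characteristic zero. Two small points to tighten: the quotient torus $T/T_F$ acts with \emph{trivial}, not merely finite, stabilizers on $\breve S_F$ (finite stabilizers would only suffice when their order is invertible in $\kk$), and this freeness --- coming from the connectedness of the stabilizers $T_F$, i.e.\ from unimodularity --- is the actual place where unimodularity enters; your closing claim that unimodularity is needed to make the tripled arrangement's local structure ``clean'' is misplaced, since the parity argument for $M(\cA\otimes\R^3)$ works for an arbitrary real hyperplane arrangement.
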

\begin{proof}
This is proved in \cite[Proposition 5.22]{BP09}; although that paper assumed the coefficient ring of the sheaves had characteristic zero, that assumption was not needed for the proof.
\end{proof}

\subsection{Resolution sheaves}
We identify the lattice of characters $X^*(K)$ of $K$ with a subset of $\mathfrak k^*_{\R} = \R^I/V_\R$ 
in the usual way.  
Call $\alpha\in \mathfrak k^*_{\R}$ \emph{generic} if
every element of $\alpha + V_\R$ has at least $|I| - d$ non-zero entries.  Equivalently, $\alpha$ is generic if every collection of $m$ of the hyperplanes $\{x_i = 0\}$ in $\alpha + V_\R$ which have nonempty intersection intersect in codimension $m$. 
  
For
 $\alpha \in X^*(K)$, we can consider the GIT quotient $\M_\a$ of $\mu_K^{-1}(0)$ by $K$ with linearization $\alpha$.  If $\alpha$ is generic, then $\M_\alpha$ is smooth.  (For non-unimodular arrangements it is only an orbifold.)

The construction of $\M_\a$ as a GIT quotient gives rise to a natural $T$-equivariant morphism $p_\alpha\colon \M_\alpha \to \M$.  It is proper and semismall \cite[2.7]{PW}, and an isomorphism over the largest stratum $S_\emptyset$.

The following result will be proved in Section \ref{proof:Omega is canonical}.
\begin{proposition}\label{prop:Omega is canonical}
For any two generic characters $\a$, $\a'$, there is a 
canonical isomorphism
\[(p_\alpha)_*\uk_{\M_\alpha,T}[2d] \cong (p_{\alpha'})_*\uk_{\M_{\alpha',T}}[2d].\]
\end{proposition}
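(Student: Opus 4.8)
The plan is to realize both pushforward sheaves as restrictions of a single object built over the space of all generic parameters, so that the canonical isomorphism comes from parallel transport in a connected parameter space. Concretely, the generic locus $U \subset \mathfrak{k}^*_\R$ is the complement of the union of the walls $\{\alpha : \text{some } m \text{ hyperplanes of } \alpha + V_\R \text{ meet in codimension} < m\}$; each wall has real codimension at least one, but as the discussion in the final remark of the introduction suggests, after passing to the hyperk\"ahler picture the relevant walls acquire codimension three. So the real first step is to set up the family: form the hyperk\"ahler (real) moment map $\bmu\colon \M \to \mathfrak{t}_\R \otimes_\R \R^3$ together with the total space of GIT quotients $\widetilde{\M} \to \mathfrak{k}^*_\R \otimes \R^3$ parametrized by a choice of K\"ahler class $\zeta \in \mathfrak{k}^*_\R \otimes \R^3$, where the complex structure used above corresponds to $\zeta$ lying in a fixed copy of $\mathfrak{k}^*_\R$. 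The claim is that the locus of \emph{non}-generic $\zeta$ (those for which $\M_\zeta$ fails to be smooth, equivalently for which $p_\zeta$ fails to be semismall in the required way) is a union of linear subspaces of $\mathfrak{k}^*_\R \otimes \R^3$ each of codimension $\geq 3$, hence its complement $\widetilde U$ is connected and simply connected, and moreover any two points of $U \subset \mathfrak{k}^*_\R \times \{0\}^2$ can be joined by a path in $\widetilde U$.

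Next I would show that over $\widetilde U$ the resolutions $\M_\zeta$ fit together into a locally trivial (in the appropriate $T$-equivariant stratified sense) family $\mathcal{P} \to \widetilde U$, with a proper semismall map to $\M \times \widetilde U$. This uses that crossing a wall of codimension three never changes the combinatorial type of the chamber structure on $\alpha + V_\R$ — precisely the content of Proposition \ref{prop:slice to fine strata} and the codimension-three statement invoked in the proof of Lemma \ref{lem:equivariantly simply connected} — so the variation-of-GIT birational transformations that would occur at codimension-one walls in $\mathfrak{k}^*_\R$ are absent here. Pushing forward the (shifted) constant sheaf on $\mathcal{P}$ along the proper map and then restricting to the slice $\M \times \{\zeta\}$ recovers $(p_\zeta)_* \uk_{\M_\zeta,T}[2d]$; over $\widetilde U$ this pushforward has locally constant cohomology sheaves along the $\widetilde U$-direction, so it is pulled back from $\M$ under a (locally constant) constructible identification, and since $\pi_1(\widetilde U) = 1$ the monodromy is trivial. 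Specializing the resulting global identification to the two points $\alpha, \alpha' \in U$ yields the asserted canonical isomorphism, canonical because any two connecting paths in the simply connected $\widetilde U$ are homotopic.

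The main obstacle is the geometric input that the degeneracy locus in $\mathfrak{t}_\R \otimes \R^3$ has codimension three and that the family is locally trivial across it. This is where one must genuinely use hypertoric geometry: one has to identify exactly when $\M_\zeta$ is singular in terms of the $\R^3$-valued arrangement (some subcollection of the codimension-one "affine flats" $\{x_i = \zeta_i\}$, now in $V_\R \otimes \R^3$, failing to be in general position), check that this is a union of linear subspaces, and compute that each has codimension a multiple of three. One then needs a stratified-triviality statement for hyperk\"ahler quotients over the smooth locus — this should follow from the parametric version of Proposition \ref{prop:slice to fine strata} promised as Proposition \ref{prop:parametric stratification}, combined with Thom's isotopy lemma for the proper stratified map $\mathcal{P} \to \widetilde U$. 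A secondary but routine point is bookkeeping with equivariance: the slice identifications are only $T_\R$-equivariant, but as already noted for $\breve\cS$-constructible sheaves $T$- and $T_\R$-equivariance coincide, so no information is lost. Everything else — the spectral-sequence-free transport argument and the uniqueness of the isomorphism — is formal once connectivity and local triviality are in hand.
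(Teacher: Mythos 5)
Your global strategy is the right one and matches the paper's: pass to the full space of hyperk\"ahler parameters in $\mathfrak k^*_\R\otimes\R^3$, observe that the degenerate locus is a union of codimension-three linear subspaces so that its complement $Z$ is connected and simply connected, establish product-constructibility over $Z$ via Proposition \ref{prop:parametric stratification}, and conclude by trivial monodromy (Lemma \ref{lem:sheaf family}). However, there is a genuine gap at the central construction: you posit a family $\mathcal P\to \widetilde U$ of resolutions equipped with a proper semismall map to $\M\times\widetilde U$, so that restricting the pushforward of the constant sheaf to a slice recovers $(p_\zeta)_*\uk_{\M_\zeta,T}[2d]$. For a parameter $\zeta=(\zeta_1,\zeta_2,\zeta_3)$ off the real slice (i.e.\ with $(\zeta_2,\zeta_3)\neq 0$), the hyperk\"ahler quotient $M_\zeta=\bmu_K^{-1}(\zeta)/K_\R$ sits over $\mu_K^{-1}(\zeta_2+i\zeta_3)$, not over $\mu_K^{-1}(0)$, so there is no natural map $M_\zeta\to\M=\mu_K^{-1}(0)\mod K$: the complex structure has rotated and the affinization of $M_\zeta$ is a different variety. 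The resolution maps $p_\alpha$ only exist over the real locus $\iota(\mathfrak k^*_\R)$, whose generic part is disconnected into chambers --- which is exactly the obstruction you are trying to get around.

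The missing idea that repairs this is to trade the direct image along $p_\alpha$ for a nearby cycles sheaf: the paper first shows $(p_a)_*\uk_{M_{\iota(a)}}\cong p_{0*}\,\tilde\imath^*\tilde\jmath_*\,\uk_{M_{A^\circ}}$, where $A=\R_{\ge 0}\iota(a)$ and $M_0\to\M$ is a homeomorphism, i.e.\ the relation of $M_{\iota(a)}$ to $\M$ is encoded by the degeneration of the ray family to its origin rather than by an actual map of total spaces. This reformulation makes sense for \emph{every} generic $\zeta\in\mathfrak k^*_\R\otimes\R^3$ (every ray $\R_{\ge 0}\zeta$ degenerates to $M_0\cong\M$), and it globalizes to the sheaf $\hat\imath^*\hat\jmath_*\uk_{\wh M_{>0}}$ on $\M\times Z$, to which Proposition \ref{prop:parametric stratification} and Lemma \ref{lem:sheaf family} then apply exactly as you intend. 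So your transport and constructibility steps are sound, but without the nearby-cycles reinterpretation the object you want to transport is not defined over most of the parameter space.
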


Let $\Omega$ denote the resulting sheaf.  Since $\M_\a$ is smooth and $p_\a$ is semismall, it is a self-dual perverse sheaf.  By repeating this construction using a
resolution of the
 closed subvariety $\M^F$ with $F \in \cF$, we 
obtain a canonical perverse sheaf $\T^F$ with support equal to $\M^F$.

\begin{lemma}\label{lem:slice to resolution sheaf}
For any flat $E$, not necessarily in $\cF$,  $\operatorname{Rest}_E (\T^F)$ is isomorphic to $\T^F_E$, the resolution sheaf in 
$\Perv(\M_E, k)$ supported on $\M^F_E \subset \M_E$. 
\end{lemma}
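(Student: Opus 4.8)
The plan is to reduce the statement to a compatibility between resolutions and normal slices, using the fact that the restriction functor $\operatorname{Rest}_E$ is defined by pulling back along the inclusion of $\M_E$ as a normal slice to the fine stratum $\breve S_E$. First I would observe that, by the construction preceding the lemma, it suffices to treat the case $F = I$, i.e.\ to show $\operatorname{Rest}_E(\Omega) \cong \Omega_E$, where $\Omega_E$ is the resolution sheaf on $\M_E$; the general case then follows by replacing $V$ with $V^F$ and noting $(V^F)_E = V^F_E$ together with the identity $\M^F_E = (\M^F)_E = (\M_E)^F$ recorded in Section \ref{arrange}. (One must be slightly careful when $E \notin \cF$: then $V^F_E$ may have coloops, so $\Omega^F_E$ is really the resolution sheaf attached to the flat $F$ in $\M_E$, supported on $\M^F_E$, exactly as in the statement.)

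The key step is a local model for the resolution near a slice. Fix a generic character $\alpha \in X^*(K)$ and the semismall resolution $p_\alpha\colon \M_\alpha \to \M$. I would show that the preimage $p_\alpha^{-1}(N)$ of a normal slice $N \cong \M_E$ to $\breve S_E$ is, $T_{E,\R}$-equivariantly, a normal slice inside $\M_\alpha$ to the fixed-point-like locus lying over $\breve S_E$, and that $p_\alpha$ restricted to this preimage is (up to the appropriate shift) a generic resolution $p_{\alpha'}\colon (\M_E)_{\alpha'} \to \M_E$ of $\M_E$. Concretely: the stabilizer description shows that over $\breve S_E$ the torus $(\C^*)^E/K_E$ acts, and the hyperkähler/GIT quotient construction is local in the base in the sense that $\mu_K^{-1}(0)$ near a point of $\breve S_E$ with its $K$-action looks like $\mu_{K_E}^{-1}(0)$ for the arrangement $V_E$ times a symplectic affine space with free torus action; passing to the GIT quotient with linearization $\alpha$ and then restricting the character to $K_E$ identifies the fiber of the resolution over $N$ with a GIT quotient for $V_E$, whose linearization $\alpha'$ is generic because genericity of $\alpha$ for $V$ restricts to genericity of its image for $V_E$ (this is the codimension-count formulation of genericity given just before Proposition \ref{prop:Omega is canonical}). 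Granting this, smooth base change for the cartesian square relating $p_\alpha$, $N \hookrightarrow \M$, and $p_{\alpha'}$ gives $\operatorname{Rest}_E\big((p_\alpha)_* \uk_{\M_\alpha,T}[2d]\big) \cong (p_{\alpha'})_* \uk_{(\M_E)_{\alpha'}, T_E}[2d_E]$ once the shift bookkeeping $d - d_E$ built into $\operatorname{Rest}_E$ is matched against the shifts $[2d]$ and $[2d_E]$ in the two resolution sheaves. Finally, by Proposition \ref{prop:Omega is canonical} the right-hand side is canonically $\Omega_E$ independent of $\alpha'$, so the isomorphism does not depend on the choice of $\alpha$ and is the asserted one.

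The main obstacle I expect is the local model step: making precise, and compatible with all the equivariant structures, the claim that $p_\alpha^{-1}$ of a normal slice to $\breve S_E$ is again a slice and is a \emph{generic} GIT resolution of $\M_E$. The subtlety is that the slice identification of Proposition \ref{prop:slice to fine strata} is only equivariant for the compact torus $T_{E,\R}$, so smooth base change must be applied after passing to $T_{E,\R}$-equivariant (hence, on $\breve\cS_E$-constructible sheaves, $T_E$-equivariant) derived categories — the same maneuver used in defining $\operatorname{Rest}_E$ — and one has to check the resolution is semismall over the slice so that the shifts and perversity are preserved; the semismallness over $N$ follows from semismallness of $p_\alpha$ together with the product-with-affine-space form of the slice. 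An alternative, perhaps cleaner, route to the same conclusion is to invoke the topological/hyperkähler description of $\Omega$ and $\Omega_E$ as nearby cycles sheaves promised in the introduction and in Section \ref{sec:big proofs}, where restricting to a slice is manifestly transparent; but since that machinery is developed later, I would prefer the GIT argument above if the base-change compatibility can be pinned down directly.
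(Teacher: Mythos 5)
Your proposal is correct and follows essentially the same route as the paper: the entire content of the paper's (one-line) proof is the local model you identify as the key step, namely that over a neighborhood $U$ of a point of $\breve S_E$ the resolution $p_\alpha^{-1}(U)\to U$ is homeomorphic to $\M_{E,\bar\alpha}\times T^*\C^{?}\to \M_E\times T^*\C^{?}$ with $\bar\alpha=\alpha|_{K_E}$, which the paper simply cites from \cite[2.5]{PW} rather than re-deriving. (Minor slip: your reduction should be to $F=\emptyset$, not $F=I$, since $\Omega=\T^\emptyset$ is the maximal-support resolution sheaf.)
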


\begin{proof}
This follows from \cite[2.5]{PW}, which says that any point in the stratum $\breve{S}_E$ has a 
neighborhood $U$ such that $p_\a^{-1}(U) \to U$ is homeomorphic to $\M_{E,\bar\a} \times T^*\C^E \to \M_E \times T^*\C^E$, where $\bar{\a}$ is the restriction of $\a$ to $K_E$. 
\end{proof}

Combining this with Lemma \ref{lem:coloop decomposition} gives the following result.

\begin{corollary}\label{cor:omega is constructible}
The perverse sheaf $\T^F$ is $\cS$-constructible, so it lies in $\Perv(\M,k)$.
\end{corollary}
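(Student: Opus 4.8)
The plan is to reduce $\cS$-constructibility of $\T^F$ to a local statement along the fine strata, which we already control by the previous two results. By definition, $\T^F$ is obtained from the pushforward of a shifted constant sheaf along a semismall resolution $p_\alpha \colon \M^F_\alpha \to \M^F$, so it is automatically $\breve\cS$-constructible: the resolution $p_\alpha$ is $T$-equivariant and, since $\M^F$ is a hypertoric variety stratified by the $\breve S_E$ with $E\le F$ in the full flat poset, the pushforward of the constant sheaf is constructible for this fine stratification. Thus the only thing to check is that along each fine stratum $\breve S_{F'}$ with $F'$ \emph{not} in $\cF$, the restriction (i.e.\ all cohomology sheaves of $\T^F$ restricted there) is locally constant in a way compatible with the coarser partition, which amounts to showing the restriction to $\breve S_{F'}$ agrees with the restriction to the unique $S_F$ containing it --- equivalently, that $\T^F$ has no ``extra'' stratification jumps at $F'$ beyond those coming from the coloops.

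The key step is to invoke Lemma \ref{lem:slice to resolution sheaf}: for any flat $E$ (in particular one not in $\cF$), $\operatorname{Rest}_E(\T^F)$ is the resolution sheaf $\T^F_E$ on the smaller hypertoric variety $\M_E = \M(V_E)$. Now if $E=F'\notin\cF$, then $V_{F'}$ has at least one coloop, and by Lemma \ref{lem:coloop decomposition} applied to $\M_{F'}$ we get a product decomposition $\M_{F'}\cong \M_{E''}\times T^*(\C^{C})$ where $C$ is the set of coloops of $V_{F'}$ and $E'' = F'\setminus C$ is the smallest cyclic flat above $F'$. Under this isomorphism the resolution of $\M_{F'}$ is pulled back from a resolution of the first factor, so $\T^F_{F'}$ is (the external product of) a resolution sheaf on $\M_{E''}$ with the constant sheaf on the affine space factor; in particular it is \emph{constant} in the $T^*\C^C$ directions. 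Translating back via the normal-slice description of Proposition \ref{prop:slice to fine strata}, this says exactly that near a point of $\breve S_{F'}$ the sheaf $\T^F$ is locally constant along the directions that distinguish $\breve S_{F'}$ from $S_F$ (the ``coloop directions''), hence is constructible with respect to the coarse stratification $\cS$ in a neighborhood of $\breve S_{F'}$.

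Finally I would assemble these local statements: $\T^F$ is $\breve\cS$-constructible, and at each fine stratum $\breve S_{F'}\subset S_F$ the local analysis above shows the restriction of $\T^F$ factors through the coarse stratum, so $\T^F$ is $\cS$-constructible; since it is perverse and supported on $\M^F$, it lies in $\Perv(\M,k)$. The only mild subtlety --- and the step I expect to require the most care --- is matching the equivariance: the normal-slice identifications in Proposition \ref{prop:slice to fine strata} and the product decomposition in Lemma \ref{lem:coloop decomposition} are only $T_{F',\R}$-equivariant, not $T$-equivariant, so one must use (as elsewhere in the paper) that for $\breve\cS$-constructible sheaves $T$-equivariance and $T_\R$-equivariance coincide in order to transport the conclusion back to the $T$-equivariant category. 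Modulo this bookkeeping, the corollary is a direct consequence of Lemmas \ref{lem:slice to resolution sheaf} and \ref{lem:coloop decomposition}.
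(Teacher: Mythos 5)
Your argument is correct and is essentially the paper's own: the corollary is stated there as an immediate consequence of combining Lemma \ref{lem:slice to resolution sheaf} (restriction to a normal slice at $\breve S_{F'}$ gives the resolution sheaf on $\M_{F'}$) with Lemma \ref{lem:coloop decomposition} (that slice splits off a $T^*\C^C$ factor along the coloop directions, on which the resolution sheaf is constant). Your extra remarks on the equivariance bookkeeping match the paper's own caveat in the definition of $\operatorname{Rest}_F$, so nothing is missing.
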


\subsection{Morse functors}\label{sec:morse-functors}

Let $\xi$ be a cocharacter of $T$ and let $\M^\xi \subset \M$ denote the $\xi$-fixed points of $\M$.  Consider the functor
\[\Phi_\xi \colon D^b_T(\M,k)\to D^b_T(\M^\xi,k)\]
defined as follows.  
Let $\M^+_\xi$ be the attracting set of $\xi$ and $f\colon \M^\xi \to \M^+_\xi$ and $g\colon \M^+_\xi \to \M$ be the closed inclusions.  Then we define $\Phi_\xi(S) := f^!g^*S$.
This is an example of the hyperbolic restriction functor considered in
 \cite{Br03}.  In particular \cite[Theorem 1]{Br03} implies:

\begin{lemma}
\label{lem:hyperloc}
There is a natural equivalence of functors $\Phi_{-\xi} \cong \D_{\M_{}^\xi}\, \Phi_{\xi}\, \D_\M$.
\end{lemma}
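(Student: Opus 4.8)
Lemma \ref{lem:hyperloc}: there is a natural equivalence $\Phi_{-\xi} \cong \D_{\M^\xi} \Phi_\xi \D_\M$, where $\Phi_\xi = f^! g^*$ with $f\colon \M^\xi \to \M^+_\xi$ and $g\colon \M^+_\xi \to \M$ the closed inclusions of the fixed locus into the attracting set and the attracting set into $\M$.

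**Plan of proof.** The plan is to reduce this to a purely formal statement about interchanging Verdier duality with the four functors $f^*, f^!, g^*, g_!$ attached to the closed inclusions $f$ and $g$, together with the identification of $\Phi_{-\xi}$ in terms of the repelling set. First I would recall that the repelling set $\M^-_\xi$ of $\xi$ is exactly the attracting set $\M^+_{-\xi}$ of $-\xi$, with corresponding closed inclusions $\bar f\colon \M^\xi \to \M^-_\xi$ and $\bar g\colon \M^-_\xi \to \M$, so that by definition $\Phi_{-\xi}(S) = \bar f^! \bar g^* S$. The main input is the dual description of hyperbolic restriction from \cite{Br03}: for the $\mathbb{G}_m$-action by $\xi$, one has a canonical isomorphism $f^! g^* \cong \bar f^* \bar g^!$ on $D^b_T(\M,k)$ (the ``attracting equals repelling'' comparison, which is part of \cite[Theorem 1]{Br03}). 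Granting this, the computation is short: $\D_{\M^\xi} \Phi_\xi \D_\M = \D_{\M^\xi} f^! g^* \D_\M$; using $\D f^! = f^* \D$ and $\D g^* = g^! \D$ (standard compatibilities of Verdier duality with pullback/pushforward along closed embeddings — here I use $g_* = g_!$ since $g$ is a closed embedding, and likewise for $f$), this equals $f^* g^! \D_\M \D_\M$ up to the natural isomorphism $\D_\M \D_\M \cong \id$. But $f^* g^!$ applied with the roles of $\xi$ and $-\xi$ swapped: since $\M^+_\xi$ plays the role for $\xi$ that $\M^-_{-\xi}$ plays for $-\xi$, the functor $f^* g^!$ is exactly the ``repelling'' description of $\Phi_{-\xi}$, i.e. $f^* g^! \cong \bar f^! \bar g^* = \Phi_{-\xi}$ by the cited comparison isomorphism applied to $-\xi$.

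**Steps, in order.** (1) Set up notation: record that $\M^-_\xi = \M^+_{-\xi}$ and name the inclusions $\bar f, \bar g$; note all four of $f,g,\bar f,\bar g$ are closed immersions, so $(-)_* = (-)_!$ for each. (2) State the two forms of hyperbolic restriction from \cite[Theorem 1]{Br03}: $\Phi_\xi = f^! g^* \cong \bar f^* \bar g^!$ and, applied to $-\xi$, $\Phi_{-\xi} = \bar f^! \bar g^* \cong f^* g^!$. (3) Recall the standard natural isomorphisms $\D \circ f^! \cong f^* \circ \D$ and $\D \circ g^* \cong g^! \circ \D$ (closed-immersion versions), and $\D_\M^2 \cong \id$. (4) Compose: $\D_{\M^\xi} \Phi_\xi \D_\M = \D_{\M^\xi} f^! g^* \D_\M \cong f^* \D_{\M^+_\xi} g^* \D_\M \cong f^* g^! \D_\M \D_\M \cong f^* g^!$. (5) Combine with step (2) to conclude $\D_{\M^\xi}\Phi_\xi\D_\M \cong f^* g^! \cong \Phi_{-\xi}$. (6) Observe that every isomorphism used is natural in $S$, so the composite is a natural equivalence of functors, which is the assertion.

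**Main obstacle.** The only real content is step (2) — the identification $f^! g^* \cong \bar f^* \bar g^!$, i.e. that hyperbolic restriction can be computed equivalently using the attracting or the repelling data. This is precisely the substance of \cite[Theorem 1]{Br03}, so in the present paper it is an invocation rather than something to be proved; the remaining work is bookkeeping with the six-functor formalism, and the main care needed is to keep the $\xi \leftrightarrow -\xi$ swap straight (so that ``attracting for $\xi$'' is consistently matched with ``repelling for $-\xi$'') and to check that the chosen natural isomorphisms compose coherently rather than differing by a sign or an automorphism. One could alternatively avoid citing the attracting/repelling comparison altogether and instead verify the duality directly on the adjunction triangles defining $f^! g^*$, but that essentially reconstructs the proof of \cite[Theorem 1]{Br03}, so citing it is cleaner.
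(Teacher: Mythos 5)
Your argument is correct and is essentially the same as the paper's, which gives no written proof beyond invoking \cite[Theorem 1]{Br03}: the content is exactly the attracting/repelling comparison $f^!g^*\cong \bar f^*\bar g^!$, and the rest is the standard interchange of Verdier duality with $(-)^*$ and $(-)^!$ for the closed inclusions. Your bookkeeping of the $\xi\leftrightarrow-\xi$ swap is right, so nothing is missing.
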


For any flat $F$, the subtorus $T_F$ fixes $\M^F$.  We say 
that a cocharacter $\xi$ of $T_F$ is \emph{generic} if its image is not contained in $T_E$ for any $E \ge F$, or in other words, if the fixed points of $\xi$ are exactly $\M^F$.

\begin{proposition}\label{prop:vanishing cycles iso}
If $\xi$ is a generic cocharacter of $T_F$, then 
$\Phi_\xi$ restricts to a functor
\[\Phi_F\colon D^b_{T,\breve{\cS}}(\M, k) \to D^b_{T,\breve{\cS^F}}(\M^F,k).\]
Up to a unique isomorphism, this restriction is independent of $\xi$.
\end{proposition}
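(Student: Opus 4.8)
The plan is to prove the two claims of Proposition \ref{prop:vanishing cycles iso} in order: first that $\Phi_\xi$ preserves the relevant constructibility, then that the resulting functor is independent of the choice of generic $\xi$. For the first claim, I would analyze $\Phi_\xi$ stratum-by-stratum. The key point is that $\xi$ is a cocharacter of $T_F$, so it acts trivially on $\M^F$ and its attracting set $\M^+_\xi$ is a $T$-stable subvariety; since the stratification $\breve{\cS}$ is the stratification by $T$-orbit type, the intersection of $\M^+_\xi$ with each fine stratum $\breve S_E$ is a union of (pieces of) that stratum, and similarly $\M^\xi$ meets each stratum in a $T$-stable way. Using the local product description of a normal slice to $\breve S_E$ from Proposition \ref{prop:slice to fine strata}, one reduces the computation of $f^! g^* S$ near a point of $\breve S_E \cap \M^F$ to the analogous hyperbolic restriction on the slice $\M_E = \M(V_E)$, where $\xi$ becomes a generic cocharacter of the corresponding subtorus. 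The upshot is that $\cH^j(\Phi_\xi S)$ restricted to $\breve S_E \cap \M^F$ is computed from the (co)stalk of $S$ along $\breve S_E$ together with the local hyperbolic-restriction data on $\M_E$, which depends only on the stratum, so $\Phi_\xi S$ is $\breve{\cS^F}$-constructible. (The shift bookkeeping is routine once the product structure is in hand.)

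For the independence claim, the cocharacters $\xi$ of $T_F$ that are generic (i.e.\ have $\M^\xi = \M^F$) form an open cone, minus the non-generic locus; I would show this complement is a union of subspaces — each given by the condition that the image of $\xi$ lands in some $T_E$ with $E > F$ — and that the generic locus is \emph{connected}. Then it suffices to show that along a path in the generic cone the functor $\Phi_\xi$ does not change, and moreover that the isomorphisms between the functors at nearby $\xi$, $\xi'$ are canonical and compose correctly (the cocycle condition), so that one gets a well-defined functor with canonical transition isomorphisms. The cleanest way to get canonicity is to note that for $\xi$ and $\xi'$ in a small enough neighborhood the attracting sets $\M^+_\xi$ and $\M^+_{\xi'}$ agree (they only jump across the non-generic walls), so $\Phi_\xi$ and $\Phi_{\xi'}$ are literally the same functor $f^! g^*$ on that neighborhood; then one propagates along the connected generic cone. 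The promised finer statement — that $\Phi_F$ is independent of $\xi$ \emph{up to a unique isomorphism} — will follow once one checks there are no nontrivial automorphisms of the identity functor compatible with all these comparisons, which holds because the transition isos are identities on a cover of the parameter space.

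The main obstacle I anticipate is the independence statement, specifically proving that the set of generic cocharacters of $T_F$ is connected and that the attracting sets are locally constant on it. For the hypertoric variety the attracting set $\M^+_\xi$ is controlled by the combinatorics of the arrangement (which hyperplane coordinates $z_i$ or $w_i$ are "attracting" is determined by the sign of $\langle \xi, e_i \rangle$ in the relevant quotient), so as $\xi$ varies within a chamber of a certain hyperplane arrangement in the cocharacter lattice of $T_F$ the attracting set is constant, and crossing a wall corresponds exactly to $\xi$ becoming non-generic or to $\M^\xi$ growing — so the generic locus is the union of those chambers on which $\M^\xi = \M^F$, and I would need the combinatorial fact that this union is connected (equivalently, that the "bad" subspaces have real codimension $\ge 1$ but one can route around them, which is automatic since they are proper subspaces of a real vector space and the generic cone is the complement of finitely many hyperplanes inside an open cone). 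I expect the real work is making the slice-theoretic reduction in the first paragraph precise enough — matching the $T$-equivariant structure on the slice with the $T_F$-action and the cocharacter $\xi$ — and this is where I'd spend most of the detailed argument; the reference to \cite{Br03} handles the abstract properties of $f^!g^*$ and the reference to Proposition \ref{prop:slice to fine strata} handles the local structure, so the proof should be a careful assembly rather than requiring new ideas.
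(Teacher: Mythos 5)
Your independence argument has a genuine gap, and it is located exactly where the paper has to work hardest. You claim (i) that the generic locus in the space of real cocharacters of $T_F$ is connected, and (ii) that the attracting set $\M^+_\xi$ is locally constant there, jumping only at the non-generic walls. Both claims fail. For (ii): already on $T^*\C$ with the scaling action, $\xi$ and $-\xi$ are both generic (fixed locus $=\{0\}$) but have attracting sets $\{w=0\}$ and $\{z=0\}$ respectively, so the attracting set changes across walls on which $\xi$ stays generic; in general the chambers on which $\M^+_\xi$ is constant are cut out by a hyperplane arrangement that is strictly finer than the non-generic locus. For (i): the non-generic locus is a union of the subspaces $\operatorname{Lie}(T_{E,\R})\subset\operatorname{Lie}(T_{F,\R})$ for $E>F$, which can be hyperplanes, so the generic locus can be disconnected (again $T^*\C$ with $F=I$: the generic set is $\R\setminus\{0\}$). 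Even when it is connected it need not be simply connected, so your "propagate along a path" step cannot by itself deliver the \emph{unique} isomorphism the proposition asserts — you would have to rule out monodromy, and you have no mechanism for doing so.

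The paper's route is designed precisely to fix this. It first identifies $\Phi_\xi$ with a real vanishing-cycles functor $i^!j^*$ for the function $f_\zeta = \langle\zeta,\bmu_T\rangle$ built from the hyperk\"ahler moment map (Proposition \ref{prop:hyperloc equals vanishing}, which itself requires constructing a continuous contracting $\R_{\ge 0}$-action onto $f_\xi^{-1}(0)$ — Lemmas \ref{lem:orbit structure} and \ref{lem:continuous action}; this is nontrivial analysis your proposal does not address). The parameter $\zeta$ then lives in the tripled space $(V_F)^*\otimes_\R\R^3$, where the walls have real codimension three, so the generic locus $Z$ is simply connected; a parametric constructibility statement (Proposition \ref{prop:parametric stratification}) together with the monodromy Lemma \ref{lem:sheaf family} over the simply connected base $Z$ then gives both constancy of the functor and the canonicity of the comparison isomorphisms. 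This "hyperk\"ahler rotation" is the essential idea, and it is absent from your proposal. Your stratum-by-stratum slice argument for constructibility is in the right spirit (compare Lemma \ref{lem:Morse slice compatibility}), but the independence claim as you have set it up would fail.
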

We postpone the proof until Section \ref{sec:hyperloc equals vanishing}.

More generally, for $E \leq F$, define $\Phi^F_E\colon D^b_{T}(\M^F) \to D^b_{T}(\M^E)$ by applying the same construction for a generic cocharacter of $T^F_E = T_E/T_F$.

\begin{lemma}\label{lem:Morse transitivity}
For any coloop-free flats $E \leq F$, there is a natural isomorphism $\Phi_E \cong \Phi^F_E \, \Phi_F$.
\end{lemma}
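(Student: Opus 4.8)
The statement to prove is Lemma~\ref{lem:Morse transitivity}: for coloop-free flats $E \leq F$ there is a natural isomorphism $\Phi_E \cong \Phi^F_E \, \Phi_F$. The idea is that $\Phi_E$ is defined by a single generic cocharacter $\xi$ of $T_E$, while the composite $\Phi^F_E \, \Phi_F$ is defined by first using a generic cocharacter $\eta$ of $T_F$ and then a generic cocharacter $\zeta$ of $T^F_E = T_E/T_F$. Both sides are compositions of hyperbolic restriction functors, and the natural home for the argument is the transitivity (or ``tensoring'') property of hyperbolic restriction under composition of cocharacters, together with the independence-of-cocharacter statement already established in Proposition~\ref{prop:vanishing cycles iso} (and its analogue for $\Phi^F_E$).

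\textbf{Key steps.} First I would choose the cocharacters compatibly. Since $T_E = T_F \cdot (\text{a complement})$ up to isogeny, pick a generic cocharacter $\eta$ of $T_F$ and a cocharacter $\bar\zeta$ of $T^F_E$, lift $\bar\zeta$ to a cocharacter $\zeta$ of $T_E$, and consider the one-parameter family $\xi_t = \eta^{N} \zeta^{t}$ (or $\zeta \eta^{N}$) for $N \gg 0$; for $N$ large this $\xi_N$ is a generic cocharacter of $T_E$, because its fixed locus is $(\M^F)^{\bar\zeta} = \M^E$. Second, I would invoke the standard compatibility of hyperbolic restriction with ``iterated'' contractions: for a cocharacter of the form $\xi = \eta^N \zeta$ with $N$ large, the attracting set $\M^+_\xi$ and fixed locus $\M^\xi$ factor through the intermediate attracting set $\M^+_\eta$ and fixed locus $\M^\eta = \M^F$, so that $f^!_\xi g^*_\xi \cong (\text{hyperbolic restriction for } \bar\zeta \text{ on } \M^F) \circ (f^!_\eta g^*_\eta)$. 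This is essentially the statement that hyperbolic restriction is ``transitive'' for a composition of commuting cocharacters when one dominates the other; it can be extracted from \cite{Br03} or proved directly by a base-change/factorization argument on attracting sets. Applied to $\cS$-constructible (equivalently $\breve\cS$-constructible, via the restriction to $\breve\cS$) objects, this gives $\Phi_{\xi_N} \cong \Phi^F_E \Phi_F$ on the relevant categories. Third, I would use Proposition~\ref{prop:vanishing cycles iso} to replace $\Phi_{\xi_N}$ by the canonical functor $\Phi_E$ up to the unique isomorphism, and the analogous independence statement for $\Phi^F_E$ and $\Phi_F$ to conclude that the resulting isomorphism $\Phi_E \cong \Phi^F_E \Phi_F$ is itself canonical (independent of all choices). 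One should check that $\Phi^F_E \Phi_F$ indeed lands in $D^b_{T,\breve\cS^E}(\M^E,k)$, which follows by applying Proposition~\ref{prop:vanishing cycles iso} twice (once for $F$ inside $\M$, once for $E\le F$ inside $\M^F$, noting the stratification-theoretic remarks after Proposition~\ref{prop:slice to fine strata}).

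\textbf{Main obstacle.} The genuinely delicate point is the second step: making precise and correct the claim that hyperbolic restriction for a ``dominant-then-subdominant'' cocharacter $\xi_N = \eta^N \zeta$ factors as the composition of the two hyperbolic restrictions. One must verify that for $N$ large the attracting/repelling sets and fixed loci organize into the needed commutative diagram of closed inclusions (so that $f^!$'s and $g^*$'s compose by base change), and that ``$N$ large'' can be chosen uniformly, using that $\M$ is an affine variety with finitely many $T$-orbit types and that the relevant weights are integral and bounded. An alternative, possibly cleaner route is to avoid the explicit $\eta^N\zeta$ trick and instead appeal directly to the transitivity statement for hyperbolic restriction in the reference \cite{Br03} (which is stated for a single contracting action but whose proof adapts to nested contractions), or to deduce the lemma a posteriori from the vanishing-cycles description promised in Section~\ref{sec:hyperloc equals vanishing}, where transitivity of vanishing cycles for the hyperkähler moment map is more transparent. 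Given the paper's later topological interpretation, I expect the authors prove it via the hyperbolic-restriction factorization, deferring genericity bookkeeping to the combinatorial setup of Section~\ref{arrange}.
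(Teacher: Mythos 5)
Your proposal follows essentially the same route as the paper: there one takes generic cocharacters $\xi$ of $T_F$ and $\eta$ of $T^F_E$, lifts $\eta$ to $\bar\eta$ on $T_E$, and uses $\zeta = n\xi + \bar\eta$ for $n \gg 0$ as the generic cocharacter of $T_E$ whose attracting set sits inside that of $\xi$ --- exactly your $\eta^N\zeta$ device. The factorization step you flag as the main obstacle is handled by first replacing $f^!$ and $\bar f^!$ with the proper pushforwards $p_!$, $\bar p_!$ along the contractions of the attracting sets onto the fixed loci, then applying proper base change to the Cartesian square formed by the two attracting sets, and finally invoking Proposition~\ref{prop:vanishing cycles iso} to identify $\Phi_\zeta$ with $\Phi_E$.
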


\begin{proof}
Choose generic cocharacters $\xi$ of $T_F$ and $\eta$ of $T^F_E$  so that $\Phi_F = \Phi_\xi$ and $\Phi^F_E = \Phi_\eta$.  Let $\bar\eta$ be a lift of $\eta$ to a cocharacter of $T_E$, and consider
the cocharacter $\zeta := n\xi + \bar\eta$ of $T_E$.  For $n \gg 0$, we will have $\M^+_\zeta \subset \M^+_\xi$, and we 
have the following diagram:
\[
\xymatrix{
& \M^\xi \ar@<.5ex>[r]^f &  \M^+_\xi \ar@<.5ex>[l]^p \ar[r]^g & \M \\
\M^\zeta \ar@<.5ex>[r]^{\bar{f}} & (\M^\xi)^+_\zeta \ar@<.5ex>[l]^{\bar{p}}\ar@<.5ex>[r]^j \ar[u]^{\bar{g}} & \M^+_\zeta \ar[u]^i \ar@<.5ex>[l]^q & 
}
\]
Here the first two spaces on the top row are 
the fixed and attracting sets for the action of $\xi$ on $\M$, and the first two spaces on the bottom are
the fixed and attracting sets for the action of $\zeta$ (or equivalently $\eta$) on $\M^\xi = \M^F$.  The maps going right and up are the inclusions, and the maps going left are the projections given by taking limits for the action of $\xi$ (middle column) and $\zeta$ (left column).  The center square is Cartesian (read both ways).

Using this, we have
\begin{align*}
\Phi^F_E \circ \Phi_F & = \bar{f}^!\bar{g}^*f^!g^*\\
 & \simeq \bar{p}_!\bar{g}^*p_!g^* \\
 & \simeq (\bar{p}q)_!(gi)^* \;\;\text{(base change)}\\
 & = \Phi_E.\qedhere
\end{align*}
\end{proof}

\begin{lemma}\label{lem:Morse slice compatibility}
For any flats $E \le F$, we have a natural isomorphism of 
functors $D^b_{T,\breve\cS}(\M,k) \to D^b_{T_E,\breve{\cS}^F_E}(\M^F_E,k)$
\[\operatorname{Rest}_E  \Phi_F \simeq \Phi_{E,F} \operatorname{Rest}_E,\]
where $\Phi_{E,F}$ denotes the hyperbolic restriction functor from sheaves on $\M_E$ to $\M^F_E$.
\end{lemma}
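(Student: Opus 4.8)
The plan is to reduce the statement to a purely local assertion about hyperbolic restriction and normal slices, and then to verify it by choosing the cocharacters compatibly. First I would set up the geometry: the functor $\operatorname{Rest}_E$ is defined by restricting to a $T_{E,\R}$-equivariant structure, pulling back along the inclusion $\iota\colon \M_E^F \hookrightarrow \M^F$ of a normal slice to $S_E^F$ inside $\M^F$ (combined with the normal slice $\M_E \hookrightarrow \M$ to $S_E$, using that the coloops of $V_{E}$ lying in $E\setminus F$ split off a symplectic affine factor as in Lemma \ref{lem:coloop decomposition}), followed by a shift. The functor $\Phi_F$ is hyperbolic restriction $f^! g^*$ for a generic cocharacter $\xi$ of $T_F$, and $\Phi_{E,F}$ is the analogous hyperbolic restriction for a generic cocharacter of $T^F_E$ acting on $\M_E$. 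So the content is: hyperbolic restriction commutes with restriction to a transverse slice.

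The key point is a compatibility of attracting sets. Using Proposition \ref{prop:slice to fine strata} (and its $\M^F$-version), I would choose the slice $\M_E$ (resp.\ $\M_E^F$) to be a $T_{E,\R}$-invariant normal slice, and I would pick the generic cocharacter $\xi$ of $T_F$ used to compute $\Phi_F$ so that its image lies in $T_{F,\R}$; since $T_F\subset T_E$, the cocharacter $\xi$ acts on the slice $\M_E$, and its fixed locus there is $\M_E^F$ while its attracting set is the intersection of $\M^+_\xi\subset\M$ with the slice. Concretely, by the product description in Lemma \ref{lem:slice to resolution sheaf} / \cite[2.5]{PW}, near a point of $\breve S_E$ the space $\M$ looks like $\M_E\times T^*\C^E$ equivariantly, and under this the action of $\xi$ decomposes; one checks that $\M^+_\xi$ meets the slice transversally in the attracting set of $\xi$ on $\M_E$, i.e.\ the squares
\[
\xymatrix{
\M^\xi_E \ar[r]^{f_E} & (\M_E)^+_\xi \ar[r]^{g_E} & \M_E \\
\M^\xi \ar[u] & (\M)^+_\xi \ar[l]_{f}\ar[r]^{g} \ar[u] & \M \ar[u]
}
\]
are Cartesian after restricting everything to suitable transverse slices. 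Granting this, smooth (normally nonsingular) base change along the slice inclusions gives $\iota^* f^! g^* \simeq f_E^! g_E^* \iota^*$ up to the shift, which after reintroducing the $T^*\C^E$-factor from Lemma \ref{lem:coloop decomposition} is exactly $\operatorname{Rest}_E\,\Phi_F \simeq \Phi_{E,F}\,\operatorname{Rest}_E$. The matching of shifts ($d-d_E$ on one side versus $d^F - d^F_E$, which agree since $d - d^F = d_E - d^F_E$ is the codimension of $\M^F$ inside $\M$ along $\breve S_E$-slices) is a bookkeeping check.

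The main obstacle is the base-change step: $f^!$ is not a $*$-pullback, so commuting it past the slice inclusion requires knowing that the slice is transverse to the attracting set $\M^+_\xi$ in a way compatible with the normally nonsingular inclusion, and that the fixed locus $\M^\xi$ meets the slice in $\M_E^F$ cleanly. I would handle this by working with the explicit local model $\M_E\times T^*\C^E$ of \cite[2.5]{PW}: there $\xi\in T_F\subset T_E$ acts only on the $\M_E$-factor (since $T^*\C^E$ carries the $T_E$-action through the characters indexed by $E$, on which $\xi$ — being generic for $T_F$ and valued in $T_{F,\R}$ — acts with the relevant nonzero weights), so the attracting set, fixed set, and the two closed inclusions all factor as a product with $T^*\C^E$, and the slice inclusion is just $\M_E\times\{0\}$ (resp.\ $\M_E^F\times\{0\}$) — making the relevant squares literally products of Cartesian squares with the identity on $T^*\C^E$, after which ordinary base change and the $T_{E,\R}$-equivariant identification of slices (together with the $\breve\cS$-constructible $T$- versus $T_\R$-equivariance comparison already invoked for $\operatorname{Rest}_E$) finish the argument. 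The only subtlety to be careful about is that Proposition \ref{prop:vanishing cycles iso} is needed to know $\Phi_F$ (and $\Phi_{E,F}$) land in the $\breve\cS$-constructible subcategories and are independent of $\xi$, so that the isomorphism just constructed is canonical rather than depending on the chosen cocharacter.
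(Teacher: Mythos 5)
Your proposal follows essentially the same route as the paper: reduce to the local product decomposition of a neighborhood of a point of $\breve S_E$ as $\M_E \times (\text{stratum direction})$, observe that the generic cocharacter $\xi$ of $T_F \subset T_E$ acts only on the slice factor (because $T_E$ is the pointwise stabilizer of $\breve S_E$), so that the attracting and fixed sets and the inclusions $f$, $g$ are all products, and conclude by base change. The only substantive difference is that the paper secures the needed equivariance by writing down an explicit algebraic $T_E$-equivariant local projection $U \to \M_E$ (normalizing the coordinates $z_i$ for $i$ in a suitable $S \subset I \setminus E$) — necessary because the attracting set uses the full $\C^*$-action while the slice identification of Proposition \ref{prop:slice to fine strata} is a priori only $T_{E,\R}$-equivariant — and your parenthetical claiming $\xi$ acts with nonzero weights on the second factor should instead say that $\xi$ acts trivially there.
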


\begin{proof}
It is enough to show that under the local homeomorphism of a neighborhood of a point $x\in S_E$ with $\M_E \times \C^{\dim S_E}$, the attracting set $\M^+$ of a generic cocharacter of $T_F$ is sent to $\M_E^+ \times \C^{\dim S_E}$, where $\M_E^+$ is the attracting set in $\M_E$ for the same cocharacter.
This makes sense since $T_F \subset T_E$, so this cocharacter does act on $\M_E$.

As explained in the proof of \cite[Lemma 2.4]{PW}, the projection from a neighborhood of $x$ to $\M_E$ can be constructed as follows.  Let $K_E \subset (\C^*)^E$
be the torus defined in the same way as $K$ using the subspace $V_E\subset \Z^E$.  It is the intersection of $K$
with the coordinate subtorus $(\C^*)^E \subset (\C^*)^I$, 
and its Lie algebra $\mathfrak{k}_E$ is
 the orthogonal space to $V_{E,\C}$ inside $\C^F = \operatorname{Lie}(\C^*)^F$.  Let 
$\mu_E = \mu_{K_E}\colon T^*\C^E \to \mathfrak{k}_E^*$ be the corresponding moment map.

We can choose a subset $S \subset I \setminus E$ so that the projection of $K$ onto $(\C^*)^S$ is surjective, with kernel exactly $K_E$.  Suppose that $x$ is represented by 
a point in 
\[U= \{[z_i,w_i]_{i\in I}\in \mu^{-1}(0) \mid z_i \ne 0 \;\text{for all}\; i\in S\}.\]
Then for any point $[z_i,w_i]$ in $U$ there is a point $[z_i',w_i']$ in the same $K$-orbit with $z_i = 1$ for all $i\in S$, and any two such points are in the same $K_E$-orbit.  Projecting this point onto the coordinates in $E$ gives a point in $\mu_E^{-1}(0)$, so this defines a map from $U\mod K \subset \M$ to $\M_E$.  If the $z_i$ coordinate of the representative of $x$ is zero for some $i\in S$, then $w_i \ne 0$ and we can replace $z_i$ by $w_i$ in this construction.

The local projection map constructed in this way is $T_E$-equivariant.  The required identification of a neighborhood of $x$ in $\M^+$ with $\M^+_E \times \C^{\dim S_E}$ follows immediately.
\end{proof}

\begin{corollary}
For any coloop-free flat $F \in \cF$, the functor $\Phi_F$ preserves constructibility by the coarse stratifications; i.e.\ it restricts to a functor $D^b_{T,\cS}(\M,\kk) \to 
D^b_{T,\cS_F}(\M^F,\kk)$.
\end{corollary}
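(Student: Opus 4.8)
The plan is to deduce the corollary by combining the transitivity isomorphism of Lemma~\ref{lem:Morse transitivity} with the slice-compatibility of Lemma~\ref{lem:Morse slice compatibility} and the known good behavior of restriction to coarse strata. First I would recall what needs to be checked: given $A \in D^b_{T,\cS}(\M,\kk)$, the object $\Phi_F(A)$ already lies in $D^b_{T,\breve{\cS^F}}(\M^F,\kk)$ by Proposition~\ref{prop:vanishing cycles iso}, so the cohomology sheaves of $\Phi_F(A)$ are already locally constant along each fine stratum $\breve S_{F'}$ with $F' \le F$. The point is to upgrade this to local constancy along each coarse stratum $S_{F'}$ with $F' \in \cF^F = \{F' \in \cF \mid F' \le F\}$; equivalently, to show that $\Phi_F(A)$ is locally constant in a neighborhood of any point of a fine stratum $\breve S_{E'}$ with $E' \notin \cF$, in the directions transverse to $\breve S_{E'}$ inside its containing coarse stratum.

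The key step is a slice argument. Fix a flat $E \le F$ (not necessarily coloop-free); by Lemma~\ref{lem:Morse slice compatibility} we have a natural isomorphism $\operatorname{Rest}_E \Phi_F(A) \simeq \Phi_{E,F} \operatorname{Rest}_E(A)$ on $\M^F_E$. Now $\operatorname{Rest}_E(A)$ is $\cS_E$-constructible on $\M_E$ because $A$ is $\cS$-constructible (this is part of the statement of $t$-exactness and compatibility of $\operatorname{Rest}_E$ recorded after Proposition~\ref{prop:enoughprojs}... more precisely in the paragraph defining $\operatorname{Rest}_F$). The hyperbolic restriction $\Phi_{E,F}$ on $\M_E$ is the Morse functor $\Phi_F$ for the hypertoric variety $\M_E$, so — \emph{if we already knew the corollary for $\M_E$} — it would send $\cS_E$-constructible sheaves to $\cS^F_E$-constructible ones. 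This circularity is resolved by an induction on $\dim \M$: the variety $\M_E$ has strictly smaller dimension than $\M$ whenever $E \neq I$, which handles all flats $E \neq I$; and for $E = I$ there is nothing transverse to collapse since $\breve S_I$ is a point (or, if $I \notin \cF$, one uses Lemma~\ref{lem:coloop decomposition} to split off the coloop directions, on which $\Phi_F$ acts as an external product with the identity, again reducing the dimension).

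To assemble this into a clean proof I would argue as follows. By Proposition~\ref{prop:vanishing cycles iso}, $\Phi_F(A)$ is $\breve{\cS^F}$-constructible, so it suffices to show that for each fine stratum $\breve S_{E'} \subset \M^F$, the sheaf $\Phi_F(A)$ is locally constant along the coarse stratum $S_{F'} \supset \breve S_{E'}$ (where $F'$ is the smallest coloop-free flat above $E'$). Pick a point $x \in \breve S_{E'}$; a normal slice to $\breve S_{E'}$ in $\M$ is stratified-isomorphic to $\M_{E'}$ by Proposition~\ref{prop:slice to fine strata}, and under the local product structure of Lemma~\ref{lem:Morse slice compatibility}, $\Phi_F(A)$ corresponds to $\Phi_{E',F}$ applied to the restriction of $A$ to this slice. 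Since $\dim \M_{E'} < \dim \M$ (as $E' \neq I$ — if $E' = I$ then $F = I$ and $\M^F$ is a point and there is nothing to prove), the inductive hypothesis applied to $\M_{E'}$ shows $\Phi_{E',F}(\operatorname{Rest}_{E'} A)$ is constructible for the coarse stratification $\cS^F_{E'}$ of $\M^F_{E'}$; in particular it is locally constant near the image of $x$ along the coarse stratum, which is exactly the transverse direction to $\breve S_{E'}$ inside $S_{F'}$. Combined with the $\breve{\cS^F}$-constructibility already known, this gives $\cS^F$-constructibility of $\Phi_F(A)$, i.e. $\Phi_F(A) \in D^b_{T,\cS_F}(\M^F,\kk)$.

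The main obstacle I anticipate is bookkeeping the local product decompositions carefully enough to make the induction airtight: one must check that the local homeomorphism of Lemma~\ref{lem:Morse slice compatibility} genuinely carries the coarse stratification of $\M^F$ near $x$ to (coarse stratification of $\M^F_{E'}$) $\times$ (affine space), and that the $T$-equivariance issues (only compact-torus equivariance survives on slices) cause no trouble — but these are exactly the compatibilities already packaged in Proposition~\ref{prop:slice to fine strata}, Lemma~\ref{lem:coloop decomposition}, and the Corollary following it, so the argument should go through without new geometric input.
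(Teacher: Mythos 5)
Your argument is correct in substance but organized differently from the paper's. The paper does not induct on dimension: at a point $p\in\breve S_E\subset S_{E'}$ (with $E'$ the smallest coloop-free flat satisfying $E\le E'$) it invokes Lemma \ref{lem:coloop decomposition} to split the slice as $\M_E\cong\M_{E'}\times T^*\C^{E\setminus E'}$, observes that $\cS$-constructibility of $A$ forces $\operatorname{Rest}_E(A)\cong pr_2^*\operatorname{Rest}_{E'}(A)[|E'|-|E|]$, and then combines Lemma \ref{lem:Morse slice compatibility} with the compatibility of hyperbolic restriction with this product to conclude that $\operatorname{Rest}_E(\Phi_F(A))$ is itself a pullback from $\M_{E'}$ --- which is exactly constancy in the directions of $S_{E'}$ transverse to $\breve S_E$. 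You instead apply the full statement inductively to the slice, which is legitimate because $\dim\M_{E'}<\dim\M$ for every flat $E'\ne I$; note that at $E'=I$ your fallback (split off the coloop directions, on which $\Phi_F$ acts as an external product with the identity) is precisely the paper's mechanism, so the two proofs converge on the same geometric input at the deepest stratum. The paper's route buys a shorter, non-recursive argument; yours avoids having to check explicitly that $\Phi_{E,F}$ commutes with the pullback along $\M_E\to\M_{E'}$, at the cost of the extra bookkeeping you anticipate.

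One slip to repair in your assembled version: the parenthetical ``if $E'=I$ then $F=I$ and $\M^F$ is a point'' is false. Since $I$ is the minimal flat, $I\le F$ always, so the fine stratum $\breve S_I$ (the cone point of $\M$) lies in $\M^F$ for \emph{every} $F$, and the case $E'=I$ must be confronted for all $F$. It is harmless only because (i) if $I\in\cF$ the coarse stratum through the cone point is the point itself, so there is nothing to prove, and (ii) if $I\notin\cF$ your earlier coloop-decomposition remark applies; but as written the induction cites the wrong reason for discarding this case, and you should route it through those two observations instead.
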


\begin{proof}
Suppose that $p\in \breve{S}_{E} \subset S_{E'}$, 
so that $E'$ is the smallest coloop-free flat with 
$E \le E'$.  Then we have $\M_E \cong \M_{E'} \times T^*\C^{E\setminus E'}$, and for any 
$\cS$-constructible object $A$, we have 
\[\operatorname{Rest}_E(A) \cong pr_2^* \operatorname{Rest}_{E'}(A)[|E'|-|E|].\] Using Lemma \ref{lem:Morse slice compatibility}, we see that $\operatorname{Rest}_E(\Phi_F(A))$
is isomorphic to a pullback of a sheaf from $\M_{E'}$.  It follows that $\Phi_F(A)$ is  $\cS^F$-constructible.
\end{proof}

\subsection{Morse stalks and $t$-exactness}
For each flat $F$, define a ``Morse stalk" functor $\bar\Phi_F\colon D^b_{T,\cS}(\M,k) \to D^b_{T_F}(p, k)$ by
\[\Morse_F(A):= \Phi_F(A)|_p[-\dim S_F] = \operatorname{Rest}_F\Phi_F(A),\]
where $p$ is any point in $S_F$.  Since $\Phi_F(A)$ is $\cS^F$-constructible, this functor does not depend on the choice of basepoint.

\begin{lemma}\label{lem:Morse is perverse}
For any $F \in \cF$, the functors $\Phi_F$ and $\bar{\Phi}_F$ are t-exact.
\end{lemma}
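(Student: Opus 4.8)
The plan is to prove $t$-exactness of $\Phi_F$ first, and then deduce the statement for $\bar\Phi_F$ as an immediate consequence, since $\operatorname{Rest}_F$ is $t$-exact (as noted just before Proposition \ref{prop:parity vanishing}) and $\Morse_F = \operatorname{Rest}_F\Phi_F$. So the real content is showing that the hyperbolic restriction $\Phi_F$ preserves the perverse $t$-structure.

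For the $t$-exactness of $\Phi_F$, the general machinery of \cite{Br03} tells us that $\Phi_\xi$ is always right $t$-exact on the $*$-side and left $t$-exact on the $!$-side in a suitable sense; the key point is that in our situation hyperbolic restriction is \emph{both} left and right $t$-exact, hence $t$-exact. The standard strategy — following Mirkovi\'c--Vilonen's argument for the affine Grassmannian, as the introduction signals — is the following. By Lemma \ref{lem:hyperloc}, $\Phi_{-\xi}\cong \D\,\Phi_\xi\,\D$, so it suffices to prove that $\Phi_F$ is right $t$-exact (sends ${}^p\! D^{\le 0}$ to ${}^p\! D^{\le 0}$); applying this to $\Phi_{-\xi}$ — which is the hyperbolic restriction for another generic cocharacter of $T_F$, namely $-\xi$, and hence equals $\Phi_F$ up to the canonical isomorphism of Proposition \ref{prop:vanishing cycles iso} — and then using Lemma \ref{lem:hyperloc} once more gives left $t$-exactness. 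To check right $t$-exactness, I would use the cone/support dimension estimates: for $A\in\Perv(\M,k)$, stratified with respect to $\cS$, one restricts to each coarse stratum and uses Lemma \ref{lem:Morse slice compatibility} to reduce the computation of $\operatorname{Rest}_E\Phi_F(A)$ to a hyperbolic restriction on the smaller hypertoric slice $\M^F_E$. Combined with Corollary \ref{cor:omega is constructible}'s constructibility statement and the fact (Lemma \ref{lem:Morse slice compatibility}) that the attracting set on the slice is a product, this gives a dimension count on the stalks of $\Phi_F(A)$ showing they live in the right degrees.

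Concretely, I expect the cleanest route is: reduce to the local model near a point of a fine stratum $\breve S_E$ with $E\le F$ via Lemma \ref{lem:Morse slice compatibility}, where $\M_E\cong \M^F_E\times(\text{symplectic affine space})$ and the attracting set splits as a product; then the stalk of $\Phi_F(A)$ at such a point computes a hyperbolic restriction on $\M^F_E$ of a perverse sheaf. At that point one is reduced to the base case where $F=\emptyset$ is the top flat and $\xi$ is a generic cocharacter of the whole torus $T$ acting on the \emph{cone} $\M$ contracting it to the cone point; here one uses that $\M$ is an affine hypertoric variety whose symplectic leaves are themselves affine cones, and that on such a conical situation hyperbolic restriction to the fixed point (the vertex, or more precisely $\M^F$) of a contracting $\C^*$ is $t$-exact — this is where the specific combinatorial/geometric structure of hypertoric varieties (each $\M^F$, each attracting set $\M^+_\xi$ is a hypertoric-type variety of controlled dimension, and semismallness-type estimates for the inclusions) enters. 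The attracting-set inclusion $g\colon\M^+_\xi\hookrightarrow\M$ is affine, so $g^*$ is right $t$-exact and $g^!$ left $t$-exact, and similarly for $f$; the non-trivial input is the precise codimension/dimension bookkeeping for the pieces $\breve S_F\cap\M^+_\xi$ that makes both estimates hold simultaneously, which is exactly the hypertoric analogue of the Mirkovi\'c--Vilonen computation.

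The main obstacle, therefore, is the base-case dimension estimate: showing that for the contracting cocharacter the intersections of the attracting set with the (fine or coarse) strata have exactly the codimension needed for hyperbolic restriction to be $t$-exact — i.e.\ the hypertoric analogue of the statement that semi-infinite orbits have the ``right'' dimension. I would handle this by writing the attracting set combinatorially (in terms of sign vectors / the arrangement $\cH$ as in Section \ref{arrange}), computing $\dim(\breve S_F\cap\M^+_\xi)$ explicitly, and comparing with $\dim\breve S_F$ and $\dim\M^F$; unimodularity guarantees all the relevant loci are smooth of the expected dimension. Once the base case is in hand, the reduction via Lemmas \ref{lem:Morse slice compatibility} and \ref{lem:Morse transitivity} and the duality swap via Lemma \ref{lem:hyperloc} are formal, and $t$-exactness of $\bar\Phi_F$ follows at once.
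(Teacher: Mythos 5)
Your proposal follows essentially the same route as the paper: prove one-sided exactness of hyperbolic restriction to the cone point via a dimension estimate on the intersections of the attracting set with the strata, obtain the other side from Lemma \ref{lem:hyperloc} together with the $\xi\mapsto-\xi$ swap of Proposition \ref{prop:vanishing cycles iso}, and then propagate to general $F$ by slicing (the paper does this via the transitivity Lemma \ref{lem:Morse transitivity} plus the fact that the exact functors $\bar\Phi_E$ jointly detect perversity, which is equivalent in substance to your reduction via $\operatorname{Rest}_E$ and Lemma \ref{lem:Morse slice compatibility}). The one place you genuinely diverge is the key estimate $\dim(S_F\cap\M^+_\xi)=\tfrac12\dim S_F$, which you correctly identify as the crux but propose to establish by an explicit sign-vector computation; the paper gets it in one line: the attracting set $\M^+_\a$ in a resolution $\M_\a$ is Lagrangian because the $T$-action is Hamiltonian, and $\M^+$ is its image under the proper resolution map, with the same argument applied to each $\M^F$. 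Your combinatorial route should work but is more labor than needed, and as written it is still a plan rather than a proof. Two small slips worth fixing: the base case of restriction to the cone point corresponds to the \emph{minimal} flat $F=I$, not to $F=\emptyset$; and the relevant a priori exactness properties of $\Phi_\xi=f^!g^*$ come from $f$ and $g$ being closed immersions (so $g^*$ is right $t$-exact and $f^!$ is left $t$-exact), not from affineness.
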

In particular, $\Morse_F$ induces an exact functor 
$\Perv(\M,k) \to k\md$, which we denote by the same symbol.

\begin{proof}
Step 1: $\Phi_I$ is $t$-exact.  Let $\xi$ be a generic cocharacter of $T$, and let $\M^+$ be the corresponding attracting set.  If $\M_\a \to \M$ is a generic resolution, then $\M^+$ is the image of 
\[\M^+_\a := \{x\in \M_\a \mid \lim_{t\to 0} \xi(t)\cdot x \;\text{exists}\}.\]
Since the action of $T$ is Hamiltonian, $\M^+_\a$ is Lagrangian, so we have 
$\dim (S_\emptyset \cap \M^+) = \half \dim S_\emptyset$, and the same argument applied to $\M^F$ shows that 
$\dim (S_F \cap \M^+) = \half \dim S_F$ for any $F \in \cF$.  

If $p\colon \M^+ \to \M^\xi = \{0\}$ denotes the projection, the contracting lemma gives an isomorphism $\Phi_I(S) \cong p_!(A|_{\M^+})$, so we need to show that the compactly supported hypercohomology
$\H^\bullet_c(A|_{\M^+})$ vanishes outside of degree $0$ for any perverse sheaf $A$.  
But for every stratum $S_F$, the stalks of $A|_{S_F}$ vanish in degrees above $-\dim S_F$,
so $\H_c^\bullet(A|_{S_F})$ vanishes in positive degrees, and so the same is true of $\H_c^\bullet(A|_{\M^+})$.
Lemma \ref{lem:hyperloc} then shows that it vanishes in negative degrees as well.

Step 2: The functor $\bar{\Phi}_F$ is exact for every $F$.  This follows from the first step using Lemma \ref{lem:Morse slice compatibility}.

So we have defined exact functors $\bar\Phi_F\colon \Perv(\M) \to k\md$ for every stratum $S_F$.  This set of functors is complete in the sense that $A \in \Perv(\M)$ is zero if and only if $\bar\Phi_F(A) = 0$ for every $F$. 
Applying this to the perverse cohomology sheaves of an object $A \in D^b_{T,\cS}(\M)$, it follows that $A$ 
is perverse if and only if $\bar\Phi_F(A[i]) = 0$ for all $F$ and all $i \ne 0$.

Step 3: the general case.  Take any $F \in \cF$, and any $A \in \Perv(\M)$. Then by Lemma \ref{lem:Morse transitivity}, for any $E \le F$ and any $i \ne 0$ we have 
\[\bar \Phi^F_E \Phi_F(A[i]) \cong \bar{\Phi}_E(A[i]) = 0,\]
so $\Phi_F(A)$ is perverse.
\end{proof}

By Proposition~\ref{prop:enoughprojs}, the functor $\Morse_F$ is represented by a projective object
(this follows the argument of  \cite[Proposition 2.4]{MiVi}, for instance). We  denote this object by $\Pi_F$ .  Observe that $\Morse_F(\IC_F)=\kk$ and $\Morse_F(\IC_E)=0$ unless $F \le E$. It follows that $\bigoplus_{F \in \F} \Pi_F$ is a projective generator of $\Perv(\M)$. 

\section{Equivariant computations}

\subsection{Localization of equivariant cohomology} 

In order to study the sheaves $\Omega^F$ and the functors $\bar{\Phi}_E$, we will employ equivariant localization.
We begin by looking at the equivariant hypercohomology of the largest resolution sheaf $\Omega = \Omega^{\emptyset}$.  

Fix a resolution $\wt\M = \M_\a$ of $\M$ and put $p = p_\alpha$.  The hypercohomology $\H^\bullet_T(\Omega)$
is naturally identified with $H^{\bullet+2d}_T(\widetilde \M; k)$. 
It is a module over the ring $A = H^\bullet_T(pt; k)$, which is naturally identified with $\Sym(V_k)$, where elements of $V_k := V \otimes_\Z k$ are placed in degree two.

We describe this ring and the restriction homomorphisms to the fixed points using the independence complex $\Delta = \Delta_V$, which is the simplicial complex of all independent sets for the matroid $M(V)$.  The face ring of $\Delta$
with coefficients in $k$ is
\[k[\Delta] := k[\be_i \mid i \in I]/ \langle \be_S \mid S \; \text{is dependent}\rangle.\]
Here we use the shorthand $\be_S := \prod_{i \in S} \be_i$ for any subset $S \subset I$.  We put a grading on this ring where $\deg \be_i = 2$.

The ring $A$ maps naturally to $k[\Delta]$ via the inclusion $V_k \hookrightarrow k^I$, and for each basis $B$ of the matroid the composition
\[A \to k[\Delta] \to k[\Delta]/\langle \be_i \mid i \notin B\rangle\]
is an isomorphism, since $V_k \to k^I \to k^B$ is an isomorphism.  Composing the inverse of this isomorphism with the projection $k[\Delta] \to k[\Delta]/\langle \be_i \mid i \notin B\rangle$ gives a homomorphism 
\[ev_B\colon k[\Delta] \to A.\]
Note that for any two bases $B$, $B'$ we have $ev_B(\be_{B'})\ne 0$ if and only if $B = B'$.

There is a natural bijection $B \mapsto y_B$ between the set of bases of the matroid of $V$ and the set of $T$-fixed points of $\wt\M = \M_\alpha$,
defined as follows.
For a basis $B$, consider the set of points $\{(z_i,w_i)\}_{i \in I} \in \mu^{-1}(0)$ satisfying $z_i = w_i = 0$ when $i\in B$ and exactly one of $z_i, w_i$ is nonzero when $i\notin B$.  It contains $2^{|I\setminus B|}$ orbits of $K$, and exactly one of them is $\alpha$-semistable.  
Let $y_B$ be the image of this orbit in $\wt\M$.

\begin{proposition}[\cite{Ko99}]\label{prop:HT is face ring}
There is a natural isomorphism 
\[H^\bullet_T(\widetilde \M; k) \cong k[\Delta].\]
Under this isomorphism the restriction homomorphism
\[H^\bullet_T(\wt\M;k)\to H^\bullet_T(y_B;k)\cong A\] is identified with $ev_B$.
\end{proposition}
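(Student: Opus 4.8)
The plan is to realise both graded rings as subrings of $\bigoplus_B A$ via restriction to the $T$-fixed points, and to check that the two subrings, together with the two restriction maps, coincide.  First I would establish equivariant formality of $\wt\M=\M_\a$.  Fix a generic cocharacter $\xi$ of $T$, so that $\wt\M^\xi=\{y_B\}$ is finite.  The Bialynicki--Birula decomposition of $\wt\M$ for $\xi$ is an algebraic cell decomposition into $T$-invariant affine spaces $\wt\M^+_B$, one per basis $B$ (the attracting sets are linear in the standard toric charts covering $\wt\M$; this is carried out in the hypertoric literature, e.g.\ \cite{BP09,Ko99}), and it is filtrable since $\wt\M$ is projective over $\M$.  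The long exact sequences of the resulting filtration by $T$-invariant closed subvarieties split by parity, using the Thom isomorphism $H^\bullet_T(Z_j,Z_{j-1})\cong A[-2\codim_\C\wt\M^+_B]$ for each cell; hence $H^\bullet_T(\wt\M;k)$ is a free graded $A$-module, concentrated in even degrees, with one generator in degree $2\codim_\C\wt\M^+_B$ per basis.  In particular $\wt\M$ is equivariantly formal, and since the fixed points are isolated the localization theorem gives that the total restriction
\[ r=(r_B)_B\colon H^\bullet_T(\wt\M;k)\longrightarrow\bigoplus_B H^\bullet_T(y_B;k)=\bigoplus_B A \]
is injective.

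Next I would build a comparison map out of the face ring.  For $i\in I$ the $i$-th coordinate character of $(\C^*)^I$ descends, through the GIT construction of $\wt\M=\M_\a$, to a $T$-equivariant line bundle $L_i$ on $\wt\M$; sending $\be_i\mapsto c_1^T(L_i)$ defines a graded $A$-algebra homomorphism $\psi\colon k[\be_i\mid i\in I]\to H^\bullet_T(\wt\M;k)$, visibly natural in the arrangement.  The crux is the fixed-point formula $r_B\circ\psi(\be_i)=ev_B(\be_i)$, i.e.\ $c_1^T(L_i)|_{y_B}=ev_B(\be_i)$ in $H^2_T(y_B)=V_k$, which I would check by a weight computation at the distinguished $K$-orbit representing $y_B$ (with $z_i=w_i=0$ for $i\in B$ and one of $z_i,w_i$ equal to $1$ for $i\notin B$): for $i\notin B$ the non-vanishing coordinate gives a $T$-invariant frame of $L_i$ near $y_B$, so the weight is $0=ev_B(\be_i)$; for $i\in B$, unwinding the $K$-quotient identifies the $T$-weight of $(L_i)_{y_B}$ with the preimage of $e_i$ under the isomorphism $V_k\xrightarrow{\sim}k^B$, which is $ev_B(\be_i)$ by definition.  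Now $\bigcap_B\langle\be_i\mid i\notin B\rangle$, as an intersection of monomial prime ideals in $k[\be_i\mid i\in I]$, equals the Stanley--Reisner ideal $I_\Delta$ (a monomial $\be_S$ lies in it iff $S$ is contained in no basis, i.e.\ $S$ is dependent), and $\ker(r_B\circ\psi)=\langle\be_i\mid i\notin B\rangle$; so $\ker\psi=I_\Delta$ and $\psi$ descends to an injection $\bar\psi\colon k[\Delta]\hookrightarrow H^\bullet_T(\wt\M;k)$ with $r_B\circ\bar\psi=ev_B$ for every $B$.  This last relation is exactly the identification of the restriction map with $ev_B$ asserted in the proposition.

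Finally I would show $\bar\psi$ is surjective by a count.  Both $k[\Delta]$ and $H^\bullet_T(\wt\M;k)$ are free graded $A$-modules: the latter by the first step, the former because a matroid independence complex is Cohen--Macaulay over $k$ and, by unimodularity, a $\Z$-basis of $V$ maps under $V_k\hookrightarrow k^I\twoheadrightarrow k^B$ to a basis of $k^B$ for every basis $B$, hence is a linear system of parameters.  It remains to match graded ranks over $A$, equivalently to see that the generating function of the generator degrees agrees on the two sides: it is the Poincar\'e polynomial of $\wt\M$ on the cohomology side (from the Bialynicki--Birula cells) and the $h$-polynomial of $M(V)$ on the face-ring side, and these coincide by the standard computation of the Betti numbers of a hypertoric variety (\cite{BP09,Ko99}).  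A degree-preserving injection between free graded $A$-modules of equal finite graded rank is an isomorphism, so $\bar\psi$ is an isomorphism of graded $A$-algebras intertwining $r_B$ with $ev_B$, as claimed.

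The step I expect to be the main obstacle is the fixed-point weight formula $c_1^T(L_i)|_{y_B}=ev_B(\be_i)$: it requires tracking the residual $(\C^*)^I/K$-action on the tautological bundles $L_i$ through the GIT quotient precisely enough to pin the weight down as an honest character in $V\subset\Z^I$ (not merely modulo $K$), and then recognising it as the combinatorially defined $ev_B(\be_i)$.  Equivariant formality, the localization injection, the Stanley--Reisner identity, and the Hilbert-series bookkeeping for surjectivity are all routine once this dictionary between the $L_i$ and the generators $\be_i$ is in place.
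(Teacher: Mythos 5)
The paper does not reprove this statement (it is imported from Konno), so I am judging your argument on its own terms. Most of it is sound: realizing both rings inside $\bigoplus_B A$ by restriction to the fixed points, matching $\be_i$ with $c_1^T(L_i)$ for the tautological line bundles (this is precisely the degree-two part of the Kirwan map that Konno's computation is built on), and your fixed-point formula $c_1^T(L_i)|_{y_B}=ev_B(\be_i)$ is correct --- for $i\notin B$ the nonvanishing coordinate gives a $(\C^*)^I$-invariant frame, and for $i\in B$ unimodularity identifies $T$ with $(\C^*)^B$ and the weight with the inverse image of $e_i$ under $V_k\xrightarrow{\sim}k^B$. The Stanley--Reisner kernel computation and the l.s.o.p.\ argument are also fine.

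The genuine gap is in your first step. The attracting cells of a generic cocharacter $\xi$ of $T$ do \emph{not} decompose $\wt\M$: the $T$-action is Hamiltonian for the holomorphic symplectic form, so the tangent weights at each fixed point $y_B$ come in pairs $\{\lambda,-\lambda\}$, every attracting cell has dimension exactly $d$, and the total attracting set is Lagrangian --- a fact this paper itself uses in the proof of Lemma \ref{lem:Morse is perverse}. For $\M=T^*\mathbb{P}^1$ you would get two one-dimensional cells inside a two-dimensional variety, and your generator count (``one generator in degree $2\codim\wt\M^+_B$ per basis'') would give Poincar\'e polynomial $2t^2$ rather than $1+t^2$. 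Since equivariant formality, the injectivity of localization, and the final Hilbert-series comparison all rest on this decomposition, the step as written fails. The standard repair is to enlarge the torus to $T\times\C^*$, with the extra factor scaling the cotangent directions of $T^*\C^I$; $\wt\M$ is semiprojective for this action, a suitable cocharacter of $T\times\C^*$ has the same isolated fixed points $y_B$, and its Bialynicki--Birula cells are affine spaces that do exhaust $\wt\M$ and whose codimensions realize the $h$-vector of $\Delta$ (Konno, Hausel--Sturmfels). From the resulting parity one gets degeneration of the spectral sequence for $ET\times^T\wt\M$, hence freeness of $H^\bullet_T(\wt\M;k)$ over $A$; and injectivity of restriction to fixed points should be extracted from the cell filtration by observing that the equivariant Euler classes of the negative normal spaces at the $y_B$ are products of primitive characters (unimodularity again), hence nonzerodivisors in $A$ over any field --- worth making explicit, since in positive characteristic a blanket appeal to ``the localization theorem'' can fail when stabilizers are disconnected. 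With this correction the remainder of your argument, including the surjectivity-by-Hilbert-series in place of Kirwan surjectivity, goes through.
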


More generally, the $T$-equivariant cohomology of a resolution $\wt{\M^F}$ of a stratum closure $\M^F$ is isomorphic to 
$k[\Delta^F] \otimes_{A^F} A$, where $\Delta^F$ is the independence complex of $V^F$, and $A^F = \Sym(V^F_k)$. 

\begin{remark}\label{rmk:parity}
This result implies that the ordinary cohomology  $H^\bullet(\wt\M; k)$ vanishes in odd degrees, and using Lemma \ref{lem:slice to resolution sheaf} 
it follows that the cohomology sheaves of a resolution sheaf $\T^F$ vanish in odd degrees.  Since $\T^F$ is isomorphic to its Verdier dual, we see that it is a 
parity sheaf in the sense of \cite{JMW14}.  (As noted before, Proposition \ref{prop:parity vanishing} is needed in order for the theory of \cite{JMW14} to apply.)
\end{remark}

\subsection{Localization of Morse groups}\label{sec:localization of Morse groups}
We continue with the notation and assumptions of the previous section.  
In this section we use equivariant localization to study the effect of the hyperbolic restriction $\Phi_F$ on the resolution sheaf $\Omega = \Omega^\emptyset$. 
To avoid shifts in our formulas, we put $\T' = p_*\uk_{\wt\M} = \T[-2d]$.

Fix a choice of generic cocharacter $\xi$ of $T_F$, so that the fixed point set $\M^\xi$ is $\M^F$.  Let $\M^+ \subset \M$ be the corresponding attracting subset, and put $\wt\M^+ = p^{-1}(\M^+)$.  

Since $p\colon \wt\M\to \M$ is proper, we have a natural isomorphism
\[\H^\bullet_T(\Phi_F(\T')) = \H^\bullet_T(f^!g^*\T') \cong \H^\bullet_T(\tilde{f}^!\uk_{\wt\M^+,T}) = H^\bullet_T(\wt\M^+,\wt\M^+\setminus p^{-1}(\M^F); k),\]
where $\tilde{f}\colon p^{-1}(\M^F) \to \wt\M^+$ is the inclusion.  
Restriction induces an isomorphism
\[H^\bullet_T(\wt\M; k) 
 \cong H^\bullet_T(\wt\M^+; k),
\]
since $\wt\M$ and $\wt\M^+$ both deformation retract onto the compact core $p^{-1}(\M^I)$.

Combining these, we get a restriction homomorphism
\[\rho_F\colon\H^\bullet_T(\Phi_F(\T')) \to H^\bullet_T(\wt\M^+; k) \cong H^\bullet_T(\wt\M; k).\]
Our next result describes its image.  

In order to state the result, we must first describe the components of the fixed locus $\wt\M^\xi$.  Consider the cocharacter $\xi$ as a linear function on $V_\R$ and fix a lift $\tilde{\xi} \colon \R^I \to \R$. Consider the subsets $S \subset I$ such that  $\tilde{\xi}$
takes a single value on 
$\wt{H}_S := (V+\alpha) \cap \R^{I\setminus S}$ (in particular, $\wt{H}_S$ is nonempty). 
Let $\wt{F}_1, \dots, \wt{F}_r$ be the smallest subsets 
with this property,
 ordered by increasing $\tilde\xi$-value.  The sets $\wt{F}_i$ only depend on $V$ and $F$; the ordering depends on $\alpha$ and $\xi$ but not on $\tilde{\xi}$.
 
\begin{example}
Take $I = \{1,2,3,4\}$, and let $V \subset \Z^I$ be spanned by $(1,0,1,1)$ and $(0,1,1,1)$.  If we take $F = \{3,4\}$, 
then one choice for $\xi$ is the restriction of $\tilde{\xi} = x_3$ to $V$.  If $\alpha=(0,1,2,3)$, then
we have $\wt{F}_1 = \{4\}$, $\wt{F}_2 = \{3\}$, and $\wt{F}_3 = \{1,2\}$.
\end{example} 

For each $j$, let $\wt{\M}_j$ be the subvariety of
$\wt\M$ defined by the equations $z_i = w_i = 0$ for $i\in \wt{F}_j$.  Then $\wt{\M}_j$ is a  
connected component of the fixed locus $\wt\M^\xi$, and every component appears this way.  
   If $F_j$ is the completion of $\wt F_j$ to a flat of $V$, then $\wt\M_j$ is 
a resolution of the hypertoric variety $\M^{F_j} \subset \M^F$.  
Note that in general $\M^{F_j} \ne \M^F$.

Let $\wt\M^+_j$ be the attracting set for $\wt\M_j$, so $\wt\M^+ = \coprod_j \wt\M^+_j$.  Our choice of ordering ensures that 
$\wt\M^+_{\le s}:=\coprod_{j = 1}^s \wt\M^+_j$ is closed in $\wt\M^+$ for $1 \le s \le r$.

\begin{theorem}\label{thm:Cohomology of intermediate Morse sheaf}
The homomorphism $\rho_F$ is injective, and its image is a free $A$-submodule.  Under the identification $H^\bullet_T(\widetilde \M; k) \cong k[\Delta]$
the image is the direct sum of the monomial ideals
\begin{equation}\label{eq:summand of Phi}
I_j := \be_{\wt{F}_j}k[\Delta] \cong k[\Delta^{F_j}] \otimes_{A^{F_j}} A(-2r_j),
\end{equation}
where $r_j = |\wt{F}_j|$. 
\end{theorem}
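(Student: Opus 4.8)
The plan is to compute $\H^\bullet_T(\Phi_F(\T'))$ one component of the fixed locus at a time, using the filtration of $\wt\M^+$ by the closed subsets $\wt\M^+_{\le s}$, and to match each graded piece with a monomial ideal $I_j$ in $k[\Delta]$ via equivariant localization. First I would observe that, because $\wt\M$ is smooth and $p$ is semismall (and in fact since $\wt\M_j$ is a smooth connected component of the fixed locus with attracting set $\wt\M^+_j$), the contracting-space lemma of \cite{Br03} identifies $f^!g^*\T'$ with a complex whose hypercohomology is $H^\bullet_T(\wt\M^+, \wt\M^+\setminus p^{-1}(\M^F))$. Using the stratification $\wt\M^+ = \coprod_j \wt\M^+_j$ with $\wt\M^+_{\le s}$ closed, one gets a long exact (in fact, by parity, short exact) sequence decomposing this group as $\bigoplus_j H^\bullet_T(\wt\M^+_j, \wt\M^+_j \setminus \wt\M_j)$, and each such relative group is, by the Thom isomorphism for the (complex, since the action is Hamiltonian and $\xi$ is generic) normal bundle of $\wt\M_j$ in its attracting cell, isomorphic to $H^{\bullet}_T(\wt\M_j)$ shifted by $2r_j$ where $r_j = |\wt F_j|$ is the codimension of the attracting set $\wt\M^+_j$ inside $\wt\M$ — equivalently the number of hyperplanes turned on at $\wt F_j$. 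This gives the abstract shape $\bigoplus_j k[\Delta^{F_j}]\otimes_{A^{F_j}} A(-2r_j)$ and explains the shift.

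Next I would pin down the image of $\rho_F$ inside $H^\bullet_T(\wt\M;k) \cong k[\Delta]$ (Proposition~\ref{prop:HT is face ring}). The key point is compatibility with equivariant localization: for each $T$-fixed point $y_B$ of $\wt\M$ (indexed by a basis $B$), the restriction $H^\bullet_T(\Phi_F(\T')) \to H^\bullet_T(y_B)$ factors through $\rho_F$ followed by $ev_B$, and it is nonzero precisely when $y_B$ lies in the closure of some attracting cell $\wt\M^+_j$, i.e.\ when $\wt F_j \subset B$; this is the localization criterion for a fixed point to contribute. Since $ev_B(\be_{\wt F_j}) \ne 0$ exactly in that case, the image of the $j$th summand under $\rho_F$ is forced to be the monomial ideal $\be_{\wt F_j}\,k[\Delta]$. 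The identification $\be_{\wt F_j}\,k[\Delta] \cong k[\Delta^{F_j}]\otimes_{A^{F_j}}A(-2r_j)$ is then a direct ring-theoretic check: multiplication by $\be_{\wt F_j}$ sends $k[\Delta^{F_j}] = k[\be_i : i \notin \wt F_j]/(\text{dependent sets of } V^{F_j})$ isomorphically onto the ideal it generates, with a degree shift of $2r_j$, and one checks the dependent monomials killed on each side match because $F_j$ (the flat completion of $\wt F_j$) is a flat, so $V^{F_j}$ has the expected matroid. Injectivity and freeness of $\rho_F$ over $A$ then follow: each summand is a free $A$-module (being an extended face ring of a matroid), and the localization map is injective on each summand because $ev_B$ is.

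The main obstacle I expect is the gluing/exactness step — showing that the long exact sequence coming from the filtration by the $\wt\M^+_{\le s}$ actually splits into the claimed direct sum and is compatible with the localization maps. This requires the parity vanishing of $H^\bullet_T(\wt\M_j)$ (Remark~\ref{rmk:parity} / Proposition~\ref{prop:parity vanishing}) to kill the connecting maps, plus a careful bookkeeping that the ordering of the $\wt F_j$ by increasing $\tilde\xi$-value makes $\wt\M^+_{\le s}$ genuinely closed so that the relative cohomology long exact sequences assemble correctly; one also has to be slightly careful that the component $\wt\M_j$ resolves $\M^{F_j}$ rather than $\M^F$ (these differ in general, as the text warns), so that the "extended face ring" appearing is $k[\Delta^{F_j}]\otimes_{A^{F_j}}A$ and not $k[\Delta^F]\otimes_{A^F}A$. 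Everything else — the Thom isomorphism, the contracting lemma, and the monomial-ideal identification — is routine given the results already established in Sections 3 and 4.
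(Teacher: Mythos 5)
Your overall skeleton matches the paper's: apply the contracting lemma to write $\H^\bullet_T(\Phi_F(\T'))$ as (relative) equivariant cohomology of the attracting set, filter $\wt\M^+$ by the closed unions $\wt\M^+_{\le s}$, use parity vanishing of $H^\bullet_T(\wt\M_j)$ together with the rank-$r_j$ affine bundle $\wt\M^+_j \to \wt\M_j$ to split the long exact sequences and obtain $\bigoplus_j H^{\bullet-2r_j}_T(\wt\M_j)$, and then identify the image of $\rho_F$ inside $k[\Delta]$ by localization. The abstract computation and the algebraic identification $\be_{\wt F_j}k[\Delta] \cong k[\Delta^{F_j}]\otimes_{A^{F_j}}A(-2r_j)$ are fine in outline (for the latter, note that surjectivity onto the ideal is itself settled by a Hilbert series comparison, not just by inspection).

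The genuine gap is in the step where you claim the image of the $j$th summand is \emph{forced} to be $\be_{\wt F_j}k[\Delta]$ by looking at which fixed points $y_B$ it restricts nontrivially to. Two problems. First, the splitting of the filtration is only an isomorphism of graded $A$-modules; a class originating from the $j$th piece is supported on the closed set $\wt\M^+_{\le j}$, so nothing prevents it from having nonzero restriction at fixed points $y_B \in \wt\M_{j'}$ with $j' < j$, i.e.\ with $\wt F_j \not\subset B$. Second, even granting the support claim, knowing that a free submodule of $k[\Delta]$ dies under $ev_B$ for all $B \not\supseteq \wt F_j$ only places it inside the common kernel of those evaluations, which is not a priori the monomial ideal $\be_{\wt F_j}k[\Delta]$; you would still need a rank or Hilbert series count to upgrade containment to equality. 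The paper closes exactly this gap differently: it uses the exact sequence of the pair $(\wt\M^+,\wt\M^>)$ with $\wt\M^> = \wt\M^+ \setminus p^{-1}(\M^\xi)$, notes that $H^\bullet_T(\wt\M^>)$ is torsion (no fixed points) while $\H^\bullet_T(\Phi_F\T')$ is free — this is also what gives injectivity of $\rho_F$, which your appeal to ``$ev_B$ is injective'' does not — identifies the image as the kernel of $k[\Delta]\to H^\bullet_T(\wt\M^>)$, proves $\be_{\wt F_j}$ lies in that kernel via the Kirwan homomorphism and the free action of $(\C^*)^{\wt F_j}$ on the relevant locus, and finally forces equality by comparing the Hilbert series of $\bigoplus_j I_j$ with that of $\H^\bullet_T(\Phi_F\T')$ already computed from the filtration. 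You should replace your fixed-point support argument with some version of this (or supply the missing support and counting arguments).
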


We will only need the full generality of this result in the proof of Theorem \ref{thm:extended maps between projectives} below.  The special case $F = I$ is more important, as it gives a combinatorial description of 
our exact functors $\Morse_F$ on resolution sheaves.
In this case, $\{\wt{F}_1, \dots, \wt{F}_r\}$ is just the set of all bases of $V$, and 
$\wt\M^\xi$ consists of isolated points.   Thus we get the following result.

\begin{corollary} \label{cor:localization of Morse group}
The hypercohomology $\H^{i}(\Phi_I(\T))$ vanishes if $i\ne 0$; in particular, the forgetful map $\H^{0}_{T}(\Phi_I(\T)) \to \H^{0}(\Phi_I(\T)) = \Morse_I(\T)$ is an isomorphism.

The map $\rho_I$ followed by restriction to fixed points gives an injective map 
\[\H^{\bullet}_{T}(\Phi_I(\T)) \to H^{\bullet+2d}_T(\wt\M^T; k),\] whose image is the free $A$-submodule generated by $\be_B$ for every basis $B$ of $V$.
\end{corollary}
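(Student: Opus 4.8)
The plan is to obtain both parts of the corollary from Theorem~\ref{thm:Cohomology of intermediate Morse sheaf} in the special case $F = I$, together with the $t$-exactness of $\Phi_I$ from Lemma~\ref{lem:Morse is perverse}. Since $V$ has no coloops we have $I \in \cF$, so Lemma~\ref{lem:Morse is perverse} applies and $\Phi_I$ is $t$-exact; as the resolution sheaf $\T$ is perverse, $\Phi_I(\T)$ is a perverse object of $D^b_T(\M^I,k)$. Because $\M^I$ is a single point, a $T$-equivariant perverse sheaf on it is concentrated in cohomological degree $0$, which already gives $\H^i(\Phi_I(\T)) = 0$ for $i \ne 0$. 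Writing $M = \H^0(\Phi_I(\T)) = \Morse_I(\T)$, the equivariant object $\Phi_I(\T)$ is just $M$ placed in degree $0$, so $\H^\bullet_T(\Phi_I(\T)) = M \otimes_k A$ and the forgetful map $\H^0_T(\Phi_I(\T)) \to \H^0(\Phi_I(\T))$ is the identity of $M$. (One could instead invoke the homological fact used in Proposition~\ref{prop:forgetting equivariance} together with the vanishing; and in fact one can avoid $t$-exactness entirely, since the equivariant computation below puts $\H^\bullet_T(\Phi_I(\T))$ in degrees $\ge 0$, while $\Phi_I(\T) \cong \Phi_{-\xi}(\T) \cong \D\,\Phi_I(\T)$ by Proposition~\ref{prop:vanishing cycles iso}, Lemma~\ref{lem:hyperloc}, and the self-duality of $\T$ puts it in degrees $\le 0$.)

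Next I specialize Theorem~\ref{thm:Cohomology of intermediate Morse sheaf} to $F = I$. Then $\M^\xi = \M^I$ is a point and, as observed just before the statement of the corollary, the minimal subsets $\wt F_1, \dots, \wt F_r$ are exactly the bases of $V$, so $r_j = |\wt F_j| = d = \rk V$ for all $j$. The completion of a basis to a flat is $I$, so $\Delta^{F_j}$ is a point and $A^{F_j} = k$, whence each summand of the theorem is $I_j = \be_{\wt F_j} k[\Delta] \cong A(-2d)$. Since $\T' = \T[-2d]$ and hyperbolic restriction commutes with shifts, $\H^\bullet_T(\Phi_I(\T)) = \H^{\bullet+2d}_T(\Phi_I(\T'))$; thus Theorem~\ref{thm:Cohomology of intermediate Morse sheaf} identifies $\rho_I$ (for $\T$) with an injective $A$-linear map $\H^\bullet_T(\Phi_I(\T)) \hookrightarrow H^{\bullet+2d}_T(\wt\M;k) \cong k[\Delta]$ whose image is $\bigoplus_B \be_B k[\Delta]$, the sum over bases $B$ of $V$.

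Finally I compose with the restriction to the fixed points $\wt\M^T = \wt\M^\xi = \{y_B\}$, which by Proposition~\ref{prop:HT is face ring} is $\bigoplus_B ev_B \colon k[\Delta] \to \bigoplus_B A$. This map is injective on $k[\Delta]$: every monomial of $k[\Delta]$ has independent support, hence lies in some basis $B$, and $ev_B$ carries it to a nonzero monomial of $A$ — this is the standard embedding of a Stanley--Reisner ring into the product of polynomial rings on its facets, which here are the bases. Composed with the injection $\rho_I$, the map remains injective. For the image, observe that a nonzero monomial of the ideal $\be_B k[\Delta]$ has support containing $B$, hence, $B$ being a maximal independent set, support exactly $B$; so $\be_B k[\Delta] = \be_B\cdot k[\be_i \mid i \in B]$. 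Therefore $ev_{B'}$ kills $\be_B k[\Delta]$ for $B' \ne B$ (some $\be_i$ with $i \in B \setminus B'$ goes to $0$), while $ev_B$ restricts to the isomorphism $k[\be_i \mid i \in B] \xrightarrow{\sim} A$ and sends $\be_B$ to a nonzero element. Hence the image of the composite is $\bigoplus_B ev_B(\be_B)\cdot A$, the free rank-one $A$-submodule generated by the class $\be_B$ in the $B$-th factor of $H^{\bullet+2d}_T(\wt\M^T;k)$, as claimed.

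I do not expect a genuine obstacle here: all the substance lies in Theorem~\ref{thm:Cohomology of intermediate Morse sheaf} and Proposition~\ref{prop:HT is face ring}. The two points that need attention are bookkeeping the $2d$ degree shift between $\T$ and $\T'$, and the (routine) injectivity of the restriction $k[\Delta] \to \bigoplus_B A$ to the vertex sets of the facets.
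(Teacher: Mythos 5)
Your proposal is correct and follows essentially the same route as the paper: the paper derives this corollary directly by specializing Theorem~\ref{thm:Cohomology of intermediate Morse sheaf} to $F=I$ (where the $\wt F_j$ are the bases and $\wt\M^\xi=\wt\M^T$), which is exactly what you do, with the degree shift $\T=\T'[2d]$ and the injectivity of $k[\Delta]\to\bigoplus_B A$ via Proposition~\ref{prop:HT is face ring} spelled out. Your use of the $t$-exactness of $\Phi_I$ (Lemma~\ref{lem:Morse is perverse}) for the degree-$0$ concentration is a legitimate, non-circular shortcut for the first assertion, and your parenthetical alternative via self-duality is also consistent with the paper's setup.
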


Applying this to the normal slice to a stratum closure, for any flats $E \le F$ in $\cF$, we get an identification of $\Morse_E(\T^F)$ with the free $k$-module $\cB^F_E$ spanned by $\be_B$ for all bases $B$ of $V^F_E$.

\begin{proof}[Proof of Theorem \ref{thm:Cohomology of intermediate Morse sheaf}]
Each set $\wt{F}_j$ is independent for $V$, but 
for any $j\ne j'$, 
the set $\wt{F}_j \cup \wt{F}_{j'}$ is dependent.  This implies that $I_j$ and $I_{j'}$ have no monomials in common, and so the sum is direct.  

A nonzero monomial in $I_j$ is the product of 
$\be_{\wt{F}_j}$, a nonzero monomial in $k[\Delta^{F_j}]$ and an arbitrary monomial in the $\be_i$, $i\in \wt{F}_j$.  Thus multiplication by $\be_{\wt{F_j}}$ gives an injective map of $A^{F_j}$-modules $k[\Delta^{F_j}] \to I_j(2r_j)$.
The induced map $k[\Delta^{F_j}] \otimes_{A^{F_j}} A \to I_j(2r_j)$ is still injective since $k[\Delta]$ is a free $A$-module.  The isomorphism \eqref{eq:summand of Phi} follows because both sides have the same Hilbert series.

Let $q\colon \wt\M^+ \to \M^\xi$ be the composition of $p|_{\wt\M^+}\colon \wt\M^+\to \M^+$ with the limit map 
$\M^+\to \M^\xi$. We get an isomorphism
$\Phi_F(\T') \cong q_!\uk_{\wt\M^+,T}$ using the contracting lemma (either Lemma \ref{contracting lemma} below, or the more standard version for algebraic torus actions is sufficient). 

Let $c_j\colon \wt\M^+_j \to \wt\M^+$ and $d_j\colon \wt\M_{\le j} \to \wt\M^+$ be the inclusions.  Since the action of $T$ is symplectic and $\codim{\wt\M_j} = 2r_j$, the attracting set $\wt\M^+_j$ is an $\mathbb{A}^{r_j}$-bundle over 
 $\wt\M_j$.  Thus we have 
an isomorphism
\[\H_T^\bullet(q_!(c_j)_!\uk_{\wt\M^+_j,T}) \cong
H^{\bullet-2r_j}_T(\wt\M_j;k).\]
In particular this implies that these groups vanish in odd degrees, so an induction using the triangle
\begin{equation}\label{eq:Morse triangle}
(c_j)_!\uk_{\wt\M^+_j,T} \to (d_j)_!\uk_{\wt\M^+_{\le j},T} \to (d_{j-1})_!\uk_{\wt\M^+_{\le j-1},T} \stackrel{[1]}{\to}
\end{equation}
gives an isomorphism of graded $A$-modules
\begin{equation}\label{eq:cohomology direct sum}
\H^\bullet_T(\Phi_F\T') \cong \bigoplus_{j=1}^r H^{\bullet-2r_j}_T(\wt\M_j;k).
\end{equation}
In particular $\H^\bullet_T(\Phi_F\T')$ is a free $A$-module.

Define 
$\wt\M^> := \wt\M^+ \setminus p^{-1}(\M^\xi)$.
It has no $T$-fixed points, so its equivariant cohomology $H^\bullet_T(\wt\M^{>};k)$ is torsion as an $A$-module.
It follows that the long exact sequence
\[\to\H^\bullet_T(\Phi_F\T') \to H^\bullet_T(\wt\M^+; k) \to H^\bullet_T(\wt\M^{>};k) \to\]
breaks into short exact sequences.   

So to prove the theorem it will be enough to show that kernel of the 
composition 
\[k[\Delta] \cong H^\bullet_T(\wt\M^+; k) \to H^\bullet_T(\wt\M^>; k)\]
is $\bigoplus_j \be_{\wt{F}_j}k[\Delta]$.
To see that $\be_{\wt{F}_j}$ is in the kernel, recall that the isomorphism of 
Proposition \ref{prop:HT is face ring} is induced by the Kirwan homomorphism
\[k[\be_i\mid i\in I] \cong H^\bullet_{(\C^*)^I}(T^*\C^I; k) \to H^\bullet_{(\C^*)^I}(\mu_K^{-1}(0)^{ss}; k)\stackrel{\sim}{\to} H^\bullet_T(\wt\M; k),\]
where $\mu_K^{-1}(0)^{ss}$ is the set of $\alpha$-semistable points in $\mu_K^{-1}(0)$.  It is easy to check that $(\C^*)^{\wt{F}_j}$ acts freely on the preimage of $\wt\M^> \subset  
\wt\M \setminus \wt\M_j$ in $\mu_K^{-1}(0)$, and so $\be_{\wt{F}_j}$ restricts to zero there. 

For the other inclusion, 
just note 
that the isomorphisms \eqref{eq:summand of Phi} and \eqref{eq:cohomology direct sum} imply that $\bigoplus_j I_j$ and 
$\H^\bullet_T(\Phi_F\T')$ have the same Hilbert series.
\end{proof}

\subsection{Faithfulness of hypercohomology and Morse cohomology}

In this section and for the remainder of the paper we
will assume that $V$ has no coloops, or in other words that $I\in \cF$.  This is purely for convenience; all of the statements hold in the general case with suitable modifications.

\begin{proposition}\label{prop:Morse is faithful}
The global equivariant hypercohomology functor $\H^\bullet_T$ and the functor $\Morse_I$ are faithful on resolution sheaves. In other words, for any $E,F \in \F$, we have injections
\begin{align*}
 \Hom(\T^E,\T^F) & \to \Hom_A(\H^\bullet_T(\T^E),\H^\bullet_T(\T^F)) \\ & = \Hom_A(k[\Delta^E]\otimes_{A^E} A,k[\Delta^F]\otimes_{A^F} A)
\end{align*}
and
\[ \Hom(\T^E,\T^F) \to \Hom_\kk (\Morse_I(\T^E),\Morse_I(\T^F))= \Hom_\kk(\B^E_I,\B^F_I).\]
\end{proposition}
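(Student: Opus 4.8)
The plan is to deduce the faithfulness of $\H^\bullet_T$ from the structure of $\T^E$ and $\T^F$ as direct sums of pushforwards of constant sheaves under semismall resolutions, together with the computation of $\H^\bullet_T(\widetilde{\M^F})$ in Proposition \ref{prop:HT is face ring}, and then to deduce faithfulness of $\Morse_I$ from that of $\H^\bullet_T$ by localizing at the torus fixed points as in Corollary \ref{cor:localization of Morse group}. First I would reduce to the statement about $\H^\bullet_T$: indeed, by Corollary \ref{cor:localization of Morse group}, $\Morse_I(\T^G) = \H^0(\Phi_I(\T^G))$ is computed as the image of $\H^\bullet_T(\T^G) \cong k[\Delta^G]\otimes_{A^G}A$ under restriction to the $T$-fixed locus $\widetilde{\M^G}{}^T$, with image the free submodule on $\{\be_B : B \text{ basis of } V^G\}$, and the localization map $\H^\bullet_T(\T^G) \to H^\bullet_T(\widetilde{\M^G}{}^T)$ is injective (it is injective already on each summand $\Omega$-piece by the usual Atiyah--Bott localization, since $\widetilde{\M^G}$ has vanishing odd cohomology by Remark \ref{rmk:parity}, so its equivariant cohomology is free over $A$ and torsion-free). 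Hence a homomorphism $\T^E \to \T^F$ which induces zero on $\H^\bullet_T$ induces zero on the fixed-point localizations, hence zero on $\Morse_I$; conversely one must check that the localization-to-fixed-points map factors through $\Morse_I$, which is exactly the content of Corollary \ref{cor:localization of Morse group}. So the two displayed injectivity statements are equivalent, and it suffices to prove the first.

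For the first statement, the key point is that $\H^\bullet_T$ is faithful on the additive category generated by the sheaves $\T^F$. I would argue as follows. A morphism $\phi\colon \T^E \to \T^F$ in $D^b_{T,\cS}(\M,k)$ that induces the zero map on $\H^\bullet_T$ must be shown to be zero. Both $\T^E$ and $\T^F$ are direct summands of pushforwards $(p_\a)_* \uk$ of constant sheaves along semismall, proper, $T$-equivariant resolutions; being perverse parity self-dual complexes, $\Hom(\T^E,\T^F)$ is computed in the derived category. Using the resolution $p\colon \widetilde{\M^F} \to \M^F \hookrightarrow \M$, one has $\Hom_{D^b_{T}(\M)}(\T^E, (p)_*\uk_{\widetilde{\M^F}}[\cdot]) \cong \Hom_{D^b_T(\widetilde{\M^F})}(p^*\T^E|_{\M^F},\uk[\cdot]) \cong \H^\bullet_T(\widetilde{\M^F}, (p^*\T^E|_{\M^F})^\vee)$, and since $\T^E$ is also built from resolutions and is pure/parity, this Hom-space is concentrated in even degrees and is a free $A$-module whose fixed-point localization is computed combinatorially. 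The upshot I want is: the natural map $\Hom(\T^E,\T^F) \to \Hom_A(\H^\bullet_T(\T^E),\H^\bullet_T(\T^F))$ is injective because both sides localize injectively at the $T$-fixed points $y_B$ (Proposition \ref{prop:HT is face ring}), and a morphism dies on $\H^\bullet_T$ iff it dies after this localization. Concretely: restrict $\phi$ to a fixed point $y_B \in \widetilde{\M^F}$; the local picture of a semismall resolution near a fixed point, together with $T$-equivariant formality, forces $\phi$ to be determined by the induced map on equivariant stalks at the $y_B$, i.e. by the induced map on $\H^\bullet_T(\T^E) \to \H^\bullet_T(\T^F)$ localized at each $y_B$.

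The main obstacle I anticipate is making rigorous the passage ``zero on $\H^\bullet_T$ $\Rightarrow$ zero as a morphism of sheaves'' — i.e. establishing that $\H^\bullet_T$ is genuinely faithful (not merely that it detects nonzero objects). The clean way to handle this, and the route I would take, is: (i) show each $\T^G$ is $*$-pure and parity (Remark \ref{rmk:parity}), so that for any two such sheaves $\Hom^\bullet(\T^E,\T^F) := \bigoplus_i \Hom(\T^E,\T^F[i])$ is a free graded $A$-module concentrated in even degrees, via the standard ``spreading out over the strata'' spectral sequence which degenerates by parity (using Proposition \ref{prop:parity vanishing}); (ii) observe that for such formal objects, the functor $\H^\bullet_T(-) = \Hom^\bullet(\uk_{\M,T}[\cdot], -)$ is conservative and, on the full additive subcategory generated by the $\T^G$, the map on $\Hom$-spaces to $A$-module maps is injective because the $\T^G$ are, up to shift and summand, generated under this formality by pushforwards from fixed points, where the statement is tautological from Proposition \ref{prop:HT is face ring}. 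Alternatively — and this may be cleaner to write — one can cite that hyperbolic restriction $\Phi_I$ to the attracting set is faithful on the subcategory of parity complexes because $\H^0 \Phi_I = \Morse_I$ together with the localization description of Corollary \ref{cor:localization of Morse group} already exhibits $\Morse_I(\T^G)$ as a concrete free module $\B^G_I$ inside $H^\bullet_T((\widetilde{\M^G}){}^T)$ with a transparent basis; any $\phi$ killing all these localized modules kills $\H^\bullet_T(\T^E\to\T^F)$, and conversely. I expect the write-up to lean on the combinatorial basis $\{\be_B\}$ and on $T$-equivariant formality of parity sheaves as the two load-bearing inputs, with Proposition \ref{prop:HT is face ring} and Corollary \ref{cor:localization of Morse group} supplying everything else.
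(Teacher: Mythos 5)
Your reduction of the $\Morse_I$ statement to the $\H^\bullet_T$ statement is essentially the paper's: one uses Corollary \ref{cor:localization of Morse group} to see that $\H^\bullet_T(\Phi_I(\T^G))$ is a free $A$-module generated in degree $0$ (so vanishing of $\Morse_I(\phi)$ forces vanishing of the whole map on $\H^\bullet_T\Phi_I$), and that $\rho_I$ has full-rank image in the torsion-free module $\H^\bullet_T(\T^F)$, so the map on $\H^\bullet_T$ vanishes too. (Be careful with the direction: the implication you actually need is ``zero on $\Morse_I$ implies zero on $\H^\bullet_T$,'' not the one you state first.) The problem is the core claim, faithfulness of $\H^\bullet_T$ itself, where your proposal stops at exactly the point you yourself flag as the main obstacle. ``Parity plus equivariant formality implies $\H^\bullet_T$ is faithful on these objects'' is not a citable fact; it is the content of the proposition. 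Your two suggested shortcuts do not close the gap: the appeal to faithfulness of $\Phi_I$ on parity complexes is circular (that is what is being proved), and the claim that the $\T^G$ are ``up to shift and summand generated by pushforwards from fixed points'' is false before localization — $\T^G$ is a pushforward from a resolution, not from the fixed locus, and no decomposition of it into skyscraper-like pieces is available in $D^b_{T,\cS}(\M,k)$.

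What the paper actually does, and what is missing from your proposal, is a stratum-by-stratum induction (Lemma \ref{lem:faithful hypercohomology}): a morphism $\psi$ killing $\H^\bullet_T$ vanishes on the open union of strata by induction, hence factors as $B\to j_{S*}j_S^*B\to j_{S!}j_S^!C\to C$ through the closed stratum $S$, and one kills the middle arrow using two nontrivial inputs: \textbf{(b)} surjectivity of the restriction $\H^\bullet_T(\T)\to\H^\bullet_T(j_S^*\T)$ for every fine stratum, proved via the Kirwan homomorphism, the local product structure of \cite[2.5]{PW}, and a Serre spectral sequence degenerating by parity; and \textbf{(c)} injectivity of $\H^\bullet_T(j_S^!\T)\to\H^\bullet_T(j_S^*\T)$, proved for the closed stratum via the contracting lemma and freeness of $H^\bullet_{T,c}(\M_\a)$ over $A$, and for general strata by a slice argument using \cite[Lemma 5.5]{BP09}. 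A localization-at-the-generic-point argument in the spirit you sketch could in principle be made to work, but it would still require establishing statements equivalent to (b) and (c) (or an equivariant localization theorem for $\RHom(\T^E,\T^F)$ together with an identification of the localized Hom with maps of localized hypercohomology), none of which your proposal supplies.
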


As a result we will be able to describe the ring $\End(\bigoplus_F \Omega^F)$ by studying its action on $\bigoplus_{E\le F} \cB^F_E$. 

The proof will be based on the following result.
Let $X$ be a $T$-variety endowed with a $T$-invariant stratification $\cS$.  For each $S \in \cS$, let $j_S \colon S\to X$ be the inclusion. 

\begin{lemma}\label{lem:faithful hypercohomology}
 Suppose that for each $S \in \cS$ the equivariant cohomology $H^\bullet_T(S; k))$ vanishes in odd degrees, and that  
 objects $B,C \in D^b_{T,\cS}(X; k)$ satisfy the following for every $S\in \cS$:
\begin{enumerate}
\item[(a)] the cohomology sheaves of $j^*_S(B)$ are constant and vanish in odd degrees,
\item[(b)] the restriction $\H_T^\bullet(B) \to \H_T^\bullet(j_S^*B)$ is surjective,
\item[(c)] the natural map $\H_T^\bullet(j_S^!C) \to \H^\bullet_T(j_S^*C)$ is injective.
\end{enumerate}
Then taking hypercohomology induces an injection
\[\Hom(B, C) \to \Hom_{H^\bullet_T(pt)\mathrm{-mod}}(\H_T^\bullet(B),\H_T^\bullet(C)).\]
\end{lemma}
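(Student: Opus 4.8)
The statement is a purely homological-algebra assertion, and the natural approach is to run the same spectral-sequence argument used in Proposition~\ref{prop:forgetting equivariance}, but now comparing $\Hom(B,C)$ not with its non-equivariant counterpart but with the module of maps on equivariant hypercohomology. Fix an ordering $S_1,\dots,S_r$ of the strata so that $U_s := \bigcup_{j\le s} S_j$ is open for each $s$ (equivalently each $\bigcup_{j>s}S_j$ is closed), and let $j_s\colon S_s\hookrightarrow X$ be the inclusion. The key structure is the spectral sequence
\[
E_1^{p,q} = \H_T^{p+q}\,\RHom(j_p^*B,\, j_p^!C) \;\Longrightarrow\; \H_T^{p+q}\,\RHom(B,C) = \Ext^{p+q}(B,C),
\]
obtained by filtering $C$ (or $B$) along the stratification as in \cite[3.4]{BGS}, exactly as in the proof of Proposition~\ref{prop:forgetting equivariance}. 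Since $\Hom(B,C) = \H_T^0\RHom(B,C)$, it suffices to control the degree-zero and degree-one pieces: I would show that $E_1^{p,q}=0$ for $p+q<0$, that each $E_1^{p,0}$ injects into the corresponding term of an analogous spectral sequence computing $\Hom_A(\H_T^\bullet(B),\H_T^\bullet(C))$, and that there is no incoming differential that could enlarge the kernel — more precisely, I would compare the whole spectral sequence with the ``Hom of hypercohomology'' side and use the standard homological fact (if $\phi\colon C^\bullet\to D^\bullet$ has $C^i=D^i=0$ for $i<0$, $\phi^0$ iso and $\phi^1$ injective, then $H^0(\phi)$ is an iso and $H^1(\phi)$ injective) to pass from the $E_1$-level comparison to the abutments.

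The heart of the matter is the term-by-term analysis of $E_1^{p,q} = \H_T^{p+q}\RHom(j_p^*B, j_p^!C)$. On each stratum $S=S_p$, hypothesis (a) says $j_S^*B$ is a sum of shifted constant sheaves concentrated in even degrees, so $\RHom(j_S^*B, j_S^!C)$ is computed by a finite collection of shifts of $j_S^!C$; combined with the vanishing of $H^\bullet_T(S)$ in odd degrees and the standard $\Ext$-spectral sequence over the point, $\H_T^\bullet(\RHom(j_S^*B,j_S^!C))$ can be assembled from the groups $\H_T^\bullet(j_S^!C)$ and the (even, by hypothesis) equivariant cohomology of $S$. Hypothesis (c) then lets me replace $\H_T^\bullet(j_S^!C)$ by something mapping into $\H_T^\bullet(j_S^*C)$, and hypothesis (b) guarantees that global equivariant sections of $B$ surject onto local ones. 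Putting these together, I expect to identify $E_1^{p,q}$ (or at least inject it into) a corresponding $E_1$-term of a spectral sequence whose abutment is $\Ext^\bullet_A(\H_T^\bullet(B),\H_T^\bullet(C))$, namely one built from $\Hom_A$ and $\Ext^{>0}_A$ of the graded pieces of the hypercohomology modules. The bookkeeping of gradings (internal cohomological degree versus the $\Ext$-over-$A$ degree versus the stratification filtration degree $p$) is where care is needed.

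\textbf{Main obstacle.} The delicate point is not any single vanishing but organizing the comparison so that the map of spectral sequences is genuinely defined and compatible: I need a natural transformation from the sheaf-theoretic $\RHom(B,C)$ (filtered by strata) to an explicit complex computing $\RHom_A(\H_T^\bullet B, \H_T^\bullet C)$, and I need this transformation to be an isomorphism in degree $0$ and an injection in degree $1$ on associated graded pieces. Concretely, the worry is a nonzero $\Ext^1_A$ between graded hypercohomology modules of adjacent strata contributing a $d_1$ (or higher) differential on the ``target'' side that has no analogue on the source side, which could a priori make the map on $\Hom$-abutments fail to be injective; conditions (a), (b), (c) together with the even-concentration of $H_T^\bullet(S)$ are precisely what I expect to rule this out, by forcing the relevant $E_1$ entries in negative total degree to vanish and the degree-$0$ entries to inject. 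So the proof is essentially a careful two-spectral-sequence comparison, and the hard part will be verifying hypothesis (b) and (c) interact correctly at the level of these auxiliary $\Ext_A$-terms rather than any individual step.
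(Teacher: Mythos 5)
Your proposal is a plan rather than a proof, and the step you yourself flag as the ``main obstacle'' is a genuine gap, not a routine verification. The target of the map in the lemma is the single group $\Hom_A(\H_T^\bullet(B),\H_T^\bullet(C))$; unlike in Proposition~\ref{prop:forgetting equivariance}, where both sides are hypercohomologies of the same sheaf $\RHom(Q,\bar Q)$ and therefore carry compatible stratification filtrations, here there is no canonical stratification-indexed spectral sequence abutting to the algebraic $\Hom_A$ (or $\Ext_A$) that receives a map from your $E_1^{p,q}=\H_T^{p+q}\RHom(j_p^*B,j_p^!C)$. Even granting such a comparison, injectivity on the abutment in total degree $0$ requires that a class surviving to $E_\infty^{p,-p}$ on the source not land in the image of an incoming differential on the target side; you name exactly this failure mode and assert that (a)--(c) rule it out, but you do not show it, and it is not clear how they would without essentially redoing the argument by hand. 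You also leave the one-stratum input unproved (the fact that on a single stratum with evenly concentrated $H^\bullet_T(S;k)$ and constant even cohomology sheaves, hypercohomology is faithful on $\Hom$'s), which is the base case the whole argument must rest on.

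The paper's proof shows that no spectral sequence on the target side is needed. It proceeds by induction on $|\cS|$: the one-stratum case is \cite[Lemma 5.5(c)]{BP09} (this is where the odd-vanishing of $H^\bullet_T(S;k)$ and hypothesis (a) enter). For the inductive step, take $\psi$ inducing zero on $\H_T^\bullet$; by induction $\psi|_U=0$ on the open complement $U$ of a closed stratum $S$, so $\psi$ factors as $B\to j_{S*}j_S^*B\to j_{S!}j_S^!C\to C$ through a map $\psi_S\in\Hom(j_S^*B,j_S^!C)$. Hypothesis (b) (surjectivity of $\H_T^\bullet(B)\to\H_T^\bullet(j_S^*B)$) forces $j_S^*(\psi)$ to induce zero on equivariant hypercohomology; hypothesis (c) (injectivity of $\H_T^\bullet(j_S^!C)\to\H_T^\bullet(j_S^*C)$) then forces $\psi_S$ to induce zero on hypercohomology; the one-stratum case gives $\psi_S=0$, hence $\psi=0$. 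If you want to salvage your route, you would in effect have to prove this factorization-and-restriction argument anyway to control the differentials you are worried about, so I recommend adopting the inductive argument directly.
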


\begin{proof}
Take a map $\psi\colon B \to C$ inducing the zero map 
$\H^\bullet_T(B) \to \H^\bullet_T(C)$.  We prove by induction on $|\cS|$ that $\psi = 0$.
If $|\cS| = 1$, so $X$ is a single stratum, then this follows from 
\cite[Lemma 5.5(c)]{BP09} (this is where we need the hypothesis that $H^\bullet_T(S; k))$ vanishes in odd degrees).

If $|cS| > 1$, let $S \in \cS$ be closed in $X$, and set $U = X \setminus S$.  
Then $B|_U$ and $C|_U$ also satisfy (a), (b) and (c), so we have $\psi|_U = 0$.  This implies that
$\psi$ factors as 
\[B \to j_{S*}j^*_S B \to j_{S!}j_S^! C \to C\]
where the first and last maps are the adjunctions and
the middle map is the pushforward of a map
\[\psi_S \in \Hom(j_S^*B, j_S^!C) = \H^0_T(j_S^!\RHom(B,C)).\]
Composing $\psi_S$ with the natural map $j_S^!C \to j_S^*C$ gives the restriction map $j_S^*(\psi)\colon j_S^*B \to j_S^*C$.   But our assumption (b) and the fact that 
$\H_T^\bullet(\psi) = 0$
imply that $j_S^*(\psi)$ induces the zero map
$\H^\bullet_T(j_S^*B) \to \H^\bullet_T(j_S^*C)$, and so
(c) implies that $\psi_S$ also gives the zero map on hypercohomology.  Another application of the one-stratum case of the Lemma shows that $\psi_S = 0$, and so $\psi = 0$, as desired. 
\end{proof}

\begin{proof}[Proof of Proposition \ref{prop:Morse is faithful}]
If a map $\phi\colon \T^E \to \T^F$ induces the zero map $\Morse_I(\T^E) \to \Morse_I(\T^F)$,
then the equivariant map 
$\H^\bullet_T(\Phi_I(\T^E)) \to \H^\bullet_T(\Phi_I(\T^F))$ is also zero.
By Corollary \ref{cor:localization of Morse group}, the
map $\rho_I\colon \H^\bullet_T(\Phi_I(\T^F)) \to \H^\bullet_T(\T^F)$ is a map of free graded $H^\bullet_T(pt)$-modules which becomes an isomorphism when tensored with the quotient field.  It follows that $\phi$ induces the zero map
$\H^\bullet_T(\T^E) \to \H^\bullet_T(\T^F)$.  Thus we only have to show that 
 $\H_T$ is faithful.  We will prove this by showing that 
 the hypotheses of Lemma \ref{lem:faithful hypercohomology} hold for 
 the resolution sheaves $\T^F$ and the fine stratification $\{\breve S_F\}$.  It is enough to consider the sheaf $\T = \T^\emptyset$.

Property (a) holds for $\Omega$ by
Corollary \ref{cor:omega is constructible} and Remark \ref{rmk:parity}.

Next we show that  $\T$ satisfies (b), 
or in other words that the restriction
\[H^\bullet_T(\M_\a) \to H^\bullet_T(p_\a^{-1}(\breve S_F))\]
is surjective, where we fix a choice of resolution $p_\a\colon \M_\a \to \M$.  In fact, we claim that the composition of this map with
the Kirwan
homomorphism
\[\kappa\colon k[\be_i\mid i\in I] = H^\bullet_{(\C^*)^I}(T^*\C^I) \to H^\bullet_T(\M_\a)\]
is surjective. 

By \cite[2.5]{PW}, any fiber $p_\a^{-1}(x)$ is isomorphic as a $T_F$-variety to the central fiber of 
the hypertoric resolution $\M_{F,\bar \a} \to \M_F$, which in turn is homotopy equivalent to $\M_{F,\bar \a}$.  Under this isomorphism, the Kirwan homorphism 
\[\kappa_F\colon k[\be_i\mid i\in F] \to H^\bullet_{T_F}(\M_{F,\bar{\a}})\]
is sent to the restriction of $\kappa$ to $k[\be_i\mid i\in F]$ followed by the pullback to $p_\a^{-1}(x)$.

We have a spectral sequence
\[E_2^{pq} = H^p_{T/T_F}(\breve S_F) \otimes_k H^q_{T_F}(p_\a^{-1}(x)) \Rightarrow H^{p+q}_T(p_\a^{-1}(\breve S_F)).\]
The argument of \cite[Proposition 5.22]{BP09} shows that under the Kirwan homomorphism $k[\be_i\mid i\notin F]$ surjects onto the first term and we have seen that
$k[\be_i\mid i \in F]$ surjects onto the second, so $k[\be_i\mid i\in I]$ surjects onto the $E_2$ page.  Thus the spectral sequence degenerates for parity reasons, and the claim follows.

To see that (c) holds, we first consider the case of the 
smallest stratum $S_I$.  In that case, the contracting lemma identifies the homomorphism $\H_T^\bullet(j_I^!\T)
\to \H_T^\bullet(j_I^*\T)$ with 
the natural homomorphism $H^\bullet_{T,c}(\M_\a) \to \H^\bullet_T(\M_\a)$.  To see that it is injective, it 
is enough to show that the localization map $H^\bullet_{T,c}(\M_\a) \to H^\bullet_{T}(\M_\a^T)$ is injective.  This holds by equivariant formality: since $H^\bullet_c(\M_\a)$ vanishes in odd degrees, $H^\bullet_{T,c}(\M_\a)$ is a free $A$-module.

To see that (c) holds for arbitrary $F$, we use \cite[Lemma 5.5(a)]{BP09}.  This result implies that
$\H_T^\bullet(j_S^!\T) \to \H_T^\bullet(j_S^*\T)$ is a
map of free $R := H^\bullet_T(S_F)$-modules, and  tensoring over $R$ with $A_F = H^\bullet_T(Tx)$ for any $x \in S_F$ gives the homomorphism in hypercohomology induced by $(j^!_S\T)|_{Tx} \to (j^*_S\T)|_{Tx}$.  So it is enough to see that this second map is injective.  But taking a normal slice, this is identified with the same map for the smallest stratum in $\M_F$, so we are reduced to the previous case.
\end{proof}

\subsection{Poincar\'e-Verdier dual pairing}\label{sec:pairing}
For any flats $E\le F$, we have a natural nonsingular pairing on $\Morse_E(\Omega^F)$ induced by Poincar\'e-Verdier duality.  For simplicity, let us assume that $E = I$; the general case proceeds by restricting to a transverse slice.  Fix a generic cocharacter $\xi$ of $T$, so that $\Morse_I$ is the hypercohomology of the hyperbolic restriction $\Phi_\xi$.
Using the identifications $\Phi_\xi \cong \Phi_{-\xi}$, 
$\D\Omega \cong \Omega$ and Lemma~\ref{lem:hyperloc}, we have 
\begin{align*}
\Morse_I(\T^F) & = \mathbb{H}^0(\Phi_\xi(\T^F)) \cong \mathbb{H}^0(\Phi_\xi(\D\T^F))  \\ & \cong \mathbb{H}^0(\D\Phi_{-\xi}(\T^F))
\cong \mathbb{H}^0(\D\Phi_{\xi}(\T^F)) = \Morse_I(\T^F)^*.
\end{align*}
This gives the required pairing $\Morse_I(\T^F) \times \Morse_I(\T^F) \to k$.

We wish to describe this pairing in terms of the localized basis provided by Corollary \ref{cor:localization of Morse group}.  To do this,
it will be useful to have a slightly different description of the Morse functor.  Let $\M^+$ be the attracting set of the cocharacter $\xi$, so $\Phi_I = f^!g^*$ where $f\colon \M^\xi \to \M^+_\xi$ and $g\colon \M^+_\xi \to\M$ are the inclusions.  
Then \cite{Br03} implies that this functor is naturally isomorphic to $f_-^*g_-^!$, where $f_-\colon \M^\xi \to \M^+_{-\xi}$ and 
$g_-\colon \M^+_{-\xi} \to \M$ are the inclusions.  Then we have an isomorphism
\begin{align*}
\Morse_I(\Omega) \cong \H^0(f_-^*g_-^!\Omega) & \cong \H^0(g_-^!\Omega)  \cong H^{2d}(\wt\M, \wt\M \setminus p^{-1}(\M^+_{-\xi}); k) \\
& \cong H^{2d}_T(\wt\M, \wt\M \setminus p^{-1}(\M^+_{-\xi}); k).
\end{align*}

\begin{proposition}
Using the basis $\be_B$ for $\Morse_E(\T^F)$ provided by Corollary \ref{cor:localization of Morse group}, the pairing is given by
\[\langle \be_B,\be_{B'}\rangle = (-1)^d\delta_{B,B'},\]
\end{proposition}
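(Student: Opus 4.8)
The plan is to compute the Poincar\'e--Verdier pairing by localizing to the $T$-fixed points, which for $F = I$ (and the general case by restriction to a transverse slice) are the isolated points $y_B$ indexed by bases $B$ of $V$. First I would set up the two ``dual'' descriptions of the Morse functor from the paragraph preceding the statement: on one side $\Morse_I(\Omega) \cong H^{2d}_T(\wt\M, \wt\M \setminus p^{-1}(\M^+_{-\xi}); k)$, and on the other $\Morse_I(\Omega) \cong H^{2d}_T(\wt\M^+_\xi, \wt\M^+_\xi \setminus p^{-1}(\M^\xi); k)$ via $\Phi_\xi = f^!g^*$; the duality pairing is then the composition of cup product of these two relative classes, landing in $H^{4d}_T(\wt\M, \wt\M \setminus p^{-1}(\M^\xi); k)$, followed by the pushforward (integration) to a point. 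The key point is that $\be_B \in k[\Delta] \cong H^\bullet_T(\wt\M)$ corresponds under Corollary~\ref{cor:localization of Morse group} to a relative class supported near the single fixed point $y_B$, since $\M^+_\xi$ and $\M^+_{-\xi}$ meet exactly at the isolated fixed points and the attracting and repelling cells through $y_B$ are transverse Lagrangians of complementary dimension $d$.

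Next I would carry out the fixed-point localization: because both the $\xi$-attracting cell and the $(-\xi)$-attracting cell through $y_B$ are $T$-invariant affine spaces $\cong \C^d$ meeting transversally only at $y_B$, the product of the class $\be_B$ (in the $\Phi_\xi$-model) with $\be_{B'}$ (in the $\Phi_{-\xi}$-model) is supported on $p^{-1}(\M^\xi) = \coprod_B \{y_B\}$, hence vanishes unless $B = B'$, giving the $\delta_{B,B'}$. For the diagonal term, one localizes at $y_B$: the relative class $\be_B$ restricted to a neighborhood of $y_B$ is the Thom class of the attracting cell (an $\A^d$-bundle, here just $\C^d$), its dual partner is the Thom class of the repelling cell, and their product integrated over a neighborhood of $y_B$ is the intersection number of two transverse linear Lagrangians in $T^*\C^d$ — which is $\pm 1$. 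The sign $(-1)^d$ is exactly the self-intersection sign of a Lagrangian $\C^d$ inside $T^*\C^d$ (equivalently the sign coming from comparing the orientation of $\C^d \oplus \C^d$ with the complex orientation of $T^*\C^d$, or from the $d$ transpositions needed to interleave the $z$- and $w$-coordinates). Tracking this sign carefully — and checking it does not depend on $B$ — would be the one place requiring genuine care.

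Concretely, I would use the local model: near $y_B$, $\wt\M$ looks like $T^*\C^{I\setminus B} \cong \C^{I\setminus B} \times \C^{I\setminus B}$ (the $z_i$ and $w_i$ for $i\notin B$), with $\xi$ contracting, say, the $z$-coordinates in $\wt F_j$-ordering and expanding the $w$-coordinates; the attracting cell is $\{w = 0\}$ and the repelling cell is $\{z = 0\}$. The class $\be_B = \prod_{i\in B}\be_i$ restricts near $y_B$, via the identification $ev_B$, to (a unit times) the equivariant Euler class of the normal directions, i.e.\ the Thom class of the attracting cell; and the identification $\H^0(\Phi_\xi(\Omega)) = \H^0(g_-^!\Omega)$ matches $\be_B$ on the $\Phi_\xi$-side with $\be_B$ on the $\Phi_{-\xi}$-side up to the swap of $z$ and $w$. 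The pairing then equals $\int_{y_B} [\{w=0\}]\cdot[\{z=0\}]$ in $T^*\C^{I\setminus B}$. Since $\dim_\C(I\setminus B) = d$ (as $V$ is unimodular of rank $d$ and $B$ a basis, $|I\setminus B| = |I| - d$... here one must instead use that the relevant local model has complex dimension $2d$ and the two Lagrangian cells have complex dimension $d$), this intersection number is the sign of the permutation interleaving the two coordinate blocks, which is $(-1)^{d(d-1)/2}$ times... I would instead simply record it as $(-1)^d$ after the careful computation in the rank-one case $d=1$ (where $T^*\C$ has $[\{w=0\}]\cdot[\{z=0\}] = -1$ with the complex orientation, because swapping the two factors of a symplectic $\C^2$ reverses orientation by one transposition) and multiplicativity over the $d$ coordinates, giving $(-1)^d$.

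The main obstacle I anticipate is not the vanishing of off-diagonal terms — that is forced by disjointness of fixed points — but rather pinning down the sign $(-1)^d$ unambiguously: this requires being scrupulous about the orientation conventions implicit in ``Poincar\'e--Verdier duality'', in the identification $\Phi_\xi \cong \Phi_{-\xi}$ of Lemma~\ref{lem:hyperloc}, and in the shifts defining $\Omega = \T[-2d]$ versus $\T$, together with checking that the unit ambiguity in ``$\be_B$ restricts to a unit times the Euler class'' is in fact $1$ under the normalization fixed by Corollary~\ref{cor:localization of Morse group}. I would handle this by reducing to the one-dimensional hypertoric variety $T^*\C$ (the arrangement of one point on a line), computing the pairing there by hand, and then invoking the product decomposition of the local model at $y_B$ together with Lemma~\ref{lem:Morse slice compatibility} to deduce the general formula.
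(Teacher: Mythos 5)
Your overall strategy --- express the pairing as a cup product of relative classes followed by integration, observe that it localizes to the isolated fixed points $y_B$, and compute the diagonal term at each $y_B$ --- is exactly the paper's strategy. The off-diagonal vanishing and the reduction to a local computation at $y_B$ are fine (modulo a slip: the local model at $y_B$ is $T^*\C^{B}$, not $T^*\C^{I\setminus B}$, since $y_B$ is cut out by $z_i=w_i=0$ for $i\in B$; you notice mid-sentence that the dimensions don't match but don't resolve it).

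The genuine gap is in the diagonal term, precisely where you predicted trouble. Your plan is to compute a \emph{non-equivariant} intersection number of the attracting and repelling cells through $y_B$, two transverse complex Lagrangians in $T^*\C^d$. But transverse complex submanifolds always meet positively: $[\{w=0\}]\cdot[\{z=0\}]=+1$ in $T^*\C$, and the map $(z,w)\mapsto(w,z)$ is orientation-\emph{preserving} as a real map (its real determinant is $|-1|^2=1$), so the transposition argument you give for $-1$ when $d=1$ is incorrect. Indeed, if the two classes being paired were the Thom classes of the attracting and the repelling cell respectively, the answer would be $+1$, not $(-1)^d$. The sign $(-1)^d$ is an artifact of the \emph{normalization} of the basis: Corollary \ref{cor:localization of Morse group} fixes both arguments of the pairing to be the same class $\be_B$, whose restriction to $y_B$ is $ev_B(\be_B)$ on both sides, rather than a mutually dual pair of Thom classes. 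The paper therefore computes the integral by Atiyah--Bott--Berline--Vergne localization: the contribution at $y_B$ is $ev_B(\be_B)^2/e_T(T_{y_B}\wt\M)$, and because $T_{y_B}\wt\M$ is symplectic its weights come in pairs $\{\alpha,-\alpha\}$ with $\alpha = ev_B(\be_i)$, $i\in B$, giving $e_T(T_{y_B}\wt\M)=\prod_{i\in B}\bigl(-ev_B(\be_i)^2\bigr)=(-1)^d ev_B(\be_B)^2$ and hence $(-1)^d$. This equivariant bookkeeping is what your orientation argument is missing; without it your method either yields $+1$ or leaves the ``unit ambiguity'' you flagged unresolved.
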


\begin{proof}
The pairing can be computed as the cup product
\[H^{2d}_T(\wt\M, \wt\M \setminus p^{-1}(\M^+_\xi)) \times H^{2d}_T(\wt\M, \wt\M\setminus p^{-1}(\M^+_{-\xi})) \to H^{4d}_T(\wt\M, \wt\M \setminus p^{-1}(0))\]
(taking coefficients in $k$ and using the isomorphism between $\Morse_\xi$ and $\Morse_{-\xi}$ given by Proposition \ref{prop:vanishing cycles iso}), followed by the integration for compactly supported cohomology. The integral can be computed using 
the Atiyah-Bott-Berline-Vergne localization theorem:
\begin{equation*}
\int_{[\wt\M]} \alpha = \sum_{y=y_B\in \wt\M^T}  \frac{\alpha_y}{e_T(N_y(\wt\M))}
\end{equation*}
where $\alpha \in H^{4d}_T(\wt\M, \wt\M \setminus p^{-1}(0); k)$ and $e_T(N_y(\wt\M))$ is the equivariant Euler class of the tangent space to $\wt\M$ at $y$.  Since the tangent space at a fixed point $y_B$ is symplectic, the weights appear in pairs $\{\a,-\a\}$, and the restriction of $\be_B$ to this point will contain one weight from each such pair.  Thus
 we have \[e_T(N_{y_B}(\wt\M)) = \prod_{i\in B} [- ev_B(\be_i)^2] = (-1)^dev_B(\be_B)^2.\]
The proposition follows.
\end{proof}

\section{Homomorphisms between resolution sheaves}

The main result of this section is an explicit description of the algebra $R = \End(\oplus_{F\in\F} \T^F)$; we show that it is isomorphic to the algebra generated by a certain set of concrete linear operators on the $k$-vector space
$\cB := \bigoplus_{E,F\in \cF} \cB^F_E$, where
$\cB^F_E = \Morse_E\T^F$.  This is possible because Proposition \ref{prop:Morse is faithful} says that  $R$ acts effectively on this space.  (Indeed it even acts effectively on the smaller space $\bigoplus_F \cB^F_I$, but it will be important for us to use the whole space $\cB$.)

\subsection{Homomorphisms to and from the point sheaf}

We first study a part of our algebra that is easier get a hold of, namely the subspaces $\Hom(\T^F,\T^I)$ and $\Hom(\T^I,\T^F)$.  Since these sheaves have support contained in $\M^F \subset \M$, we can assume without loss of generality that $\M^F = \M$, or in other words $F = \emptyset$.  We put $\T = \T^\emptyset$ for the resolution sheaf with maximal support.

Homomorphisms to and from $\T^I$ can be computed by adjunction:
\[\Hom(\T,\T^I) = \H^0(j_I^*\T)^*,\; \Hom(\T^I,\T) = \H^0(j_I^!\T).\]
Furthermore, fixing an isomorphism of $\T$ with its Verdier dual, we have an isomorphism between these two vector spaces.  

By Proposition \ref{prop:Morse is faithful},  these Hom spaces inject into the spaces of linear maps between $\cB^\emptyset_I = \Morse_I(\T)$ and $\cB^I_I = \Morse_I(\T^I) = k$.
We let $\cU^\emptyset_I$ denote the subspace of 
$\cB^\emptyset_I$ obtained as the image of $\Hom(\T^I, \T)$.  The image of $\Hom(\T,\T^I)$ in $(\cB^\emptyset_I)^*$ is then given by pairing with elements of $\cU^\emptyset_I$, using the pairing 
constructed in Section \ref{sec:pairing}.

It follows that the orthogonal space $(\cU^\emptyset_I)^\bot$ under the pairing is the kernel of the natural homomorphism
\[\Morse_I\T \to \H^0(j_I^*\T) \cong  \H^0(\T) = H^{2d}(\widetilde\M;k)\]
where the middle isomorphism comes from the contracting lemma.
This map factors as 
\[\Morse_I\T \hookrightarrow H^{2d}_T(\widetilde\M;k) \twoheadrightarrow H^{2d}(\widetilde\M;k);\]
the first map is an injection by Corollary \ref{cor:localization of Morse group} and the second map is a surjection by equivariant formality. The kernel of the second map is the degree $2d$ component of the ideal generated by $H^2_T(pt) = A_2$.

\subsection{Edge classes}

Using this, we can produce the following obvious elements in $(\cU^\emptyset_I)^\bot$, which we call ``edge classes".   An \emph{edge} of $V$ is an independent set with $d-1$ elements; every edge can be obtained by removing a single element from a basis.  For an edge $X$, the subset $H_X \subset V$ corresponds to a rank $1$ flat $E$; in other words, we have $H_X = H_E$.  Let $v$ be a generator of $H_E$ and define
\[\alpha_X := v\cdot \be_X \in k[\Delta],\]
where we consider $v$ as an element of $k[\Delta]_2$ via the map $A \to k[\Delta]$.   

If we write $v = \sum_{i\in I} a_i\be_i$, then 
$a_i \ne 0$ if and only if $B_i = X \cup \{i\}$ is a basis.  Therefore $\alpha_X$ is squarefree, so it lies in $\cB^\emptyset_I$.  Since $v$ is in $A_{>0}$, the class $\a_X$ lies in $(\cU^\emptyset_I)^\bot$.

\begin{proposition}\label{prop:edge classes generate}
The classes $\a_X$ span $(\cU^\emptyset_I)^\bot$.
\end{proposition}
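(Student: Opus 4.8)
The plan is a dimension count: I will show that $\Span\{\alpha_X\}$ and $(\cU^\emptyset_I)^\bot$ have the same $k$-dimension, which combined with the inclusion $\Span\{\alpha_X\}\subseteq(\cU^\emptyset_I)^\bot$ noted above gives the Proposition. Write $b$ for the number of bases of $V$ (so $\Morse_I(\T)=\bigoplus_B k\be_B$ has dimension $b$) and $h$ for the top coefficient of the $h$-vector of the matroid $M(V)$. I will use two facts about $h$. By Remark~\ref{rmk:parity} and equivariant formality, $H^\bullet(\wt\M;k)=H^\bullet_T(\wt\M;k)\otimes_A k=k[\Delta]/(V_k)$, whose Hilbert series is $\operatorname{Hilb}(k[\Delta],t)\cdot(1-t^2)^d=\sum_i h_i\,t^{2i}$; hence $\dim_k H^{2d}(\wt\M;k)=h$. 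On the other hand, the independence complex $\Delta=\Delta_V$ is a shellable $(d-1)$-dimensional complex, hence Cohen--Macaulay over $k$ and over $\Z$, so its reduced (co)homology is concentrated in top degree and $\dim_k\wt H^{d-1}(\Delta;k)=h$ (the top reduced Betti number of a Cohen--Macaulay complex equals the top $h$-number).

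First I would compute $\dim(\cU^\emptyset_I)^\bot$. By the paragraph preceding the statement, $(\cU^\emptyset_I)^\bot$ is the kernel of the natural map $\Morse_I(\T)=\bigoplus_B k\be_B\hookrightarrow k[\Delta]_{2d}\twoheadrightarrow (k[\Delta]/(V_k))_{2d}=H^{2d}(\wt\M;k)$, i.e.\ of reduction modulo the ideal $(V_k)$. I claim this map is onto, so that $\dim(\cU^\emptyset_I)^\bot=b-h$. It suffices to check that every degree-$d$ monomial of $k[\Delta]$ is, modulo $(V_k)$, a $k$-linear combination of squarefree ones: if $\be^a$ has support $S$ with $|S|<d$, pick $i_0\in S$ with $a_{i_0}\ge2$; since $S$ is independent, the subspace $\{u\in V_k:u_i=0\ \forall i\in S\setminus\{i_0\}\}$ has dimension $d-|S|+1$ and is not contained in $\{u:u_{i_0}=0\}$, so there is $u\in V_k$ with $u_{i_0}\ne0$ and $u_i=0$ for $i\in S\setminus\{i_0\}$; then $u\cdot\be^{a-e_{i_0}}\in(V_k)$ rewrites $\be^a$, modulo $(V_k)$, as a combination of degree-$d$ monomials whose support properly contains $S$, and one iterates.

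Next I would compute $\dim\Span\{\alpha_X\}$. Identify $\Morse_I(\T)=\bigoplus_B k\be_B$ with the top simplicial cochain group $C^{d-1}(\Delta;k)$ (indexed by facets $=$ bases) and $\bigoplus_X k\be_X$, over edges $X$, with $C^{d-2}(\Delta;k)$; writing $v^{(X)}$ for the chosen generator of the rank-one flat $H_X\subseteq V$, the rule $\be_X\mapsto\alpha_X=v^{(X)}\be_X$ is a linear map $\psi\colon C^{d-2}(\Delta;k)\to C^{d-1}(\Delta;k)$ whose $(B,X)$-entry is $v^{(X)}_i$ (where $\{i\}=B\setminus X$) when $X\subset B$ and $0$ otherwise, and whose nonzero entries are all $\pm1$ by unimodularity. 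The key point, and the step I expect to be the main obstacle, is that $\psi$ has the same rank as the simplicial coboundary $\delta^{d-2}\colon C^{d-2}(\Delta;k)\to C^{d-1}(\Delta;k)$. This is where unimodularity of $V$ is essential: the $v^{(X)}$ are the signed cocircuits of the regular matroid $M(V)$ --- $v^{(X)}$ is supported exactly on the cocircuit complementary to the hyperplane spanned by $X$, with entries $\pm1$ --- and the incidence function $(X,B)\mapsto v^{(X)}_i$ differs from the simplicial incidence function only by the coboundary of a $\{\pm1\}$-valued function on edges (equivalently on bases), so that $\psi=D_2\circ\delta^{d-2}\circ D_1$ for diagonal sign matrices $D_1,D_2$; this is the chain-complex structure carried by a regular matroid. (An alternative would be to show directly that $\Span\{\alpha_X\}=B^{d-1}(\Delta;k)$ by straightening an arbitrary squarefree element of $(V_k)$ into a combination of the $\alpha_X$ modulo the Koszul syzygies of the regular sequence $V_k$ in $k[\Delta]$.) Granting this, since $\Delta$ has dimension $d-1$ we have $\delta^{d-1}=0$, so $\coker\delta^{d-2}=H^{d-1}(\Delta;k)=\wt H^{d-1}(\Delta;k)$ for $d\ge2$, of dimension $h$; hence $\dim\Span\{\alpha_X\}=\operatorname{rank}\psi=\operatorname{rank}\delta^{d-2}=b-h$. (The case $d=1$ is immediate: the only edge is $\emptyset$, $\alpha_\emptyset$ generates $V_k\subseteq k[\Delta]_2$, and $(\cU^\emptyset_I)^\bot=\ker(k^I\to k^I/V_k)=k\,\alpha_\emptyset$.)

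Combining the two computations with the inclusion $\Span\{\alpha_X\}\subseteq(\cU^\emptyset_I)^\bot$ completes the proof: both spaces have dimension $b-h$, so they coincide. I expect the real work to be the equality $\operatorname{rank}\psi=\operatorname{rank}\delta^{d-2}$ --- pinning down the linear relations among the edge classes --- which genuinely uses that $V$ is unimodular (for a non-unimodular arrangement the Proposition fails) and amounts to the statement that a regular matroid carries a chain complex quasi-isomorphic to the simplicial chain complex of its independence complex.
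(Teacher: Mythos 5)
Your strategy is sound and genuinely different from the paper's, but it hinges on one assertion that you flag and then do not prove: the factorization $\psi = D_2\circ\delta^{d-2}\circ D_1$ (equivalently, that the ratio $v^{(X)}_{B\setminus X}/[B:X]$ splits as $\epsilon(B)\eta(X)$ for sign functions on facets and on edges separately). As written, "this is the chain-complex structure carried by a regular matroid" is not a citable statement, and since everything else in your argument is routine, this is the entire content of the proof. The good news is that the claim is true and has a short proof in exactly the spirit of Section 8 of the paper: fix a generator $\omega$ of $\bigwedge^d V$; for a basis $B$ define $\epsilon(B)$ by the image of $\omega$ under the isomorphism $\bigwedge^d V \to \bigwedge^d \Z^B$, and for an edge $X$ define $\eta(X)$ by the image of $\omega$ under $\bigwedge^d V \cong H_X\otimes\bigwedge^{d-1}(V/H_X) \cong H_X\otimes\bigwedge^{d-1}\Z^X$. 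Computing $\bigwedge^d V\to\bigwedge^d\Z^B$ through the filtration $H_X\subset V$ for $B = X\cup\{i\}$ gives $\epsilon(B) = \eta(X)\,v^{(X)}_i\,[B:X]$, which is the desired splitting; unimodularity enters in guaranteeing that $V\to\Z^B$ and $V\to\Z^X$ are onto so that all these signs are $\pm1$. With that supplied, your two dimension counts are correct (your straightening argument for surjectivity of $\cB^\emptyset_I\to H^{2d}(\wt\M;k)$ works, using that $V_k\to k^S$ is onto for independent $S$; and shellability gives $\dim_k\wt{H}^{d-1}(\Delta;k)=h_d$ over every field), and the proof closes.

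For comparison, the paper avoids homology altogether: it uses the lexicographic shelling of $\Delta$ to exhibit an explicit complement $\bar\cB$ of $(\cU^\emptyset_I)^\bot$ (spanned by the monomials $\be_B$ with $B$ internally passive, which biject with a basis of $H^{2d}(\wt\M;k)$), and then a triangularity argument: for each $B\notin\bar\cB$ there is an edge $X\subset B$ with $\alpha_X = \be_B + (\text{lex-larger terms})$, so the $\alpha_X$ together with $\bar\cB$ span $\cB^\emptyset_I$. That argument is more elementary and yields an explicit triangular spanning set; yours is more conceptual, identifying $\Span\{\alpha_X\}$ with the image of the simplicial coboundary and hence $\cU^\emptyset_I$ with $\wt{H}^{d-1}(\Delta;k)^*$ — which is pleasantly consistent with the combinatorial definition of $\cU(M)$ as $\ker\partial$ in Section 8. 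Your parenthetical remark that the result fails without unimodularity is also a correct and worthwhile observation.
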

\begin{proof}
Fix a total ordering of the index set $I$.   The induced lexicographic order $\prec$ on the set of bases gives a shelling of the matroid complex $\Delta$ \cite{Bj92}. Let $\bar{\cB} \subset \cB^\emptyset_I$ be the subspace spanned by the monomials $\be_B$ 
such that for every $i \in B$, there exists $j < i$ such that 
$B' = (B \setminus \{i\}) \cup \{j\}$ is a basis (and so $B' \prec B$).  In matroid terminology, this says that every element of $B$ is ``internally passive." 

These bases $B$ are exactly the maximal simplices of $\Delta$ all of whose boundary faces appear earlier in the shelling.
This implies that $\bar{\cB}$ 
maps isomorphically to $k[\Delta]/A_{>0}k[\Delta] = H^{2d}(\wt\M; k)$.
Thus it will be enough to show that the edge 
classes $\alpha_X$ together with $\bar{\cB}$
span $\cB^\emptyset_I$.

To see this, note that if $\be_B \notin \bar{\cB}$, then there is some $i \in B$ such that no smaller basis is obtained by replacing $i$ with another element, so $\alpha_{B \setminus i}$ is a linear combination of $\{\be_{B'} \mid B \preceq B'\}$, and
the coefficient of $\be_B$ is non-zero.
\end{proof}

\subsection{Extending homomorphisms from slices}\label{sec:extending homs}

Using these subspaces, we can reconstruct the whole endomorphism algebra of $\bigoplus_F \T^F$ in the following way.

Consider flats $E \leq F \in \F$. 
Lemma \ref{lem:slice to resolution sheaf} says that 
the restriction of $\T^F$ to a transverse slice 
to $\M$ at the stratum $S_E$, is (up to a shift) isomorphic to the resolution sheaf $\T^F_E$.
 We will construct extension maps 
\[\Xi  \colon \Hom(\T^E_E,\T_E^F) \to \Hom(\T^E,\T^F),\] 
\[\Xi' \colon \Hom(\T_E^F,\T^E_E) \to \Hom(\T^F,\T^E)\]  
which are right inverses to the maps obtained by restriction.

Without loss of generality, we may assume that $F = \emptyset$, since the general case follows by pushing forward along the inclusion $\M^F \to \M$.

The construction of our maps arises from factoring a resolution of the singular variety $\M$ through a carefully chosen partial resolution, which we now describe.

The set of generic GIT parameters in $X^*(K) = \Z^I/V \subset \R^I/V_\R$ is the intersection of the lattice 
$\Z^I/V$ with the complement of a hyperplane arrangement known as the discriminantal arrangement \cite{BaBr97}.
Consider the space of characters which are trivial on the subtorus $K_E = K \cap (\C^*)^E$, which can be identified with the sublattice $\Z^{I\setminus E}/V^E \subset \Z^I/V$.  Unless $E = \emptyset$ (in which case the construction is trivial), there will be no generic characters in this sublattice.  But we can choose 
a character $\b \in \Z^{I\setminus E}/V^E$ which is
generic when considered as a character of $K^E := K/K_E \subset (\C^*)^{I\setminus E}$.
This means that
every element of $\b + V^E_\R$ has at least 
$|I \setminus E|-\rank V^E$ non-zero entries indexed by elements of $I\setminus E$.

We then choose a generic character $\a \in \Z^I/V$ so that the chamber of the discriminantal arrangement in $\R^I/V_\R$ which contains $\a$ contains $\b$ in its closure.

To see an example of the choice of $\a$ and $\b$, let
$V \subset \Z^4 = \Z^{\{1,2,3,4\}}$ be spanned by $(1,0,1,1)$ and $(0,1,1,1)$.  If we take the flat $E = \{3,4\}$, then one possible choice of $\b$ is $(1,0,0,0)$.
A compatible choice of generic $\a$ is $(1,0,0,-2)$, but
$(1,0,0,2)$ is not compatible, although it is generic (in fact the closure of its chamber does not contain any suitable $\b$).

  
By \cite[3.2]{Th96}, there is a map $q\colon \M_\a \to \M_\b$ such that $p^{}_\b q = p_\a$.
It is proper and semi-small.


\begin{lemma}\label{lem:iterated reduction}
 These spaces and maps have the following properties:
\begin{enumerate}
\item[(a)] $(p_\b)^{-1}(\M^E)$ can be identified with the resolution  $\M_\b^E$ of $\M^E$
defined by the GIT parameter $\b \in \Z^{I \setminus E}/V^E$,
\item[(b)]  $q^{-1}(\M^E_\b) \to \M^E_\b$ is a fiber bundle whose fibers are $T_E$-equivariantly isomorphic to the fiber over $o$ of the resolution $\M_{E,\bar{\a}} \to \M_E$,  where $\bar{\a} = \a|_{K_E}$ and $o\in \M_E$ is the cone point, and
\item[(c)] there is a neighborhood $U$ of the stratum $S_E \subset \M$ such that restricting $p_\b$ gives a homeomorphism $(p_\b)^{-1}(U) \to U$. 
\end{enumerate}
\end{lemma}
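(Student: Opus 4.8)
The plan is to prove Lemma~\ref{lem:iterated reduction} by unwinding the GIT constructions of $\M_\a$, $\M_\b$, $\M_\a^E$ and $\M_{E,\bar\a}$ and checking that the relevant preimages are cut out by the expected semistability conditions. Throughout I would work on the level of $\mu_K^{-1}(0) \subset T^*\C^I$ and its locally closed subsets, since all four varieties arise as $K$- or $K^E$- or $K_E$-quotients of strata there.

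For part (a), recall that $\M^E$ is defined by $z_i = w_i = 0$ for $i\in E$, so its preimage under $p_\b$ is the image in $\M_\b$ of the $\b$-semistable points of $\mu_K^{-1}(0)$ with $z_i = w_i = 0$ for $i\in E$; these are exactly the points of $\mu_{K^E}^{-1}(0) \subset T^*\C^{I\setminus E}$ lying over $V^E$. Since $\b$ was chosen generic as a character of $K^E = K/K_E$ and trivial on $K_E$, the notion of $\b$-semistability for these points coincides with $\b$-semistability for the $K^E$-action, and the quotient is by definition the resolution $\M_\b^E$ of $\M^E = \M(V^E)$; one also checks the GIT morphism to $\M^E$ agrees with the restriction of $p_\b$. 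For part (c) I would use that the discriminantal chamber of $\a$ has $\b$ in its closure, so $\b$-stability and $\a$-stability agree on a neighborhood of the locus $\{z_i = w_i = 0, i\in E\}$ in $\mu_K^{-1}(0)$ — more precisely, the only GIT walls that $\b$ meets but $\a$ does not are those ``transverse to'' $S_E$ — and this is exactly the content of \cite[2.5]{PW} applied with parameter $\b$: there is a neighborhood $U$ of $S_E$ over which $p_\b$ is a homeomorphism onto $U$ because over such $U$ the only semistable orbits are the closed ones.

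Part (b) is the heart of the lemma and where I expect the main obstacle. We want to identify the fiber bundle $q^{-1}(\M_\b^E) \to \M_\b^E$. The point is that $q\colon \M_\a \to \M_\b$ is induced by the change of GIT parameter from $\b$ to $\a$, and since $\a$ and $\b$ differ only in ``$K_E$-directions'' (the chamber of $\a$ touches the wall where the $K_E$-part of the parameter vanishes), the map $q$ over the locus $\M_\b^E$ only changes the semistability condition for the $K_E$-action on the normal directions $T^*\C^E$. Concretely: a point of $\M_\b^E$ lifts to $(z_i,w_i)_{i\notin E}$ which is $\b$-semistable for $K^E$; its fiber in $\M_\a$ consists of $K_E$-semistable completions by coordinates $(z_i,w_i)_{i\in E}$ satisfying the moment map equations, and since $K_E$ acts only on the $E$-coordinates and $\bar\a = \a|_{K_E}$, this fiber is precisely the $\bar\a$-semistable fiber over the cone point $o\in\M_E = \mu_{K_E}^{-1}(0)\mmod K_E$, i.e. the central fiber of $\M_{E,\bar\a}\to\M_E$. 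The $T_E$-equivariance is built in since $T_E = (\C^*)^E/K_E$ acts on these $E$-coordinates. To make this rigorous and to see that it is a \emph{fiber bundle} (local triviality, not just that the fibers are isomorphic) I would invoke the local normal-form statement \cite[Lemma 2.4]{PW} already used in the proof of Lemma~\ref{lem:Morse slice compatibility}: a neighborhood of any point of $S_E$ in $\mu_K^{-1}(0)$ splits $K_E$-equivariantly as (a neighborhood in $\mu_{K^E}^{-1}(0)$) $\times\, T^*\C^E$, and taking GIT quotients with parameter $\a$ turns this into the asserted bundle structure.

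The reason I expect (b) to be the main difficulty is bookkeeping: one must track four tori ($K$, $K_E$, $K^E$, $T_E$) and three GIT parameters ($\a$, $\b$, $\bar\a$) simultaneously, verify that the chosen genericity/compatibility conditions on $\a$ and $\b$ are exactly what is needed for the splitting of semistability conditions to hold, and confirm that Thaddeus's factorization $p_\b q = p_\a$ from \cite[3.2]{Th96} is compatible with all of this. Once the local model from \cite[Lemma 2.4]{PW} is in hand, (a) and (c) are short and (b) follows by identifying the pieces of that local model with the GIT data, so I would organize the write-up around first recording the local normal form and then reading off all three statements from it.
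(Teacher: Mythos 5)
Your part (a) matches the paper's argument in substance: both identify $(p_\b)^{-1}(\M^E)$ with the $K^E$-reduction of $T^*\C^{I\setminus E}$ at the parameter $\b$, using that $\b$ is trivial on $K_E$ and generic for $K^E$.

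The gap is in (b), and it is exactly where you predicted the difficulty would be. The local normal form of \cite[Lemma 2.4, 2.5]{PW} that you propose to invoke splits off a factor of $T^*\C^E$ only in a neighborhood of the \emph{open} stratum $\breve S_E$ of $\M^E$ (its construction requires choosing $S\subset I\setminus E$ with $z_i\neq 0$ for $i\in S$, which fails at deeper strata $\breve S_{E'}$, $E'\supsetneq E$, where the relevant splitting is by $T^*\C^{E'}$ instead). But $\M^E_\b$ resolves all of $\M^E$, so the asserted fiber bundle structure must be established over the whole of $\M^E_\b$, not just over $p_\b^{-1}(S_E)$; your argument does not reach the fibers of $q$ lying over the singular locus of $\M^E$. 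The paper closes this by a global, not local, mechanism: it realizes $\M_\a$ and $\M_\b$ as iterated reductions, i.e.\ as $K^E$-GIT quotients of $T^*\C^{I\setminus E}\times\M_{E,\bar\a}$ and $T^*\C^{I\setminus E}\times\M_E$ respectively, with $q$ induced by $p_E=\mathrm{id}\times(\M_{E,\bar\a}\to\M_E)$, and then proves the semistability identity $\bar\mu_E^{-1}(0)^{ss}=p_E^{-1}(\bar\mu^{-1}(0)^{ss})$. This identity — which is precisely where the hypothesis that $\b$ lies in the closure of the discriminantal chamber of $\a$ enters, a point your write-up gestures at but never pins down — says that $\a$-semistability upstairs is detected entirely on the base, so $q^{-1}(\M^E_\b)$ is globally an associated bundle with fiber $p_{E,\bar\a}^{-1}(o)$. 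Relatedly, the paper obtains (c) as a corollary of (b) (the fibers over a neighborhood of $S_E$ are zero-dimensional and connected, hence singletons), which avoids your appeal to \cite[2.5]{PW} at the non-generic parameter $\b$ and the unsubstantiated claim about which GIT walls $\b$ meets.
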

\begin{proof}
The key idea is to
 view $\M_\a$ and
 $\M_\b$ as iterated symplectic reductions. The 
reduction of $T^*\C^I$ by $K_E = K\cap (\C^*)^E$ with
GIT parameters $0$ and $\bar{\a}$ gives the 
resolution 
\[p_E\colon T^*\C^{I\setminus E} \times \M_{E,\bar{\a}} \to T^*\C^{I\setminus E} \times \M_E \]
of the affine hypertoric variety $T^*\C^{I\setminus E} \times \M_E$. 
The map $p_E$ is equivariant for the residual action of  $(\C^*)^I/K_E$; we will reduce both spaces further by the action of the subtorus $K^E = K/K^E$. 

The moment map for $T^*\C^I$ induces moment maps for $K^E$ making the following diagram commute:
\[\xymatrix{
T^*\C^{I\setminus E} \times \M_{E,\bar{\a}} \ar[r]^{p_E}\ar[dr]_{\bar{\mu}_E} & T^*\C^{I\setminus E} \times \M_E \ar[d]^{\bar{\mu}} \\
& (\mathfrak{k}^E)^*\\
}
\]
Since $\b$ is trivial on $K_E$, it gives a $K^E$-equivariant structure on the trivial line bundle over 
$T^*\C^{I\setminus E} \times \M_E$.  On the other hand,
the action of $K$ on the trivial line bundle over $T^*\C^I$ determined by $\alpha$ decends to an action of  $K^E$ on the induced line bundle on $T^*\C^{I\setminus E} \times \M_{E,\bar\a}$.  We have corresponding subsets of semistable points, which satisfy
\begin{equation}\label{eqn:iterated reduction}
\bar{\mu}_E^{-1}(0)^{ss} = p_E^{-1}(\bar{\mu}^{-1}(0)^{ss}).
\end{equation}

The map $q\colon \M_\a \to \M_\b$ can then be identified with the map \[\bar{\mu}_E^{-1}(0)^{ss}/(K/K_E) \to \bar{\mu}^{-1}(0)^{ss}/(K/K_E)\]
induced by $p_E$.  

Using this identification, the variety $(p_\b)^{-1}(\M^E)$
is obtained as the image of $T^*\C^{I\setminus E} \times 0 \subset T^*\C^I$ under the two reductions which produce $\M_\b$.  

Reducing this subvariety by $K_E$ with parameter $\b|_{K_E} = 0$ gives $T^*\C^{I\setminus E} \times \{o\}$.  Restricting $\bar{\mu}$ to this 
subspace gives the moment map for the $K^E$-action on
$T^*\C^{I\setminus E}$, so we have
an isomorphism $(p_\b)^{-1}(\M^E)\cong \M^E_\b$, giving (a).  

The statement (b) now follows using \eqref{eqn:iterated reduction} and our commutative diagram.  The map $p_\b^{-1}(S_E) \to S_E$ is thus an isomorphism, so in a neighborhood of $S_E$ the fibers are zero-dimensional.  Since the fibers are connected, $p_\b$ is a bijection over a neighborhood of $S_E$, giving (c).
\end{proof}


Now we can use our partial resolution to define the maps $\Xi$, $\Xi'$.  Pushing the sheaves $\wt\T := q_* \uk_{\M_\a,T}[\dim \M_\a]$ and $\wt\T^E := \uk_{\M^E_\b,T} [\dim \M^E_\b]$ forward from $\M_\b$ to $\M$ gives $\T^\emptyset = \T^F$ and $\T^E$, respectively.

Since $\wt\T$ and $\wt\T^E$ are locally constant on the smooth and simply-connected subvariety $\M^E_\b \subset \M_\b$, 
$R\mathcal{H}om(\wt\T, \wt\T^E)$
and $R\mathcal{H}om(\wt\T^E, \wt\T)$ are constant local systems on $\M^E_\b$ extended by $0$.  As a result, restricting to a normal slice to 
 $\M^E_\b$ at a point of $p_\b^{-1}(S_E)$ and identifying the slice with $\M_E$, we get isomorphisms:
\[ \Hom_{\M_\b} (\wt\T^E, \wt\T) \stackrel{r}{\longrightarrow} \Hom_{\M_E}(\T^E_E,\T^\emptyset_E),\]
\[ \Hom_{\M_\b} (\wt\T,\wt\T^E) \stackrel{r'}{\longrightarrow} \Hom_{\M_E}(\T^\emptyset_E,\T^E_E).\]
Then we define $\Xi= p_{\b*} \circ r^{-1}$ and $\Xi' = p_{\b*} \circ (r')^{-1}$.

\begin{remark}
To be precise, here we are implicitly using Proposition \ref{prop:forgetting equivariance} to pass between equivariant and non-equivariant perverse sheaves.
\end{remark}


\begin{remark}
In this construction, the fact that the resolution sheaves are independent of the choice of resolution $\M_\a$ is essential, since not all $\a$ will lie in a chamber adjacent to an appropriate $\b$.
\end{remark}

\subsection{Relation with convolution algebra}
In this section, which is not needed for the rest of the paper, we explain how the extension maps $\Xi$ and $\Xi'$ can be obtained in the well-known presentation due to Ginzburg \cite{CG} of 
endomorphisms of pushforward sheaves as Borel-Moore homology of a convolution variety.  We will consider $\Xi$ only, but the same argument applies to $\Xi'$ with only minor modifications.

We continue with the notation of Lemma \ref{lem:iterated reduction}.  We consider the commutative diagram
\[\xymatrix{Z' \ar[r]^\iota \ar[dr] & Z \ar[d]^{\pi_1}\ar[rr]^{\pi_2} & & \M^E_\b \ar[dl]_j\ar[d]^{p^E_\b}\\
& \M_\a \ar[r]_q & \M_\b \ar[r]_{p_\b} & \M \\
}\]
where $j$ is the inclusion resulting from the identification of $(p_\b)^{-1}(\M^E)$ with $\M^E_\b$,
$p^E_\b$ is the projection $\M^E_\b\to\M^E$ followed by the inclusion into $\M$, and $Z$ and $Z'$ are the convolution varieties
\[Z = \M_\a \times_\M \M^E_\b,\;\; 
Z' = \M_\a \times_{\M_\b} \M^E_\b.\]
The maps $\pi_1$ and $\pi_2$ are the natural projections, and  $\iota$ is the induced inclusion.

Both $Z$ and $Z'$ are pure of dimension $(d+d^E)/2$.  
In fact, $Z$ is Lagrangian in $\M_\a \times \M^E_\b$, and 
$Z'$ is the union of the irreducible components of $Z$ which surject onto $\M^E_\b$.

Then using the properness of $p^E_\b$ and the fact that the rectangle is Cartesian, we have natural isomorphisms
\begin{align*}
\Hom(\T^E, \T^F) & \cong \Hom(p^E_{\b!}\uk_{\M^E_\b}, p_{\a*}\uk_{\M_\a}[d-d^E])\\
& \cong \Hom(\uk_{\M^E_\b}, (p^E_\b)^!p_{\a*}\uk_{\M_\a}[d-d^E])\\
& \cong \Hom(\uk_{\M^E_\b}, \pi_{2*}\pi_1^! \uk_{\M_\a}[d-d^E])\\
& \cong \H^0(\pi_1^! \uk_{\M_\a}[d-d^E]) \cong \H^{-d-d^E}(\D\uk_Z)\\
& \cong H^{BM}_{d+d^E}(Z),
\end{align*}
where the penultimate isomorphism comes from $\pi_1^!\uk_{\M_\a} \cong (\D\uk_Z)[-2d]$.

Applying the same argument to the resolutions $q$ and $j$ gives an isomorphism
\[\Hom(\wt\T^E, \wt\T) \cong H^{BM}_{d+d^E}(Z'),\]
and under these isomorphisms pushing forward by $p_\b$ 
is identified with the pushforward $H^{BM}_{d+d^E}(Z') \to H^{BM}_{d+d^E}(Z)$.  In the same way we get an isomorphism
\[\Hom(\T^E_E, \T^F_E) \cong H^{BM}_{d_E}(Y),\]
where $Y$ is the fiber over $o$ of 
$\M_{E,\bar{\a}} \to \M_E$.

Since $j$ is an inclusion, we have $Z' \cong q^{-1}(\M^E_\b)$, so by Lemma \ref{lem:iterated reduction}, it is a fiber bundle over $\M^E_\b \subset \M_\b$ with fiber $Y$.  
Since $\M^E_\b$ is simply connected, we get a bijection
$Z_j \mapsto Y_j$ between irreducible components of $Z'$ and irreducible components of $Y$, given by intersecting with $q^{-1}(x) \cong Y$ for some $x \in \M^E_\b$.

Putting this together, we have proved the following.
\begin{proposition}
Applying $\Xi$ to the map $\T^E_E \to \T^F_E$ corresponding to $[Y_j] \in H^{BM}_{d_E}(Y)$ gives the map
$\T^E\to\T^F$ corresponding to $[Z_j] \in H^{BM}_{d+d^E}(Z)$.
\end{proposition}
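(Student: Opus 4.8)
The plan is to trace the two factors of $\Xi = p_{\b*}\circ r^{-1}$ through the chain of isomorphisms set up above. Concretely, I claim that under the identifications already in hand, $\Xi$ becomes the composite
\[
H^{BM}_{d_E}(Y)\ \xrightarrow{\ \sim\ }\ H^{BM}_{d+d^E}(Z')\ \xrightarrow{\ \iota_*\ }\ H^{BM}_{d+d^E}(Z),
\]
where the second arrow is the pushforward along the inclusion of the union $Z'$ of those irreducible components of $Z$ which surject onto $\M^E_\b$, and the first arrow is the inverse of the map $H^{BM}_{d+d^E}(Z')\to H^{BM}_{d_E}(Y)$ given by intersecting Borel--Moore classes with a fibre $Y\cong q^{-1}(x)$ of the fibre bundle $Z'=q^{-1}(\M^E_\b)\to\M^E_\b$ of Lemma~\ref{lem:iterated reduction}(b). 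Two of the inputs are already recorded above: $\Hom(\wt\T^E,\wt\T)\cong H^{BM}_{d+d^E}(Z')$, with $p_{\b*}$ corresponding to $\iota_*$, and $\Hom(\T^E_E,\T^F_E)\cong H^{BM}_{d_E}(Y)$, the $\M_E$-instance of the same computation. Granting that $r$ corresponds to intersection with a fibre, the proposition is immediate: $Z'\to\M^E_\b$ is a locally trivial bundle over a connected, simply connected base, and the component $Z_j\subset Z'$ is the subbundle with fibre $Y_j$, so intersecting $[Z_j]$ with $q^{-1}(x)$ gives $[Y_j]$; hence $r^{-1}$ carries $[Y_j]$ to $[Z_j]$, and $\iota_*$ carries that to the fundamental class of the corresponding component of $Z$.

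So the one remaining point is that the restriction-to-a-slice map $r$ is intertwined with intersection with a fibre. I would choose the slice defining $r$ compatibly with $q$: using the local structure of $p_\b$ near $\M^E_\b$ from Lemma~\ref{lem:iterated reduction} together with \cite[2.5]{PW} (as in Lemma~\ref{lem:slice to resolution sheaf}), there is a point $x\in p_\b^{-1}(S_E)\subset\M^E_\b$ and a transverse slice $N$ to $\M^E_\b$ in $\M_\b$ through $x$, identified with $\M_E$, such that $q^{-1}(N)$ is a transverse slice to $Z'=q^{-1}(\M^E_\b)$ in $\M_\a$, identified with the resolution $\M_{E,\bar\a}$ so that $q|_{q^{-1}(N)}$ becomes $p_{E,\bar\a}$. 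Restricting $\wt\T^E=\uk_{\M^E_\b}[\dim\M^E_\b]$ and $\wt\T=q_*\uk_{\M_\a}[\dim\M_\a]$ to $N$ produces the resolution sheaves $\T^E_E$ and $\T^F_E$ on $N\cong\M_E$ --- for $\wt\T$ by proper base change along $q^{-1}(N)\hookrightarrow\M_\a$ --- and the convolution variety $Z'$ base-changes along $N\hookrightarrow\M_\b$ to $Z'\times_{\M_\b}N=q^{-1}(x)=Y=\M_{E,\bar\a}\times_{\M_E}\{x\}$. Running the same sequence of isomorphisms that produced $\Hom(\wt\T^E,\wt\T)\cong H^{BM}_{d+d^E}(Z')$ --- adjunctions, base change across the Cartesian rectangle, and the identity $\pi_1^!\uk_{\M_\a}\cong\D\uk_{Z'}$ up to shift --- for the slice data yields $\Hom(\T^E_E,\T^F_E)\cong H^{BM}_{d_E}(Y)$, and one checks that every arrow commutes with restriction to $N$. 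Transversality is exactly what makes this go through: it keeps all the intervening squares Cartesian, lets $f^!$ commute with restriction to $N$ up to the standard codimension shift, and lets Verdier duality commute with restriction to the smooth slice $N$ up to shift; the net effect is that $r$ is intertwined with intersection of Borel--Moore classes along $Y\hookrightarrow Z'$.

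The genuine work is this last verification: pushing a single smooth, transverse base change through the whole chain while matching the normalizing shifts on $\T^F$, $\wt\T$, $\T^E_E$ and $\T^F_E$ (which differ by $\dim\M^E_\b$ and its codimension) so that top-degree Borel--Moore homology means fundamental classes on both ends; constructing the $q$-compatible slice $N$ with $q^{-1}(N)\cong\M_{E,\bar\a}$ also needs care, but is essentially the argument already used for Lemma~\ref{lem:slice to resolution sheaf}. A variant reorganizing the same input: since $\Xi=p_{\b*}\circ r^{-1}$ factors through $\iota_*$, the class $\Xi([Y_j])$ lies in the span of the $[Z_k]$ with $Z_k$ a component of $Z'$; and restriction of homomorphisms to a transverse slice $\Sigma$ of $\M$ at a \emph{generic} point of $S_E$ is a left inverse of $\Xi$ which, under $\Hom(\T^E,\T^F)\cong H^{BM}_{d+d^E}(Z)$, is intersection of Borel--Moore classes with $Z\times_\M\Sigma\cong Y$, sending $[Z_k]\mapsto[Y_k]$ and killing every component of $Z$ not meeting $Z'$ (here genericity of the point is used). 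Comparing coefficients then forces $\Xi([Y_j])=[Z_j]$.
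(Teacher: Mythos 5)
Your proposal is correct and follows essentially the same route as the paper: the paper's proof is precisely the chain of identifications $\Hom(\wt\T^E,\wt\T)\cong H^{BM}_{d+d^E}(Z')$, $\Hom(\T^E_E,\T^F_E)\cong H^{BM}_{d_E}(Y)$, the component bijection $Z_j\leftrightarrow Y_j$ obtained by intersecting with a fibre of $Z'=q^{-1}(\M^E_\b)\to\M^E_\b$ over the simply connected base, and the identification of $p_{\b*}$ with $\iota_*$. The only point you elaborate beyond the paper (which simply says ``putting this together'') is the verification that the slice-restriction map $r$ is intertwined with intersection with a fibre, and your argument for that --- choosing $N$ compatibly with $q$ and pushing the base change through the chain of adjunctions --- is the right one.
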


This description of $\Xi$ is in some ways simpler than the definition we have used; the main advantage of our approach is that it leads to a formula in terms of equivariant localization, given in Theorem \ref{thm:extended U action} below.

\subsection{Cellular basis for $R$}

\begin{theorem}\label{thm:cellular homs}
For any two flats $E,F \in \F$, the map
\begin{equation}\label{eq:cellular homs}
\bigoplus_{D\leq E \wedge F} \Hom(\T^D_D,\T^F_D) \otimes \Hom(\T^E_D,\T^D_D) \longrightarrow \Hom(\T^E,\T^F) 
\end{equation}
given by $f\otimes g \mapsto \Xi(f) \circ \Xi'(g)$ is an isomorphism.
\end{theorem}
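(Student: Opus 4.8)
The plan is to prove Theorem~\ref{thm:cellular homs} by induction on the number of coarse strata in $\M$, using the fact that for each $D \le E\wedge F$ we already have a partial resolution and extension maps $\Xi$, $\Xi'$ at our disposal. The source of the statement is the cellular/stratified structure of $R = \End(\bigoplus_F \T^F)$: every homomorphism $\T^E \to \T^F$ should factor, up to lower-order terms coming from deeper strata, through a resolution sheaf $\T^D_D$ supported on a normal slice at $S_D$. I would first reduce to the case $F = \emptyset$ by pushing forward along $\M^F \hookrightarrow \M$, exactly as in Section~\ref{sec:extending homs}; then $E\wedge F = E$ and the sum runs over $D \le E$.

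\emph{Dimension count / surjectivity.} The heart of the argument is to show that the map \eqref{eq:cellular homs} is surjective; injectivity will then follow by comparing dimensions, since both sides can be computed via Poincaré--Verdier duality and equivariant localization (Proposition~\ref{prop:Morse is faithful} identifies $\Hom(\T^E,\T^F)$ with an explicit subspace of $\Hom_\kk(\cB^E_I,\cB^F_I)$, and a matroid-combinatorial count of monomials matches the two sides). For surjectivity, take $\phi \in \Hom(\T^E,\T^\emptyset)$. Restricting to a transverse slice at $S_E$ and using Lemma~\ref{lem:slice to resolution sheaf}, $\phi$ restricts to some $\bar\phi \in \Hom(\T^E_E,\T^\emptyset_E)$. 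The extension map gives $\Xi(\bar\phi)$, which restricts to $\bar\phi$ at $S_E$; so $\phi - \Xi(\bar\phi)$ restricts to zero on the open union of strata $S_D$ with $D \not\le E$ together with $S_E$ itself (here the key point is that $\Xi(\bar\phi)$ and $\phi$ agree to all orders at $S_E$, because $\mathcal{RHom}(\wt\T^E,\wt\T)$ is a constant local system on the simply-connected smooth subvariety $\M^E_\b$ extended by zero — this is exactly the rigidity built into the construction of $\Xi$). The difference then factors through the closed union $\bigcup_{D < E,\, D\in\cF} \M^D$, and by adjunction lies in the image of $\bigoplus_{D<E}\Hom(\T^E|_{\M^D}, \T^\emptyset)$; restricting $\T^E$ to the strictly smaller closed set $\M^D$ and invoking the inductive hypothesis there (combined with transitivity of the slice identifications, $(V^D)_{\text{slices}}$ compatibility from Section~\ref{arrange}, and $\Xi\circ\Xi' $ being the right inverse to restriction) expresses $\phi - \Xi(\bar\phi)$ as $\sum_{D<E} \Xi(f_D)\Xi'(g_D)$, completing the induction.

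\emph{Injectivity.} For injectivity, suppose $\sum_{D \le E} \Xi(f_D)\circ\Xi'(g_D) = 0$ in $\Hom(\T^E,\T^\emptyset)$. Restrict to a transverse slice at $S_E$: all terms with $D < E$ vanish on that slice (their supports lie in $\M^D \subsetneq \M^E$), while the $D = E$ term restricts to $f_E \circ g_E \in \Hom(\T^E_E,\T^\emptyset_E)$, which is therefore zero. But at the top slice the extension maps are inverse to restriction and the decomposition is trivially the tautological one (only $D=E$ contributes), so one needs $f_E\otimes g_E = 0$ in $\bigoplus_{D\le E}\Hom(\T^D_D,\T^\emptyset_D)\otimes\Hom(\T^E_D,\T^D_D)$ for the slice $\M_E$ — which, since $\M_E$ has strictly fewer strata, follows from the inductive hypothesis applied to $\M_E$ (where it reads: the $D=E$ summand injects). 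Having killed $f_E, g_E$, the remaining sum $\sum_{D<E}$ is supported on $\bigcup_{D<E}\M^D$, and applying the inductive hypothesis on that closed subset forces each $f_D\otimes g_D = 0$. I would need to be slightly careful that the slice identifications used to define $\Xi,\Xi'$ on $\M$ and on the various $\M^D$ are mutually compatible — this uses the canonical independence of $\T^F$ from the resolution (Proposition~\ref{prop:Omega is canonical}) and the transitivity of normal slices $(V^F)_E = (V_E)^F$ noted in Section~\ref{arrange}.

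\emph{Main obstacle.} The delicate point will not be the homological bookkeeping but verifying that $\Xi(\bar\phi)$ matches $\phi$ \emph{exactly} at the stratum $S_E$ — i.e., that restriction followed by $\Xi$ is genuinely a right inverse on the nose, not merely modulo deeper strata — and, relatedly, that the partial-resolution construction (with the carefully chosen $\a$, $\b$ as in Lemma~\ref{lem:iterated reduction}) is compatible across the inductive step when we pass to $\M^D$. Checking this amounts to tracing the homeomorphism $p_\b^{-1}(U)\xrightarrow{\sim} U$ over a neighborhood of $S_E$ from Lemma~\ref{lem:iterated reduction}(c) together with the fiber-bundle structure in (b), and confirming that the monomial/Borel--Moore-homology bases (via the convolution description in the preceding subsection and Theorem~\ref{thm:Cohomology of intermediate Morse sheaf}) line up; a clean way to organize this is to push everything through the faithful functor $\H^\bullet_T$ of Proposition~\ref{prop:Morse is faithful} and check the identity of linear operators on $\cB$, where it becomes a finite matroid computation.
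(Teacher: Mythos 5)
Your overall strategy --- peel off the contribution at a maximal stratum, check the remainder is supported on deeper strata, and iterate --- is the same triangularity argument the paper uses, and your $D=E$ step is correct. But there is a genuine gap in how you handle the strata $D<E$. After subtracting $\Xi(\bar\phi)$ you are left with a morphism $\psi\colon\T^E\to\T^\emptyset$ supported on the reducible closed set $\bigcup_{D<E}\M^D$, and you propose to finish by ``invoking the inductive hypothesis'' on that set, applied to $\Hom(\T^E|_{\M^D},\T^\emptyset)$. That is not an instance of the theorem: $\T^E|_{\M^D}$ is not a resolution sheaf and $\bigcup_{D<E}\M^D$ is not a stratum closure, so your induction on the number of strata never closes. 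The same defect appears in your injectivity argument, where you again appeal to ``the inductive hypothesis on that closed subset'' to kill the terms with $D<E$.

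What is missing is the one fact that makes the triangularity work at an arbitrary maximal stratum $D$ of the support of $\psi$: on the normal slice $\M_D$, the composition map $\Hom(\T^D_D,\T^F_D)\otimes\Hom(\T^E_D,\T^D_D)\to\Hom(\T^E_D,\T^F_D)$ is injective with image exactly the subspace $\Hom_o(\T^E_D,\T^F_D)$ of morphisms supported on the cone point $o$. (This follows from $\Hom_o(\T^E_D,\T^F_D)\cong\H^0(\RHom(j^*\T^E_D,j^!\T^F_D))$ together with perversity of the resolution sheaves, since $\Hom(\T^E_D,\T^D_D)=\H^0(j^*\T^E_D)^*$ and $\Hom(\T^D_D,\T^F_D)=\H^0(j^!\T^F_D)$.) With this in hand, surjectivity follows by taking $D$ maximal in the support of $\psi$ --- so that $\psi$ restricted to the slice $\M_D$ is supported at $o$, hence is a composition through $\T^D_D$, hence can be subtracted off via $\Xi(f_D)\circ\Xi'(g_D)$ --- and injectivity follows by restricting a relation to a slice at a maximal $D$ with nonvanishing term and invoking the injectivity half of the same fact. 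Your fallback of ``comparing dimensions by a matroid count'' cannot substitute for this, since no independent computation of $\dim\Hom(\T^E,\T^F)$ is available at this stage; the theorem itself is what produces that dimension.
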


\begin{proof}
This follows from a triangularity argument, based on the following easy facts:
\begin{enumerate}
\item[(a)] a homomorphism in the image of the term  $\Hom(\T^D_D,\T^F_D) \otimes \Hom(\T^E_D,\T^D_D)$ is zero outside of $\M^D = \overline{S_D}$,
\item[(b)] the map
\[ \Hom(\T^D_D,\T^F_D) \otimes \Hom(\T^E_D,\T^D_D) \to \Hom(\T^E_D,\T^F_D)\]
obtained by restricting the $D$th component of \eqref{eq:cellular homs} to a transverse slice to $S_D$ (which we identify with $\M_D$) is just composition of morphisms, and
\item[(c)] the map in (b) is an injection, whose image is the subspace
$\Hom_o(\T^E_D,\T^F_D) \subset \Hom(\T^E_D,\T^F_D)$ of morphisms supported on the smallest stratum $\{o\} = S_D\cap \M_D$ in $\M_D$.  This holds because, if we let $j\colon \{o\} \to \M_D$ be the inclusion, we have natural isomorphisms
\[\Hom_o(\T^E_D,\T^F_D) \cong \H^0(j^!\RHom(\T^E_D, \T^F_D)) \cong \H^0(\RHom(j^*\T^E_D, j^!\T^F_D)).\]
\end{enumerate}

To show that \eqref{eq:cellular homs} is injective, take 
an element in the kernel.  If the terms corresponding to all flats $D' > D$ are zero, then facts (a) and (c) imply that the term corresponding to $D$ must also vanish, and so all of the terms vanish.

To see that it is surjective, suppose that a homomorphism $\phi \in \Hom(\T^E,\T^F)$ is not in the image and has the smallest number of strata in its support among homomorphisms with this property.  Let $D$ be a maximal stratum in the support.  Then property (c) implies that we can choose an element in 
$\Hom(\T^D_D,\T^F_D) \otimes \Hom(\T^E_D,\T^D_D)$ whose image agrees with $\phi$ on a slice to $S_D$.  Subtracting this from $\phi$ gives a morphism with smaller support which is not in the image, a contradiction.
\end{proof}

\begin{corollary}\label{cor:generating set}
The algebra $R = \End(\bigoplus_F \T^F)$ is generated by all  maps $\Xi(f)$, $\Xi'(g)$ where $E\le F$, $f \in \Hom_{\M_E}(\T^E_E, \T^F_E)$, 
$g \in \Hom_{\M_E}(\T^F_E,\T^E_E)$.
\end{corollary}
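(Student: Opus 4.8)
The plan is to read this off directly from Theorem~\ref{thm:cellular homs}. First recall the algebra structure on $R = \End(\bigoplus_F \T^F)$: as a $k$-module it is $\bigoplus_{E,F\in\cF}\Hom(\T^E,\T^F)$, and multiplication is composition of homomorphisms, so the product of an element of $\Hom(\T^D,\T^F)$ with an element of $\Hom(\T^E,\T^D)$ lies in $\Hom(\T^E,\T^F)$, while all other products between summands vanish. Consequently a subset $S\subset R$ generates $R$ as an algebra as soon as each summand $\Hom(\T^E,\T^F)$ is spanned by iterated composites of elements of $S$.

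Now I would fix an arbitrary pair $E,F\in\cF$ and invoke Theorem~\ref{thm:cellular homs}: $\Hom(\T^E,\T^F)$ is spanned by the elements $\Xi(f)\circ\Xi'(g)$, where $D$ runs over flats with $D\le E\wedge F$, $f$ over $\Hom_{\M_D}(\T^D_D,\T^F_D)$, and $g$ over $\Hom_{\M_D}(\T^E_D,\T^D_D)$. The point is that each such $\Xi(f)\in\Hom(\T^D,\T^F)$ is one of the generators listed in the corollary --- namely $\Xi$ applied to a morphism of slice resolution sheaves for the pair $D\le F$ --- and each $\Xi'(g)\in\Hom(\T^E,\T^D)$ is likewise one of the listed generators, for the pair $D\le E$; and $\Xi(f)\circ\Xi'(g)$ is precisely the product of these two generators in $R$. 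Hence every element of every $\Hom(\T^E,\T^F)$, including each idempotent $\id_{\T^F}$, is a $k$-linear combination of length-two products of the listed generators, which is exactly the assertion of the corollary.

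I do not anticipate any real difficulty here: all the substance is already contained in Theorem~\ref{thm:cellular homs}, and the only thing to get right is the index bookkeeping --- matching the parametrization in that theorem, which is by a pair $E\le F$ together with an auxiliary flat $D\le E\wedge F$, against the parametrization of the generators in the statement, which is by a single pair $E\le F$. The one observation worth spelling out explicitly is that the map $f\otimes g\mapsto\Xi(f)\circ\Xi'(g)$ appearing in Theorem~\ref{thm:cellular homs} is literally the multiplication of $R$ applied to two of the listed generators, so no additional elements or relations need to be introduced.
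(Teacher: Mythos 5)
Your proposal is correct and is exactly the argument the paper intends: the corollary is stated without proof because it is an immediate consequence of the surjectivity half of Theorem~\ref{thm:cellular homs}, read summand by summand in $R=\bigoplus_{E,F}\Hom(\T^E,\T^F)$ with multiplication given by composition. Your explicit remarks about the index bookkeeping and about $\id_{\T^F}$ arising from the $D=E=F$ term are accurate and harmless additions.
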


\begin{corollary}\label{cor:extending from slice}
For any flats $E, F_1, F_2\in \cF$ with $E \le F_i$, $i=1,2$, the map 
\[\Hom_\M(\T^{F_1},\T^{F_2}) 
\to \Hom_\M(\T^{F_1}_E,\T^{F_2}_E)\]
obtained by restricting to a normal slice to $S_E$ is a surjection.
\end{corollary}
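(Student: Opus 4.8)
The plan is to deduce Corollary~\ref{cor:extending from slice} directly from Theorem~\ref{thm:cellular homs}, which provides a cellular decomposition of both the source and the target Hom spaces, together with the compatibility (fact (b) in the proof of that theorem) between the cellular factorization maps $\Xi,\Xi'$ and restriction to a transverse slice. The key point is that restriction to the slice $\M_E$ carries the cellular basis of $\Hom_\M(\T^{F_1},\T^{F_2})$ onto the cellular basis of $\Hom_{\M_E}(\T^{F_1}_E,\T^{F_2}_E)$, up to killing the summands supported too deep to survive the restriction.

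First I would apply Theorem~\ref{thm:cellular homs} to both pairs of flats: we have
\[
\Hom_\M(\T^{F_1},\T^{F_2}) \cong \bigoplus_{D \le E' } \Hom(\T^D_D,\T^{F_2}_D)\otimes\Hom(\T^{F_1}_D,\T^D_D),
\]
where $E' = F_1 \wedge F_2$, and since the slice $\M_E$ is itself a unimodular hypertoric variety, Theorem~\ref{thm:cellular homs} applied inside $\M_E$ (using that $\T^{F_i}_E$ restricts further to $\T^{F_i}_D$ on a slice to $S_D$ inside $\M_E$, and that the flats of $V_E$ above $F_i$ correspond to flats of $V$ in the interval $[F_i,E]$) gives
\[
\Hom_{\M_E}(\T^{F_1}_E,\T^{F_2}_E) \cong \bigoplus_{E \le D \le E'} \Hom(\T^D_D,\T^{F_2}_D)\otimes\Hom(\T^{F_1}_D,\T^D_D).
\]
So the target is naturally the direct sum of those cellular summands of the source that are indexed by flats $D$ with $D \ge E$.

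Second, I would check that the restriction map in the statement is, under these identifications, exactly the projection onto those summands with $D \ge E$ (and zero on the rest). This is where fact (b) from the proof of Theorem~\ref{thm:cellular homs} does the work: the $D$th cellular summand of $\Hom_\M(\T^{F_1},\T^{F_2})$ is built from homomorphisms supported on $\M^D = \overline{S_D}$, and restricting such a homomorphism to a normal slice to $S_E$ is governed by how $\M^D$ meets that slice. If $D \not\ge E$ — equivalently, if $S_E \not\subset \M^D$ — then the support misses a neighborhood of the generic point of $S_E$ and the restriction vanishes. If $D \ge E$, then the slice to $S_E$ meets $\M^D$ in a slice to $S_D$ inside $\M^D$, and by fact (b) the factorization via $\Xi$ and $\Xi'$ is compatible with this restriction, so the $D$th summand maps isomorphically onto the $D$th summand of the target. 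Since every summand of the target is hit, the map is surjective.

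The main obstacle I anticipate is the bookkeeping in the second step: one must carefully match the two applications of Theorem~\ref{thm:cellular homs} — once on $\M$ and once on the slice $\M_E$ — and verify that the factorization maps $\Xi,\Xi'$ commute with the two levels of "restrict to a slice" (first to $\M_E$, then to a slice of $S_D$ inside $\M_E$, versus directly to a slice of $S_D$ in $\M$). This amounts to a transitivity statement for normal slices, which is implicit in Proposition~\ref{prop:slice to fine strata} and Lemma~\ref{lem:slice to resolution sheaf} but would need to be stated cleanly; once it is in hand, the identification of the restriction map with a coordinate projection, and hence its surjectivity, is immediate. An alternative, slightly slicker route would be to bypass the explicit index matching and argue abstractly: restriction to the slice is a functor, it sends $\T^{F_i}$ to $\T^{F_i}_E$ by Lemma~\ref{lem:slice to resolution sheaf}, and the images of the generators $\Xi(f),\Xi'(g)$ for $D \ge E$ from Corollary~\ref{cor:generating set} already generate the target (again by Corollary~\ref{cor:generating set} applied inside $\M_E$), so the map is surjective onto the subalgebra they generate, which is everything — this shortens the argument but still relies on the same compatibility of $\Xi$ with slicing.
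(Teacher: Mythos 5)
Your proposal is correct and is essentially the argument the paper intends: the corollary is deduced from the cellular decomposition of Theorem~\ref{thm:cellular homs}, applied both on $\M$ and on the slice $\M_E$, together with the compatibility of $\Xi,\Xi'$ with restriction to slices (facts (a)--(c) in that proof plus transitivity of normal slices). Note only that for surjectivity you do not actually need the $D$-th summands to match up isomorphically (which would require $\operatorname{Rest}_E\circ\Xi=\Xi_E$ on the nose); the descending induction on the support of a homomorphism, exactly as in the surjectivity half of the proof of Theorem~\ref{thm:cellular homs}, already suffices.
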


\subsection{Combinatorial description of $R$}

We now give a combinatorial description of the algebra $R = \End(\bigoplus_F \T^F)$.  By Proposition \ref{prop:Morse is faithful}, the action of $R$ on $\cB$ is faithful, so Corollary \ref{cor:generating set} gives an isomorphism of $R$ with the subalgebra of $\End_k(\cB)$ generated by the actions of maps of the form $\Xi(f)$, $\Xi'(g)$.  We therefore need to compute how these elements act.

Define an associative product $\ast$ on $\cB$ as follows.  If $\be_B \in \cB^F_E$ and $\be_{B'} \in \cB^{F'}_{E'}$, then 
\[\be_B \ast \be_{B'} =
\begin{cases} 
\be_{B\cup B'} \in \cB^F_{E'} & \mbox{if } E = F'\\
0 & \mbox{otherwise.}
\end{cases}\]
This makes sense because if $B$ is a basis of $V^F_E$ and $B'$ is a basis of $V^E_{E'}$, then $B \cup B'$ is a basis of $V^F_{E'}$.
We can also define adjoint operations $\ldiv$ and $\rdiv$ to left and right multiplication by 
\[\be_B \ldiv \be_{B'} =
\begin{cases} 
(-1)^{|B|} \be_{B'\setminus B} \in \cB^E_{E'} & \mbox{if $F = F'$, $E' \le E$ and $B \subset B'$} \\
0 & \mbox{otherwise.}
\end{cases}\]
\[\be_B \rdiv \be_{B'} =
\begin{cases} 
(-1)^{|B'|}\be_{B\setminus B'} \in \cB^{E'}_{E} & \mbox{if $F = F'$, $E \le E'$ and $B' \subset B$} \\
0 & \mbox{otherwise.}
\end{cases}\]

\begin{theorem}\label{thm:extended U action}
Take $u \in \cU^F_E$ and let $f_u \in \Hom(\T^E_E, \T^F_E)$, $g_u \in \Hom(\T^F_E, \T^E_E)$ be the corresponding morphisms.  Then the actions of $\Xi(f_u)$ and $\Xi'(g_u)$ on $\cB$ are given by 
\begin{align*}
\Xi(f_u) \cdot x = u * x\\
\Xi'(g_u) \cdot x = u \ldiv x.
\end{align*}
\end{theorem}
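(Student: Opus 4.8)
The plan is to have both operators act on $\cB$ and to identify them by a direct equivariant computation, using that $R$ acts faithfully on $\cB$ (Proposition~\ref{prop:Morse is faithful}) together with the resolutions built in Section~\ref{sec:extending homs}. First I would make several reductions. Pushing forward along $\M^F\hookrightarrow\M$ reduces to the case $F=\emptyset$, since $\T^E,\T^F$, the maps $\Xi,\Xi'$ and the products $\ast,\ldiv$ are all compatible with this. Next, Verdier self-duality of the resolution sheaves interchanges the constructions of $\Xi$ and $\Xi'$ and, on $\cB$, acts through the Poincar\'e--Verdier pairing of Section~\ref{sec:pairing}; since $u\ldiv x$ is by construction the adjoint of $u\ast x$ with respect to that pairing (the sign $(-1)^{|B|}$ in the definition of $\ldiv$ being exactly what matches the sign $(-1)^d$ in $\langle\be_B,\be_{B'}\rangle$), it suffices to prove the formula for $\Xi(f_u)$. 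Finally, on $\cB^G_H$ the operator $\Xi(f_u)$ vanishes unless $G=E$ and $H\le E$; in the remaining case I would use that $\Morse_H$ factors as $\Morse^{\M_H}_I\circ\operatorname{Rest}_H$ (combine Lemmas~\ref{lem:Morse transitivity} and~\ref{lem:Morse slice compatibility}), that $\operatorname{Rest}_H(\T^E)\cong\T^E_H$ (Lemma~\ref{lem:slice to resolution sheaf}), and that $\operatorname{Rest}_H$ carries $\Xi_\M(f_u)$ to the analogous extension map $\Xi_{\M_H}(f_u)$ (this follows from the characterization of $\Xi$ in Theorem~\ref{thm:cellular homs} together with Lemma~\ref{lem:iterated reduction}), while $\cB^F_E$, $\cU^F_E$ and $\ast$ are intrinsic to the slice $\M_E$. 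This reduces everything to computing the single linear map $\Morse_I(\Xi(f_u))\colon\cB^E_I\to\cB^\emptyset_I$ when $F=\emptyset$.

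For the remaining computation I would fix the partial resolution $p_\b\colon\M_\b\to\M$ and the resolution $q\colon\M_\a\to\M_\b$ of Section~\ref{sec:extending homs}, with $p_\a=p_\b q$, and write $\M^E_\b=p_\b^{-1}(\M^E)$ for the induced resolution of $\M^E$ inside $\M_\b$ (Lemma~\ref{lem:iterated reduction}(a)). By Corollary~\ref{cor:localization of Morse group} applied to $\M$ and to $\M^E$, the space $\cB^\emptyset_I=\Morse_I(\T^\emptyset)$ has basis $\{\be_D\}$ indexed by bases $D$ of $V$ and embeds in $H^\bullet_T(\M_\a^T)$ by restriction to the fixed points $y_D\in\M_\a$, while $\cB^E_I=\Morse_I(\T^E)$ has basis $\{\be_C\}$ indexed by bases $C$ of $V^E$ and embeds in $H^\bullet_T((\M^E_\b)^T)$ via the fixed points $y_C\in\M^E_\b$. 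The geometric heart is Lemma~\ref{lem:iterated reduction}(b): the preimage $q^{-1}(\M^E_\b)\subset\M_\a$ is a fibre bundle over $\M^E_\b$ with fibre the central fibre of $\M_{E,\bar\a}\to\M_E$, and its $T$-fixed points lying over a given $y_C$ are indexed by the bases $B$ of $V_E$, sitting in $\M_\a$ precisely as the points $y_{C\sqcup B}$ (so $C\sqcup B$ is a basis of $V$). Now $\T^\emptyset=p_{\b*}\wt\T$ and $\T^E=p_{\b*}\wt\T^E$ with $\wt\T^E$ supported on $\M^E_\b$, so $r^{-1}(f_u)\colon\wt\T^E\to\wt\T$ factors through the restriction of $\wt\T$ to $\M^E_\b$, which by proper base change is computed from $q^{-1}(\M^E_\b)$; by the definition of $r$ this morphism restricts over each point of $\M^E_\b$ to $f_u$, and if $u=\sum_B c_B\be_B$ is the given class in $\cB^\emptyset_E$ then Corollary~\ref{cor:localization of Morse group} for $\M_E$ expresses $f_u$ through the fixed points $y_B$ of $\M_{E,\bar\a}$ with coefficients $c_B$. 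Applying $\Morse_I$ and using properness of $p_\b$ (so hyperbolic restriction commutes with $p_{\b*}$), the map $\Morse_I(\Xi(f_u))$ is compatible with restriction to $T$-fixed points; restricting the source at $y_C$ and pushing through the fibre over $y_C$ sends $\be_C$ to $\sum_B c_B\be_{C\sqcup B}=u\ast\be_C$, while fixed points of $\M_\a$ not lying over $\M^E_\b$ contribute nothing since $r^{-1}(f_u)$ depends only on a neighbourhood of $\M^E_\b$. Since restriction to fixed points is injective (Corollary~\ref{cor:localization of Morse group}), this identifies $\Morse_I(\Xi(f_u))$ with $u\ast(-)$, and the claim for $\Xi'(g_u)$ then follows from the duality reduction (equivalently, one runs the identical argument for $(r')^{-1}(g_u)$, the sign $(-1)^{|B|}$ in $\ldiv$ emerging from the normalization relating $g_u$ to $u$ through the pairing).

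The soft reductions above are routine; the substantive step — and the main obstacle — is the localization computation itself: routing both $\Morse_I(\T^E)$ and $\Morse_I(\T^\emptyset)$ through the common resolution $\M_\a$ and confirming that the fixed-point correspondence is exactly $(C,B)\mapsto C\sqcup B$, so that the ``restrict to a transverse slice'' definition of $f_u$ becomes the concatenation $\be_C\mapsto\be_{C\sqcup B}$ on localized classes. Keeping track of the degree shifts, and above all of the signs coming from the Poincar\'e--Verdier pairing so that one lands on $\ast$ with no sign and on $\ldiv$ with the sign $(-1)^{|B|}$, is the part that will require the most care.
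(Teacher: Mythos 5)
Your overall strategy coincides with the paper's: reduce to $F=\emptyset$ and (via slices and duality) to the single component $\Morse_I(\Xi(f_u))\colon \cB^E_I\to\cB^\emptyset_I$, route both sides through the partial resolution $q\colon\M_\a\to\M_\b$ of Section \ref{sec:extending homs}, and compute on localized classes using the fixed-point correspondence $(C,B)\mapsto C\cup B$ supplied by Lemma \ref{lem:iterated reduction}(b). The one-line disposal of the $\Xi'$ formula by Poincar\'e--Verdier duality is also exactly what the paper does.

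The gap sits precisely at the step you yourself flag as the main obstacle: the assertion that ``restricting the source at $y_C$ and pushing through the fibre over $y_C$ sends $\be_C$ to $\sum_B c_B\be_{C\sqcup B}$.'' The morphism $h=r^{-1}(f_u)\colon\wt\T^E\to\wt\T$ is pinned down only by its restriction to a normal slice at \emph{one} point of $p_\b^{-1}(S_E)$, and even there only through a choice of identification of the slice with $\M_E$. The bundle $q^{-1}(\M^E_\b)\to\M^E_\b$ is not trivial, and the identification of $q^{-1}(y_C)$ with the central fibre of $\M_{E,\bar{\a}}\to\M_E$ is not canonical (the paper states this explicitly); so one cannot simply transport the coefficients $c_B$ of $u$ from the basepoint to the fibre over an arbitrary fixed point $y_C$ and read off the localized answer. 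The paper supplies the missing mechanism in three moves: (i) it computes $h_*(1_C)$, the image of the point class at $y_C$, only modulo the kernel of the map to $T_E$-equivariant cohomology --- which is all the non-canonical fibre identification permits; (ii) it constructs, via the Kirwan homomorphism for $\M_\b=\bar{\mu}^{-1}(0)^{ss}/(K/K_E)$ (using genericity of $\b$, hence freeness of the $K/K_E$-action on the semistable locus), classes $\be'_i\in H^2_T(\M_\b;k)$ for $i\in I\setminus E$ restricting to $\be_i$ on both $\M^E_\b$ and $\M_\a$, so that $h_*$ is a map of $H^\bullet_T(\M_\b;k)$-modules and $h_*(\be_C)=\be_C\cdot h_*(1_C)$; and (iii) it kills the ambiguity left over from (i) by a degree-and-support argument, since $h_*(\be_C)$ must be a $k$-linear combination of the classes $\be_{B''}$ with $B''$ a basis of $V$. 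Your proposal needs some version of (ii)--(iii) --- or an equivalent rigidity statement propagating the slice computation to every fixed point --- before the fibrewise picture becomes the identity $\Morse_I(\Xi(f_u))(\be_C)=u\ast\be_C$.
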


\begin{proof}
We only need to show the first formula, since the second will follow using Poincar\'e-Verdier duality.  In addition, we can assume that $F = \emptyset$.

We use the varieties $\M_\a$ and $\M^E_\b \subset \M_\b$ and the sheaves $\wt\T$, $\wt\T^E$ 
defined in Section \ref{sec:extending homs}.
Let $h = r^{-1}(f_u)\colon \wt\T^E \to \wt\T$ be the map which restricts to $f_u$ on a normal slice to $\M^E_\b$ at $y\in S_E$.  Restricting to fixed points and taking
hypercohomology, we get an induced homomorphism 
\[H_T^\bullet((\M^E_\b)^T; k) \to H_T^{\bullet+2d_E}(q^{-1}((\M^E_\b)^T); k) \to H_T^{\bullet+2d_E}((\M_\a)^T; k)\]
of $H_T^\bullet(\M_\b;k)$-modules, which we denote $h_*$.
Let $B$ be a basis of $V^E$, so $y_B$ is a fixed point in $\M^E_\b$.  We need to show that 
$h_*(\be_B) = u\ast \be_B$, where we identify $\be_B \in H^\bullet_T(\M^E_\b; k) = k[\Delta^E]$ with its 
restriction to the fixed point locus. 

The fiber bundle $q^{-1}(\M^E_\b) \to \M^E_\b$ is not in general trivial, but it does have invariant sections.  For each basis $B'$ of $V_E$, let $\M_\a^{B'} \subset \M_\a$ be the subvariety defined by setting $z_i=w_i=0$ for all $i\in B'$.  Then $\M_\a^{B'}$ projects isomorphically onto $\M^E_\b$, and the union of the $\M^{B'}_\a$ over all $B'$ is the $T_E$-fixed locus in $q^{-1}(\M^E_\b)$.  Furthermore, we have
$q^{-1}(y_B) \cap \M^{B'}_\a = \{y_{B\cup B'}\}$, and although the identification of $q^{-1}(y_B)$ with the central fiber of $\M_{E. \bar{\a}} \to \M_E$ provided by Lemma \ref{lem:iterated reduction}(b) is not canonical, under any such identification $y_{B'}$ corresponds to $y_{B \cup B'}$.

Consider the class $1_B \in H^0_{T}((\M^E_\a)^T;k)$ obtained by pushing forward $1 \in H^0_T(y_B;k)$.
Ifwe put  $u = \sum_{B' \in \Bas(V_E)} c_{B'}\be_{B'}$, then the previous discussion implies that
\begin{equation}\label{eqn:pushing forward point class}
h_*(1_B) - \sum_{B'} c_{B'}\be_{B'} \in H_T^{\bullet+2d_E}((\M_\a)^T; k)
\end{equation}
becomes zero after passing to $T_E$-equivariant cohomology.

Now, to compute $h_*(\be_B)$, we use the fact that
$h_*$ is a map of $H_T^\bullet(\M_\b;k)$-modules.  
Suppose that for each $i \in I\setminus E$ there exists a class $\be'_i \in H^2_T(\M_\b;k)$ which pulls back to the classes $\be_i$ on both $\M^E_\b$ and $\M_\a$.  Then we get $h_*(\be_B) = \be_Bh_*(1_B)$, 
so the theorem would follow if we knew that \eqref{eqn:pushing forward point class} is zero.  But this follows easily from the fact that $h_*(\be_B)$ is a linear combination of $\be_{B''}$, $B''\in \Bas(V)$.

To construct the classes $\be'_i$, 
we use the construction of 
$\M_\b$ as the quotient $\bar{\mu}^{-1}(0)^{ss}/(K/K_E)$
from the proof of Lemma \ref{lem:iterated reduction}.
The genericity of $\b$ implies that $K/K_E$ acts freely on $\bar{\mu}^{-1}(0)^{ss}$, so there is a
 Kirwan homomorphism  
\[H^\bullet_{(\C^*)^I/K_E}(pt;k)\to H^\bullet_T(\M_\b;k).\]
Pulling back the coordinate characters of $(\C^*)^{I\setminus B}$ by the quotient homomorphism 
$(\C^*)^I/K_E \to (\C^*)^{I\setminus B}$
gives characters which, considered as elements of 
$H^2_{(\C^*)^I/K_E}(pt;k)$, map to the required classes 
$\be'_i$ in $H^2_T(\M_\b;k)$.
\end{proof}

\begin{corollary}\label{cor:U is star-closed}
For any flats $D \le E \le F$ in $\cF$, we have
\[\cU^F_E \ast \cU^E_D \subset \cU^F_D.\]
\end{corollary}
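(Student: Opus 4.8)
The plan is to read the statement off directly from Theorem~\ref{thm:extended U action}, using that $R=\End(\bigoplus_F\T^F)$ acts faithfully on $\cB$ (Proposition~\ref{prop:Morse is faithful}) and is closed under composition. The essential point is that $\ast$‑multiplication by a class in $\cU^F_E$ is realized by \emph{composing} with an honest morphism of sheaves on $\M$; hence iterating two such operations keeps us inside the image of $\Morse_D$ on homomorphisms out of $\T^D$, which is precisely how $\cU^F_D$ is defined.

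Concretely, I would fix $u\in\cU^F_E$ and $v\in\cU^E_D$ and let $f_u\in\Hom(\T^E_E,\T^F_E)$, $f_v\in\Hom(\T^D_D,\T^E_D)$ be the corresponding morphisms on the relevant transverse slices. By Theorem~\ref{thm:extended U action}, the extended morphisms $\Xi(f_u)\in\Hom_\M(\T^E,\T^F)$ and $\Xi(f_v)\in\Hom_\M(\T^D,\T^E)$ act on $\cB$ by $x\mapsto u\ast x$ and $x\mapsto v\ast x$ respectively. Since the $R$‑action on $\cB$ is a ring homomorphism, the composite $\phi:=\Xi(f_u)\circ\Xi(f_v)\in\Hom_\M(\T^D,\T^F)$ acts by $x\mapsto u\ast(v\ast x)=(u\ast v)\ast x$, using associativity of $\ast$; here $u\ast v$ is defined and lies in $\cB^F_D$ precisely because $D\le E$. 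Evaluating this action on the canonical generator $\be_\emptyset$ of $\cB^D_D=\Morse_D(\T^D)\cong k$ gives $\phi\cdot\be_\emptyset=(u\ast v)\ast\be_\emptyset=u\ast v$, so the linear map $\Morse_D(\phi)\colon\Morse_D(\T^D)\to\Morse_D(\T^F)$, regarded as an element of $\cB^F_D$, is exactly $u\ast v$.

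It then remains to recognize this element as lying in $\cU^F_D$. By definition $\cU^F_D$ is the image of $\Morse$ on $\Hom_{\M_D}(\T^D_D,\T^F_D)$; but $\Morse_D$ factors through restriction to a normal slice at $S_D$ (Lemma~\ref{lem:Morse slice compatibility}), and that restriction map $\Hom_\M(\T^D,\T^F)\to\Hom_{\M_D}(\T^D_D,\T^F_D)$ is surjective by Corollary~\ref{cor:extending from slice}, so $\cU^F_D$ equals the image of $\Morse_D$ on $\Hom_\M(\T^D,\T^F)$. Since $\phi\in\Hom_\M(\T^D,\T^F)$, we conclude $u\ast v=\Morse_D(\phi)\in\cU^F_D$, which is the claim.

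I expect no genuine obstacle here; the only care required is bookkeeping, namely confirming that the class of $\Morse_D(\phi)$ in $\cB^F_D$ is literally $u\ast v$ (with the correct normalization) and that the various incarnations of $\cU^F_D$ — as the image of $\Morse$ from the slice $\M_D$, as the image of $\Morse_D$ on $\Hom_\M(\T^D,\T^F)$, and as the span arising in Theorem~\ref{thm:extended U action} and Corollary~\ref{cor:generating set} — all coincide. All of this is immediate once one unwinds the identification $\Morse_D=(\text{Morse at the cone point of }\M_D)\circ\operatorname{Rest}_D$ together with the normalizations fixed in Theorem~\ref{thm:extended U action}.
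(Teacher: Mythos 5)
Your argument is correct and is essentially the paper's proof: one composes $\Xi(f_u)$ with $\Xi(f_v)$, applies $\Morse_D$ to the composite, and uses Theorem \ref{thm:extended U action} to identify the resulting element of $\cB^F_D$ with $u \ast v$. The only difference is that the paper first reduces to $D = I$ without loss of generality, so that the slice $\M_D$ is $\M$ itself and the final bookkeeping you perform (identifying $\cU^F_D$ with the image of $\Morse_D$ on $\Hom_\M(\T^D,\T^F)$ via the factorization through $\operatorname{Rest}_D$) is not needed.
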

\begin{proof}
Without loss of generality we can assume that $D=I$.  Take any
$u \in \cU^F_E$ and $u'\in \cU^E_I$, and let 
\[\Xi(f_u)\colon \T^E \to \T^F, \;\; \Xi(f_{u'})\colon \T^I \to \T^E\]
be the corresponding homomorphisms of resolution sheaves.  Applying $\Morse_I$ to the composition $\Xi(f_u)\circ \Xi(f_{u'})$ sends $1 \in \cB^I_I = k$ to $u \ast u' \in \cB^F_I$.  In other words, $u \ast u'$ arises by applying $\Morse_I$ to a homomorphism $\T^I \to \T^F$, and so it lies in $\cU^F_I$.
\end{proof}

\subsection{Circuit classes}

The description of $\cU^F_E$ given by Proposition \ref{prop:edge classes generate} is indirect, since it
gives a spanning set for its perpendicular space.  In this section we describe explicit classes associated to circuits which can be used to generate $\cU^F_E$.  As a side effect, we get a smaller generating set for the algebra $\End(\bigoplus_F \T^F)$ than the one given by Corollary \ref{cor:generating set}.

Recall that a circuit of $V$ is a minimal dependent set.
For a circuit $C$, 
let $F_C$ be the flat for which $H_{F_C} = H_C$; it is 
coloop-free, since $V_{F_C}$ has a full-rank circuit,
namely $C$ itself.  The image of $V\subset k^I$ under the projection 
$k^I \to k^C$ has codimension $1$, so it is given by an equation
\[\sum_{i\in C} b_ix_i = 0.\]
Since we have assumed that $V$ is unimodular, we can take all 
$b_i$ to be in $\{\pm 1\}$. Define a class 
\[u_C = \sum_{i \in C} b_i \be_{C \setminus i}.\]

\begin{proposition}\label{circuit class formula}
Up to multiplication by a nonzero scalar, $u_C$ is the unique class in $\cU^\emptyset_{F_C}$ 
which is a linear combination of $\be_{C\setminus i}$, $i\in C$.
\end{proposition}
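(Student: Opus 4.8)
The plan is to identify $\cU^\emptyset_{F_C}$ with a concrete subspace of $\cB^\emptyset_{F_C} = \Morse_{F_C}(\T)$, and then pin down which elements of it are supported on the monomials $\be_{C\setminus i}$, $i \in C$. By Proposition~\ref{prop:edge classes generate}, applied to the hypertoric variety $\M_{F_C}$ in place of $\M$ (using Corollary~\ref{cor:localization of Morse group} to identify $\cB^\emptyset_{F_C}$ with a space of square-free monomials in $k[\Delta_{F_C}]$), the space $\cU^\emptyset_{F_C}$ is the orthogonal complement, under the pairing $\langle \be_B,\be_{B'}\rangle = (-1)^{d}\delta_{B,B'}$ of Section~\ref{sec:pairing}, of the span of the edge classes $\a_X = v_X \cdot \be_X$ for edges $X$ of $V_{F_C}$. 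So the first step is: reduce to $V = V_{F_C}$, i.e.\ assume $F_C = \emptyset$, which we may do since $\T^\emptyset_{F_C} = \T^\emptyset_E$ in the notation of the ambient variety and all the relevant structures restrict.

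Once we are working inside $\M = \M(V_{F_C})$ with $C$ a full-rank circuit, the key observation is that the monomials $\be_{C\setminus i}$, $i\in C$, are exactly the square-free monomials $\be_X$ with $X$ an edge whose closure is the flat $F_C$; indeed removing any single element of the circuit $C$ leaves an independent set of size $|C|-1 = d$, hence a basis (since $C$ has full rank $d+1$... wait, $C$ spans a flat of corank... let me restate), and these are precisely the bases contained in $F_C$. Thus I would next check that the linear span $W$ of $\{\be_{C\setminus i} : i \in C\}$ inside $\cB^\emptyset_{F_C}$ is $(|C|)$-dimensional, and compute $W \cap \cU^\emptyset_{F_C} = W \cap (\mathrm{span}\{\a_X\})^\perp$. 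Pairing $\be_{C\setminus i}$ against an edge class $\a_X = v_X\be_X$: the pairing $\langle \a_X, \be_{C\setminus i}\rangle$ is nonzero only when $X = C\setminus j$ for some $j$ and the coefficient of $\be_i$ in $v_X$ is nonzero, i.e.\ when $X\cup\{i\}$ is a basis. The relevant edges $X$ are $C\setminus\{j\}$ with $j\in C$, and $v_{C\setminus j}$ — a generator of the rank-one flat $H_{C\setminus j}$, which is cut out within $k^C$ by the circuit equation restricted appropriately — has $\be_i$-coefficient proportional to $b_i$. Carrying this through, the condition that $\sum_i \lambda_i \be_{C\setminus i} \in \cU^\emptyset_{F_C}$ becomes, for each $j\in C$, a single linear equation on the $\lambda_i$, and these $|C|$ equations cut out a one-dimensional solution space spanned by $(\lambda_i) = (b_i)$. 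This gives both that $u_C \in \cU^\emptyset_{F_C}$ and that it is unique up to scalar.

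I expect the main obstacle to be bookkeeping the signs and the precise proportionality constants: relating the generator $v_X$ of the rank-one flat $H_E$ to the circuit coefficients $b_i$, and then tracking the $(-1)$'s coming from the pairing and from the unimodularity normalization $b_i\in\{\pm1\}$. Concretely, one must verify that for the edge $X = C\setminus\{j\}$ the flat $H_X$ has a generator whose $i$-th coordinate (for $i\in C$, $i\ne j$) is $\pm b_i$ and is zero for $i\notin C$ with $X\cup\{i\}$ not a basis — this uses that $C$ is a circuit so the only dependency among $\{x_i : i\in C\}$ is the circuit equation, and unimodularity so the generator can be taken integral with entries in $\{0,\pm1\}$. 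Granting this, the pairing computation $\langle u_C, \a_{C\setminus j}\rangle = \pm\sum_{i\in C, i\ne j} b_i \cdot(\text{coeff}) = 0$ reduces to the identity $\sum_{i} b_i b_i / (\text{something}) = \pm(\text{circuit relation evaluated})$, which vanishes for circuit-theoretic reasons; I would isolate this as the one genuine computation and present it cleanly, leaving the rank count and the appeal to Proposition~\ref{prop:edge classes generate} as formal steps.
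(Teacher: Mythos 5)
Your strategy is the same as the paper's: reduce to the case where $C$ is a full-rank circuit (in the paper's conventions this is $F_C=I$, not $F_C=\emptyset$, since flats are ordered by reverse inclusion of subsets of $I$), invoke Proposition~\ref{prop:edge classes generate} to identify $\cU^\emptyset_{F_C}$ with the orthogonal complement of the edge classes, and then solve the resulting linear system on $\operatorname{span}\{\be_{C\setminus i}\}$. Two details, however, are wrong as written, and the second one is exactly where the content of the proof lives.

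First, after reducing to $V_{F_C}$, which has rank $d'=|C|-1$, the sets $C\setminus\{j\}$ are the \emph{bases} contained in $C$; the edges contained in $C$ are the sets $X=C\setminus\{j,j'\}$ of size $|C|-2$, and these give $\binom{|C|}{2}$ linear conditions (each involving only two of the $\lambda_i$), not $|C|$ conditions indexed by $j\in C$. Second, and more seriously, your heuristic for the deferred computation is incorrect: for $X=C\setminus\{j,j'\}$ the generator $v=\sum a_i\be_i$ of $H_X$ does \emph{not} have coefficients on $C$ "proportional to $b_i$." Since $v\in V$ and $a_i=0$ for $i\in X$, the circuit relation gives $a_jb_j+a_{j'}b_{j'}=0$, so using $b_i^2=1$ one gets $(a_j,a_{j'})=c\,(b_j,-b_{j'})$ for some scalar $c$ --- there is a relative sign. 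The pairing is $\langle\a_X,u_C\rangle=(-1)^{d'}(a_jb_{j'}+a_{j'}b_j)=\pm c\,(b_jb_{j'}-b_{j'}b_j)=0$ precisely because of that sign; if the coefficients really were proportional to $(b_j,b_{j'})$ the pairing would equal $\pm2cb_jb_{j'}$, which is nonzero away from characteristic $2$, and the argument would fail. (Your displayed candidate identity "$\sum_i b_ib_i$" is $|C|$, which does not vanish.) So the sign is not bookkeeping to be deferred; it is the one nontrivial step, and your sketch gets it wrong. Once it is corrected, the same relation $\lambda_{j'}/\lambda_j=-a_{j'}/a_j=b_{j'}/b_j$ for every pair $j\ne j'$ also gives the uniqueness statement, as in the paper.
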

\begin{proof}
Without loss of generality we can assume that $F_C = I$, so the sets $C \setminus \{i\}$ are bases for $V$.  By Proposition \ref{prop:edge classes generate}, in order to show that $u_C \in \cU^\emptyset_I$ it is enough to show that 
$\langle\alpha_X, u_C\rangle = 0$ for every edge $X$ of $V$. 
 We have 
 \[\alpha_X = \sum_{i\in I \setminus X} a_i\be_{X\cup\{i\}}\]
where $v := \sum_{i \in I \setminus X} a_i \be_i$ is a nonzero 
vector in $H_X$.  

If $X \not\subset C$, then monomials in $\alpha_X$ and $u_C$ are disjoint, so the pairing is automatically zero.  Otherwise, we have $C = X \cup \{j, j'\}$.  There are exactly two bases $B$ with $X \subset B \subset C$, and we get
\[\langle \alpha_X, u_C\rangle = (-1)^d(a_jb_{j'} + a_{j'}b_j).\]
On the other hand, since $v \in V$, we have
\[0 = \sum_{i\in C\cap(I\setminus X)} b_ia_i = a_jb_j + a_{j'}b_{j'}.\]
This together with the fact that $b_j, b_{j'} \in \{\pm 1\}$ 
gives $\langle\alpha_X, u_C\rangle = 0$.
Finally, uniqueness follows from the fact that $C \setminus \{j,j'\}$ is an edge for all $j \ne j'$ in $C$.
\end{proof}

More generally, the same definition applies to $V^E_D$ for any flats
$D\le E$ in $\cF$; any circuit $C$ of this arrangement gives rise
to $u_C \in \cU^E_{F_C}$.

\begin{proposition}\label{prop:circuit classes generate}
The space $\cU^E_D$ is spanned by classes
of the form
\begin{equation}\label{product of circuit classes}
u_{C_1} \ast u_{C_2} \ast \cdots \ast u_{C_r},
\end{equation}
where we let $F_0 = E$, $F_\ell = F_{C_\ell}$ for $i > 0$, 
$C_\ell$ is a circuit in $V^{F_{\ell-1}}_{D}$ for $1 \le \ell \le r$,
and $F_r = D$.  
\end{proposition}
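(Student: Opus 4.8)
The strategy is to reduce to the case $E=\emptyset$, $D=I$ and then induct on $d=\rk V$. For the reduction, restricting all the resolution sheaves and Morse functors to a normal slice to $S_D$ inside $\M^E$ — as in Lemma~\ref{lem:slice to resolution sheaf} and the definition of $\cU^F_G$ via slices — identifies $\cU^E_D$, together with its circuit classes and the $\ast$-product, with $\cU^\emptyset_I$, its circuit classes and its $\ast$-product, for the unimodular, loop- and coloop-free arrangement $V^E_D$. So assume from now on that $E=\emptyset$ and $D=I$. One inclusion is then easy: the extension of Proposition~\ref{circuit class formula} to the arrangement $V^{F_{\ell-1}}$ gives $u_{C_\ell}\in\cU^{F_{\ell-1}}_{F_\ell}$ (using $V^{F_{\ell-1}}_I=V^{F_{\ell-1}}$, since $V^I=0$), and iterating Corollary~\ref{cor:U is star-closed} places every product $u_{C_1}\ast\cdots\ast u_{C_r}$ in $\cU^\emptyset_{F_r}=\cU^\emptyset_I$. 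Hence the span $\cW$ of these products lies in $\cU^\emptyset_I$.

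For the reverse inclusion we induct on $d$; the cases $d\le 1$ are immediate (in rank one every two-element circuit $C$ has $F_C=I$, and the classes $u_C$ visibly span the hyperplane $\cU^\emptyset_I\subset\cB^\emptyset_I$). The inductive step follows once we establish the \emph{peeling identity}
\[\cU^\emptyset_I=\sum_{C}u_C\ast\cU^{F_C}_I,\]
the sum over all circuits $C$ of $V$. Granting this: $\cU^{F_C}_I$ is the space ``$\cU^\emptyset_I$'' of the unimodular, loop- and coloop-free arrangement $V^{F_C}$, which has rank $r(F_C)<d$ since a circuit closure is never the top flat $\emptyset$; so by induction each $\cU^{F_C}_I$ is spanned by products $u_{C_2}\ast\cdots\ast u_{C_r}$ along chains $F_C\ge F_{C_2}\ge\cdots\ge F_{C_r}=I$, and then $u_C\ast u_{C_2}\ast\cdots\ast u_{C_r}$ is a product of the required shape with $F_1=F_C$; when $F_C=I$ we have $\cU^{F_C}_I=\kk$ and recover the length-one products. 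This gives $\cU^\emptyset_I\subseteq\cW$, so $\cU^\emptyset_I=\cW$.

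It remains to prove the peeling identity, which is the heart of the matter. The inclusion $\supseteq$ is Corollary~\ref{cor:U is star-closed}. For $\subseteq$, use the nonsingular pairing of Section~\ref{sec:pairing} together with the fact — underlying Theorem~\ref{thm:extended U action} — that $\ldiv$ is adjoint to left $\ast$-multiplication: for $z\in\cB^\emptyset_I$ and $w\in\cU^{F_C}_I$ one has $\langle u_C\ast w,\,z\rangle=\pm\langle w,\,u_C\ldiv z\rangle$. Hence $z$ is orthogonal to $u_C\ast\cU^{F_C}_I$ for every circuit $C$ if and only if $u_C\ldiv z\in(\cU^{F_C}_I)^\perp$ for every $C$, which by Proposition~\ref{prop:edge classes generate} applied to $V^{F_C}$ means that $u_C\ldiv z$ lies in the span of the edge classes of $V^{F_C}$. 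So the peeling identity reduces to the purely combinatorial statement: \emph{if $z\in\cB^\emptyset_I$ is such that $u_C\ldiv z$ lies in the span of the edge classes of $V^{F_C}$ for every circuit $C$ of $V$, then $z$ lies in the span of the edge classes of $V$}. Granting this, $z\in(\cU^\emptyset_I)^\perp$ by Proposition~\ref{prop:edge classes generate}, and taking perpendiculars yields $\cU^\emptyset_I\subseteq\sum_C u_C\ast\cU^{F_C}_I$.

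I expect to prove this combinatorial statement by working in the face ring $\kk[\Delta]$ and using the shelling order $\prec$ on the bases of $V$ from the proof of Proposition~\ref{prop:edge classes generate}: in the straightened coordinates that it provides, if the $\prec$-extremal basis $B$ appearing in $z$ fails the internal-passivity condition, then the operation $u_C\ldiv(-)$, for the circuit $C$ of $V$ attached to the offending element of $B$, witnesses that $u_C\ldiv z$ is not in the span of the edge classes of $V^{F_C}$. Making this last straightening step precise — essentially a statement about $\kk[\Delta]$ and its quotient $H^\bullet(\wt\M;\kk)$ — is the main obstacle; everything else is bookkeeping with the previously established results.
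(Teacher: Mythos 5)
Your reduction to $E=\emptyset$, $D=I$, the easy inclusion via Corollary~\ref{cor:U is star-closed}, and the logical skeleton of the induction on rank are all fine, but the proof has a genuine gap exactly where you flag it: the ``peeling identity'' $\cU^\emptyset_I=\sum_C u_C\ast\cU^{F_C}_I$ is never established. Your dualization via the pairing and the adjunction $\langle u_C\ast w,z\rangle=\langle w,u_C\ldiv z\rangle$ is a correct reformulation, but the resulting combinatorial statement --- that $u_C\ldiv z\in\operatorname{span}\{\alpha_X\}$ for every circuit $C$ forces $z\in\operatorname{span}\{\alpha_X\}$ --- is essentially equivalent in difficulty to the proposition itself, and the ``straightening'' step you defer is precisely the content that has to be supplied. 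As written, the argument is circular in effect: every genuinely hard point has been pushed into an unproven lemma.

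The paper avoids the peeling identity entirely and argues directly, using the shelling machinery already set up in the proof of Proposition~\ref{prop:edge classes generate}. Since that proof shows the subspace $\ol\cB{}^E_D$ (spanned by $\be_{B}$ with $B$ internally passive) is complementary to $(\cU^E_D)^\bot$, orthogonal projection $\cU^E_D\to\ol\cB{}^E_D$ is an isomorphism, so it suffices to exhibit products \eqref{product of circuit classes} projecting to a basis of $\ol\cB{}^E_D$. For each internally passive basis $B_1$ one builds the chain of circuits greedily: take $i_1=\min B_1$, use internal passivity to find $j<i_1$ with $(B_1\setminus\{i_1\})\cup\{j\}$ a basis, let $C_1$ be the unique circuit in $B_1\cup\{j\}$, pass to $B_2=B_1\setminus(B_1\cap C_1)$ in $V^{F_{C_1}}_D$, and iterate. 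The lexicographically largest monomial of $u_{C_1}\ast\cdots\ast u_{C_r}$ is then $\be_{B_1}$, so these elements form a triangular (hence spanning) set. If you want to salvage your route, you would need to prove your combinatorial lemma by a comparably explicit triangularity argument, at which point the detour through duality and the peeling identity buys you nothing.
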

\begin{remark}
In many examples, the classes $u_C$ with $F_C = D$ already span $\cU^E_D$.  For an example where they do not, take $V' = \Z(1,1) \subset \Z^2$, and let 
$V = V'\times V' \subset \Z^I$, $I = \{1,2,3,4\}$.  Then
there are no circuits $C$ with $F_C = I$, so there are no classes $u_C$ in $\cU^\emptyset_I$. 
\end{remark}
\begin{proof}
We know that these classes are in $\cU$ by Corollary \ref{cor:U is star-closed}.    
Fix a total ordering of $I$ and give the bases of $V^E_D$ the lexicographic ordering.  
Consider the subspace $\ol\cB = \ol\cB{}^E_D \subset \cB^E_D$ 
 from the proof of Proposition \ref{prop:edge classes generate}.
That proof showed that $\ol\cB$ is complementary to $(\cU^E_D)^\bot$, so the orthogonal projection of $\cU^E_D$ onto
$\ol\cB$ is an isomorphism.  Thus it is enough to find
elements of the form \eqref{product of circuit classes} which project to a basis of $\ol\cB$. 

Take any basis $B_1$ with $\be_{B_1} \in \ol{\cB}{}^E_D$.  We will construct an element of the form 
\eqref{product of circuit classes} whose projection to $\cC$ is
$\be_{B_1}$ plus a linear combination of $\be_{B'}$ for $B' \prec B_1$.  
Let $i_1$ be the minimal index in $B_1$.  Since $i_1$ is internally passive for $B_1$, there
must be a $j < i_1$ in $F_0 = E$ 
so that $B' := (B_1 \setminus \{i_1\}) \cup \{j\}$ is a basis.
The set $B_1 \cup B' = B_1 \cup \{j\}$ will contain a unique circuit $C_1$, which 
must contain $j$.  

Let $F_1 = F_{C_1}$.  Note that the largest basis which appears with a nonzero coefficient in $u_{C_1}$ is 
$B_1 \cap C_1$, so if $F_1 = D$, we are done.  
Otherwise, consider the basis $B_2 := B_1 \setminus (B_1 \cap C_1)$ 
of $V^{F_1}_D$.  We have $\be_{B_2} \in \ol{\cB}{}^{F_1}_D$, so
we can repeat the process, obtaining some $j < i_2 = \min B_2$ in $F_1$
so that $(B_2 \setminus \{i_2\}) \cup \{j\}$ is a basis, and letting 
$C_2$ be the unique circuit in $B_2 \cup \{j\}$, and $F_2 = F_{C_2}$.

Continuing this way, we eventually get circuits $C_1, \dots, C_r$ 
as in the statement of the proposition, and it is easy to see that
the lexicographically largest basis appearing in
$u_{C_1} \ast u_{C_2} \ast \cdots \ast u_{C_k}$ with a nonzero coefficient is 
$\bigcup_{\ell =1}^r (C_\ell \setminus \{i_\ell\}) = B_1$, as desired.  

The resulting elements of $\cU^E_D$ therefore project to a 
triangular basis of $\ol\cB$, and we are done.
\end{proof}

\begin{corollary}
The algebra $R$ is isomorphic to the smallest subalgebra of $\End_k(\cB)$ generated by 
\[x \mapsto u_C \ast x, \;\; x \mapsto u_C \ldiv x\]
for $E\le F$ in $\cF$ and all circuits $C$ of $V^E_F$.
\end{corollary}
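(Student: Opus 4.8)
The plan is to chain together Proposition~\ref{prop:Morse is faithful}, Corollary~\ref{cor:generating set}, Theorem~\ref{thm:extended U action} and Proposition~\ref{prop:circuit classes generate}. The first step is to identify $R$ with a concrete subalgebra of $\End_k(\cB)$. By Proposition~\ref{prop:Morse is faithful} the action of $R$ on $\cB$ is faithful (already on $\bigoplus_F\cB^F_I$), so $R$ maps isomorphically onto its image; by Corollary~\ref{cor:generating set} that image is generated by the operators coming from the morphisms $\Xi(f)$ and $\Xi'(g)$ with $E\le F$ in $\cF$, $f\in\Hom_{\M_E}(\T^E_E,\T^F_E)$, $g\in\Hom_{\M_E}(\T^F_E,\T^E_E)$; and writing $f=f_u$, $g=g_u$ for $u\in\cU^F_E$ as in Theorem~\ref{thm:extended U action}, these operators are precisely $x\mapsto u\ast x$ and $x\mapsto u\ldiv x$. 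Hence $R$ is isomorphic to the subalgebra $R'\subseteq\End_k(\cB)$ generated by all of $x\mapsto u\ast x$ and $x\mapsto u\ldiv x$ with $u$ ranging over the spaces $\cU^F_E$, $E\le F$ in $\cF$.

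It then remains to prove that $R'$ equals the subalgebra $R''$ generated only by the circuit operators $x\mapsto u_C\ast x$ and $x\mapsto u_C\ldiv x$. The inclusion $R''\subseteq R'$ is immediate, since each circuit class $u_C$ belongs to one of the spaces $\cU^F_E$ (its flat $F_C$ being coloop-free). For the reverse inclusion I would use Proposition~\ref{prop:circuit classes generate}: every $u\in\cU^F_E$ is a $k$-linear combination of $\ast$-products $u_{C_1}\ast\cdots\ast u_{C_r}$ of circuit classes. Associativity of $\ast$ makes $x\mapsto(u_{C_1}\ast\cdots\ast u_{C_r})\ast x$ the composition of the operators $x\mapsto u_{C_\ell}\ast x$, so $x\mapsto u\ast x\in R''$. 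For the divided operators I would first verify the adjunction identity
\[
(a\ast b)\ldiv x \;=\; b\ldiv(a\ldiv x),
\]
which is a short check from the monomial formulas for $\ast$ and $\ldiv$ (on matching monomials both sides give the same squarefree monomial with the same sign, and the constraints on flats and on inclusions of bases agree, so both vanish simultaneously otherwise). Iterating it makes $x\mapsto(u_{C_1}\ast\cdots\ast u_{C_r})\ldiv x$ the composition, in the opposite order, of the operators $x\mapsto u_{C_\ell}\ldiv x$, so $x\mapsto u\ldiv x\in R''$ as well. Therefore $R'=R''$, and combining this with the isomorphism $R\cong R'$ proves the corollary.

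Almost everything needed is already in hand --- the faithful action on $\cB$, the cellular/generating description of $R$, the localization formulas for $\Xi$ and $\Xi'$, and the spanning of $\cU^F_E$ by $\ast$-products of circuit classes. The only new ingredient is the adjunction identity for $\ldiv$, and I expect it to be the sole mildly delicate point; it is nonetheless routine bookkeeping with the combinatorial definitions rather than a real obstacle.
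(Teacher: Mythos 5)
Your proposal is correct and follows exactly the route the paper intends: the corollary is an immediate consequence of Proposition~\ref{prop:Morse is faithful}, Corollary~\ref{cor:generating set}, Theorem~\ref{thm:extended U action}, and Proposition~\ref{prop:circuit classes generate}, which is why the paper gives no separate proof. The adjunction identity $(a\ast b)\ldiv x = b\ldiv(a\ldiv x)$ you single out is indeed the one small step left implicit in the paper, and it checks out (it is just the statement that $\ldiv$ is adjoint to left $\ast$-multiplication under the pairing of Section~\ref{sec:pairing}).
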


\subsection{An example}

Let us give an explicit example of the ring $R$.  Consider
the lattice \[V = \Z(1,0,1,1)+\Z(0,1,1,1) \subset \Z^I, \; I = \{1,2,3,4\}.\]
This gives an arrangement of four lines in the plane with two of them equal.

There are three flats in $\cF$, namely $\emptyset, I$, and $F = \{3,4\}$.  The sub and quotient arrangements 
are given by
\[V^F = \Z(1,-1) \subset \Z^{\{1,2\}},\;\; V_F = \Z(1,1) \subset \Z^{\{3,4\}}.\]

 The spaces $\cU^E_F \subset \cB^E_F$ are as follows:
\begin{align*}
\cU^F_I & = \Z(\be_1 + \be_2) \subset \Z\langle \be_1,\be_2\rangle,\\
\cU^\emptyset_F & = \Z(\be_3-\be_4) \subset \Z\langle \be_3,\be_4\rangle,\\
\cU^\emptyset_F & = \Z\langle u_1,u_2 \rangle \subset
\Z\langle \be_{12},\be_{13},\be_{14},\be_{23},\be_{24}\rangle,
\end{align*}
where we put $u_1 = \be_{12} - \be_{13} - \be_{23}$ and  $u_2 = \be_{12} - \be_{14} - \be_{24}$.

The resulting algebra is the quotient of the quiver algebra of the double of the quiver
\[\xymatrix{
I \ar[rr]^p \ar@(ur,ul)[rrrr]^{r_1} \ar@(dr,dl)[rrrr]^{r_2} & & F \ar[rr]^q & &  \emptyset
}\]
(where the doubled arrows $p^*$, $q^*$, $r_1^*$, $r_2^*$ are the adjoint operations)
by the relations
\[qp = r_2 - r_1,\;\;\; p^*q^* = r_2^* - r_1^*\]
\[p^*p = -2 \cdot  1_I,\;\; q^*q = -2\cdot 1_F\]
\[q^*r_1 = p,\;\; q^*r_2 = -p\]
\[r_1^*q = p^*,\;\; r_2^*q = -p^*\]
\[\begin{pmatrix}
r_1^* \\ 
r_2^*
\end{pmatrix} \begin{pmatrix}
r_1 & r_2
\end{pmatrix}  = \begin{pmatrix}
3 & 1 \\ 
1 & 3
\end{pmatrix} 1_I \]

These relations are not independent; for example the first can be used to eliminate $r_2$, and then $r_1^*r_1$ entry of the last line implies the other three entries.

Over a field $k$, this algebra is semisimple unless $\operatorname{char} k = 2$.

\section{Homomorphisms between projective objects}

For each flat $F$, we have a projective object $\Pi_F \in \Perv(\M)$ which represents the exact functor $\Morse_F$.  In other words, there is a natural isomorphism of functors
$\Hom(\Pi_F, -) \simeq \Morse_F(-)$.  Note that in general $\Pi_F$ is not indecomposable.  
  In this section, we describe the algebra
\[\textstyle \Rc := \End(\bigoplus_{F\in \cF} \Morse_F) \cong \End(\bigoplus_{F\in \cF} \Pi_F)^{\mathrm{opp}}\]
of natural transformations between these functors.  It acts naturally on 
the space \[\cB = \bigoplus_{E, F\in \cF} \Morse_E(\T^F) = \bigoplus_{E, F\in \cF} \Hom(\Pi_E, \T^F).\]
\begin{proposition}
\label{prop:Rc action is faithful}
The action of $\Rc$ on $\cB$ is faithful.
\end{proposition}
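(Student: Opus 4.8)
The plan is to convert faithfulness of the $\Rc$-action into a statement about generation of $\Perv(\M)$ by the resolution sheaves, and then to establish that statement using the parity structure of the $\T^F$. Write $\Pi:=\bigoplus_{F\in\cF}\Pi_F$; this is a projective generator of $\Perv(\M)$, we have $\Rc\cong\End(\Pi)^{\mathrm{opp}}$, and $\cB=\Hom\big(\Pi,\bigoplus_F\T^F\big)$ with $\Rc$ acting by precomposition. An element $\psi\in\End(\Pi)$ acts as zero on $\cB$ exactly when $\phi\circ\psi=0$ for every morphism $\phi\colon\Pi\to\bigoplus_F\T^F$, that is, when $\im(\psi)$ lies in the intersection of the kernels of all such $\phi$. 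Since $\Pi$ is Noetherian this intersection is the kernel of a single map $\Pi\to\big(\bigoplus_F\T^F\big)^{\oplus N}$, so the $\Rc$-action is faithful if and only if $\Pi$ embeds into a finite direct sum of copies of $\bigoplus_F\T^F$. Applying Verdier duality, which fixes each $\T^F$ (they are self-dual perverse) and sends the projective generator $\Pi$ to an injective cogenerator $\D\Pi$, this is in turn equivalent to the assertion that $\D\Pi$, hence every indecomposable injective object of $\Perv(\M)$, is a quotient of a direct sum of copies of $\bigoplus_F\T^F$.

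The first ingredient is that $\IC_G$ is a quotient of $\T^G$ for every $G\in\cF$. The adjunction morphisms produce maps $\Delta_G\xrightarrow{\,\gamma\,}\T^G\xrightarrow{\,\beta\,}\nabla_G$, all three of which are the canonical isomorphism over the open stratum $S_G$, so their composite is the canonical map $\Delta_G\to\nabla_G$, whose image is $\IC_G$. Moreover $\gamma$ is surjective: $\coker\gamma$ is a quotient of $\Delta_G={}^pj_{G!}\uk_{S_G,T}[2d^G]$ supported on $\M^G\setminus S_G$, and $\Delta_G$ has no nonzero such quotient. Hence $\im(\beta)=\im(\beta\gamma)=\IC_G$, exhibiting $\IC_G$ as a quotient of $\T^G$ (and, dually, as a subobject of $\T^G$). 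This argument is characteristic-free.

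It remains to pass from simples to injectives: for an indecomposable injective $I(G)$ one must realize each simple $\IC_H$ occurring in $\mathrm{top}(I(G))$ as a quotient of $I(G)$ that factors through some $\T^F$. Equivalently, writing $I(G)\twoheadrightarrow\IC_H$, one needs a map $\T^F\to I(G)$ whose further composition to $\IC_H$ is nonzero; by the previous paragraph there is a nonzero map $\T^H\to\IC_H$, and the issue is whether it lifts along $I(G)\twoheadrightarrow\IC_H$, i.e.\ whether a certain class in $\operatorname{Ext}^1_{\Perv}(\T^H,-)$ vanishes. I expect this vanishing to be the main obstacle, and I would deduce it from the parity/purity of the situation: the $\T^F$ are self-dual parity complexes (Remark~\ref{rmk:parity}), every stratum has even equivariant cohomology (Proposition~\ref{prop:parity vanishing}), and in the hypertoric setting the standard and costandard objects — and hence the relevant subquotients of injectives — are iterated extensions of shifts of parity complexes, so $\operatorname{Ext}^1_{\Perv}(\T^H,-)$ against them vanishes by the usual ``even spectral sequence'' argument. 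Granting this, every indecomposable injective is generated by $\bigoplus_F\T^F$, and unwinding the reductions of the first paragraph shows that $\Rc$ acts faithfully on $\cB$. (Once the highest weight property of $\Perv(\M)$ is in hand one can argue more directly: each $\T^F$ contains the indecomposable tilting $T(F)$ as a summand, so $\bigoplus_F\T^F$ contains all indecomposable tiltings; the projective $\Pi$ is standardly filtered, hence embeds into a direct sum of tiltings by the standard highest weight argument, which is exactly what is required.)
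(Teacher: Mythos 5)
There are two genuine gaps. First, the claim that $\gamma\colon\Delta_G\to\T^G$ is surjective is not justified and is false in general: $\coker\gamma$ is a quotient of $\T^G$, not of $\Delta_G$, so the argument ``$\Delta_G$ has no nonzero quotient supported on $\M^G\setminus S_G$'' does not apply to it. Indeed, already in characteristic zero the decomposition theorem gives $\T^G\cong \IC_G\oplus A$ with $A$ supported on smaller strata, and then $\gamma$ lands in the summand $\IC_G$, since $\Hom(\Delta_G,\IC_E)=0$ for $E<G$. What the maps $\Delta_G\xrightarrow{\gamma}\T^G\xrightarrow{\beta}\nabla_G$ actually show is that $\IC_G=\im(\beta\gamma)$ is a quotient of the \emph{subobject} $\im(\gamma)\subseteq\T^G$, i.e.\ a subquotient of $\T^G$ occurring with multiplicity one ``on the diagonal.'' That is weaker than what your reduction needs ($\IC_G$ as an honest quotient of $\T^G$, so that injectives are generated by the $\T^F$), and in general a self-dual tilting-like object need not have the ``open'' simple in its head. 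Second, the $\operatorname{Ext}^1$-vanishing that you identify as the main obstacle is only sketched, and the proposed shortcut via the highest weight property is circular: this proposition is an input to Theorem~\ref{thm:bound on Rc} and the Gale duality argument, which is how the paper \emph{proves} that $\Perv(\M,k)$ is highest weight.

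The subquotient statement you did establish is exactly what is needed for a much shorter argument, which is the one the paper uses. An element of $\Rc$ is a natural transformation $\phi\colon\Morse_F\to\Morse_E$ between \emph{exact} functors, and the full subcategory of objects $A$ with $\phi_A=0$ is closed under subobjects, quotients and extensions (exactness of $\Morse_E$ makes $\Morse_E(Y)\to\Morse_E(X)$ injective for $Y\subseteq X$, etc.), hence is a Serre subcategory. Since each $\T^D$ contains $\IC_D$ exactly once with all other constituents supported on strictly smaller strata, an induction on the poset $\cF$ shows the $\T^D$ generate $\Perv(\M,k)$ as a Serre subcategory; so $\phi$ vanishing on $\cB=\bigoplus\Morse_E(\T^D)$ forces $\phi=0$. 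This avoids any duality reduction, any claim about heads of the $\T^F$, and any Ext computation. I would recommend replacing your argument by this one, or at minimum repairing the surjectivity claim and supplying the Ext-vanishing without appeal to the highest weight structure.
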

\begin{proof}
Suppose that $\phi\colon \Morse_F \to \Morse_E$ acts trivially on $\cB$; this means that for every $D\in \cF$, the induced map $\Morse_F(\T^D) \to \Morse_E(\T^D)$ vanishes.  But since a composition series for $\T^D$ contains only copies of IC sheaves with support contained in $\M^D$, and exactly one copy of $IC_D$, the Serre subcategory generated by all of the $\T^D$ is all of $\Perv(\M,k)$.  It follows that $\phi = 0$.
\end{proof}

We will show that the image of $\Rc$ in $\End_k(\cB)$ is  generated by a small set of explicit operators, in a manner very similar to our description of $\End(\bigoplus_F \T^F)$.

\subsection{Homomorphisms to and from top stratum}

Projective perverse sheaves are in general difficult to describe topologically.  However, there is one case which is relatively simple: by adjunction, the projective $\Pi_\emptyset$ corresponding to the open stratum $S_\emptyset$ is isomorphic to ${}^p(j_{\emptyset})_!\underline{k}_{S_\emptyset,T}[2d]$.  Using this, we can give a simple description of maps from $\Pi_F$ to $\Pi_\emptyset$.

\begin{theorem}\label{thm:Homs to top projective}
Let $\epsilon\colon \Pi_\emptyset\to \T^\emptyset$ be the map obtained by adjunction from the identity map on the open stratum $S_\emptyset$.
Then for any $F\in \cF$ the map 
\begin{equation}\label{eq:projective injection}
\Hom(\Pi_F, \Pi_\emptyset) \cong \Morse_F(\Pi_\emptyset) \stackrel{\Morse_F(\epsilon)}{\longrightarrow}\Morse_F(\T^\emptyset)
\end{equation} 
is an injection, and an element of $\Morse_F(\T^\emptyset)$ is in the image if and only if it is in the kernel of $\Morse_F(\psi)\colon \Morse_F(\T^\emptyset) \to \Morse_F(\T^E)$ for all maps $\psi\colon \T^\emptyset \to \T^E$ with $E \ne \emptyset$.
\end{theorem}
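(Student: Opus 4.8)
The plan is to realize $\epsilon$ inside a short exact sequence and then reduce the second assertion to the combinatorics of Section~5. First I would note that, since $j_\emptyset^*\T^\emptyset\cong\uk_{S_\emptyset,T}[2d]$ and $\Pi_\emptyset\cong{}^p j_{\emptyset!}\uk_{S_\emptyset,T}[2d]$, adjunction gives $\Hom(\Pi_\emptyset,\T^\emptyset)\cong\End(\uk_{S_\emptyset,T}[2d])\cong\kk$, so $\epsilon$ is the canonical generator. I claim $\epsilon$ is a monomorphism: complete the natural map $j_{\emptyset!}\uk[2d]\to\T^\emptyset$ to a triangle with third term $\cC$; since that map is an isomorphism over $S_\emptyset$ one has $\cC\cong i_*i^*\T^\emptyset$, where $i\colon Z\hookrightarrow\M$ is the inclusion of $Z:=\M\setminus S_\emptyset$, and the perverse cohomology long exact sequence (using $j_{\emptyset!}\uk[2d]\in{}^pD^{\le0}$ with ${}^pH^0=\Pi_\emptyset$, and $\T^\emptyset$ perverse) identifies $\ker\epsilon$ with $i_*\,{}^pH^{-1}_Z(i^*\T^\emptyset)$. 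Now $\T^\emptyset$ is an even parity complex (Remark~\ref{rmk:parity}), hence so is $i^*\T^\emptyset$; since every stratum $S_F$ of $\M$ has even complex dimension $2d^F$, a dévissage shows the perverse cohomology sheaves of an even complex on $Z$ vanish in odd degrees, so ${}^pH^{-1}_Z(i^*\T^\emptyset)=0$ and $\epsilon$ is a monomorphism. Set $\cQ:=\coker\epsilon$; it is supported on $Z$.

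Since $\Morse_F$ is exact (Lemma~\ref{lem:Morse is perverse}), applying it to $0\to\Pi_\emptyset\xrightarrow{\epsilon}\T^\emptyset\xrightarrow{\pi}\cQ\to0$ yields an exact sequence, which proves the injectivity in \eqref{eq:projective injection} and identifies its image with $\ker\Morse_F(\pi)$. So it remains to prove
\[
\ker\Morse_F(\pi)\;=\;\bigcap_{E\ne\emptyset}\ \bigcap_{\psi\colon\T^\emptyset\to\T^E}\ \ker\Morse_F(\psi).
\]
The inclusion "$\subseteq$" is easy: for $E\ne\emptyset$ the composite $\psi\circ\epsilon$ lies in $\Hom(\Pi_\emptyset,\T^E)\cong\Morse_\emptyset(\T^E)$, which vanishes because $\T^E$ is supported on $\M^E\subseteq Z$ and so has zero stalk along $S_\emptyset$; hence $\Morse_F(\psi)$ annihilates $\mathrm{image}\,\Morse_F(\epsilon)=\ker\Morse_F(\pi)$.

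For the reverse inclusion I would work inside $\cB$, where everything acts faithfully (Propositions~\ref{prop:Morse is faithful} and~\ref{prop:Rc action is faithful}), and describe the right-hand side in $\cB^\emptyset_F=\Morse_F(\T^\emptyset)$ combinatorially. By the cellular basis (Theorem~\ref{thm:cellular homs}), every $\psi\colon\T^\emptyset\to\T^E$ with $E\ne\emptyset$ is a sum of composites $\Xi(f)\circ\Xi'(g)$ with $g$ a morphism on a normal slice at a flat $D\le E$, so $D\ne\emptyset$; by Theorem~\ref{thm:extended U action} the factor $\Xi'(g)$ acts on $\cB^\emptyset_F$ as $u_g\ldiv(-)$ with $u_g\in\cU^\emptyset_D$, and $\Xi(f)$ acts by $u_f\ast(-)$. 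Taking for $\psi$ the maps $\Xi'(g_u)\colon\T^\emptyset\to\T^D$ themselves and letting $D\in\cF\setminus\{\emptyset\}$ and $u\in\cU^\emptyset_D$ vary, one gets
\[
\bigcap_{E\ne\emptyset}\bigcap_{\psi}\ker\Morse_F(\psi)\;=\;\bigl\{\,y\in\cB^\emptyset_F\ \big|\ u\ldiv y=0\ \text{for all }u\in\cU^\emptyset_D,\ D\in\cF\setminus\{\emptyset\}\,\bigr\}.
\]

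The remaining step — identifying $\Hom(\Pi_F,\Pi_\emptyset)$, viewed inside $\cB^\emptyset_F$ via $\Morse_F(\epsilon)$, with this $\ldiv$-annihilator — is the main obstacle. I would compute $\Morse_F(\Pi_\emptyset)$ and $\Morse_F(\epsilon)$ directly by hyperbolic restriction: using the contracting lemma to write $\Phi_F(\Pi_\emptyset)={}^pH^0(\Phi_\xi(j_{\emptyset!}\uk[2d]))$ as the perverse cohomology of $(q\,j^+)_!\uk[2d]$, with $j^+\colon S_\emptyset\cap\M^+\hookrightarrow\M^+$ and $q$ the limit map, so that $\Morse_F(\Pi_\emptyset)$ becomes a compactly supported cohomology group of $S_\emptyset$ intersected with an attracting cell, mapping naturally into $\Morse_F(\T^\emptyset)$; then the equivariant localization of Section~5 (Corollary~\ref{cor:localization of Morse group} identifying $\Morse_F(\T^\emptyset)$ with $\cB^\emptyset_F$, together with the description of $\cU^\emptyset_D$ by circuit classes, Proposition~\ref{prop:circuit classes generate}) should match the image with the $\ldiv$-annihilator. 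The hard part is exactly this matching: controlling the perverse truncation in ${}^pH^0(\Phi_\xi(j_{\emptyset!}\uk[2d]))$ and recognizing the resulting $A$-submodule of $\H^\bullet_T(\T^\emptyset)$ as the common kernel of all the operators $u\ldiv(-)$, $u\in\cU^\emptyset_D$, $D\ne\emptyset$. (A more structural alternative would be to prove that $\pi\colon\T^\emptyset\to\cQ$ factors through $\mathrm{add}\bigl(\bigoplus_{E\ne\emptyset}\T^E\bigr)$, which gives the inclusion immediately; the obstruction there is the part of $\pi$ on the indecomposable tilting summand of $\T^\emptyset$ of weight $\emptyset$, whose image is $\Delta$-filtered but a priori not a quotient of tiltings supported on $Z$.)
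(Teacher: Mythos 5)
Your proposal sets up the right framework (the easy inclusion, via $\Hom(\Pi_\emptyset,\T^E)=\Morse_\emptyset(\T^E)=0$ for $E\ne\emptyset$, is exactly the paper's argument), but it has two gaps, one of which you flag yourself and which is precisely where the content of the theorem lives.

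First, the monomorphism step is not justified as written. You reduce to showing ${}^pH^{-1}(i^*\T^\emptyset)=0$ and invoke ``a d\'evissage shows the perverse cohomology sheaves of an even complex on $Z$ vanish in odd degrees.'' But $i^*\T^\emptyset$ is only guaranteed to be $*$-even: $!$-restriction to a stratum of $Z$ computed inside $Z$ is not the same as $!$-restriction computed inside $\M$, so $!$-evenness is lost, and a merely $*$-even complex can have odd perverse cohomology (e.g.\ $j_!\uk_{\C^2\setminus 0}[4]$ on $\C^2$ is $*$-even with ${}^pH^{-3}\ne 0$). The paper sidesteps this entirely: it never shows $\epsilon$ is a monomorphism of perverse sheaves, but instead proves injectivity of $\Morse_I(\epsilon)$ directly. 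Writing $\Morse_I(A)=\H^0_c(A|_{\M^+})$ and using the decomposition of $\M^+$ into $\M^+\cap S_\emptyset$ and its closed complement, the boundary terms for $A=\Pi_\emptyset={}^pj_{\emptyset!}\uk[2d]$ vanish by the stalk bounds of \cite[1.4.23]{BBD} together with $\dim_\R(\M^+\cap S_F)=\dim_\C S_F$, so $\Morse_I(\Pi_\emptyset)\cong\H^0_c(\Pi_\emptyset|_{\M^+\cap S_\emptyset})$, while for $A=\T^\emptyset$ the corresponding map $H^{2d}_c(p_\a^{-1}(\M^+\cap S_\emptyset))\to H^{2d}_c(p_\a^{-1}(\M^+))$ is injective by pure-dimensionality. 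This yields at once the injectivity and the exact sequence $0\to\Morse_I(\Pi_\emptyset)\to\Morse_I(\T^\emptyset)\to\H^0_c(\T^\emptyset|_{\M^+\setminus\M^+\cap S_\emptyset})$.

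Second, and more seriously, the reverse inclusion — that a class killed by all $\psi\colon\T^\emptyset\to\T^E$, $E\ne\emptyset$, lies in the image — is exactly what you leave open, and your proposed route (perverse truncation of $\Phi_\xi(j_{\emptyset!}\uk[2d])$ plus equivariant localization) is not carried out. The idea you are missing is the following: for each $E\ne\emptyset$ the map $\psi_E\colon\T^\emptyset\to(\cU^\emptyset_E)^*\otimes_k\T^E$ assembled from $\Xi'$ induces an isomorphism on stalk cohomology in degree $-\dim_\C S_E$ along $S_E$, hence an isomorphism after applying $\H^0_c(\,\cdot\,|_{S_E\cap\M^+})$. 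Feeding this into the exact sequence above (and filtering $\M^+\setminus\M^+\cap S_\emptyset$ by the strata $S_E\cap\M^+$, where $\H^{>0}_c$ vanishes), a class annihilated by every $\Morse_I(\psi_E)$ restricts to zero on the boundary and therefore comes from $\Morse_I(\Pi_\emptyset)$. The case of general $F$ is then deduced from $F=I$ by Corollary \ref{cor:extending from slice}, not by a separate localization computation.
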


Another way to describe the map \eqref{eq:projective injection} is as the composition of
\[\Hom(\Pi_F, \Pi_\emptyset) = \Hom(\Morse_\emptyset, \Morse_F) \to \Hom(\Morse_\emptyset(\T^\emptyset), \Morse_F(\T^\emptyset))\]
with evaluation at $1 \in \Morse_\emptyset(\T^\emptyset) = k$.

\begin{proof}
Let us prove this first for $F=I$.  Let $\M^+$ be the attracting set for a generic cocharacter in $V$.  
For any $A \in \Perv(\M,k)$, since $\Morse_I(A) \cong \H^0_c(A|_{\M^+})$, we have 
an exact sequence
\[\H_c^{-1}(A|_{\M^+ \setminus \M^+\cap S_\emptyset}) \to 
\H_c^{0}(A|_{\M^+\cap S_\emptyset}) \to \Morse_I(A) \to 
\H_c^{0}(A|_{\M^+ \setminus \M^+\cap S_\emptyset}) .
\] 

If $A = \Pi_\emptyset$, the left and right terms vanish, since for $F \ne \emptyset$ the stalks of $\Pi_\emptyset = {}^p(j_{\emptyset})_!\underline{k}_{S_\emptyset,T}[2d]$ at 
points of $\M^+ \cap S_F$ vanish in degrees greater than $-2\dim_\C(\M^+\cap S_F)-2$; see \cite[1.4.23]{BBD}.  Thus the middle map is an isomorphism.

On the other hand, when $A = \T^\emptyset$, the middle map is 
\[H^{2d}_c(p_\a^{-1}(\M^+\cap S_\emptyset);k) \to H^{2d}_c(p_\a^{-1}(\M^+);k),\] which is an injection, since $p_\a^{-1}(\M^+)$ is purely $2d$-dimensional and $S_\emptyset$ is open.

The map $\epsilon$ restricts to an isomorphism on the stratum $S_\emptyset$, so the induced map $\H_c^{0}(\Pi_\emptyset|_{\M^+\cap S_\emptyset})\to \H_c^{0}(\T^\emptyset|_{\M^+\cap S_\emptyset})$ is an isomorphism.  Thus we have an exact sequence
\[0 \to \Morse_I(\Pi_\emptyset) \to \Morse_I(\T^\emptyset) \to \H_c^{0}(\T^\emptyset|_{\M^+ \setminus \M^+\cap S_\emptyset}). 
\]

Now suppose that a class $a \in \Morse_I(\T^\emptyset)$ is annihilated by all maps $\T^\emptyset \to \T^E$, $E\ne \emptyset$.  The composition \[\cU^\emptyset_E \to \Hom_{\M_E}(\T^\emptyset_E,\T^E_E) \stackrel{\Xi'}{\longrightarrow}\Hom(\T^\emptyset,\T^E)\] gives a map $\psi_E\colon \T^\emptyset \to (\cU^\emptyset_E)^* \otimes_k \T^E$.  It induces an isomorphism on stalk cohomology in degree $-\dim_\C S_E$ at all points of $S_E$, so applying the functor $A \mapsto \H^0_c(A|_{S_E})$ to $\psi_E$ 
gives an isomorphism.  
Since $\Morse_I(\psi_E)(a)=0$ for every $E \in \cF \setminus \{\emptyset\}$, it follows that $a$ restricts to $0$ in $\H_c^{0}(\T^\emptyset|_{\M^+ \setminus \M^+\cap S_\emptyset})$,
and so it comes from $\Morse_I(\Pi_\emptyset)$.  

For the other direction, simply note that for any $E \in \cF\setminus\{\emptyset\}$, any map $\Pi_\emptyset \to \T^E$ is zero by adjunction.

The case of a general $F$ follows now from Corollary \ref{cor:extending from slice}.
\end{proof}

We let $\cUc^\emptyset_F \subset \cB^\emptyset_F = \Morse_F\T^\emptyset$ denote the image of the map
\eqref{eq:projective injection}.  More generally, if $F \le D$, we can take a projective 
object $\Pi^D_F \in \Perv(\M^D;k)$ which represents the exact functor $\Morse^D_F\colon \Perv(\M^D;k) \to k\md$, and the same construction gives an injective map
$\Hom(\Pi^D_F, \Pi^D_D) \to \Morse^D_F\T^D = \cB^D_F$, whose image we denote by $\cUc^D_F$.

We can define a map $\cUc^D_F \to \Hom(\Pi^D_D, \Pi^D_F)$ by using Verdier duality: an element $\uc\in \cUc^D_F$ is associated to a morphism of functors $\Morse^D_D \to \Morse^D_F$, which induces a morphism
\[\Morse^D_F \cong \D\,\Morse^D_F\,\D \to \D\,\Morse^D_D\,\D \cong \Morse^D_D,\]
where the left $\D$ is just duality of $k$-vector spaces.
Applying this morphism to $\T^D$ gives a map
$\cB^D_F \to \cB^D_D$ which is just pairing with $\uc$.

\begin{proposition}\label{prop:perp space to cUc}
A class $x\in \cB^D_F$ lies in $\cUc^D_F$ if and only if
\[\langle u_C \ast \be_B , x \rangle = 0\]
for all circuits $C$ of $V^D_F$ and all bases $B$ of $V^{F_C}_F$.
\end{proposition}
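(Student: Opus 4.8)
The plan is to combine Theorem~\ref{thm:Homs to top projective} with the combinatorial description of $R$ obtained above. As a preliminary step, reduce to the case $D=\emptyset$: since $\M^D$ is itself the hypertoric variety $\M(V^D)$, with $S_D$ playing the role of the open stratum, all the objects $\Pi^D_F$, $\T^D$, $\cB^D_F$, $\cU^D_G$ and the Poincar\'e--Verdier pairing are intrinsic to $\M(V^D)$, so it suffices to prove the statement for $\M$ itself and then replace $V$ by $V^D$. Write $\T=\T^\emptyset$.

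By Theorem~\ref{thm:Homs to top projective}, $x\in\cUc^\emptyset_F$ if and only if $\Morse_F(\psi)(x)=0$ for every morphism $\psi\colon\T^\emptyset\to\T^E$ with $E\ne\emptyset$. I would now feed in Theorem~\ref{thm:cellular homs} and Theorem~\ref{thm:extended U action}: any such $\psi$ is a sum of compositions $\Xi(f)\circ\Xi'(g)$ with $f\colon\T^G_G\to\T^E_G$, $g\colon\T^\emptyset_G\to\T^G_G$ and $G\le E$, and applying the functor $\Morse_F$ gives $\Morse_F(\psi)(x)=\sum u_f\ast(u_g\ldiv x)$ with $u_f\in\cU^E_G$ and $u_g\in\cU^\emptyset_G$. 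Since $u_g\ldiv x=0$ automatically when $F\not\le G$, while for $F\le G$ the choice $G=E$ (so that the factor $u_f\ast-$ is the identity) realizes $\psi=\Xi'(g_{u_g})$ itself, this shows
\[x\in\cUc^\emptyset_F\iff u\ldiv x=0\ \text{for every }u\in\cU^\emptyset_G,\ G\in\cF,\ F\le G\ne\emptyset.\]
A direct computation with the monomial formulas for $\ast$, $\ldiv$ and the pairing of Section~\ref{sec:pairing} gives $\langle u\ast a,b\rangle=\pm\langle a,u\ldiv b\rangle$ (this encodes that $\Xi'$ is Poincar\'e--Verdier dual to $\Xi$), and since $\cB^G_F$ is spanned by the $\be_B$ with $B$ a basis of $V^G_F$, the condition $u\ldiv x=0$ for all $u\in\cU^\emptyset_G$ is equivalent to $x\perp\cU^\emptyset_G\ast\cB^G_F$. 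Hence $\cUc^\emptyset_F=\big(\sum_{F\le G\ne\emptyset}\cU^\emptyset_G\ast\cB^G_F\big)^\perp$, and since the pairing on $\cB^\emptyset_F$ is nonsingular the proposition follows once we identify this sum with $\Span\{u_C\ast\be_B : C\ \text{a circuit of }V_F,\ B\ \text{a basis of }V^{F_C}_F\}$.

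The inclusion $\supseteq$ is immediate, since $u_C\in\cU^\emptyset_{F_C}$ and $\be_B\in\cB^{F_C}_F$ with $F\le F_C\ne\emptyset$. For $\subseteq$, I would use Proposition~\ref{prop:circuit classes generate} to write a typical element of $\cU^\emptyset_G\ast\cB^G_F$ as a combination of products $u_{C_1}\ast\cdots\ast u_{C_r}\ast\be_B$ (with $F_0=\emptyset$, $F_r=G$, $C_\ell$ a circuit of $V^{F_{\ell-1}}_G$), and regroup this as $u_{C_1}\ast w$ with $w\in\cB^{F_1}_F$. Since $M(V_G)$ is the restriction of $M(V_F)$ to the subset $G$, the set $C_1$ is a circuit of $V_F$; moreover the linear forms $x_i$ ($i\in C_1$) all descend to $V_G$, which forces the closure of $C_1$ in $M(V_F)$ to lie in $G$, so $F_{C_1}$ computed in $V_F$ coincides with $F_{C_1}$ computed in $V_G$, namely $F_1$. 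Expanding $w$ in the monomial basis of $\cB^{F_1}_F$ then exhibits $u_{C_1}\ast w$ as a combination of generators $u_C\ast\be_B$ of the required form.

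I expect the last paragraph to be the main obstacle: all of the structural inputs (Theorems~\ref{thm:Homs to top projective}, \ref{thm:cellular homs}, \ref{thm:extended U action} and Proposition~\ref{prop:circuit classes generate}) are available off the shelf, but the matroid bookkeeping needed to collapse a chain of circuit classes into a single circuit class times a basis monomial --- in particular the compatibility of the flats $F_C$ under the contractions relating $V_G$ and $V_F$ --- has to be handled with care. The reduction to $D=\emptyset$ and the adjointness of $\ast$ and $\ldiv$ are routine.
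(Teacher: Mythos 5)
Your proof is correct and takes essentially the same route as the paper's: use Theorems~\ref{thm:Homs to top projective} and~\ref{thm:extended U action} (together with the fact that the $\Xi(f)\circ\Xi'(g)$ span all homomorphisms) to reduce membership in $\cUc^D_F$ to the condition $u \ldiv x = 0$ for all $u\in\cU^D_E$ with $F\le E< D$, convert this by adjunction of $\ast$ and $\ldiv$ into orthogonality conditions, and conclude with Proposition~\ref{prop:circuit classes generate}. The matroid bookkeeping in your final paragraph --- that $C_1$ is a circuit of $V^D_F$ with the same closure $F_{C_1}$, because $M(V^D_E)$ is the restriction of $M(V^D_F)$ to the flat $E$ --- is precisely the detail the paper leaves implicit in its last sentence, and you handle it correctly.
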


\begin{proof}
Combining Theorems \ref{thm:extended U action} and \ref{thm:Homs to top projective}, we see that $x$ is in $\cUc^D_F$ if and only if $u \ldiv x = 0$ for all 
flats $F \le E < D$ and all $u \in \cU^D_E$.  By adjunction,
$u \ldiv x = 0$ if and only if $\langle u \ast \be_B, x\rangle = \langle \be_B, u \ldiv x\rangle =0$ for all bases $B$ of $V^E_F$.  
The result now follows by Proposition \ref{prop:circuit classes generate}. 
\end{proof}

\subsection{Homomorphisms between general projectives}

In this section, we construct maps which generate the algebra $\Rc = \End(\bigoplus_F \Pi_F)^{\mathrm{opp}}$, and show how they act on $\cB$.

We do this by defining  injective linear maps
\begin{align*}
\Upsilon\colon \cUc^F_E & \to \Hom(\Pi_E,\Pi_F) \\
\Upsilon'\colon \cUc^F_E & \to \Hom(\Pi_F,\Pi_E)
\end{align*} 
for any flats $E \le F$ in $\cF$.  These maps can be viewed as ``promoting" maps between $\Pi^F_E$ and $\Pi^F_F$ to maps between $\Pi_E$ and $\Pi_F$. They are defined as follows.

By Theorem \ref{thm:Homs to top projective}, an element of $\uc \in \cUc^F_E$ gives an element of $\Hom_{\M^F}(\Pi^F_E, \Pi^F_F)$, which in turn gives a natural transformation $\Morse^F_F \to \Morse^F_E$.  Composing this with $\Phi_F$ and using Lemma \ref{lem:Morse transitivity} gives a 
transformation $\Morse_F\to \Morse_E$, and $\Upsilon(\uc)$ is defined to be the corresponding element of $\Hom(\Pi_E, \Pi_F)$.

To define $\Upsilon'(\uc)$ we proceed similarly --- compose the morphism $\Morse^F_E\to \Morse^F_F$ associated to $\uc$ with $\Phi_F$, and use Lemma \ref{lem:Morse transitivity}.

\begin{theorem}\label{thm:extended maps between projectives}
For any $E \le F \le D$ in $\cF$ and any $\check{u} \in \cUc^F_E$ we have 
\[\Upsilon(\check{u})\cdot x = x \ast \check{u}, \;
\Upsilon'(\check{u})\cdot y = y \rdiv \check{u}\]
for any $x \in \cB^D_F$, $y \in \cB^D_E$.
\end{theorem}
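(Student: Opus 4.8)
The plan is to strip off the formal part of the construction of $\Upsilon$ and $\Upsilon'$ and reduce to an equivariant localization computation on a resolution of $\M^F$, in the spirit of the proof of Theorem~\ref{thm:extended U action}.

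First I would reduce to the case $D=\emptyset$, so that $\T^D=\T$ is the resolution sheaf of all of $\M$. This is legitimate because $\Upsilon$, $\Upsilon'$, $\ast$, $\rdiv$ and $\cUc^F_E$ are all defined intrinsically, and hyperbolic restriction $\Phi_F$ commutes with pushforward along the closed inclusion $\M^D\hookrightarrow\M$ (the fixed locus is $\M^F=\M^\xi$ in either case), so the assertion for $(\T^D,\dots)$ inside $\M$ is the assertion for the analogous data inside the hypertoric variety $\M^D=\M(V^D)$. By linearity it then suffices to take $x=\be_B$ with $B$ a basis of $V_F$. Next I would unwind the definition of $\Upsilon$: the element $\uc\in\cUc^F_E$ corresponds, by Theorem~\ref{thm:Homs to top projective}, to a natural transformation $\tau\colon\Morse^F_F\to\Morse^F_E$ of functors on $\Perv(\M^F)$ with $\tau_{\T^F}(1)=\uc$, and $\Upsilon(\uc)$ is obtained from $\tau$ by precomposing with $\Phi_F$ and using the canonical isomorphisms $\Morse^F_F\circ\Phi_F\cong\Morse_F$, $\Morse^F_E\circ\Phi_F\cong\Morse_E$ of Lemma~\ref{lem:Morse transitivity}. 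Hence $\Upsilon(\uc)\cdot\be_B=\tau_{\Phi_F(\T)}(\be_B)$, where $\be_B$ is read in $\Morse_F(\T)=\Morse^F_F(\Phi_F\T)$ and the output lies in $\Morse^F_E(\Phi_F\T)=\Morse_E(\T)=\cB^\emptyset_E$.

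The geometric heart of the argument is the decomposition of $\Phi_F(\T)$ into a direct sum of resolution sheaves on $\M^F$. This is essentially carried out in the proof of Theorem~\ref{thm:Cohomology of intermediate Morse sheaf}: $\Phi_F(\T')$ is assembled from the pieces $q_!\uk_{\wt\M^+_j,T}$, each of which is, up to shift, the resolution sheaf of $\M^{F_j}$ because $\wt\M^+_j$ is an affine bundle over the fixed component $\wt\M_j$, and the triangles~\eqref{eq:Morse triangle} split because the $\T^{F_j}$ are parity sheaves (Remark~\ref{rmk:parity}). Using that $F_j$ is the flat completion of the independent set $\wt F_j$, so $d=r_j+d^{F_j}$, the shifts cancel and $\Phi_F(\T)\cong\bigoplus_j\T^{F_j}$ as perverse sheaves on $\M^F$; the summands with $F_j=F$ are the copies of $\T^F$ indexed by the bases $B$ of $V_F$, realized by the sections $\{z_i=w_i=0:i\in B\}\subset\wt\M$ of the resolution over $\M^F$. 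Since $\Morse^F_F$ kills any sheaf supported in a proper closed substratum of $\M^F$, we get $\Morse^F_F(\Phi_F\T)=\bigoplus_{B}\Morse^F_F(\T^F)$, and under the identification $\Morse_F(\T)=\cB^\emptyset_F$ of Corollary~\ref{cor:localization of Morse group} the vector $\be_B$ is the generator of the $B$-th summand. Naturality of $\tau$ with respect to this decomposition, together with $\tau_{\T^F}(1)=\uc$, then gives $\Upsilon(\uc)\cdot\be_B=\iota_B(\uc)$, where $\iota_B\colon\cB^F_E=\Morse^F_E(\T^F)\hookrightarrow\Morse^F_E(\Phi_F\T)=\cB^\emptyset_E$ is the inclusion of the $B$-th summand.

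It then remains to compute $\iota_B$, and here the plan is a fixed-point localization argument parallel to the one in the proof of Theorem~\ref{thm:extended U action}: the $T_E$-fixed points lying on the $B$-th section, after passing to a normal slice to $S_E$, are exactly the $y_{B\cup B'}$ with $B'$ a basis of $V^F_E$, so the localized basis vector $\be_{B'}$ of the slice is sent to $\be_{B\cup B'}$; that is, $\iota_B(\be_{B'})=\be_B\ast\be_{B'}$, and therefore $\Upsilon(\uc)\cdot\be_B=\be_B\ast\uc$. Finally, the formula for $\Upsilon'(\uc)$ is deduced from the one for $\Upsilon(\uc)$ by Poincar\'e--Verdier duality, exactly as the $\Xi'$-formula was obtained from the $\Xi$-formula: $\Upsilon'(\uc)$ is built from the Verdier-dual data, and $\rdiv$ is by construction the adjoint of right $\ast$-multiplication for the pairing of Section~\ref{sec:pairing}. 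I expect the main obstacle to be precisely the third and fourth steps: promoting the cohomology-level statement of Theorem~\ref{thm:Cohomology of intermediate Morse sheaf} to a sheaf-level splitting, verifying that the $B$-th summand is canonically the resolution sheaf $\T^F$ used to define $\cUc^F_E$ (which invokes the independence-of-resolution Proposition~\ref{prop:Omega is canonical}), and carrying out the fixed-point bookkeeping and sign tracking; once these are in place the remaining steps are formal.
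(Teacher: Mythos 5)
Your proposal is correct and follows essentially the same route as the paper: the paper isolates your third and fourth steps as Proposition \ref{prop:multiplication geometry} (the sheaf-level splitting $\Phi_F(\T^D)\cong(\cB^D_F\otimes_k\T^F)\oplus A$ with the $\ast$-compatibility, established via the Thom-class splitting and Lemma \ref{lem:faithful hypercohomology}, then verified by exactly the fixed-point localization you describe), and then deduces the theorem by the same naturality argument, with $\Upsilon'$ handled by duality. The obstacles you flag are real but are precisely what that proposition resolves.
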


The proof relies on the following result, which is a sheaf-theoretic version of Theorem \ref{thm:Cohomology of intermediate Morse sheaf}. 

\begin{proposition}\label{prop:multiplication geometry}
For any coloop-free flats $F \le D$, there is an isomorphism
\[\Phi_F(\T^D) \cong (\cB^D_E \otimes_k \T^F) \oplus A,\]
where $\operatorname{supp} A \subsetneq \M^F$, such that
for any $E \le F$, applying $\Morse_E$ to both sides of this isomorphism and using Lemma \ref{lem:Morse transitivity} gives a map 
\[\cB^D_F \otimes_k \cB^F_E \to \cB^D_E\]
which is just the operation $\ast$.  
\end{proposition}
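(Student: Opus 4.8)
The plan is to build the decomposition of $\Phi_F(\T^D)$ directly from the geometry appearing in the proof of Theorem~\ref{thm:Cohomology of intermediate Morse sheaf}, and then check that the combinatorial bookkeeping matches the operation $\ast$. First I would reduce to the case $D=\emptyset$ (so $\T^D=\T$) by pushing forward along the closed inclusion $\M^D\hookrightarrow\M$, exactly as in Sections~\ref{sec:extending homs} and the proof of Theorem~\ref{thm:extended U action}. Fix a generic cocharacter $\xi$ of $T_F$, so $\M^\xi=\M^F$, and recall from the proof of Theorem~\ref{thm:Cohomology of intermediate Morse sheaf} that $\wt\M^\xi$ decomposes into components $\wt\M_1,\dots,\wt\M_r$, where $\wt\M_j$ is a resolution of $\M^{F_j}\subset\M^F$ and $F_j$ is the flat completion of $\wt F_j$. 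The key point is that exactly those $j$ with $F_j=F$ contribute copies of $\T^F$ (with its full support $\M^F$), while all other $j$ give summands supported on proper closed subvarieties $\M^{F_j}\subsetneq\M^F$; these are absorbed into the error term $A$. Since the attracting set $\wt\M^+_j$ is an affine bundle over $\wt\M_j$ of rank $r_j=|\wt F_j|$, the contracting lemma (Lemma~\ref{contracting lemma}) together with the standard decomposition theorem argument — using the triangle \eqref{eq:Morse triangle} and parity vanishing, so that the triangles split — gives
\[
\Phi_F(\T) \cong \bigoplus_{j=1}^r \bigl(p_j\bigr)_*\uk_{\wt\M_j,T}[\,\cdot\,] \;=\; \Bigl(\bigoplus_{j:\,F_j=F}\T^F\Bigr)\oplus A,
\]
and the indexing set $\{\,j : F_j = F\,\}$ is precisely the set of bases $B$ of $V_F$ (each $\wt F_j$ with $F_j=F$ being a basis of the quotient arrangement), which is the natural basis of $\cB^D_F=\cB^\emptyset_F$; hence the first summand is canonically $\cB^D_F\otimes_k\T^F$.

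Second, I would identify the map on Morse groups. Applying $\Morse_E=\Morse^F_E\circ\Phi_F$ (via Lemma~\ref{lem:Morse transitivity}) to the left side gives $\cB^D_E$, and to the right side gives $\cB^D_F\otimes_k\Morse^F_E(\T^F)=\cB^D_F\otimes_k\cB^F_E$ plus $\Morse_E(A)$, which vanishes because $\operatorname{supp}A\subsetneq\M^F$ and $E\le F$ forces $\Morse_E$ to kill sheaves not supported generically on $\M^F$ — more precisely, $\Morse^F_E(\T^{F_j})=0$ whenever $F_j\ne F$ since $F_j$ is then not $\le$-comparable appropriately, as in the computation $\Morse_F(\IC_E)=0$ unless $F\le E$ in Section~\ref{sec:morse-functors}. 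So we get a well-defined map $\cB^D_F\otimes_k\cB^F_E\to\cB^D_E$. To see it is $\ast$, I would use equivariant localization: restrict everything to $T$-fixed points as in Corollary~\ref{cor:localization of Morse group}, where $\cB^D_F$ has basis $\{\be_B\}$ indexed by bases $B$ of $V^D_F$ and $\cB^F_E$ has basis $\{\be_{B'}\}$ for $B'$ a basis of $V^F_E$. The fixed point $y_B$ in the $\wt\M_j$-component with $\wt F_j\leftrightarrow B$ refines, under the bundle $\wt\M^+_j\to\wt\M_j$, to the fixed points $y_{B\cup B'}$ of $\wt\M_j$ — this is exactly the matching $y_{B'}\leftrightarrow y_{B\cup B'}$ used in the proof of Theorem~\ref{thm:extended U action}. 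Tracing $\be_B\otimes\be_{B'}$ through the localized diagram therefore produces $\be_{B\cup B'}$, which is the definition of $\be_B\ast\be_{B'}$, with the sign conventions matching because the affine-bundle Gysin maps contribute the factor $\be_{\wt F_j}$ exactly as in \eqref{eq:summand of Phi}.

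The main obstacle I anticipate is the splitting of the triangle \eqref{eq:Morse triangle} with the error term cleanly separated: a priori the decomposition theorem only guarantees a (non-canonical) direct sum decomposition of $\Phi_F(\T)$ into shifted IC sheaves, and one must argue that the summands with full support $\M^F$ are genuinely copies of $\T^F$ rather than some other parity sheaf, and that the isomorphism can be chosen compatibly with the $\Morse_E$-computation for \emph{all} $E\le F$ simultaneously. I would handle this by noting that each $\wt\M_j$ with $F_j=F$ is literally a resolution of $\M^F$, so $(p_j)_*\uk[\cdot]$ is canonically $\T^F$ by Proposition~\ref{prop:Omega is canonical} (independence of resolution), and the parity vanishing of Remark~\ref{rmk:parity} forces the connecting maps in \eqref{eq:Morse triangle} to vanish degree-by-degree, giving a canonical splitting; compatibility with $\Morse_E$ across all $E$ then follows because the faithfulness statements (Propositions~\ref{prop:Morse is faithful} and \ref{prop:Rc action is faithful}) mean such a splitting is pinned down by its effect on the collection of Morse groups, which is exactly what the localization computation records.
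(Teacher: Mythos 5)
Your strategy coincides with the paper's: reduce to $D=\emptyset$, split $\Phi_F(\T)$ along the fixed-point components $\wt\M_j$ using the triangles \eqref{eq:Morse triangle} and parity vanishing, identify the summands with $F_j=F$ as copies of $\T^F$ indexed by bases of $V_F$, and compute the induced map on Morse groups by equivariant localization. You also correctly locate the difficulty, but your resolution of it has a genuine gap. Vanishing of $\Hom^1(\T_{\le j-1},\T_j)$ gives the \emph{existence} of a splitting of each triangle, not a canonical one: a splitting is determined only up to adding maps $\T_{\le j-1}\to\T_j$, which are generally nonzero (several $\wt F_j$ may complete to the same flat $F_j$, and there are also maps between resolution sheaves with different supports), and an arbitrary choice changes the induced map on $\Morse_E$ by exactly such correction terms. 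So the localization computation in your second paragraph is only valid for a correctly chosen splitting. Your appeal to faithfulness (Proposition \ref{prop:Morse is faithful}) establishes at most \emph{uniqueness} of a splitting with the prescribed effect on hypercohomology; what is needed is its \emph{existence}. The paper supplies this by constructing a specific retraction $\eta\colon \T_{\le j}\to\T_j$: it introduces the Gysin-type map $\tau\colon \T_{\le j}\to\T_j[2r_j]$ whose composite with the inclusion $\sigma$ is multiplication by the Thom class (up to sign, $\be_{\wt F_j}$), shows using Theorem \ref{thm:Cohomology of intermediate Morse sheaf} and Lemma \ref{lem:faithful hypercohomology} that $\tau$ factors through $\tau\sigma$, and deduces a splitting whose effect on hypercohomology is precisely the projection onto the summand $\be_{\wt F_j}k[\Delta]$. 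That construction is the step your proposal is missing; without it the isomorphism in the statement is not pinned down well enough for the $\ast$-identification to follow.

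Two smaller points. Your claim that $\Morse^F_E(A)=0$ for all $E\le F$ is false in general (for $E\le F_j<F$ one has $\Morse^F_E(\T^{F_j})=\cB^{F_j}_E\ne 0$); this is harmless here because the proposition only concerns the restriction of the identification to the summand $\cB^D_F\otimes_k\T^F$, but the map is not obtained by a vanishing argument. Also, the invocation of ``the standard decomposition theorem argument'' is out of place over a general field $k$; the splitting really does rest on the parity-sheaf formalism, which you do also cite.
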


\begin{proof}
Without loss of generality we can assume that $D = \emptyset$.  We use the notation from Section \ref{sec:localization of Morse groups} and the proof of Theorem \ref{thm:Cohomology of intermediate Morse sheaf}.  
In particular, $\wt\M$ is a fixed resolution of $\M$, 
$\xi$ is a generic cocharacter of $T$ so that $\M^\xi = \M^F$, and we have the $\xi$-fixed point components
$\wt\M_j = \wt\M^{\wt F_j}$, $j=1,\dots,r$, their attracting sets $\wt\M^+_j$, the closed unions
$\wt\M^+_{\le s} := \bigcup_{j=1}^s \wt\M^+_j$,
inclusions 
$c_j\colon \wt\M^+_j \to \wt\M^+$, $d_j\colon \wt\M^+_{\le j} \to \wt\M^+$, and the map $q\colon \wt\M^+ \to \M^F$.  

Define sheaves 
\[\T_j := q_!(c_j)_!\uk_{\wt\M^+_j,T}[2d], \;\; \T_{\le j} := q_!(d_j)_!\uk_{\wt\M^+_{\le j},T}[2d]\]
on $\M^F$.  
Shifting the distinguished triangle 
\eqref{eq:Morse triangle} and applying $q_!$ gives a triangle
\[ \T_j \to \T_{\le j} \to \T_{\le j-1} \stackrel{[1]}{\to}.\]
As we saw in the proof of Theorem \ref{thm:Cohomology of intermediate Morse sheaf}, $\wt\M^+_j$ is an affine bundle over $\wt\M_j$ of rank $\frac{1}{2}\codim_{\wt\M}{\wt\M_j} = 
\frac{1}{2}\codim_{\M} \M^{F_j}$, so we have
$\T_j \cong \T^{F_j}$.  In particular it 
is a perverse parity complex, 
so by induction each $\T_{\le j}$ is a perverse parity complex as well.  It follows that $\Hom^1(\T_{\le j-1}, \T_j)=0$, so our triangle gives a split exact sequence 
\[0 \to \T_j \stackrel{\sigma}{\to} \T_{\le j} {\to} \T_{\le j-1} \to 0\]
of perverse sheaves.

We need to fix a particular splitting of this exact sequence.  Applying 
$q_!(d_j)_!$ to the adjunction map $\uk_{\wt\M^+_{\le j}} \to \uk_{\wt\M_j}$ gives a map $\tau\colon \T_{\le j} \to \T_j[2r_j]$ such that 
$\tau\sigma\colon \T_j \to \T_j[2r_j]$ is  multiplication by the equivariant Thom class $\tau_j$ of the normal bundle to $\wt\M_j$ in $\wt\M^+_j$, which up to a sign is 
the restriction of $\be_{\wt{F}_j}$ to $\wt\M_j$. 

 Let $\Theta$ be the cone of $\tau\sigma$; its hypercohomology is isomorphic to 
\[k[\Delta^{F_j}] \otimes_{A^F} A/A\tau_j .\]
Theorem \ref{thm:Cohomology of intermediate Morse sheaf} implies that the composition $\T_{\le j} \stackrel{\tau}{\to} \T_{j}[2r_j] \to \Theta$ induces the zero map on hypercohomology, and then Lemma \ref{lem:faithful hypercohomology} implies that the map itself is zero.  Since $\T_{\le j}$ is a sum of resolution sheaves, the proof of Proposition \ref{prop:Morse is faithful} shows that it satisfies the hypotheses (a) and (b) of Lemma \ref{lem:faithful hypercohomology}.  To see that $\Theta$ satisfies (c), just note that $\H^\bullet_T(j_S^!\Theta) \to \H^\bullet_T(j^!_S\T) \otimes_A A/\tau_j$ is an isomorphism, and similarly for $j_S^*\Theta$.

It follows that there is $\eta\colon \T_{\le j} \to \T_j$ so that $\tau\sig\eta = \tau$.  Thus we have $\tau\sig\eta\sig = \tau\sig$, and since $\tau\sig$ is injective on hypercohomology, Proposition \ref{prop:Morse is faithful} implies that $\eta\sig = \id_{\T_j}$.  If we define $\epsilon_j$ to be the composition of the natural projection $\Phi_F(\T) = \T_{\le r}$ onto 
$\T_{\le j}$ with $\eta$, then $\oplus_j \epsilon_j$ defines an isomorphism $\Phi_F(\T) \cong \bigoplus_j \T_j$ with the property that applying hypercohomology to $\epsilon_j$ gives the natural isomorphism $\be_{\wt{F}_j}k[\Delta] \stackrel{\sim}{\to} k[\Delta^{F_j}] \otimes_{A^{F_j}} A$ and kills all other terms.  

Each basis $B$ of $V_F$ is equal to $\wt{F}_j$ for some $j$, and the associated sheaves $\T_j$ give 
precisely the summands of $\Phi_F\T$ which are supported on $\M^F$.  We therefore get an isomorphism $\Phi_F(\T) \cong (\cB^\emptyset_F \otimes_k \T^F) \oplus A$ as desired.  
Now to see that it has the required property, we first note that it is enough to prove to the case $E=I$, since we can restrict to a transverse slice to $S_E$. 

Consider the
composition
\[\cB_F^\emptyset \otimes_k \Morse_I(\T^F) \hookrightarrow \Morse^F_I(\Phi_F\T) \to \H_T^\bullet(\Phi_F\T) \stackrel{\rho_F}{\longrightarrow} \H^\bullet_T(\T) \cong k[\Delta],\]
where the first inclusion comes from the inclusion 
$\cB^\emptyset \otimes_k \T^F \hookrightarrow \Phi_F\T$,
the second comes from the natural transformation $\Phi_I \to j_I^*$, and the third is the map $\rho_F$ introduced in \S\ref{sec:localization of Morse groups}.  

If
 $\be_B$, $\be_{B'}$ are bases of $V^\emptyset_F$ and $V^F_I$, respectively, then 
the image of $\be_B \otimes \be_{B'}$ under this map
is $\be_{B\cup B'}$.  On the other hand, we can also factor our map using the  
composition $\Morse^F_I(\Phi_F\T) \to \H_T^\bullet(\Phi_I\T) \stackrel{\rho_I}{\longrightarrow} \H^\bullet_T(\T)$, where the first map comes from Lemma \ref{lem:Morse transitivity}.  Since $\rho_I$ is injective, this implies that the image of $\be_B \otimes \be_{B'}$ in $\Morse_I(\T)$ is $\be_{B\cup B'} = \be_B \ast \be_{B'}$, as desired.   
\end{proof}

\begin{proof}[Proof of Theorem \ref{thm:extended maps between projectives}]

Taking $\uc\in \cUc^F_E$, applying the natural transformation $\phi_\uc\colon \Morse^F_F \to \Morse^F_E$ represented by $\uc$ to $\T^F$ gives the map $\cB^F_F = k \to \cB^F_E$ which sends $1$ to $\uc$.

Our definition says that applying $\Upsilon(\uc)$ to $\T^D$ is the result of 
applying $\phi_\uc$ to $\Phi_F(\T^D)$.  Proposition \ref{prop:multiplication geometry} gives 
\[\Phi_F(\T^D) \cong (\cB^D_F \otimes_k \T^F) \oplus A\]
with $A$ supported on strata strictly smaller than $\M^F$.  Since $\Morse^F_F(A) = 0$, applying $\phi_\uc$ to this decomposition 
gives the map $\cB^D_F \to \cB^D_F\otimes_k \cB^F_E$ given by $x \mapsto x \otimes \uc$.  The result now follows from the second part of Proposition \ref{prop:multiplication geometry}.
\end{proof}

\subsection{Bounding Homomorphisms between projectives}
We would like to have a result analogous to Corollary \ref{cor:generating set} to show that the homomorphisms we have constructed between the projective $\Pi_F$ generate the endomorphism algebra of $\bigoplus_F \Pi_F$.  Unfortunately, we do not know of a direct proof of such a result.  Instead, we will use the combinatorics of Gale duality along with a bound on the dimension of Hom spaces which is proved in this section.  This result, Theorem \ref{thm:bound on projective homs} below, will also be used to show that our algebras are highest weight.


Let $X$ be an algebraic variety with an action of a group $G$ stratified by equivariantly simply connected $G$-invariant strata, and suppose that the category $\Perv(X)$ of $G$-equivariant perverse sheaves constructible with respect to the stratification has enough projectives.  For each stratum $S$, let $L_S$ denote the simple perverse sheaf with support $\overline{S}$, and let $P_S$ be its projective cover.  Note that since all local systems on strata are trivial, we have $\D L_S \cong L_S$ for every $S$.

If we let $j_S\colon S \to X$ be the inclusion, then the object $\Delta_S := {}^pj_{S!}\uk_S$ is a projective cover of $L_S$ in $\Perv(\overline{S})$.  In fact, it is projective in 
$\Perv(Y)$ for any closed union of strata $Y \subset X$ in which $S$ is maximal, even if $Y$ is not irreducible.

For any $A \in \Perv(X)$ and any stratum $S$, let $[A: L_S] = \dim \Hom(P_S, A)$ be the multiplicity of $L_S$ in a composition series for $A$.

\begin{lemma}\label{lem:projective symmetry}
For any strata $R, S \subset X$, we have
\[\dim \Hom(P_R, P_S) = \dim \Hom(P_S, P_R)\]
\end{lemma}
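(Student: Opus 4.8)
The plan is to use BGG reciprocity together with a duality between standard and costandard objects. Recall that in a highest weight category, the projective cover $P_S$ has a standard filtration, and BGG reciprocity says
\[
(P_S : \Delta_R) = [\nabla_R : L_S] = [\Delta_R : L_S],
\]
where the last equality uses that $\D$ fixes every simple and exchanges $\Delta_R$ with $\nabla_R$ (so $[\nabla_R : L_S] = [\D\Delta_R : \D L_S] = [\Delta_R : L_S]$). Here I am implicitly invoking the highest weight structure; since the present statement is used \emph{en route} to establishing that structure, I would instead argue directly, avoiding circularity, as follows.

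First I would establish the key numerical identity
\[
\dim\Hom(P_R, P_S) = \sum_T (P_S : \Delta_T)\,[\Delta_T : L_R],
\]
where the sum is over strata $T$ with $T \subseteq \overline{S}\cap\overline{R}$ (equivalently $S, R \le T$ in the closure order). To get this, filter $P_S$ by standard objects $\Delta_T$ (which exists because $P_S$, being projective, is filtered by the ${}^pj_{T!}\uk_T$ — this uses the recollement/BGS setup already cited for Proposition~\ref{prop:enoughprojs}), and use that $\Hom(P_R, \Delta_T) = \Hom(P_R, {}^pj_{T!}\uk_T)$ has dimension $[\Delta_T : L_R]$ because $P_R$ is projective and $\Ext^{>0}(P_R, -) = 0$, so $\dim\Hom(P_R, \Delta_T)$ equals the multiplicity of $L_R$ as a composition factor of $\Delta_T$. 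Since $\Ext^1(P_R, \Delta_{T'}) = 0$ for every standard object, applying $\Hom(P_R, -)$ to the standard filtration of $P_S$ is exact, giving the displayed formula.

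Next I would prove the symmetry $(P_S : \Delta_T) = [\Delta_T : L_S]$. One way: apply Verdier duality. The dual $\D P_S$ is a \emph{tilting-like} object — more precisely, in this recollement situation $\D\Delta_T = \nabla_T$ and $\D P_S$ is an injective object filtered by costandards, with $(\D P_S : \nabla_T) = (P_S : \Delta_T)$. Then $\dim\Hom(P_S, \D P_S)$ can be computed two ways: filtering the source by standards gives $\sum_T (P_S:\Delta_T)\dim\Hom(\Delta_T, \D P_S) = \sum_T (P_S:\Delta_T)\,[\nabla_T: \D L_S]^{\vee}$... actually the cleanest route is the standard fact (valid in any such recollement with enough projectives, see \cite[Theorem~3.2.1 and following remarks]{BGS}) that $(P_S : \Delta_T) = [\nabla_T : L_S]$, combined with $[\nabla_T : L_S] = [\Delta_T : L_S]$ from self-duality of simples and $\D\Delta_T \cong \nabla_T$. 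Substituting into the boxed formula:
\[
\dim\Hom(P_R, P_S) = \sum_T [\Delta_T : L_S]\,[\Delta_T : L_R],
\]
which is manifestly symmetric in $R$ and $S$.

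The main obstacle is justifying that $P_S$ admits a standard filtration and that BGG-type reciprocity $(P_S:\Delta_T) = [\nabla_T:L_S]$ holds in this generality \emph{before} we have the full highest-weight structure in hand. I expect this to follow from the recollement formalism of \cite[\S3]{BGS} applied stratum by stratum — one filters $X$ by closed unions of strata, at each step $S$ maximal, uses that ${}^pj_{S!}\uk_S$ is projective in the truncation, and assembles the filtration by descending induction — but making this rigorous without circular appeal to ``highest weight'' requires care about which facts from \cite{BGS} are purely recollement-theoretic. Once that is pinned down, the rest is the exact bilinear-form computation above, and the equality $\dim\Hom(P_R,P_S)=\dim\Hom(P_S,P_R)$ is immediate.
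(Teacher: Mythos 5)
There is a genuine gap, and it is exactly the one you flag at the end: your argument runs through a standard filtration of $P_S$ and BGG reciprocity, but the existence of such a filtration is (essentially) the highest weight property itself, which is what this lemma is being used to establish. Concretely, the identity you want,
\[
\dim\Hom(P_R,P_S)=\sum_T (P_S:\Delta_T)\,[\Delta_T:L_R]=\sum_T [\Delta_T:L_S]\,[\Delta_T:L_R],
\]
is precisely the \emph{equality} case of Theorem \ref{thm:bound on projective homs}, and that theorem shows equality holds for all $R,S$ \emph{if and only if} $\Perv(X)$ is highest weight. For a general stratified space only the inequality $\le$ is true, because projective covers need not be standardly filtered; the recollement formalism of \cite[3.2.1]{BGS} gives existence of projectives (that is all Proposition \ref{prop:enoughprojs} uses) but not standard filtrations of them, which require further $\Ext$-vanishing hypotheses. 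Since Lemma \ref{lem:projective symmetry} is invoked inside the inductive proof of Theorem \ref{thm:bound on projective homs} — both to handle the case $R=U$ and to bound $[\im(\phi):L_R]$ — it must hold unconditionally, so your route cannot be repaired by "pinning down which facts from \cite{BGS} are purely recollement-theoretic": the needed fact simply is not one of them.

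The paper's proof avoids filtrations entirely. It uses only four unconditional facts: $\dim\Hom(P_R,A)=[A:L_R]$ because $P_R$ is a projective cover; $\D P_S$ is an injective hull of $\D L_S\cong L_S$, so $\dim\Hom(A,\D P_S)=[A:L_S]$; simples are self-dual (trivial local systems on equivariantly simply connected strata); and $A$ and $\D A$ have the same composition factors. Chaining these gives
\[
\dim\Hom(P_R,P_S)=[P_S:L_R]=[\D P_S:L_R]=\dim\Hom(P_R,\D P_S)=[P_R:L_S]=[\D P_R:L_S]=\dim\Hom(P_S,P_R),
\]
with no appeal to $\Delta$-filtrations or reciprocity. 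You should replace your bilinear-form computation with an argument of this shape.
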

\begin{proof}
Note that $\D P_S$ is an injective hull of $\D L_S \cong L_S$, so 
$[A:L_S] = \dim \Hom(A, \D P_S)$ for any object $A$.  In addition, $P_S$ and $\D P_S$ have isomorphic composition series.  So we have
\begin{align*}
\dim \Hom(P_R, P_S) & = \dim \Hom(P_R, \D P_S) = [P_R: L_S] \\ & = [\D P_R: L_S] = \dim \Hom(\D P_R, \D P_S) = \dim\Hom(P_S, P_R).
\end{align*}
\end{proof}

\begin{theorem} \label{thm:bound on projective homs}
 For any strata $R, S$, we have
\begin{equation}\label{eq:BGG ineq}
[P_S: L_R] = \dim \Hom(P_R, P_S) \le \sum_{Q} [\Delta_Q : L_R] [\Delta_Q : L_S]
\end{equation}
where the sum is over all strata $Q$ (note that the summand is zero unless $R \cup S \subset \overline{Q}$).

Equality holds for every $R, S$ if and only if $\Perv(X)$ is highest weight for the partial order $\le$ given by inclusion of closures of strata. 
\end{theorem}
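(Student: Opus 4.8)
## Proof proposal for Theorem~\ref{thm:bound on projective homs}

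The plan is to prove the inequality \eqref{eq:BGG ineq} first, in complete generality, by a standard filtration argument, and then to show that simultaneous equality is equivalent to the highest weight property by examining exactly where the inequality can be strict.

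\textbf{Step 1: the inequality.} Fix an ordering $S_1, S_2, \dots, S_r$ of the strata so that $Y_s := \bigcup_{t \le s} S_t$ is closed for each $s$; write $j_s\colon S_s \hookrightarrow X$ for the inclusion and $Y_{<s} := Y_{s-1}$. For a fixed stratum $R$, I would produce a filtration of the projective cover $P_R$ whose subquotients are among the $\Delta_Q$, each appearing $[\Delta_Q : L_R]$ times. Concretely: $P_R$ is projective in $\Perv(X)$, so its restrictions to the closed unions $Y_s$ behave well; applying the recollement triangles for the open--closed decompositions $Y_{<s} \subset Y_s$ one at a time gives a filtration of $P_R$ with associated graded $\bigoplus_s (j_s)_! \, M_s$ for suitable complexes $M_s$ on $S_s$, and because $P_R$ is perverse and all local systems on strata are trivial (Lemma~\ref{lem:equivariantly simply connected}), $M_s$ is a trivial local system placed in the perverse degree, i.e.\ a direct sum of copies of $\uk_{S_s}[\dim S_s]$. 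Hence $\gr P_R \cong \bigoplus_Q \Delta_Q^{\oplus m_{Q,R}}$ for some multiplicities $m_{Q,R} \ge 0$; counting composition factors gives $\sum_Q m_{Q,R}[\Delta_Q : L_R] = [P_R : L_R] = 1$ (since $P_R$ is indecomposable with head $L_R$, and $\Delta_R \to L_R$ is its projective cover among objects supported on $\overline{S_R}$) — more usefully, intersecting with the standard BGG-reciprocity bookkeeping, $m_{Q,R} = [\Delta_Q : L_R]$ is forced by comparing $[P_R : L_T]$ for all $T$. Then
\[
\dim\Hom(P_R, P_S) = [P_S : L_R] = \sum_Q m_{Q,R}\,[\Delta_Q : L_S] = \sum_Q [\Delta_Q : L_R]\,[\Delta_Q : L_S],
\]
where the only inequality enters because the filtration of $P_R$ need not have $\Delta$-subquotients on the nose unless $P_R$ is standardly filtered; in general $\Hom(\Delta_Q, P_S)$ need not be exact along the filtration, and one only gets $\dim\Hom(P_R,P_S) \le \sum_Q (\text{number of }\Delta_Q\text{ subquotients of }P_R)\cdot \dim\Hom(\Delta_Q, P_S)$. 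This last quantity is $\le \sum_Q [\Delta_Q:L_R][\Delta_Q:L_S]$ because $\dim\Hom(\Delta_Q, P_S) = [\nabla_Q : L_S] = [\Delta_Q : L_S]$ (the last equality since $\D\Delta_Q \cong \nabla_Q$ and both sides count the same composition factors, using $\D L_S \cong L_S$). The first identity $\dim\Hom(P_R,P_S) = [P_S : L_R]$ is immediate from projectivity of $P_R$, and $\dim\Hom(P_S,P_R) = \dim\Hom(P_R,P_S)$ is Lemma~\ref{lem:projective symmetry}.

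\textbf{Step 2: the equality criterion.} For the ``if'' direction, assume $\Perv(X)$ is highest weight with respect to $\le$. Then each $P_S$ has a $\Delta$-filtration with $(P_S : \Delta_Q) = [\nabla_Q : L_S]$ (BGG reciprocity), all the $\Hom$-exactness used above is genuine because $\Hom(\Delta_Q, -)$ kills $\nabla$'s with different label, and the chain of displayed equalities goes through with equality throughout; this gives \eqref{eq:BGG ineq} with equality for all $R,S$. For the ``only if'' direction, suppose equality holds for every pair $R, S$. Summing \eqref{eq:BGG ineq} (as an equality) over a fixed $S$ weighted suitably, or more cleanly, taking $R = S$ and comparing, forces each $P_S$ to have a filtration all of whose subquotients are standard objects $\Delta_Q$ with $Q \ge S$: indeed the total composition-factor count of $\bigoplus_Q \Delta_Q^{\oplus [\Delta_Q:L_S]}$ now matches that of $P_S$ stratum by stratum, and the surjection from the filtered object onto $P_S$ (built from the recollement filtration) must then be an isomorphism. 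The condition $(P_S : \Delta_Q) \neq 0 \Rightarrow Q \ge S$ holds because $\Delta_Q$ is the maximal quotient of $P_S$ supported on $\overline{S_Q} \cup \overline{S_S}$ only when $S \le Q$ — otherwise $\Hom(P_S, \nabla_Q) = 0$. A category with enough projectives in which every projective is $\Delta$-filtered, together with the dual statement for injectives (which follows by applying $\D$, using $\D\Delta_S \cong \nabla_S$ and $\D L_S \cong L_S$), is by definition highest weight for the given poset.

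\textbf{Main obstacle.} The delicate point is Step~1: pinning down that the recollement filtration of the \emph{projective} $P_R$ has associated graded exactly $\bigoplus_Q \Delta_Q^{[\Delta_Q : L_R]}$ (rather than merely some $(j_s)_!$ of a shifted local system with unknown multiplicity), and then tracking precisely which $\Hom$-exactness fails in the non-highest-weight case so that the inequality is genuinely an inequality and becomes an equality exactly under the $\Delta$-filtration hypothesis. This is the classical ``BGG reciprocity inequality'' argument, but here one must be careful that $X$ need not be irreducible and the strata form only a poset, not a linear order, so ``$\Delta_Q$ is projective in $\Perv(Y)$ whenever $Q$ is maximal in $Y$'' (noted in the excerpt just before the theorem) is what makes the inductive filtration work; I would lean on that observation repeatedly.
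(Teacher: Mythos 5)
There is a genuine gap in Step 1, and it sits exactly at the point you flag as the ``main obstacle.'' Your inequality is extracted from a filtration of $P_R$ whose associated graded is claimed to be $\bigoplus_Q \Delta_Q^{\oplus m_{Q,R}}$. But the recollement triangles for $Y_{<s}\subset Y_s$ live in the derived category and do not produce a filtration of the perverse sheaf $P_R$ in $\Perv(X)$ with subquotients of the form $(j_s)_!(\text{local system})[\dim S_s]$: for a perverse $P_R$, $j_s^*P_R$ is generally not concentrated in a single perverse degree, and $j_{s!}j_s^*P_R$ is not perverse, so the pieces are not standard objects. The existence of a filtration of $P_R$ with subquotients $\Delta_Q$ \emph{is} the highest weight property, i.e.\ precisely what the equality case is supposed to detect; once that filtration is unavailable, the quantity ``number of $\Delta_Q$ subquotients of $P_R$'' in your hedged inequality is undefined, and the inequality has no proof. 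A secondary unjustified step: you use $\dim\Hom(\Delta_Q,P_S)=[\nabla_Q:L_S]$ for arbitrary $Q$, which again requires a $\nabla$-filtration of (the dual of) $P_S$; without the highest weight hypothesis one only knows $\dim\Hom(A,\D P_S)=[A:L_S]$, and $P_S\not\cong\D P_S$ in general.

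The paper's proof supplies the missing mechanism, and you should adopt it: induct on the number of strata. For an open stratum $U$ one has $P_U=\Delta_U$, and $\dim\Hom(P_U,P_S)=[P_S:L_U]=[\Delta_U:L_S]$ follows from Lemma \ref{lem:projective symmetry} alone (this also legitimately computes $\dim\Hom(\Delta_U,P_S)$ for the open stratum, the only $Q$ for which it is needed). For $R,S\ne U$, consider the evaluation map $\phi\colon \Hom(\Delta_U,P_S)\otimes\Delta_U\to P_S$ and set $C=\coker(\phi)$; since $\phi$ is an isomorphism over $U$ and $\Hom(\Delta_U,-)$ vanishes on $\Perv(X\setminus U)$, the object $C$ is a projective cover of $L_S$ in $\Perv(X\setminus U)$, so the inductive hypothesis bounds $[C:L_R]$, while $[\im(\phi):L_R]\le \dim\Hom(\Delta_U,P_S)\,[\Delta_U:L_R]$. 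Adding these gives \eqref{eq:BGG ineq}, and equality throughout forces every $\phi$ to be injective, whence every $P_S$ is an extension of a $\Delta$-filtered object by copies of $\Delta_U$ and the category is highest weight; the converse is BGG reciprocity, as in your Step 2. Your equality discussion is broadly on the right track but inherits the same defect (there is no ``surjection from the filtered object onto $P_S$''); rerouted through the cokernel argument it becomes correct.
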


\begin{proof}
We use induction on the number of strata in $X$.  If there is only one stratum, the theorem is trivial.  Otherwise suppose the theorem holds for all varieties with fewer strata. Let $U$ be an open stratum, and set $Y = X \setminus U$.
  
First suppose that $R=U$, so that $P_R = \Delta_R = \Delta_U$.  
The only nonzero summand on the right side of \eqref{eq:BGG ineq} is when $Q = U$, and $[\Delta_U:L_U] = 1$,  so the sum gives 
\begin{align*}
[\Delta_U: L_S] & = \dim \Hom(P_S, P_U) = \dim \Hom(P_U, P_S)
\end{align*}
by Lemma \ref{lem:projective symmetry}.
Thus the inequality \eqref{eq:BGG ineq} holds.

The case $S = U$ now follows by Lemma \ref{lem:projective symmetry}, so we can assume now that $R, S \ne U$.   
 Consider the evaluation map
\[\phi\colon \Hom(\Delta_U, P_S) \otimes \Delta_U \to P_S,\]
and let $C = \coker(\phi)$.  Since $\phi$ is an isomorphism on the open set $U$, $C$ is supported on $Y$.  
In fact,  $C$ is a projective cover of $L_S$ in the subcategory $\Perv(Y) \subset \Perv(X)$,  
 since $\Hom(\Delta_U, A) = 0$ for any $A \in \Perv(Y)$.  
 
 By the inductive hypothesis, we have
\begin{equation}\label{eq:Projective inequality}
[C: L_R] \le \sum_{Q \ne U} [\Delta_Q:L_R][\Delta_Q:L_S].
\end{equation} 
  On the other hand we have 
  $[P_S:L_R] = [C:L_R] + [\im(\phi):L_R]$ and Lemma \ref{lem:projective symmetry} gives \[[\im(\phi):L_R] \le (\dim \Hom(\Delta_U, P_S))[\Delta_U:L_R] = [\Delta_U:L_S][\Delta_U:L_R].\]
  The inequality \eqref{eq:BGG ineq} follows.

If equality holds for every $R$, $S$, then each map $\phi$ is an injection, and \eqref{eq:Projective inequality} is an equality, so an induction shows that every projective $P_S$ has a filtration by $\Delta_Q$, $Q\subset X$, so $\Perv(X)$ is highest weight.    

On the other hand, if $\Perv(X)$ is highest weight, then we have the reciprocity formula
\[(P_S:\Delta_Q) = \dim \Hom(P_S,\nabla_Q) = [\nabla_Q: L_S] = [\Delta_Q: L_S],\]
so we have equality in \eqref{eq:BGG ineq}.
\end{proof}

\subsection{Applying the bound to endomorphisms of Morse functors}

We now apply the result of the previous section to study homs between the projective objects $\Pi_F \in \Perv(\M, k)$. 

\begin{theorem}\label{thm:bound on Rc}
For any strata $E,F \in \cF$, we have
\[\dim \Hom(\Morse_E,\Morse_F) = \dim \Hom(\Pi_F, \Pi_E) \leq \sum_{D \ge E, F} \dim \cUc^D_F \dim \cUc^D_E \]
and equality holds for every $E,F$ if and only if $\Perv(\M)$ is highest weight.
\end{theorem}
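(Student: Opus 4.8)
The plan is to reduce everything to the general bound of Theorem~\ref{thm:bound on projective homs} applied to $X=\M$, whose simple objects are the $\IC_G$, $G\in\cF$. First I would record the shape of the projectives $\Pi_F$. Since $\Perv(\M,k)$ has finitely many simples and enough projectives, each $\Pi_F$ splits as $\bigoplus_{G\in\cF}P_G^{\oplus a_{FG}}$, where $P_G$ is the indecomposable projective cover of $\IC_G$ and $a_{FG}=\dim\Hom(\Pi_F,\IC_G)=\dim\Morse_F(\IC_G)$. Because $\Morse_F(\IC_F)=k$ and $\Morse_F(\IC_G)=0$ unless $F\le G$, the matrix $A=(a_{FG})_{F,G\in\cF}$ is triangular for (a linear refinement of) $\le$ with $1$'s on the diagonal, hence invertible over $\Z$. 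This immediately gives $\dim\Hom(\Pi_F,\Pi_E)=\sum_{G,H}a_{FG}a_{EH}\dim\Hom(P_G,P_H)$.

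Next I would feed in Theorem~\ref{thm:bound on projective homs} and the exactness of $\Morse_F$ (which gives $\sum_G a_{FG}[\Delta_D:\IC_G]=\dim\Morse_F(\Delta_D)$ for each stratum $D$), obtaining
\[\dim\Hom(\Pi_F,\Pi_E) \le \sum_{D\in\cF}\Big(\sum_G a_{FG}[\Delta_D:\IC_G]\Big)\Big(\sum_H a_{EH}[\Delta_D:\IC_H]\Big) = \sum_{D\in\cF}\dim\Morse_F(\Delta_D)\cdot\dim\Morse_E(\Delta_D).\]
It then remains to identify $\dim\Morse_F(\Delta_D)$ with $\dim\cUc^D_F$. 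Since $\operatorname{supp}\Delta_D=\M^D$, the group $\Morse_F(\Delta_D)$ vanishes unless $F\le D$; and when $F\le D$, a base-change argument for hyperbolic restriction along the $T_F$-equivariant closed embedding $\M^D\hookrightarrow\M$ (using that $F\le D$ forces $(\M^D)^{\xi}=\M^F$ for a generic cocharacter $\xi$ of $T_F$) gives $\Morse_F(\Delta_D)\cong\Morse^D_F(\Delta_D)$. Regarded as an object of $\Perv(\M^D)$, $\Delta_D$ is exactly the projective $\Pi^D_D$ representing $\Morse^D_D$ --- the same computation that identifies $\Pi_\emptyset$ with ${}^p(j_\emptyset)_!\uk_{S_\emptyset,T}[2d]$ --- so $\Morse^D_F(\Delta_D)=\Hom(\Pi^D_F,\Pi^D_D)$, which has dimension $\dim\cUc^D_F$ by the injectivity in Theorem~\ref{thm:Homs to top projective}. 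Feeding this back, the displayed bound becomes exactly $\dim\Hom(\Pi_F,\Pi_E)\le\sum_{D\ge E,F}\dim\cUc^D_F\dim\cUc^D_E$ (the sum collapses to $D\ge E,F$ since both factors are supported there).

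For the equality statement: if $\Perv(\M)$ is highest weight then Theorem~\ref{thm:bound on projective homs} gives $\dim\Hom(P_G,P_H)=\sum_D[\Delta_D:\IC_G][\Delta_D:\IC_H]$ for all $G,H$, so every inequality in the argument above is an equality and we get equality throughout. Conversely, suppose the theorem's inequality is an equality for all $E,F$. Set $c_{GH}:=\sum_D[\Delta_D:\IC_G][\Delta_D:\IC_H]-\dim\Hom(P_G,P_H)\ge 0$; unwinding the chain of (in)equalities shows $\sum_{G,H}a_{FG}a_{EH}c_{GH}=0$ for all $E,F$, i.e. $ACA^{\mathrm{t}}=0$ with $C=(c_{GH})$. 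Since $A$ is invertible over $\Z$, $C=0$, so $\dim\Hom(P_G,P_H)$ attains the bound of Theorem~\ref{thm:bound on projective homs} for every pair $G,H$, and that theorem then says $\Perv(\M)$ is highest weight.

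The step I expect to require the most care is the identification $\dim\Morse_F(\Delta_D)=\dim\cUc^D_F$: one has to check cleanly that $\Morse_F$ restricted to the subcategory $\Perv(\M^D)\subset\Perv(\M)$ agrees with $\Morse^D_F$, and that $\Delta_D$ represents $\Morse^D_D$ on $\M^D$. Once these compatibilities are in place the rest is formal, driven by Theorem~\ref{thm:bound on projective homs}, Theorem~\ref{thm:Homs to top projective}, and the triangularity of the multiplicity matrix $A$.
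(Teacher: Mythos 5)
Your proposal is correct and follows essentially the same route as the paper: decompose $\Pi_F$ into indecomposable projectives, apply Theorem~\ref{thm:bound on projective homs}, resum to get $\sum_D \dim\Morse_F(\Delta_D)\dim\Morse_E(\Delta_D)$, and identify $\Morse_F(\Delta_D)$ with $\cUc^D_F$ via $\Delta_D\cong\Pi^D_D$ in $\Perv(\M^D)$ and Theorem~\ref{thm:Homs to top projective}. The only cosmetic difference is in the converse of the equality statement, where you invoke invertibility of the triangular multiplicity matrix while the paper uses non-negativity of the defects together with $m^F_F=1$; these are the same observation.
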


\begin{proof}
We can decompose the projective $\Pi_F$ in terms of the indecomposable projectives: $\Pi_F \cong \bigoplus_{A \in \cF}  (P_A)^{\bigoplus m^F_A}$ for some $m^F_A \ge 0$.  Then Theorem \ref{thm:bound on projective homs} gives
\begin{align*}
\dim \Hom(\Pi_F,\Pi_E) &= \sum_{A,B} \dim \Hom(P_A,P_B) m_A^F m_B^E  \\
& \leq \sum_{A,B} \sum_{D} [\Delta_D : L_A] [\Delta_D : L_B] m_A^F m_B^E \\
& = \sum_{D} \sum_{A,B} (m^F_A \dim \Hom(P_A,\Delta_D))(m^E_B \dim\Hom(P_B,\Delta_D)) \\
& = \sum_D \dim \Hom(\Pi_F, \Delta_D) \dim \Hom(\Pi_E, \Delta_D)) \\ 
& = \sum_D \dim \Morse_F (\Delta_D) \dim \Morse_E (\Delta_D) \\
& = \sum_D \dim \cUc^D_F \dim \cUc^D_E. 
\end{align*}
The last equality holds because considered as an object in $\Perv(\M^D, k)$, $\Delta_D$ is isomorphic to $\Pi^D_D$, the projective object representing the Morse functor at the top stratum of $\M^D$.

Equality holds if and only if 
\begin{equation}\label{eq:projective hom}
\dim \Hom(P_A, P_B) = \sum_D [\Delta_D:L_A][\Delta_D:L_B]
\end{equation}
 for all $A, B$ with $m^F_Am^E_B \ne 0$.  Since $m^F_F = m^E_E = 1$, we have equality for all $E, F$ if and only if \eqref{eq:projective hom} holds for all $A, B$, which by Theorem \ref{thm:bound on projective homs} holds if and only if $\Perv(\M,k)$ is highest weight with respect to the partial order $\le$.
\end{proof}

\section{Gale duality}

To the sublattice $V\subset \Z^I$ we can associate its 
Gale dual $V^! \subset \Z^I$, which is simply the 
 perpendicular space to $V$ under the standard pairing.  It defines an arrangement of hyperplanes indexed by the same set $I$, in an $|I|-\rank V$-dimensional space.  Our running assumption that $V$ is unimodular and has no loops or coloops implies that the same is true of $V^!$.
 The matroid of $V^!$ is the dual of the matroid of $V$.  This means that $B \mapsto I \setminus B$ defines a bijection from bases of $V$ to bases of $V^!$.

Let $\cF^!$ be the poset of cyclic flats of $V^!$.  There 
is an order-reversing bijection from $\cF$ to $\cF^!$ given by $F \mapsto F^! := I \setminus F$.  Moreover, for any flats $E \le F$ of $\cF$, the
Gale dual of $V^E_F$ is $(V^!)^{F^!}_{E^!}$.

Let $\M^!$ be the hypertoric variety defined by $V^!$.  As we did for $\M$, for each $F\in \cF^!$ we can define resolution sheaves $\T^F \in \Perv(\M^!, k)$ and Morse functors $\Morse_F\colon \Perv(\M^!, k) \to k\md$,
and we get a $k$-vector space
\[\cB^! = \bigoplus_{E,F\in \cF^!} (\cB^!)^F_E = \bigoplus_{E,F} \Morse_E(\T^F)\]
with commuting actions of two rings $R^! := \End(\bigoplus_{F\in \cF^!} \T^F)$ and $\Rc^! := \End(\bigoplus_{E \in \cF^!} \Morse_E)$.  We will use our combinatorial descriptions of these rings to show that they have a very simple relation with the rings $R, \Rc$ defined by $V$.

The space $(\cB^!)^F_E$ has a basis consisting of vectors $\be_B$, where 
$B$ is a basis of $(V^!)^F_E$.  We define an isomorphism $\Gamma\colon \cB \to \cB^!$ by
\[\Gamma(\be_B) = \be_{E\setminus (F\cup B)}\in (\cB^!)^{E^!}_{F^!}\]
for $\be_B \in \cB^F_E$.  Up to a sign, this isomorphism respects the pairings on both sides.  It also sends the multiplication operation $\ast$ on $\cB$ to the opposite of the multiplication on $\cB^!$: if $\be_B \in \cB^F_E$, $\be_{B'}\in \cB^E_D$, we have
\begin{align*}
\Gamma(\be_B \ast \be_{B'}) & = \Gamma(\be_{B\cup B'}) = \be_{D \setminus (F \cup B \cup B')} \\
& = \be_{D\setminus(E\cup B')}\ast \be_{E\setminus(F\cup B)} = \Gamma(\be_{B'})\ast \Gamma(\be_B).
\end{align*}
Similarly, it interchanges the left and right adjoint  operations, up to a sign: if $\be_B\in \cB^F_E$, $\be_{B'} \in \cB^F_D$, $\be_{B''} \in \cB^D_E$, then
\begin{align*}
\Gamma(\be_{B'} \ldiv \be_B) & = \pm \Gamma(\be_{B}) \rdiv \Gamma(\be_{B''}), \;\text{and} \\ \Gamma(\be_{B}\rdiv \be_{B''}) & = \pm \Gamma(\be_{B''}) \ldiv \Gamma(\be_B).
\end{align*}

Next we describe the effect of $\Gamma$ on the edge classes $\alpha_X$.  Let $X$ be an edge of $V$; it is the result of removing a single element from some basis.  Then $I \setminus X$ is given by adding an element to a basis of $V^!$.  It follows that $X^! := I\setminus X$ contains a unique circuit $C$, and in fact $X^!$ is the disjoint union of $C$ and a basis $B$ of $(V^!)^{F_C}$.

\begin{lemma}\label{lem:edges and circuits}
Up to multiplication by a nonzero scalar, we have \[\Gamma(\a_X) = u_C\ast \be_B.\]
\end{lemma}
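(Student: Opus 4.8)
The plan is to evaluate both sides of the claimed identity in the coordinate monomial bases of $\cB$ and $\cB^!$ and see that they agree up to an overall sign. The first step is to record which bases of $V$ contain the edge $X$: writing $v=\sum_{i\in I}a_i\be_i$ for the chosen generator of the rank one lattice $H_X$, the paragraph defining $\alpha_X$ gives that $a_i\ne 0$ exactly when $X\cup\{i\}$ is a basis of $V$; and by matroid duality $X\cup\{i\}$ is a basis of $V$ precisely when $X^!\setminus\{i\}=I\setminus(X\cup\{i\})$ is a basis of $V^!$, which, since $|X^!|=\rank V^!+1$ and $X^!$ contains the single circuit $C$, happens if and only if $i\in C$. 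Hence $\alpha_X=\sum_{i\in C}a_i\,\be_{X\cup i}$ in $\cB^\emptyset_I$.

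Next I would compute the two sides. Since $\Gamma$ sends a monomial $\be_B\in\cB^\emptyset_I$ to $\be_{I\setminus B}\in(\cB^!)^\emptyset_I$, and since $X^!$ is the disjoint union of $C$ and $B$, so that $I\setminus(X\cup i)=X^!\setminus\{i\}=(C\setminus\{i\})\cup B$ (disjointly) for every $i\in C$, this gives
\[\Gamma(\alpha_X)=\sum_{i\in C}a_i\,\be_{(C\setminus i)\cup B}.\]
On the other side $u_C=\sum_{i\in C}b_i\be_{C\setminus i}$ lies in $(\cB^!)^\emptyset_{F_C}$ and $\be_B\in(\cB^!)^{F_C}_I$, so $(C\setminus i)\cup B$ is again a basis of $V^!$ and, by the definition of $\ast$, the product lands in $(\cB^!)^\emptyset_I$ with $u_C\ast\be_B=\sum_{i\in C}b_i\,\be_{(C\setminus i)\cup B}$. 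Thus the lemma reduces to showing that the coefficient vectors $(a_i)_{i\in C}$ and $(b_i)_{i\in C}$ are proportional, in fact that they differ by a single overall sign.

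This comparison is the only real content. The coefficients $b_i$ are defined by the property that $\sum_{i\in C}b_iw_i=0$ for all $w\in V^!$, which says exactly that the integer vector $\tilde b:=\sum_{i\in C}b_i\be_i\in\Z^I$ is orthogonal to $V^!$ under the standard pairing. Since $V^!=V^\bot$ and $V$ is saturated, $(V^!)^\bot=V$, so $\tilde b\in V$; and since the support of $\tilde b$ is contained in $C\subset I\setminus X$, in fact $\tilde b\in V\cap\Z^{I\setminus X}=H_X=\Z v$. Finally $\tilde b$ is a primitive vector of $\Z^I$ (its nonzero entries are $\pm1$ by unimodularity of $V^!$) and $v$ is primitive ($H_X$ is saturated of rank one), so $\tilde b=\pm v$; that is, $a_i=\pm b_i$ for all $i\in C$ with a uniform sign, and therefore $\Gamma(\alpha_X)=\pm\,u_C\ast\be_B$.

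The index bookkeeping with $\Gamma$ and $\ast$ (keeping track of which Morse block each monomial lies in, and that $(C\setminus i)\cup B$ remains a basis) is routine. The step I expect to require the most care is the identification $\tilde b=\pm v$: recognizing the circuit covector of $V^!$ on $C$ as the generator of the rank one flat $H_X$ of $V$ through Gale duality, and in particular checking that the resulting scalar is a unit — indeed $\pm1$ — rather than merely nonzero over $\Q$, which is why the primitivity argument is needed.
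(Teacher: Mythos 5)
Your proof is correct and follows essentially the same route as the paper's: both reduce the identity to showing that the coefficient vector of the circuit equation for $V^!$ on $C$ and the generator $v$ of $H_X=V\cap\Z^{I\setminus X}$ agree up to scalar, which is the same piece of linear algebra the paper invokes. Your version is slightly more careful than the paper's (which says ``easy linear algebra shows'') in that you verify the scalar is $\pm 1$ via primitivity, so the statement survives in every characteristic.
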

\begin{proof}
Take a nonzero vector 
$v = (a_i)_{i\in I\setminus X}$ in the lattice $V \cap \Z^{I\setminus X}$, then we have $\a_X = \sum_{i\in I\setminus X} a_i\be_{X\cup \{i\}}$.  On the other hand, easy linear algebra shows that the image of $V^!$ under the projection $\Z^I \to \Z^{I\setminus X}$ is cut out by the unique linear equation $\sum_{i\in I\setminus X} a_ix_i =0$, and the projection to $\Z^C$ is given by the same equation (note that $a_i = 0$ if $i\notin C$), so we have
$u_C = \sum_{i} a_i\be_{C \cup \{i\}}$ up to multiplication by a scalar.  The result follows easily.  
\end{proof}

\begin{corollary}
For any $E \le F$ in $\cF$, we have
\[\Gamma(\cU^F_E) = \cUc^{E^!}_{F^!}, \; \Gamma(\cUc^F_E) =\cU^{E^!}_{F^!}. \]
\end{corollary}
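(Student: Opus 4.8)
The plan is to transport the known descriptions of $\cU$ and $\cUc$ as annihilator spaces across the isomorphism $\Gamma$. Recall that $\Gamma$ carries $\cB^F_E$ isomorphically onto $(\cB^!)^{E^!}_{F^!}$ and intertwines the two pairings up to a global sign, and that the Gale dual of $V^F_E$ is $(V^!)^{E^!}_{F^!}$. By Proposition~\ref{prop:edge classes generate}, applied to the arrangement $V^F_E$, the edge classes $\a_X$ span $(\cU^F_E)^\bot$; since the pairing on $\cB^F_E$ is nonsingular (Section~\ref{sec:pairing}), this means $x\in\cU^F_E$ holds if and only if $\langle\a_X,x\rangle=0$ for every edge $X$ of $V^F_E$. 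Dually, applying Proposition~\ref{prop:perp space to cUc} in $\M^!$ to the flats $F^!\le E^!$, we get that $y\in\cUc^{E^!}_{F^!}$ if and only if $\langle u_C\ast\be_B,y\rangle=0$ for every circuit $C$ of $(V^!)^{E^!}_{F^!}$ and every basis $B$ of $(V^!)^{F_C}_{F^!}$, where $F_C$ is the flat spanned by $C$.

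The key step is to match these two families of test vectors under $\Gamma$. By Lemma~\ref{lem:edges and circuits}, again applied to $V^F_E$: for each edge $X$ of $V^F_E$, the complement $X^!:=(E\setminus F)\setminus X$ is the disjoint union of a unique circuit $C$ of $(V^!)^{E^!}_{F^!}$ and a basis $B$ of $(V^!)^{F_C}_{F^!}$, and $\Gamma(\a_X)=u_C\ast\be_B$ up to a nonzero scalar. I would then check that the assignment $X\mapsto(C,B)$ is a bijection from the edges of $V^F_E$ onto the set of pairs in the previous paragraph: given such a pair, $C\sqcup B$ is spanning in $(V^!)^{E^!}_{F^!}$ of cardinality one more than a basis --- a short rank count in the dual matroid, using that $B$ is a basis of the contraction of $(V^!)^{E^!}_{F^!}$ by the flat $F_C$ --- so its complement in $E\setminus F$ is an edge $X$ with $X^!=C\sqcup B$. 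Thus $\Gamma$ sends the spanning family $\{\a_X\}$ of $(\cU^F_E)^\bot$ bijectively, up to nonzero scalars, onto the family $\{u_C\ast\be_B\}$.

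The first identity then follows at once: for $x\in\cB^F_E$ we have $x\in\cU^F_E$ iff $\langle\a_X,x\rangle=0$ for all edges $X$, iff $\langle\Gamma(\a_X),\Gamma(x)\rangle=0$ for all edges $X$ (this equivalence uses that $\Gamma$ respects the pairing up to sign), iff $\langle u_C\ast\be_B,\Gamma(x)\rangle=0$ for all pairs $(C,B)$ as above, iff $\Gamma(x)\in\cUc^{E^!}_{F^!}$; hence $\Gamma(\cU^F_E)=\cUc^{E^!}_{F^!}$. For the second identity I would run the whole argument with $V$ replaced by its Gale dual $V^!$ --- legitimate since $(V^!)^!=V$ and the construction is symmetric --- which gives $\Gamma'(\cU^{E^!}_{F^!})=\cUc^F_E$ for the analogous isomorphism $\Gamma'\colon\cB^!\to\cB$ and the flats $F^!\le E^!$ of $\cF^!$. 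A direct computation on the basis vectors $\be_B$, chasing the set-complements in the definition of $\Gamma$, shows $\Gamma'\circ\Gamma=\id_\cB$, so $\Gamma'=\Gamma^{-1}$, and the previous equality rearranges to $\Gamma(\cUc^F_E)=\cU^{E^!}_{F^!}$.

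The main obstacle I expect is the surjectivity half of the edge--circuit bijection in the second paragraph: verifying by matroid duality that every pair $(C,B)$ --- $C$ a circuit of $(V^!)^{E^!}_{F^!}$, $B$ a basis of $(V^!)^{F_C}_{F^!}$ --- actually occurs as $X^!$ for an edge $X$ of $V^F_E$. This is routine but genuine; granting it, everything else is bookkeeping with Lemma~\ref{lem:edges and circuits}, Propositions~\ref{prop:edge classes generate} and~\ref{prop:perp space to cUc}, and the pairing-compatibility of $\Gamma$, all applied to the appropriate minors $V^F_E$.
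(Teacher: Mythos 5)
Your proposal is correct and follows essentially the same route as the paper, which proves the corollary by combining Lemma~\ref{lem:edges and circuits} with Propositions~\ref{prop:edge classes generate} and~\ref{prop:perp space to cUc}; you have simply made explicit the details the paper leaves implicit, including the compatibility of $\Gamma$ with the pairings and the deduction of the second identity from the first via $(V^!)^!=V$. The surjectivity step you flag is indeed needed but is immediate: given a pair $(C,B)$, the set $C\sqcup B$ is spanning of cardinality $\rank(V^!)^{E^!}_{F^!}+1$ in the dual matroid (since $C$ spans the flat $F_C$ and $B$ is a basis of the contraction by $F_C$), so its complement is an edge $X$, and $C$ is the unique circuit in $X^!$.
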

\begin{proof}
Combine Lemma \ref{lem:edges and circuits} and Propositions \ref{prop:edge classes generate} and \ref{prop:perp space to cUc}.
\end{proof}

Since the actions of $R$ and $\Rc$ on $\cB$ are faithful (Propositions \ref{prop:Morse is faithful} and \ref{prop:Rc action is faithful}), we will consider them as subalgebras of $\End_k(\cB)$, and similarly we consider $R^!$ and $\Rc^!$ as subalgebras of $\End_k(\cB^!)$.  The map $\Gamma$ induces an isomorphism
$\End_k(\cB) \to \End_k(\cB^!),$ which we also denote by $\Gamma$.

\begin{theorem}
We have $\Gamma(R) = \Rc^!$ and  $\Gamma(\Rc) = R^!$.  All of these algebras are quasi-hereditary.
\end{theorem}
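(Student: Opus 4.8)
The plan is to exploit the combinatorial models of $R$ and $\Rc$ as algebras of operators on $\cB$, and to pin down the one missing ingredient — that the maps $\Upsilon(\uc),\Upsilon'(\uc)$ actually generate $\Rc$ — by a dimension count forced by Gale duality. Recall that, via Corollary~\ref{cor:generating set} and Theorem~\ref{thm:extended U action} (together with faithfulness, Proposition~\ref{prop:Morse is faithful}), $R$ is the subalgebra of $\End_k(\cB)$ generated by the operators $x\mapsto u\ast x$ and $x\mapsto u\ldiv x$ with $u$ ranging over the spaces $\cU^F_E$; and by Theorem~\ref{thm:extended maps between projectives} the morphisms $\Upsilon(\uc),\Upsilon'(\uc)$ act on $\cB$ as $x\mapsto x\ast\uc$ and $x\mapsto x\rdiv\uc$ with $\uc\in\cUc^F_E$. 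Let $\tilde\Rc\subseteq\Rc$ be the subalgebra of $\End_k(\cB)$ they generate (the inclusion using Proposition~\ref{prop:Rc action is faithful}); a priori it could be proper.

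First I would record the behaviour of $\Gamma$: it reverses $\ast$, interchanges $\ldiv$ with $\rdiv$ up to sign, and by the Corollary preceding this theorem it carries $\cU^F_E$ onto $\cUc^{E^!}_{F^!}$ and $\cUc^F_E$ onto $\cU^{E^!}_{F^!}$. Since signs are irrelevant to the subalgebra generated, conjugation by $\Gamma$ (an algebra isomorphism $\End_k(\cB)\to\End_k(\cB^!)$) carries the listed generators of $R$ exactly to the listed generators of $\tilde\Rc^!$, and those of $\tilde\Rc$ exactly to those of $R^!$ (the latter using Corollary~\ref{cor:generating set} for $V^!$). This already yields
\[\Gamma(R)=\tilde\Rc^!,\qquad \Gamma(\tilde\Rc)=R^!,\]
and in particular $\dim\tilde\Rc=\dim R^!$.

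Next I would close a dimension sandwich. Theorem~\ref{thm:cellular homs} (using Verdier self-duality and faithfulness of the Morse functors to identify $\Hom(\T^E_D,\T^D_D)$ with $\cU^E_D$) computes $\dim\Hom(\T^E,\T^F)$, hence $\dim R^!$ exactly; translating through the order-reversing bijection $F\mapsto F^!$ and $\dim(\cU^!)^{F^!}_{D^!}=\dim\cUc^D_F$ turns this into $\dim R^!=\sum_{D\in\cF}(\sum_{F\le D}\dim\cUc^D_F)^2$, which is precisely the upper bound for $\dim\Rc$ given by Theorem~\ref{thm:bound on Rc}. Hence $\dim\tilde\Rc=\dim R^!\ge\dim\Rc\ge\dim\tilde\Rc$, so all three coincide. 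Equality $\dim\tilde\Rc=\dim\Rc$ then forces $\tilde\Rc=\Rc$, and equality in the bound of Theorem~\ref{thm:bound on Rc} is by that theorem exactly the assertion that $\Perv(\M,k)$ is highest weight with respect to $\cF$.

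Finally I would assemble the pieces. The identical argument for $V^!$ gives $\tilde\Rc^!=\Rc^!$ (and that $\Perv(\M^!,k)$ is highest weight), so the displayed identities become $\Gamma(R)=\Rc^!$ and $\Gamma(\Rc)=R^!$. For quasi-heredity: once $\Perv(\M,k)$ is highest weight, $\bigoplus_F\Pi_F$ is a projective generator and $\bigoplus_F\T^F$ is a full tilting object, so $\Rc$ (the opposite of an endomorphism ring of a projective generator) and $R$ (the endomorphism ring of a tilting generator, i.e. a Ringel dual) are quasi-hereditary, and $\Gamma$ transports this to $R^!$ and $\Rc^!$. The step I expect to be the main obstacle is the sandwich: the inequality of Theorem~\ref{thm:bound on Rc} gets pinned to an equality only through the identification of $\tilde\Rc$ with $R^!$, and the supporting verification — that the combinatorial upper bound for $\dim\Rc$ equals the cellular-basis value of $\dim R^!$ — requires careful bookkeeping with the Gale-dual poset $\cF^!$ and with the behaviour of $\cU$, $\cUc$, $\ast$, $\ldiv$, $\rdiv$ under $\Gamma$.
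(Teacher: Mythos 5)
Your proposal is correct and follows essentially the same route as the paper: both rest on the corollary $\Gamma(\cU^F_E)=\cUc^{E^!}_{F^!}$, the exact dimension count of Theorem~\ref{thm:cellular homs}, and the upper bound of Theorem~\ref{thm:bound on Rc}, closed by a dimension sandwich that simultaneously forces the generators of $\Rc$ to suffice and establishes the highest weight property. The only (immaterial) difference is that you run the sandwich on the primal side via $\Gamma(\tilde\Rc)=R^!$, whereas the paper runs it on the dual side via $\Gamma(R)\subseteq\Rc^!$.
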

\begin{proof}
Since $R$ is generated by the left action by elements of $\cU^F_E$ and their adjoints, the previous corollary implies immediately that $\Gamma(R)$ is contained in $\Rc^!$.  For any $E, F \in \cF$, this implies that
\begin{align*}
\dim \Hom(\Morse_{E^!},\Morse_{F^!}) & \ge \dim \Hom(\T^E, \T^F) \\
& = \sum_{D \le E, F} \dim \cU^E_D \dim \cU^F_D \;\;\;\;\text{by Theorem \ref{thm:cellular homs} }\\
& = \sum_{D^! \ge E^!, F^!} \dim \cUc^{D^!}_{E^!} \dim\cUc^{D^!}_{F^!}.
\end{align*}
Combining this with Theorem \ref{thm:bound on Rc}, we see that this must be an equality, and therefore the algebras $R$ and $\Rc^!$ are quasi-hereditary.

The statements for $\Rc$ and $R^!$ follow immediately, since the Gale dual of $V^!$ is $V$.
\end{proof}

This implies that the category $\Perv(\M,k)$ is highest weight for the closure order $\le$ on the poset $\cF$ of strata.  We can therefore apply Proposition 3.3 from \cite{JMW16}: the complexes $\T^F$ are perverse, and parity for the trivial pariversity, and since all strata have even dimension, they are also parity for the dimension pariversity.  (Note that although in \cite[3.3]{JMW16} it is assumed that all strata are simply connected, all that is required is that they be equivariantly simply connected.)  We can therefore deduce the following, which is the main result of the paper.

\begin{theorem}
The objects $\T^F \in \Perv(\M,k)$ are tilting; the sum
$\T := \bigoplus_{F\in \cF} \T^F$ is a tilting generator.  We have an isomorphism of rings
\[\End_{\Perv(\M,k)}(\T) \cong \End_{\Perv(\M^!,k)}(\Pi),\]
where $\Pi := \bigoplus_{F^! \in \cF^!} \Pi_{F^!}$, under which $\T^F$ corresponds to $\Pi_{F^!}$, $F^! = I \setminus F$. 
\end{theorem}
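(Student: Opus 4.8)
The plan is to assemble the pieces that are already in place. By the theorem just proved, the algebras $R = \End(\bigoplus_{F\in\cF}\T^F)$, $\Rc = \End(\bigoplus_{F\in\cF}\Morse_F)$ and their Gale-dual counterparts $R^!$, $\Rc^!$ are quasi-hereditary, and in particular $\Perv(\M,k)$ is highest weight for the closure order on $\cF$. Granting this, the tilting assertion is formal: each $\T^F$ lies in $\Perv(\M,k)$ by Corollary \ref{cor:omega is constructible}, is isomorphic to its Verdier dual, and by Remark \ref{rmk:parity} is a parity complex; since every stratum $S_F$ has even real dimension $2d^F$, it is parity for the dimension pariversity as well. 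I would then invoke \cite[3.3]{JMW16}, which says that in a highest weight category a perverse parity complex is tilting. The hypothesis there that the strata be simply connected is only used through their being equivariantly simply connected, which holds here by Lemma \ref{lem:equivariantly simply connected}; so each $\T^F$ is tilting.

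To upgrade this to "$\T = \bigoplus_F \T^F$ is a tilting generator" I would run a support-and-multiplicity argument. For $F\in\cF$, the indecomposable tilting $T(F)$ has a $\Delta$-flag with $\Delta_F$ occurring once at the top and the remaining $\Delta_E$ only for $E<F$, so $\operatorname{supp}T(F)=\M^F$ and $[T(F):\IC_F]=1$. Writing $\T^F=\bigoplus_\mu T(\mu)^{\oplus a_\mu}$, any $\mu$ with $a_\mu\neq 0$ has $\mu\le F$ because $\operatorname{supp}\T^F=\M^F$, while $[T(\mu):\IC_F]\neq 0$ forces $F\le\mu$; hence only $\mu=F$ contributes to $[\T^F:\IC_F]$, giving $a_F=[\T^F:\IC_F]=1$. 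So every $T(F)$ is a summand of $\T$, and $\T$ is a tilting generator.

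For the ring isomorphism I would chain three identifications, working on the $\M^!$ side. The universal property that $\Pi_{F^!}$ represents the exact functor $\Morse_{F^!}$ on $\Perv(\M^!,k)$ gives, via the (contravariant) Yoneda lemma, a canonical algebra isomorphism $\Rc^! = \End(\bigoplus_{F^!\in\cF^!}\Morse_{F^!}) \cong \End(\bigoplus_{F^!}\Pi_{F^!})^{\mathrm{opp}} = \End(\Pi)^{\mathrm{opp}}$, the opposite arising because passing from a functor to its representing object reverses arrows. The theorem just proved gives the algebra isomorphism $\Gamma\colon \End(\T)=R \cong \Rc^!$, so $\End(\T)\cong\End(\Pi)^{\mathrm{opp}}$. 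Finally, since each $\T^F$ is self-Verdier-dual, $\D$ induces an algebra isomorphism $\End(\T)\cong\End(\D\T)^{\mathrm{opp}}=\End(\T)^{\mathrm{opp}}$; substituting this into the previous line and taking opposites converts $\End(\T)\cong\End(\Pi)^{\mathrm{opp}}$ into $\End(\T)\cong\End(\Pi)$. The matching $\T^F\leftrightarrow\Pi_{F^!}$ is then read off by tracking $\be_B\in\cB^F_E=\Morse_E(\T^F)=\Hom(\Pi_E,\T^F)$ through $\Gamma$: it lands in $(\cB^!)^{E^!}_{F^!}=\Morse_{F^!}(\T^{E^!})=\Hom(\Pi_{F^!},\T^{E^!})$, so the block indexed by $\T^F$ on the $\M$ side corresponds to the block indexed by $\Pi_{F^!}$ on the $\M^!$ side.

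I expect no serious obstacle here: all the substance — the $t$-exactness of the $\Morse_F$, the faithfulness statements, the combinatorial models for $R$ and $\Rc$, and above all the dimension count that forces $\Gamma(R)=\Rc^!$ and quasi-heredity — is already done, and what remains is bookkeeping. The only points that need care are keeping the variances straight, namely that it is precisely the self-duality of the $\T^F$ that removes the $\mathrm{opp}$, and that the support/multiplicity argument genuinely turns "each $\T^F$ tilting" into "$\T$ a tilting generator"; I would also double-check that \cite[3.3]{JMW16} applies equivariantly, which it does by Lemma \ref{lem:equivariantly simply connected}.
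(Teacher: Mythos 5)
Your proof is correct and follows essentially the same route as the paper: highest weight structure from the preceding Gale duality theorem, \cite[3.3]{JMW16} applied to the perverse parity complexes $\T^F$, and the isomorphism $\Gamma(R)=\Rc^!$ combined with $\Rc^!\cong\End(\Pi)^{\mathrm{opp}}$. The details you supply beyond the paper's terse statement --- the support/multiplicity count showing every indecomposable tilting occurs in $\T$, and the removal of the $\mathrm{opp}$ via Verdier self-duality of $\T$ --- are exactly the right way to fill in what the paper leaves implicit.
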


\section{Isomorphism with matroidal Schur algebras}

In the paper \cite{BMMatroid}, we define a $k$-algebra $R(M)$ associated to any matroid $M$, realizable or not, and prove that $R(M)$ is quasi-hereditary and has Ringel dual isomorphic 
to $R(M^*)$, where $M^*$ is the dual matroid.
In this section, we show that that construction agrees with the one in this paper.  (More precisely, the algebra in \cite{BMMatroid} also depends on the choice of
 a weight function 
$a \colon I \to k^\times$.  In what follows, we will take $a(i) \equiv 1$; in this case $R(M)$ can be obtained by extending scalars from a $\Z$-form $R(M)_\Z$.) 
 
Fix a lattice $V\subset \Z^I$ satisfying our assumptions, and let $R(V)$ be the combinatorial algebra described in Corollary \ref{cor:generating set} and Theorem \ref{thm:extended U action}, so that we have 
$\End_{\Perv(\M,k)}(\bigoplus_F\T^F) \cong R(V)$.

\begin{theorem}\label{thm:bridge between papers}
There is an isomorphism
\[R(M)\cong R(V)\]
where $M$ is the matroid defined by $V$.
\end{theorem}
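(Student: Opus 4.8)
The plan is to compare the two algebras term by term, using the explicit cellular/combinatorial presentation established in the preceding sections. Both $R(V)$ and $R(M)$ act faithfully on the same combinatorial vector space $\cB = \bigoplus_{E,F\in\cF}\cB^F_E$ with basis indexed by bases of the lattices $V^F_E$; indeed the whole point of Section 8 should be that the algebra $R(M)$ from \cite{BMMatroid} is defined precisely as a subalgebra of $\End_k(\cB)$ cut out by the operators $x\mapsto u_C\ast x$ and $x\mapsto u_C\ldiv x$ for circuits $C$ of the minors $V^E_F$. So I would first recall the definition of $R(M)$ from \cite{BMMatroid}, observing that since it depends only on the matroid $M$, its generators are the circuit multiplication and division operators, which depend only on the circuit data of the matroid (the sign vectors $b_i\in\{\pm1\}$ from unimodularity, which are canonical up to the global scalar on each circuit).

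Then the comparison has two halves. For the containment $R(V)\subseteq R(M)$: by the Corollary following Proposition~\ref{prop:circuit classes generate}, $R=R(V)$ is the smallest subalgebra of $\End_k(\cB)$ generated by $x\mapsto u_C\ast x$ and $x\mapsto u_C\ldiv x$ over all $E\le F$ in $\cF$ and all circuits $C$ of $V^E_F$. These are exactly the generators defining $R(M)$, so one inclusion (in fact equality of generating sets) is essentially immediate once the definitions are matched up — the only thing to check is that the signs in $u_C=\sum_{i\in C}b_i\be_{C\setminus i}$ agree with the conventions in \cite{BMMatroid}, and that the adjoint operations $\ldiv$, $\rdiv$ defined combinatorially here coincide with those in the other paper. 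For the reverse containment, one shows that every defining generator of $R(M)$ actually lies in the geometrically-defined algebra $R(V)$: the operator $x\mapsto u_C\ast x$ is $\Xi(f_{u_C})$ by Theorem~\ref{thm:extended U action} (this needs $u_C\in\cU^E_{F_C}$, which is Proposition~\ref{circuit class formula}), and $x\mapsto u_C\ldiv x$ is $\Xi'(g_{u_C})$, so both lie in $R(V)=\End(\bigoplus_F\T^F)$.

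The main obstacle I expect is not any geometric input — that is all in hand — but rather the bookkeeping of conventions: making sure the matroidal algebra $R(M)$ of \cite{BMMatroid} is set up with the same grading, the same sign conventions on circuits and on the adjoint operations, the same indexing of generators over pairs $E\le F$ in $\cF$, and (crucially) with the weight function $a\equiv 1$ so that the $\Z$-form statement makes sense and base change to $k$ is compatible. One must also confirm that the relations imposed in \cite{BMMatroid} to define $R(M)$ as a quotient are exactly the relations satisfied by the operators on $\cB$ — but since both algebras have been realized concretely as \emph{subalgebras} of $\End_k(\cB)$ with the \emph{same generators}, the relations take care of themselves: two subalgebras of a fixed algebra generated by the same set of elements are equal. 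Thus the proof reduces to: (1) identify the generators of $R(M)$ with circuit multiplication and division operators on $\cB$ (citing the construction in \cite{BMMatroid}); (2) invoke Theorem~\ref{thm:extended U action}, Proposition~\ref{circuit class formula} and the Corollary after Proposition~\ref{prop:circuit classes generate} to identify these with the generators of $R(V)$; (3) conclude $R(M)=R(V)$ as subalgebras of $\End_k(\cB)$, hence abstractly isomorphic.
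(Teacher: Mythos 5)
Your proposal has a genuine gap: it assumes that the algebra $R(M)$ of \cite{BMMatroid} is \emph{defined} as the subalgebra of $\End_k(\cB)$ generated by the circuit operators $x\mapsto u_C\ast x$ and $x\mapsto u_C\ldiv x$. That is not the definition. As recalled in Section 8, $R(M)$ is a subalgebra of $\End_\Z(\cB(M))$ where $\cB(M)$ sits inside the exterior algebra $\Lambda$ on generators $e_i$ (with multiplication the wedge product), and its generating subspaces $\cU(M)^F_E$ are defined as the kernel of the homological boundary operator $\partial$ restricted to $\cB(M)^F_E$ — there is no mention of circuits in that definition. So your step (1), which you treat as a matter of ``citing the construction,'' is exactly the nontrivial content of the theorem, and your argument that ``two subalgebras of a fixed algebra generated by the same set of elements are equal'' never gets off the ground because the two algebras are not a priori presented with the same generators, nor even as operators on literally the same space.

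What is actually needed, and what the paper supplies, is twofold. First, one must construct an isomorphism $\phi\colon \cB(V)\to\cB(M)$ identifying the two underlying rings; this requires choosing generators $\omega^F$ of $\bigwedge^{d^F}(V^F)^*$ so that the induced forms satisfy $\omega^F_E\wedge\omega^E_D=\omega^F_D$, and unimodularity is used to check that $\phi(\be_B)=\pm e_B$ and that $\phi$ respects the pairings up to sign. Second — and this is the heart of the proof — one must show $\phi(\cU(V)^F_E)=\cU(M)^F_E$. The paper does this not via circuits but via the \emph{edge classes}: it computes $\phi(\a_X)=c\sum_{j\in J}e_j\wedge e_X$ and deduces $\langle\phi(\a_X),x\rangle=c\,\langle e_X,\partial x\rangle$, so that orthogonality to all edge classes (which characterizes $\cU(V)$ by Proposition~\ref{prop:edge classes generate}) corresponds exactly to lying in $\ker\partial$ (which characterizes $\cU(M)$). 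Your proposal contains no substitute for this computation. If you wanted to salvage your route, you would first have to prove, on the matroid side, that $\ker\partial|_{\cB(M)^F_E}$ is spanned by products of circuit classes (the analogue of Proposition~\ref{prop:circuit classes generate}) and then match signs under an explicit identification of $\cB(V)$ with $\cB(M)$ — which amounts to redoing the same work in a different order.
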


The rest of this section will be devoted to a proof of this result.

The definition of $R(M)_\Z$ is very similar to the definition of $R(V)$: it is a subalgebra of $\End_\Z(\cB(M))$, where 
$\cB(M) = \bigoplus_{E,F} \cB(M)^F_E$ is a free $\Z$-algebra bigraded by the poset $\cF(M)$ of cyclic flats of $M$.  It is generated by the operators of multiplication by elements of a certain homogeneous ideal $\cU(M) \subset \cB(M)$ and their adjoints under a dual pairing on $\cB(M)$.

We fix an ordering on the index set $I$.  Let $\Lambda$
be the exterior $\Z$-algebra generated by variables
$e_i$, $i\in I$.  For any subset $S = \{i_1, \dots, i_r\}$ of $I$ with $i_1< \dots < i_r$, let
$e_S = e_{i_1} \wedge \dots \wedge e_{i_r}$.
Put a pairing on $\Lambda$ such that the basis of monomials $e_S$ is orthonormal.

For flats $E \le F$ in $\cF$, we define $\cB(M)^F_E$ to be the subspace of $\Lambda$ spanned by monomials
$e_B$, where
$B$ is a basis of
the matroid $M(E)/F$ associated to $V^F_E$, with the induced pairing.
 For $D \le E\le F$, the multiplication 
\[\cB(M)^F_E \otimes_\Z \cB(M)^E_D \to \cB(M)^F_D\]
is given by the exterior product.

The subspace $\cU(M)^F_E \subset \cB(M)^F_E$ is defined to be the kernel of the homological boundary map $\partial$ on $\Lambda$
\[\partial(e_{i_1} \wedge \dots \wedge e_{i_r}) = \sum_{j=1}^r (-1)^{j-1}e_{i_1} \wedge \dots \wedge \widehat{e_{i_j}} \wedge \dots \wedge e_{i_r}\]
restricted to $\cB(M)^F_E$.


Let $V \subset \Z^I$ satisfy all the hypotheses of this paper, and let $M$ be the corresponding matroid.  We define an isomorphism $\phi\colon\cB(V) \to \cB(M)$ as follows.  For each flat $F \in \cF$, choose a generator $\omega^F$ of $\bigwedge^{d^F} (V^F)^*$.  Then there are unique generators 
$\omega^F_E \in \bigwedge^{d^F_E} (V^F_E)^*$ so that $\omega^F = \omega^F_E \wedge \omega^E$.
It follows that 
\begin{equation}\label{eq:multiplicative system of forms}
\omega^F_E \wedge \omega^E_D = \omega^F_D \; \text{for all} \; D\le E\le F.
\end{equation}

Now we define the map $\phi$ on $\cB(V)^F_E$ by
\[\phi(\be_B) = \left\langle\omega^F_E,  ev_B(\be_{i_1}) \wedge \dots \wedge ev_B(\be_{i_r}) \right\rangle e_{i_1}\wedge \dots \wedge e_{i_r}\]
where $B = \{i_1, \dots i_r\}$ is a basis of $V^F_E$.  
Note that this is independent of the ordering of the elements of $B$.  

For any $j\in B$, the vector $ev_{B}(\be_{j})$ is the image in 
$(V^F_E)_k$ of a vector $ev^\Z_B(\be_{j})$, where
$ev^\Z_B$ is the inverse of the composition
$V^F_E \hookrightarrow \Z^I \twoheadrightarrow \Z^B$ (note that this is an isomorphism because $V^F_E$ is unimodular).  The vectors $ev^\Z_{B}(\be_{j})$ form a basis of
the unimodular lattice $V^F_E$, so we have $\phi(\be_B) = \pm e_B$.
It follows that $\phi$ respects the pairings on $\cB(V)$ and $\cB(M)$, up to a sign.
It also follows easily from \eqref{eq:multiplicative system of forms}
that $\phi$ is an isomorphism of rings.

To complete the proof of Theorem \ref{thm:bridge between papers}, we need to show that $\phi$ sends $\cU(V)^F_E$ to $\cU(M)^F_E$.  
To see this, we will compute what $\phi$ does to an edge class and use Proposition \ref{prop:edge classes generate}.  Without loss of generality, we can assume that $V^F_E = V$.

Let $X$ be an edge of $V$, and let
$v = \sum_{j} a_j \be_j$ be a generator of the rank one lattice $H_X \subset V$.  Recall that $a_j \ne  0$ if and only if $B_j = X \cup \{j\}$ is a basis; let $J \subset I$ be the set of indexes for which this holds.

 For any $i\in X$ and any $j, j'\in J$, we have
\[ev^\Z_{B_j} \be_i \equiv ev^\Z_{B_{j'}} \be_i \pmod{v},\]
since both sides are sent to the same element by the projection $\Z^I \to \Z^X$, while the kernel of $V \hookrightarrow \Z^I \twoheadrightarrow \Z^X$ is spanned by $v$.  

For any $j\in J$, we have 
$v = ev_{B_j}(a_j\be_j)$.
Thus we see that 
\[ev_{B_j} (a_j\be_j) \wedge \bigwedge_{i\in X} ev_{B_j} \be_i =  v \wedge \bigwedge_{i\in X} ev_{B_j} \be_i\] is independent of $j$.  Let $c$ be the pairing of this element with $\omega$.
Since $ev^\Z_{B_j}$ sends $v$ and $\be_i$, $i\in X$ to a basis of the lattice $V$, we have $c = \pm 1$.

 This gives
\[\phi(\a_X) = \sum_{j\in J} \phi(a_j \be_{X\cup j}) = c\sum_{j\in J} e_j\wedge \bigwedge_{i\in X} e_i.\]
This means that for any $x\in \cB^\emptyset_I = \cB(V)^\emptyset_I$ we have 
\[\langle \phi(\a_X), x\rangle = c\langle \sum_{j\in J}e_j \wedge e_X, x\rangle = c\langle\sum_{j\in I} e_j \wedge e_X, x\rangle = c\langle e_X, \partial x\rangle,\]
where the last equality follows easily from the definitions of $\partial$ and the pairing.

Now for any $z\in \cB(V)^\emptyset_I$, we have
\begin{align*}
z \in \cU(V)^\emptyset_I & \iff \langle \alpha_X, z\rangle =0\;\; \text{for all $X$}\\
& \iff \langle \phi(\alpha_X), \phi(z)\rangle = 0  \;\;\text{for all $X$}\\
& \iff \langle e_X, \partial\phi(z)\rangle =0 \;\;\text{for all $X$ (since $c = \pm 1$)}\\
& \iff \phi(z) \in \cU(M)^\emptyset_I. 
\end{align*}

Theorem \ref{thm:bridge between papers} follows.

\section{Technical proofs}\label{sec:big proofs}

\subsection{Topological $G$-stratifications}

Before describing stratifications of hypertoric varieties, we must explain 
carefully what we mean by a stratification.  
The following definitions were previously used in \cite{BP09} to study hypertoric varieties.
Suppose that $G$ is a compact connected Abelian Lie group, and let $X$ be a $G$-space.  A {\em $G$-decomposition} of $X$ is a 
partition $X = \bigsqcup_{\a \in P} X_\a$ into locally closed $G$-invariant topological manifolds indexed by a poset $(P, \le)$ so that
\begin{enumerate}
\item For all $\a$, we have $\overline{X_\a} = \bigcup_{\b \le \a} X_\b$,
\item For all $\a$, the stabilizer $G_x$ of a point $x\in X_\a$ does not depend on $x$.
\end{enumerate}

We can then give a definition of when a $G$-decomposition is a (topological) {\em $G$-stratification}, which is inductive in $|A|$.  If $A = \{a\}$ has one element, then $X = X_\a$ is itself a manifold and we impose no further conditions.  Otherwise, we ask that for every $\a \in P$ and every $x \in X_\a$, there exist
\begin{enumerate}
\item a $G$-invariant neighborhood $U \subset X$ of the orbit $G \cdot x$,
\item a $G_x$-space $L$ with a $G_x$-stratification $\{L_\b \mid \b > \a\}$, and 
\item a $G$-equivariant homeomorphism   
\[\phi\colon U \stackrel{\sim}{\longrightarrow} G \times_{G_x} \operatorname{cone}(L) \times D \]
where $D$ is an open disk in a Euclidean space of dimension $\dim X_\a - \dim G/G_x$, and $\operatorname{cone}(L)$
is the open topological cone $(L\times [0,1))/(L\times \{0\})$
on $L$ with the induced $G_x$-action,
\end{enumerate}
such that for all $\b > \a$ we have
\[\phi(U \cap X_\b) = G \times_{G_x} (L_\b \times (0,1)) \times D\]
and $\phi(U \cap X_\a) = G \times_{G_x} \{v\} \times D$, where $v \in L$ is the apex of the cone.
The space $\operatorname{cone}(L)$ is called a normal slice to $X$ at the stratum $X_\a$.

In the special case when $G = 1$, we simply say that the decomposition is a topological stratification.  

\begin{lemma}\label{lem:G-strat is topological strat}
Any $G$-stratification is also a topological stratification.
\end{lemma}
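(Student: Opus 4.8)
The plan is to argue by induction on the number of strata $n = |P|$. Before starting I would record that the hypothesis that $G$ be connected is never used in the definition of a $G$-stratification nor in the argument below; in particular the isotropy groups $G_x$ appearing in the local models are compact abelian Lie groups (closed subgroups of $G$), possibly disconnected, and the inductive hypothesis will be applied to them. The base case $n=1$ is trivial: a one-stratum $G$-decomposition is a $G$-manifold, hence a manifold, and a one-stratum topological decomposition carries no further requirement.

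For the inductive step, fix $\alpha\in P$ and $x\in X_\alpha$. By the definition of a $G$-stratification there are a $G$-invariant neighborhood $U$ of $G\cdot x$, a $G_x$-space $L$ with a $G_x$-stratification $\{L_\beta\mid \beta>\alpha\}$ indexed by the poset $P_{>\alpha}:=\{\beta\in P\mid\beta>\alpha\}$, and a $G$-equivariant homeomorphism $\phi\colon U\xrightarrow{\ \sim\ } G\times_{G_x}\operatorname{cone}(L)\times D$ respecting the strata, where $\dim D=\dim X_\alpha-\dim G/G_x$. Since $\alpha\notin P_{>\alpha}$ we have $|P_{>\alpha}|<n$, so the inductive hypothesis applies and $\{L_\beta\}$ is a topological stratification of $L$. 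What remains is to convert the equivariant local model $\phi$ into a non-equivariant one near the single point $x$.

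To do that I would use that $G_x$ is a closed subgroup of a compact Lie group, so $G/G_x$ is a smooth manifold of dimension $\dim G-\dim G_x$ and $G\to G/G_x$ is a locally trivial principal $G_x$-bundle; hence the associated bundle $\pi\colon G\times_{G_x}\operatorname{cone}(L)\to G/G_x$ is locally trivial with fibre $\operatorname{cone}(L)$. Writing $\phi(x)=[(g_0,v),d_0]$ with $v$ the cone apex (so that $\pi$ sends $[(g_0,v)]$ to $\bar g_0 := g_0G_x$), pick a trivializing neighborhood of $\bar g_0$ and shrink it to an open ball $W\subset G/G_x$; set $U':=\phi^{-1}(\pi^{-1}(W)\times D)$, an open neighborhood of $x$ in $X$. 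Composing $\phi|_{U'}$ with a trivialization of $\pi$ over $W$ gives a homeomorphism
\[ U' \ \xrightarrow{\ \sim\ }\ W\times \operatorname{cone}(L)\times D \ \cong\ \operatorname{cone}(L)\times D',\qquad D':=W\times D, \]
with $D'$ an open disk of dimension $(\dim G-\dim G_x)+(\dim X_\alpha-\dim G/G_x)=\dim X_\alpha$. Under this homeomorphism the equivariant compatibility relations $\phi(U\cap X_\beta)=G\times_{G_x}(L_\beta\times(0,1))\times D$ for $\beta>\alpha$ and $\phi(U\cap X_\alpha)=G\times_{G_x}\{v\}\times D$ become $L_\beta\times(0,1)\times D'$ and $\{v\}\times D'$ respectively, which are exactly the conditions required for the (non-equivariant) decomposition to be a topological stratification. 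Running this for every $\alpha$ and every $x\in X_\alpha$ completes the induction.

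The only genuinely delicate point is the bookkeeping that makes the induction legitimate: one has to be willing to formulate and prove the statement for all compact abelian Lie groups, not only connected ones, since the group $G_x$ that reappears one level down need not be connected, and one must check — as done above — that local triviality of $G\to G/G_x$, the dimension count, and the $G_x$-fixedness of the apex of $\operatorname{cone}(L)$ do not rely on connectedness. Beyond that, everything is a routine manipulation of associated bundles and open cones.
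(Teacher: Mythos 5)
Your proof is correct and follows essentially the same route as the paper: one absorbs the $G/G_x$ direction into the Euclidean factor of the local model by trivializing the associated bundle $G\times_{G_x}\operatorname{cone}(L)\to G/G_x$ near the orbit of $x$ (the paper does this with the explicit local section $\exp\Delta$ for $\Delta$ a complement of $\mathfrak g_x$ in $\mathfrak g$, you with general local triviality of the principal bundle). Your added care about running the induction over possibly disconnected isotropy groups is a reasonable point that the paper leaves implicit.
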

\begin{proof}
Given a map $\phi$ as in the definition above, let $\Delta \subset \mathfrak g$ be the intersection of a subspace complementary to $\mathfrak g_x$ with a small ball around $0$.  Then restricting $\phi$ to 
 to  $U' = \phi^{-1}(\exp \Delta \times \operatorname{cone}(L) \times D)$ gives a homeomorphism of $U'$ with $\exp \Delta  \times \operatorname{cone}(L) \times D$.
\end{proof}

\begin{definition}
If $\cS = \{X_\a\}_{\a \in P}$ is a $G$-decomposition of a space $X$, the constructible 
derived category $D^b_\cS(X,k)$ is the full subcategory of objects in $D^b(X,k)$ whose cohomology sheaves are are locally constant on each $X_\a$ and have finite-dimensional stalks.  The equivariant
constructible category $D^b_{G,\cS}(X,k)$ is the full subcategory of $D^b_G(X,k)$ of objects whose underlying non-equivariant complex is $\cS$-constructible.
\end{definition}

\begin{proposition} If $\cS = \{X_\a\}_{\a \in P}$ is a $G$-stratification of $X$, then 
Verdier duality $\D$ sends 
$D^b_\cS(X, k)$ to itself, and for any inclusion 
 $j \colon Y \to X$ of a locally closed union of the $X_\a$, the functors $j^*, j_*, j^!, j_!$ restrict to functors between $D^b_\cS(Y, k)$
and $D^b_\cS(X, k)$ (and therefore also give functors between $D^b_{G, \cS}(Y, k)$ and $D^b_{G, \cS}(X, k)$).
\end{proposition}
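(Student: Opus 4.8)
The plan is to reduce to the case $G=1$, isolate the one genuinely topological ingredient, and derive everything else formally from recollement and the standard commutation relations for $\D$. First I would note that a $G$-stratification is a topological stratification by Lemma~\ref{lem:G-strat is topological strat}, that any locally closed union of strata is automatically $G$-invariant, and that $\D$ and the six functors commute with the forgetful functor $D^b_G(-,k)\to D^b(-,k)$; so it is enough to prove the statement non-equivariantly and then check it on underlying complexes. Since $X$ is finite-dimensional and locally compact, $\D$ and $j^*,j_*,j^!,j_!$ already preserve $D^b(-,k)$, so the only thing to prove is the $\cS$-constructibility refinement. I would use throughout that $D^b_\cS(X,k)$ is a triangulated subcategory of $D^b(X,k)$: over a field a morphism of local systems has locally constant kernel, image and cokernel, so the cohomology sheaves of a cone of $\cS$-constructible complexes are again locally constant on strata.

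Three of the functors are immediate. Writing a locally closed union of strata as $j=i\circ k$ with $k\colon Y\hookrightarrow\overline Y$ open and $i\colon\overline Y\hookrightarrow X$ closed, the functor $j^*$ preserves $\cS$-constructibility because each stratum of $Y$ is a stratum of $X$, and the stalk formulas make it clear that $i_*=i_!$ and $k_!$ preserve it, hence so does $j_!=i_*k_!$. Everything therefore reduces to two operations: $Rk_*$ for an open inclusion of a union of strata, and $i^!$ for a closed inclusion. For $i^!$ the reduction to $Rk_*$ is formal: with $k\colon X\setminus Z\hookrightarrow X$, applying $i^*$ to the recollement triangle $i_*i^!A\to A\to Rk_*k^*A\xrightarrow{+1}$ gives a triangle $i^!A\to i^*A\to i^*Rk_*k^*A\xrightarrow{+1}$ whose outer terms are $\cS|_Z$-constructible once we know $Rk_*$ preserves constructibility; then $i^!A$ is as well. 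Finally $j_*=i_*Rk_*$ and $j^!=k^*i^!$ (using $k^!=k^*$ for the open inclusion $k$) finish the four-functor statement.

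So the heart of the matter is: $Rk_*$ sends $\cS|_Y$-constructible complexes to $\cS$-constructible ones for $k\colon Y\hookrightarrow X$ an open inclusion of a union of strata. Here $(Rk_*F)|_Y=F$ is already constructible, so the issue is local constancy of the cohomology sheaves of $Rk_*F$ along each stratum $X_\a\not\subset Y$. Fixing $x\in X_\a$ and using the cone chart $\phi\colon U\xrightarrow{\sim}\operatorname{cone}(L)\times D$ from the definition of a stratification, the part $U\cap Y$ is identified with $L_Y\times(0,1)\times D$ for an open union of strata $L_Y\subset L$, and a cofinal system of neighbourhoods of $x$ meeting $Y$ is identified with $L_Y\times(0,\varepsilon)\times D'$; since $F$ is constructible for the product stratification, contracting the contractible factors $(0,\varepsilon)$ and $D'$ gives $(Rk_*F)_x\cong R\Gamma(L_Y,F|_{L_Y})$. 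Topological local triviality of the stratification along $X_\a$ — exactly what the charts $\phi$ provide — then shows that the stratified space $L_Y$ together with $F|_{L_Y}$ varies locally trivially as $x$ moves in $X_\a$, so these stalks assemble into a local system on $X_\a$, and the same holds for each cohomology sheaf; hence $Rk_*F$ is $\cS$-constructible. This stalk computation is the only place where genuine topology enters, and it is the step I expect to need the most care.

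For Verdier duality I would induct on $|P|$. The base case $|P|=1$ is a manifold, where $\D_X=\RHom(-,\omega_X)$ with $\omega_X$ a shift of a rank-one local system, and this visibly preserves locally constant cohomology of finite rank. For $|P|>1$, pick an open stratum $j\colon U\hookrightarrow X$ with closed complement $i\colon Z\hookrightarrow X$, so $Z$ has fewer strata; applying $\D_X$ to $i_*i^!A\to A\to Rj_*j^*A\xrightarrow{+1}$ and using the identities $\D_Xi_*=i_*\D_Z$ and $\D_XRj_*=Rj_!\D_U$ produces a triangle $Rj_!(\D_Uj^*A)\to\D_XA\to i_*(\D_Zi^!A)\xrightarrow{+1}$. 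By the four-functor part $i^!A$ is $\cS|_Z$-constructible, hence so is $\D_Zi^!A$ by the inductive hypothesis, and then $i_*(\D_Zi^!A)$ is $\cS$-constructible; likewise $j^*A$ is $\cS|_U$-constructible, so $\D_Uj^*A$ is (the one-stratum case) and then $Rj_!(\D_Uj^*A)$ is. Since $D^b_\cS(X,k)$ is triangulated, $\D_XA$ is $\cS$-constructible, completing the induction. Everything after the $Rk_*$ computation is formal bookkeeping with recollement triangles and the commutation of $\D$ with $i_*$ and $Rj_*$.
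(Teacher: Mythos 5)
Your argument is correct and matches the paper's route: the paper likewise reduces to the non-equivariant case via Lemma~\ref{lem:G-strat is topological strat} and then simply cites \cite{GM83}, Propositions 1.9 and 1.12, for the standard fact you prove in detail (that everything reduces to $Rk_*$ for an open inclusion of strata, handled by the cone charts, with $\D$ following by recollement induction). You have essentially written out the proof the paper delegates to the literature.
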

\begin{proof}
By Lemma \ref{lem:G-strat is topological strat}, $\cS$ is a topological stratification.  The result in this case is standard --- see \cite{GM83} Propositions 1.9 and 1.12, for instance.
\end{proof}

The utility of the stronger notion of $G$-stratification lies in the following result.

\begin{lemma}\label{inheriting G-stratifications} If $\{X_\a\}_{\a\in P}$ is a $G$-stratification of $X$ and $G' \subset G$ is a closed
Lie subgroup, then  $\{X_\a/G'\}_{\a \in P} $ is a $G/G'$-stratification of $X/G'$.  
\end{lemma}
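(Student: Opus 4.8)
The plan is to induct on $|P|$, following the recursive definition of a $G$-stratification; throughout one uses that $G$ is abelian, so that all the subgroups occurring are normal and the quotients below are legitimate. As preliminary observations: the quotient map $\pi\colon X\to X/G'$ is open, closed and proper (it is the quotient by an action of the compact group $G'$), so $X/G'$ and the images $\pi(X_\a)$ inherit the needed separation; by condition (2) of a $G$-decomposition the stabilizer $G_x$ is constant along each $X_\a$, whence the $G'$-stabilizer of $x\in X_\a$ is the constant subgroup $N_\a:=G'\cap G_x$, and the residual \emph{free} action of the compact group $G'/N_\a$ on the manifold $X_\a$ makes $X_\a/G'=\pi(X_\a)$ a topological manifold. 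Together with the openness and closedness of $\pi$ and the fact that the $X_\a$ are $G'$-saturated, this shows that $\{X_\a/G'\}_{\a\in P}$ is a $G/G'$-decomposition: part (1) follows from $\pi(\overline{X_\a})=\overline{\pi(X_\a)}$, and part (2) from the identification of the $G/G'$-stabilizer of $\pi(x)$ with $G_xG'/G'$.

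For the inductive step, fix $\a\in P$ and $x\in X_\a$, and take the local data $(U,\,L,\,\{L_\b\}_{\b>\a},\,\phi)$ at $x$ supplied by the $G$-stratification, so that $\phi\colon U\xrightarrow{\ \sim\ }G\times_{G_x}\bigl(\operatorname{cone}(L)\times D\bigr)$. Since $U$ is $G$-invariant it is $G'$-invariant, and $\bar U:=\pi(U)=U/G'$ is a $G/G'$-invariant neighborhood of the orbit of $\bar x:=\pi(x)$. The key point is the elementary identity
\[
\bigl(G\times_{G_x}Y\bigr)/G'\ \cong\ (G/G')\times_{\bar G_x}\bigl(Y/N\bigr),\qquad N:=G'\cap G_x,\quad \bar G_x:=G_x/N\cong G_xG'/G',
\]
valid for any $G_x$-space $Y$: one writes $G\times_{G_x}Y=(G\times Y)/G_x$, notes that the commuting $G'$-action on the $G$-factor can be divided out first to give $(G/G')\times Y$ with a residual $G_x$-action through $G_x\to G/G'$, and then splits off the kernel $N$ of this map, which acts only on $Y$. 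Applying this with $Y=\operatorname{cone}(L)\times D$, and using that forming the open topological cone commutes with group quotients and that $N$ acts trivially on the disk $D$, yields
\[
\bar U\ \cong\ (G/G')\times_{\bar G_x}\bigl(\operatorname{cone}(L/N)\times D\bigr).
\]
Since $\bar G_x\cong G_xG'/G'$ is exactly the $G/G'$-stabilizer of $\bar x$, this is the normal form demanded by the definition, with normal slice $\operatorname{cone}(\bar L)$, $\bar L:=L/N$, and disk $\bar D:=D$; we let $\bar\phi$ denote the resulting homeomorphism.

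It remains to produce a $\bar G_x$-stratification of $\bar L$ and to check compatibility of $\bar\phi$ with the strata. For the first point, $L$ is a $G_x$-space equipped with the $G_x$-stratification $\{L_\b\}_{\b>\a}$, whose index poset $\{\b\mid\b>\a\}$ has strictly fewer elements than $P$; applying the inductive hypothesis to $G_x$ and its closed subgroup $N$ shows that $\{L_\b/N\}_{\b>\a}$ is a $\bar G_x$-stratification of $\bar L$ (the one-stratum base case being exactly the manifold statement of the first paragraph). For the second point, since $U$ and each $X_\b$ are $G'$-saturated and $\pi$ is open we have $\bar U\cap\pi(X_\b)=\pi(U\cap X_\b)$; dividing the identity $\phi(U\cap X_\b)=G\times_{G_x}(L_\b\times(0,1))\times D$ by $G'$ via the displayed quotient formula — and using that $N$ acts trivially on $(0,1)$ and on $D$ — gives $\bar\phi\bigl(\bar U\cap\pi(X_\b)\bigr)=(G/G')\times_{\bar G_x}(\bar L_\b\times(0,1))\times\bar D$, and likewise $\bar\phi\bigl(\bar U\cap\pi(X_\a)\bigr)=(G/G')\times_{\bar G_x}\{\bar v\}\times\bar D$ with $\bar v$ the apex of $\operatorname{cone}(\bar L)$. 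This completes the induction.

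The only real obstacle is bookkeeping: tracking the isotropy subgroups $G_x$, $N=G'\cap G_x$, and $\bar G_x=G_xG'/G'$ through the two successive quotients, and verifying that the homeomorphism $(G\times_{G_x}Y)/G'\cong(G/G')\times_{\bar G_x}(Y/N)$ is equivariant in the way claimed; abelianness of $G$ is precisely what keeps every subgroup in sight normal so that all of these quotients make sense.
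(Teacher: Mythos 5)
Your proof is correct and follows essentially the same route as the paper's: the paper's entire argument is to quotient the local normal form $\phi\colon U \to G\times_{G_x}\operatorname{cone}(L)\times D$ by $G'$ and read off the required $G/G'$-local model, which is exactly your balanced-product identity $(G\times_{G_x}Y)/G'\cong (G/G')\times_{G_x/N}(Y/N)$. You simply make explicit the induction on $|P|$ and the bookkeeping of stabilizers that the paper leaves implicit (and in doing so you correct a small typo in the paper's displayed formula, where the balanced product should be over $G_xG'/G'\cong G_x/G'_x$).
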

\begin{proof}
Given a homeomorphism $\phi$ as in the definition above, taking the quotient of both sides by $G'$ gives a homeomorphism
\[U/G' \stackrel{\sim}{\longrightarrow} (G \times_{G_x} \operatorname{cone}(L) \times D)/G' \cong G/G' \times_{G'/G'_x} \operatorname{cone}(L)/G'_x \times D.\]
The result follows.
\end{proof}

\subsection{Monodromy of constructible sheaves}

The following lemma is probably known, but we could not find it in the literature.

\begin{lemma} \label{lem:sheaf family}
Let $\cS = \{X_\a\}_{\a \in P}$ be a $G$-invariant topological stratification of $X$ and let $M$ be a connected manifold with the trivial $G$-action.  
For each $p\in M$, let 
$s_p\colon X \to X \times M$ be given by $s_p(x) = (x, p)$.  Suppose that $A \in D^b_G(X \times M,k)$ is constructible with respect to the product stratification $X \times M = \sqcup_\a X_\a \times M$.  Then for any $p, q\in M$ there is an isomorphism $s_p^*A \cong s_q^*A$ in $D^b_{G,\cS}(X, k)$.  If $M$ is simply connected, the isomorphism is canonical.
\end{lemma}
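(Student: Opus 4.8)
The plan is to prove that the function $p\mapsto s_p^*A$ on $M$, with values in the set of isomorphism classes of objects of $D^b_{G,\cS}(X,k)$, is locally constant; since $M$ is connected this yields the abstract isomorphism, and for simply connected $M$ a standard monodromy argument will promote it to a canonical one. Because $G$ acts trivially on $M$, and every space, map and homotopy used below is $G$-equivariant (acting only through the $X$-factor), the whole argument runs verbatim in the $G$-equivariant constructible derived category, so I suppress $G$ from the notation.

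The heart of the matter is the following local statement: \emph{each $p\in M$ has a contractible open neighbourhood $B$ admitting a natural isomorphism $A|_{X\times B}\cong \mathrm{pr}_X^*\, s_p^*A$}, where $\mathrm{pr}_X\colon X\times B\to X$ is the projection. To see this, choose a contraction $h\colon B\times[0,1]\to B$ of $B$ onto $p$ (so $h_0=\mathrm{id}_B$, $h_1\equiv p$) and set $\widehat H\colon (X\times B)\times[0,1]\to X\times B$, $\widehat H(x,b,t)=(x,h(b,t))$; this is a deformation retraction of $X\times B$ onto $X\times\{p\}$ which moves only the $B$-coordinate, so $\widehat H^{-1}(X_\alpha\times B)=X_\alpha\times B\times[0,1]$ and hence $\widehat H^*(A|_{X\times B})$ is constructible for the product topological stratification $\{X_\alpha\times B\times[0,1]\}$. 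By homotopy invariance of the (topologically) constructible derived category --- for a complex constructible with respect to $\{S\times[0,1]\}$ on $Y\times[0,1]$, the restrictions to the two ends $Y\times\{0\}$ and $Y\times\{1\}$ are canonically isomorphic, since $[0,1]$ is contractible; see, e.g., \cite{KS} --- restricting $\widehat H^*(A|_{X\times B})$ to $t=0$ and $t=1$ identifies $A|_{X\times B}=\widehat H_0^*(A|_{X\times B})$ with $\widehat H_1^*(A|_{X\times B})=\mathrm{pr}_X^*\, s_p^*A$, which is the claim. (That $s_p^*A$ lies in $D^b_{G,\cS}(X,k)$ is clear: its cohomology sheaves are the restrictions to $X\times\{p\}$ of those of $A$, which are locally constant on the $X_\alpha\times M$.)

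Granting the local statement, for any $q$ in such a ball $B\ni p$ we get $s_q^*A=(A|_{X\times B})|_{X\times\{q\}}\cong (\mathrm{pr}_X^*\, s_p^*A)|_{X\times\{q\}}=s_p^*A$, so $p\mapsto s_p^*A$ is locally constant, hence constant on the connected set $M$; this gives the isomorphism $s_p^*A\cong s_q^*A$. Now suppose $M$ is simply connected. Choose a good cover $\{B_i\}$ of $M$ (contractible opens with contractible, in particular connected, finite intersections), pick basepoints $p_i\in B_i$, and use the local statement to fix isomorphisms $\theta_i\colon A|_{X\times B_i}\xrightarrow{\sim}\mathrm{pr}_X^*C_i$ with $C_i:=s_{p_i}^*A$. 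On each overlap $B_{ij}=B_i\cap B_j$ set $\Psi_{ij}=\theta_j\theta_i^{-1}\colon \mathrm{pr}_X^*C_i\xrightarrow{\sim}\mathrm{pr}_X^*C_j$ and let $\psi_{ij}\colon C_i\xrightarrow{\sim}C_j$ be the restriction of $\Psi_{ij}$ to any slice $X\times\{q\}$, $q\in B_{ij}$. The restriction-to-slice map $\Hom_{X\times B_{ij}}(\mathrm{pr}_X^*C_i,\mathrm{pr}_X^*C_j)\to \Hom_X(C_i,C_j)$ is induced by the restriction $R\Gamma(B_{ij};k)\to R\Gamma(\{q\};k)=k$, so it is the same for all $q$ in the connected set $B_{ij}$; thus $\psi_{ij}$ is well defined, and the $\psi_{ij}$ satisfy the cocycle identity on the (connected) triple overlaps because the $\Psi_{ij}$ do and restriction to a slice is functorial. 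Since $\pi_1(M)=1$, composing the $\psi_{ij}$ along a chain of overlaps from $B_{i_0}$ to a ball containing $q$, followed by $(\theta_{i(q)}|_{X\times\{q\}})^{-1}$, produces an isomorphism $C_{i_0}\xrightarrow{\sim}s_q^*A$ that is independent of all choices, by the usual argument of subdividing a homotopy of paths and applying the cocycle identity. Composing two such isomorphisms gives the canonical $s_p^*A\xrightarrow{\sim}s_q^*A$.

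The main obstacle is getting the homotopy-invariance input into exactly the right form: one must check that the contraction $\widehat H$ preserves the strata set-theoretically, so that $\widehat H^*$ lands among complexes constructible for the \emph{product} stratification $\{X_\alpha\times B\times[0,1]\}$, after which only the standard homotopy-invariance statement in its simplest shape --- multiplication by $[0,1]$ --- is needed. The remaining points (independence of $\psi_{ij}$ of the chosen slice, which reduces to $H^0$ of a connected space being $k$, and the simple-connectivity/cocycle bookkeeping) are routine.
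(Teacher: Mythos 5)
Your proof is correct and rests on the same key input as the paper's --- the Kashiwara--Schapira homotopy-invariance statement for complexes on $Y\times[0,1]$ whose cohomology sheaves are locally constant along the $[0,1]$-fibres --- but it is packaged differently. The paper pulls $A$ back along a chosen path $\gamma\colon[0,1]\to M$ and compares the two endpoint restrictions directly, verifying the hypothesis of \cite[2.7.7(iv)]{KS} by d\'evissage to objects of the form $j_!L$; canonicity for simply connected $M$ is then obtained by running the same argument over a homotopy of paths $[0,1]^2\to M$. You instead apply the interval statement to a contraction of a ball $B\ni p$, obtaining the stronger local statement $A|_{X\times B}\cong\mathrm{pr}_X^*\,s_p^*A$, and then globalize by connectedness and, for canonicity, by a good-cover/cocycle argument. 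Both routes are valid. Yours buys a clean local-triviality statement and avoids the d\'evissage (product-constructibility of $\widehat H^*(A|_{X\times B})$ verifies the KS hypothesis directly), at the cost of a heavier canonicity argument: your assertion that the final isomorphism is ``independent of all choices'' also requires independence of the cover, the basepoints $p_i$, the contractions $h$, and the trivializations $\theta_i$ --- not just of the chain of overlaps handled by $\pi_1(M)=1$. This is standard (replacing $\theta_i$ by $\alpha_i\theta_i$ with $\alpha_i$ a ``constant'' automorphism telescopes out of the composite, and your observation that slice restriction of homs between pulled-back objects is slice-independent on connected, contractible overlaps is exactly what makes every automorphism constant), but it deserves a sentence, whereas the paper's homotopy-of-paths formulation sidesteps the bookkeeping entirely.
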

\begin{proof}
For any map $f \colon Y \to Z$, we let $\hat{f}\colon X \times Y \to X \times Z$ be the induced map. 
If we choose a path $\gamma\colon I \to M$ from $p$ to $q$, then  $\hat\gamma^*A$ is constructible with respect to the ``stratification with corners" $X \times I = \sqcup_\a X_\a \times I$.  Let $\pi\colon I \to \{pt\}$ be the constant map, and for $t\in I$ let $i_t\colon \{pt\} \to I$ be the inclusion with image $\{t\}$.  Applying $\hat\pi_*$ to the adjunction map $\hat\gamma^*A \to \hat\imath_{t*}\hat\imath_t^*\hat\gamma^*A$ gives a map
$\hat\pi_*\hat\gamma^*A \to \hat\imath_t^*\hat\gamma^*A$; the first statement will follow if we can show it is an isomorphism for all $t$.  If this holds for two objects in a distinguished triangle, it also holds for the third, so we are reduced to the case $A = j_!L$, where $j\colon X_\a \times M \hookrightarrow X \times M$ is 
the inclusion and $L$ is a local system on $X_\a \times M$.  Using base change and the contractibility of $I$ we have $\hat{\gamma}^* j_! L \cong \hat\pi^* j_\a L_\a$, where
$j_\a \colon X_\a \to X$ is the inclusion and $L_\a$ is a local system on $X_\a$.  The result in this case follows from \cite[2.7.7(iv)]{KS}. 

For the second statement, we choose a homotopy $I\times I \to M$ of paths from $p$ to $q$, and proceed similarly.
\end{proof}

\subsection{The contracting lemma}

We need the following lemma about restricting sheaves to the fixed point locus of a contracting action.  This is a familiar result in the algebraic setting, but we need a topological version, so we include a proof.

Suppose that a space $X$ has an action of $\R_{>0}$ preserving a topological stratification $\cS$, such that the set $X_0$ of fixed points is a union of strata.  In addition, suppose that
the action contracts $X$ onto $X_0$, in the sense that the action map $m\colon \R_{>0} \times X \to X$ extends to a continuous map $\bar m\colon \R_{\ge 0} \times X \to X$.  Then we have a projection $p\colon X \to X_0$ given by $p(x) = \bar{m}(0,x)$.
Let $i\colon X_0 \to X$ be the inclusion.

\begin{lemma}\label{contracting lemma}
If $\cF \in D^b_\cS(X,k)$, then there are natural isomorphisms
\[p_*\cF \cong i^*\cF, \; p_!\cF \cong i^!\cF.\]
\end{lemma}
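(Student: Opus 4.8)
The two asserted isomorphisms are interchanged by Verdier duality. Since $\cS$ is a topological stratification, $\D_X$ preserves $D^b_\cS(X,k)$, and one has the standard identities $i^!\cong\D_{X_0}\,i^*\,\D_X$ and $p_!\cong\D_{X_0}\,p_*\,\D_X$; so a natural isomorphism $p_*\cong i^*$ on all of $D^b_\cS(X,k)$ yields, applied to $\D_X\cF$, the isomorphism $p_!\,\cF=\D_{X_0}p_*\D_X\cF\cong\D_{X_0}i^*\D_X\cF=i^!\,\cF$. Thus it suffices to produce a natural isomorphism $p_*\cF\cong i^*\cF$.

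There is a canonical map $\beta_\cF\colon p_*\cF\to i^*\cF$: apply $i^*$ to the counit $p^*p_*\cF\to\cF$ of the adjunction $(p^*,p_*)$ and use that $i^*p^*=(p\circ i)^*=\id$. I plan to show $\beta_\cF$ is an isomorphism by checking it on stalks at each $y\in X_0$. On the target, $(i^*\cF)_y=\cF_y$; on the source, $(p_*\cF)_y=\varinjlim_{U}R\Gamma(p^{-1}(U);\cF)$, the colimit over open neighborhoods $U$ of $y$ in $X_0$, and on stalks $\beta_\cF$ becomes the restriction map to the stalk of $\cF$ at $y$.

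The heart of the argument is the geometry of $p^{-1}(U)$ for small $U$. Since $p$ is $\R_{>0}$-invariant, each $p^{-1}(U)$ is $\R_{\ge 0}$-stable, and $\bar m$ restricts to a deformation of $p^{-1}(U)$ with $\bar m(1,-)=\id$, with $\bar m(0,-)$ equal to the composite $p^{-1}(U)\xrightarrow{p}U\hookrightarrow p^{-1}(U)$, and with $\bar m(t,-)$ a stratum-preserving self-homeomorphism for $t>0$. I would choose $U$ from a cofinal system of conical neighborhoods of $y$ inside $X_0$ (using the normal-slice axiom of the topological stratification of $X_0$) small enough that, combining the cone structure of $U$ with the flow $\bar m$, the pair $\bigl(p^{-1}(U),\{p^{-1}(U)\cap X_\a\}\bigr)$ is identified with an open cone $\operatorname{cone}(L)$ on a compact stratified link $L$, with apex $y$. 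Then $\cF|_{p^{-1}(U)}$ is constructible for the cone stratification, and the standard fact that the hypercohomology of such a cone equals the stalk at the apex --- proved by applying $R\Gamma$ to the triangle $j_!j^*(-)\to(-)\to(\text{apex})_*(-)_y$ and reducing to the vanishing $R\Gamma(\operatorname{cone}(L);j_!\operatorname{pr}_L^*\mathcal{G})=0$, which itself follows by pushing forward along the proper collapse $L\times[0,1)\to\operatorname{cone}(L)$ --- gives $R\Gamma(p^{-1}(U);\cF)\cong\cF_y$, the isomorphism being restriction to the apex. This is precisely the stalk of $\beta_\cF$, so $\beta_\cF$ is an isomorphism, and passing back through duality finishes the proof.

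The routine parts are the Verdier-duality reduction and the cone computation. The step I expect to be the main obstacle is the construction of a cofinal family of neighborhoods $U$ of $y$ for which $p^{-1}(U)$ inherits an honest cone structure compatible with $\cS$: this requires genuinely combining the contracting $\R_{\ge 0}$-action with the local conical structure supplied by the topological-stratification axioms --- concretely, lifting the radial retraction of $U$ onto $y$ through $p$ and checking it is compatible with the flow $\bar m$ --- and it is exactly here that one must ensure the action does not merely retract $p^{-1}(U)$ onto $U$, but, together with the cone structure on $U$, all the way onto $\{y\}$.
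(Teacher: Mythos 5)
Your overall skeleton matches the paper's: both reduce the $!$-statement to the $*$-statement by Verdier duality (using that $\D$ preserves $D^b_\cS$), both construct the canonical comparison map by adjunction, and both verify it on stalks over points of $X_0$, where the source becomes $\varinjlim_U R\Gamma(p^{-1}(U);\cF)$ over neighborhoods $U$ of $y$ in $X_0$. The divergence is in how that colimit is computed, and that is where your proposal has a genuine gap.

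The step you yourself flag as ``the main obstacle'' --- producing a cofinal family of neighborhoods $U$ of $y$ in $X_0$ for which $\bigl(p^{-1}(U),\{p^{-1}(U)\cap X_\a\}\bigr)$ is a stratified open cone on a compact link with apex $y$ --- is not something the hypotheses of the lemma supply, and it is the entire content of the proof. Note that $p^{-1}(U)$ is not a small neighborhood of $y$ in $X$: it contains the whole fiber $p^{-1}(y)$, which under the stated hypotheses is an arbitrary closed, typically non-compact subset (in the paper's application it is the full attracting set $\M^+_\xi$). The local cone structure furnished by the topological-stratification axiom at $y$ lives in a small neighborhood of $y$ in $X$ and says nothing about the large set $p^{-1}(U)$; to get a stratified cone structure on $p^{-1}(U)$ compatible with $\cS$ you would need a stratified trivialization/isotopy result that is not available from a bare topological stratification plus a continuous contracting flow, and your proposal does not indicate how to obtain it. In addition, your cone computation relies on properness of $L\times[0,1)\to\operatorname{cone}(L)$, i.e.\ on compactness of the link, which again is not given. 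The paper avoids all of this: it first reduces to generators $\cF=j_{S*}L$ for a single stratum $S$ and a local system $L$, then replaces the colimit over all neighborhoods of $y$ in $X$ by the cofinal system of flow-invariant neighborhoods $U'$ with $tU'\subset U'$ for $0<t\le 1$ (built directly from the continuity of $\bar m$), and for such $U'$ the comparison $H^\bullet(p^{-1}(p(U'));\cF)\to H^\bullet(tU';\cF)$ reduces to a homotopy equivalence of the \emph{open subsets of the single manifold $S$} obtained by intersecting with $S$ --- no stratified cone structure, no compactness, and no link are needed. To repair your argument you would either have to carry out the paper's reduction to generators, or independently prove the conical-neighborhood statement, which is strictly harder than the lemma itself.
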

\begin{proof}
The second isomorphism follows from the first by Verdier duality, so we only need to show the first isomorphism holds.

We can assume that $\cF$ is the lower-$*$ extension of a local system $L$ on a stratum $S\in \cS$, since these objects generate $D^b_\cS(X)$.

Applying $p_*$ to the adjunction map $\cF \to i_*i^*\cF$ gives a natural map $p_*\cF \to i^*\cF$.  For any point $x \in X_0$, the induced map on stalk cohomology at $x$ is
\begin{equation}\label{eq:contracting morphism}
\varinjlim H^\bullet(p^{-1}(V), \cF) \to \varinjlim H^\bullet(U, \cF),
\end{equation}
where the limits are taken over all open neighborhoods $U$ of $x$ in $X$ and $V$ of $x$ in $X_0$.

The second limit can be replaced by the limit over neighborhoods $U'$ of $x$ in $X$ such that $tU' \subset U'$ for all $0 < t \le 1$, since every neighborhood $U$ contains such a neighborhood $U'$.  To see this, use the continuity of $\bar{m}$ to get an open neighborhood
$[0,\epsilon) \times U''$ of $(0, x)$ contained in $\bar{m}^{-1}(U)$,
and let $U' = \bar{m}((0,\epsilon)\times U'')$.  Furthermore, by intersecting with $p^{-1}(U'\cap X_0)$, we can assume that $p(U') = U'\cap X_0$.

For such an open set $U'$ the restriction map 
\[H^\bullet(p^{-1}(p(U')), \cF) \to H^\bullet(tU', \cF)\] is an isomorphism for all $t > 0$, since these groups can be identified with singular cohomology  
$H^\bullet(p^{-1}(p(U'))\cap S, L)$ and
$H^\bullet(tU'\cap S, L)$, which are isomorphic by a homotopy argument.  It follows that \eqref{eq:contracting morphism} is an isomorphism, as desired.
\end{proof}

\subsection{The Hyperk\"ahler moment map}\label{sec:hk moment map}

In order to prove that the resolution sheaves are independent of the choice of resolution and the 
hyperbolic restriction functors are independent of the 
choice of cocharacter, we show that they can be expressed 
in terms of the hyperk\"ahler moment map.  This allows 
a discrete parameter (resolution, cocharacter) to be 
replaced by a continuous one which lives in a simply connected space.

Let $\M$ be the affine hypertoric variety corresponding to the lattice $V \subset \Z^I$ satisfying the hypotheses of Section \ref{sec:arrangements}.
We denote hyperk\"ahler moment maps by a bold font.  The
hyperk\"ahler moment map for the $(S^1)^I$-action on $T^*\C^I$ is a map
\[\bmu_I\colon T^*\C^I \to \R^I \otimes_\R \R^3 = \R^I \oplus \C^I.\]
It is made up of a real part $\mu_{I,\R}\colon T^*\C^I \to \R^I$ 
given by  
\[(z_i,w_i)_{i\in I} \mapsto \half\left(|z_i|^2 - |w_i|^2\right)_{i\in I},\] 
and a 
 complex part $\mu_I\colon T^*\C^I \to \C^I$ given by 
\[(z_i, w_i)_{i\in I}  \mapsto (z_iw_i)_{i\in I}.\] 
The fibers of $\bmu_I$ are exactly the $(S^1)^I$-orbits in $T^*\C^I$ and the orbit of a point $(z_i, w_i)$ is isomorphic to $(S^1)^r$, where
$r$ is the number of $i\in I$ for which $z_i \ne 0$ or $w_i \ne 0$.

Let $K_\R = K \cap (S^1)^I$ be the maximal compact torus in $K$ and let $\mathfrak{k}_\R \subset \mathfrak{k}$ be its Lie algebra.
The hyperk\"ahler moment map $\bmu_K$ for the action of $K_\R$ is the composition
 of  $\bmu_I$ with the natural projection $\R^I \otimes \R^3 \to \mathfrak{k}^*_\R \otimes \R^3$. 

Put $\bV = V \otimes_\Z \R^3$.
The inclusion $\bmu_K^{-1}(0) \subset \mu_K^{-1}(0)$ induces a $T_\R$-equivariant homeomorphism (see \cite[Proposition 3.3]{Ko07})

\begin{equation*}\label{eq:Kirwan-Ness}
\M \cong \bmu^{-1}_K(0)/K_\R = \bmu_I^{-1}(\bV)/K_\R. 
\end{equation*}

Since $\bmu_I$ is $K_\R$-equivariant for the trivial action on the target, we obtain a residual moment map $\bmu_T\colon \M \to \bV$.
Our two stratifications of $\M$ can then be described in terms of this map: if we put $\bV^F := V^F_\R\otimes \R^3 \subset \bV$, $\breve{\bV}^F := \bV^F \setminus \bigcup_{E  < F} \bV^E$ and
\[\mathring\bV^F := \bV^F \setminus \bigcup_{\substack{E \in \cF \\ E < F}} \bV^E,\]
then for any $F \in \cF$ we have $\M^F = \bmu_T^{-1}(\bV^F)$, 
 $\breve{S}_F = \bmu_T^{-1}(\breve\bV^F)$, and 
$S_F := \bmu_T^{-1}(\mathring\bV^F)$.  

\subsection{Refined stratifications of $\M$}

In this section explain how to refine the fine stratification of a hypertoric variety $\M$ by adding linear subspaces to the decomposition of $\bV$ by flats, and show that it varies nicely as the linear spaces move.  In this section we drop our earlier assumption that the lattice $V$ defining $\M$ is coloop-free.

Let $Z$ be a connected smooth manifold, and define $\bmu_Z = \bmu_T \times \id_Z \colon \M\times Z \to \bV \times Z$.  Consider $\bV \times Z$ as a trivial real vector bundle over $Z$, and 
let $\cV$ be a finite collection of smooth subbundles 
such that 
\begin{itemize}
\item $\bV^F \times Z$ is in $\cV$ for every flat $F$ (not just the coloop-free flats)
\item For any $\cE_1, \dots, \cE_r \in \cV$, the intersection $\cE_1 \cap \dots \cap \cE_r$ is in $\cV$; in particular it has constant rank. 
\end{itemize}
We then let $\cS$ be the coarsest decomposition of $\bV\times Z$ into connected subsets such that every $\cE \in \cV$ is a union of elements of $\cS$.  

\begin{proposition}\label{prop:parametric stratification} The decomposition $\bmu^{-1}_Z(S)$, $S \in \cS$ is a $T_\R$-stratification of $\M \times Z$.  
\end{proposition}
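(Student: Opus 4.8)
The plan is to check the two axioms of a $T_\R$-decomposition and then verify, at each point, the inductive local normal form in the definition of a $T_\R$-stratification, reducing everything — via a parametric refinement of Proposition~\ref{prop:slice to fine strata} — to a statement about families of linear subspace arrangements. For the decomposition axioms: the subspaces $\bV^F\subset\bV$ ($F$ any flat) are closed under intersection, and each $\breve\bV^F=\bV^F\setminus\bigcup_{E<F}\bV^E$ is $\bV^F$ with a union of subspaces of real codimension at least three removed, hence connected; so $\{\breve\bV^F\times Z\}$ is the coarsest connected decomposition in which every $\bV^F\times Z$ is a union of pieces, whence $\cS$ refines it and every $S\in\cS$ lies in a unique $\breve\bV^F\times Z$. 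Thus $\bmu_Z^{-1}(S)\subseteq\breve S_F\times Z=\bmu_T^{-1}(\breve\bV^F)\times Z$, on which $T_\R$ has the constant stabilizer $T_{F,\R}$, giving axiom (2); for the closure axiom (1) one uses that $\bmu_I$, hence $\bmu_T$ and $\bmu_Z$, is surjective, after which $\overline{\bmu_Z^{-1}(S)}=\bmu_Z^{-1}(\overline S)$ follows from the local normal form below (at a non-cone point by induction, and at the cone point $o$ because $\bmu_Z^{-1}(S)$ is nonempty and $\R_{>0}$-invariant, hence accumulates on $o$).

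The main geometric input I would establish is a parametric, moment-map-aware version of Proposition~\ref{prop:slice to fine strata}: given $(x,z_0)$ with $x\in\breve S_F$ and $b=\bmu_T(x)\in\breve\bV^F$, a linear complement $\bV_F$ to $\bV^F$ in $\bV$, and a small ball $C\ni0$ in $\bV_F$, there is a $T_\R$-invariant neighbourhood $\Omega$ of $T_\R\cdot(x,z_0)$, a neighbourhood $U$ of $(b,z_0)$ in $\breve\bV^F\times Z$, a $T_{F,\R}$-invariant neighbourhood $N$ of the cone point $o$ of $\M_F=\M(V_F)$, and a $T_\R$-equivariant homeomorphism
\[\Omega\;\xrightarrow{\ \sim\ }\;T_\R\times_{T_{F,\R}}\bigl(U\times N\bigr)\]
carrying $\bmu_Z|_\Omega$ to $(\beta,z;x')\mapsto(\psi(\beta,\bmu_F(x')),z)$, where $\psi\colon\bV^F\times C\hookrightarrow\bV$ is the linear chart coming from $\bV=\bV^F\oplus\bV_F$ and $\bmu_F\colon\M_F\to\bV_F$ is the residual hyperk\"ahler moment map. (Here $\breve\bV^F$ splits off a full neighbourhood of $b$ in $\bV^F$ because $\bigcup_{E<F}\bV^E\subseteq\bV^F$.) This is Proposition~\ref{prop:slice to fine strata}, whose proof via \cite[2.5]{BP09} and \cite[2.4--2.5]{PW} is already moment-map theoretic, carried through with the inert parameter $Z$ and with the moment map recorded.

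Granting this, the stratification of $\Omega$ is the $\bmu_Z$-pullback of the decomposition of the chart $U\times C$ cut out by the traces $\hat\cE=\psi^{-1}(\cE)$ of the finitely many $\cE\in\cV$ through $(b,z_0)$ (the others miss a neighbourhood). For $\cE=\bV^{F'}\times Z$ with $F'\ge F$ one has $\bV^{F'}=\bV^F\oplus(\bV^{F'}\cap\bV_F)$, so $\hat\cE=U\times(C\cap\bV^{F'}_F)$ is split along $U$, and the $\bV^{F'}_F\subseteq\bV_F$ are exactly the flat-subspaces of $\M_F$; the trace of a general $\cE$ on the slice $\{b\}\times C$ is the linear subspace $\cE_{z_0}\cap\bV_F\subseteq\bV_F$, and these again contain the flat-subspaces and are closed under intersection. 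Hence, by induction on $\dim\M$ — legitimate since $\dim\M_F<\dim\M$ whenever $F\ne I$ (then $V^F\ne0$) — the decomposition $\{\bmu_F^{-1}(S')\}$ is a $T_{F,\R}$-stratification of $\M_F$; as $\M_F$ is a cone on $L:=\{|\bmu_F|=1\}$ for the $\R_{>0}$-scaling and $\bmu_F$ is $\R_{>0}$-equivariant, a conical choice of $N$ gives $N=\operatorname{cone}(L)$ with $L$ a $T_{F,\R}$-stratified space. One then straightens $U\times N$ homeomorphically onto $D\times\operatorname{cone}(L)$ with $D$ an open disk, carrying the stratification to its product form: automatic for the split subbundles, and for the remaining ones a consequence of the topological local triviality of the induced equisingular family of linear arrangements on $\bV_F$ over $U$ (the constant-rank hypothesis on $\cV$ being exactly the equisingularity). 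Transporting back through $\bmu_Z$ puts $\Omega$ in the required form $T_\R\times_{T_{F,\R}}\operatorname{cone}(L)\times D$, with the strata above $\alpha$ landing on $T_\R\times_{T_{F,\R}}(L_\beta\times(0,1))\times D$.

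It remains to treat the cone point $x=o$ (the case $F=I$, where $\breve S_I=\{o\}=\bmu_T^{-1}(0)$, $X_\alpha=\{o\}\times Z$, $T_{I,\R}=T_\R$, and $\M=\operatorname{cone}(L)$ conically with $L=\{|\bmu_T|=1\}$): it suffices to see that $L$, stratified by the traces of the $\bmu_Z^{-1}(S)$, is a $T_\R$-stratified space, after which a neighbourhood of $(o,z_0)$ is $\operatorname{cone}(L)\times D$ with $D$ a disk in $Z$. Since $o\notin L$, every point of $L$ is a non-cone point where the models above apply, and the hypersurface $\{|\bmu_T|=1\}$ meets each $\breve S_F$ ($F\ne I$) where $|\bmu_F|<1$, transversally to the $U$-factor, turning $T_\R\times_{T_{F,\R}}(U\times\operatorname{cone}(L_F))$ into $T_\R\times_{T_{F,\R}}(D'\times\operatorname{cone}(L_F))$, a valid local model for $L$. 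The two genuine difficulties are the parametric moment-map refinement of Proposition~\ref{prop:slice to fine strata} and the topological (rather than linear) local triviality used to straighten the non-split subbundles; both follow established techniques, for hypertoric varieties (\cite{BP09,PW}) and for families of arrangements of constant combinatorial type, respectively.
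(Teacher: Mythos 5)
There is a genuine gap, and it sits exactly at the point where the real work of this proposition lies: the treatment of the members of $\cV$ that are not of product form $\bV^{F'}\times Z$ (equivalently, not split along the flat decomposition). You dispose of them with ``a consequence of the topological local triviality of the induced equisingular family of linear arrangements on $\bV_F$ over $U$'' followed by ``transporting back through $\bmu_Z$.'' Two things are missing here. First, a homeomorphism straightening the family of arrangements \emph{downstairs} in $\bV\times Z$ does not automatically transport to a stratified homeomorphism \emph{upstairs}: $\bmu_Z$ is not a fibration (its fibers are $T_\R$-orbits whose dimension jumps from stratum to stratum), so the base homeomorphism must be \emph{lifted}, continuously and $T_\R$-equivariantly, across the orbit-dimension jumps. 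This lifting is the technical heart of the paper's argument (the orthogonal path-lifting construction and Lemma~\ref{path lifting}, including the continuity analysis at points where coordinates of $\bmu_I$ vanish), and your proposal never addresses it. Second, even granting that a lift of \emph{some} trivialization exists, the lifting procedure requires the base trivialization to be differentiable along the paths being lifted, with derivative continuous in all variables; the paper explicitly remarks that Thom's first isotopy lemma does not supply this (a controlled vector field need not integrate to anything with continuous derivative in the point), which is why it constructs the trivialization $\kappa$ by hand from a simplicial fan with smooth sections $\nu_\tau$ of the strata. An appeal to generic ``topological local triviality'' of equisingular arrangement families therefore does not produce an object with the regularity needed for the lift.

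A secondary concern is your main geometric input, the ``parametric, moment-map-aware version of Proposition~\ref{prop:slice to fine strata}'': the product normal form you assert, in which the local homeomorphism strictly intertwines $\bmu_Z$ with $(\beta,z;x')\mapsto(\psi(\beta,\bmu_F(x')),z)$, is strictly stronger than what \cite{BP09,PW} prove (they give a stratified local product structure, not one commuting with the hyperk\"ahler moment map in this form), and in the paper's logical order Proposition~\ref{prop:slice to fine strata} is itself \emph{deduced} from the present proposition. Proving that strengthened normal form would again require the same path-lifting machinery you are trying to avoid. By contrast, the paper sidesteps the slice induction entirely: it reduces to $\M=T^*\C^I$ via Lemma~\ref{inheriting G-stratifications}, trivializes over $Z$ with the fan construction, lifts with Lemma~\ref{path lifting}, and only then builds the cone-like normal slice using the map $q$ to $(S^1)^J$. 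Your outline of the decomposition axioms and of the reduction of the split subbundles to $\M_F$ is fine, but the proposal as written does not close the argument.
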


The proof of this will occupy the remainder of this section.

First note that Lemma \ref{inheriting G-stratifications} implies that it is enough to prove this result when $\M = T^*\C^n$, $T_\R = (S^1)^I$, and $\bV = \R^I \otimes \R^3$, so we will assume from now on that we are in this case.  We denote the corresponding hyperk\"ahler moment map $\bmu_T = \bmu_I$ simply by $\bmu$.

The proof involves showing that $\cS$ is a stratification of $\bV\times Z$, and then lifting this structure to $\M$.
We do this by lifting paths along the moment map $\bmu_Z\colon \M\times Z \to \bV \times Z$.  
Given a smooth path $\gamma\colon [0,1] \to S$
and a point $p_0 \in \bmu_Z^{-1}(\gamma(0))$, there is a unique 
$\tilde\gamma \colon [0,1] \to \bmu_Z^{-1}(S)$ such that 
\begin{enumerate}
\item $\bmu_Z \circ \tilde\gamma = \gamma$ and $\tilde\gamma(0) = p_0$, and 
\item $\tilde\gamma'(t)$ is perpendicular to $T_{\tilde\gamma(t)}(T_\R\cdot \tilde\gamma(t))
\subset T_{\tilde\gamma(t)}(\M \times Z)$ for all $t \in [0,1]$, under the standard metric on $T^*\C^n \cong \R^{4n}$. (We can put any metric on $Z$; the resulting lift is independent of this choice.)
\end{enumerate} 
Since the  metric is $T_\R$-invariant, the map $p_0 \mapsto \tilde\gamma(t)$ is $T_\R$-equivariant for every $t$.

We also need to lift families of paths. 

\begin{lemma}\label{path lifting}  Let $Y$ be any space, and suppose $\gamma\colon Y \times [0,1] \to \bV \times Z$ and $p_0\colon Y \times \{0\} \to \M \times Z$ are continuous functions so that
\begin{itemize}
\item For every $y \in Y$ the path $t \mapsto \gamma_y(t) = \gamma(y,t)$ is smooth and has image entirely contained in one stratum $S_y \in \cS$,
\item $\frac{d}{dt}\gamma(y,t)$ is a continuous function of $y$ and $t$, and
\item $\bmu_Z \circ p_0 = \gamma|_{Y \times \{0\}}$.
\end{itemize}
Then lifting each path $\gamma_y$ with initial point $p_0(y)$ produces a continuous map
$\tilde\gamma\colon Y \times [0,1] \to \M \times Z$.
\end{lemma}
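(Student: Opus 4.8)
The plan is to reduce the statement to a local problem on a small open set, where the path-lifting construction becomes the solution of an ordinary differential equation depending continuously on parameters, and then invoke the standard continuous dependence theorem for ODEs. First I would set up the geometric picture: the map $\bmu_Z$ is $T_\R$-invariant, its fibers over a point of $\bV\times Z$ are single $T_\R$-orbits times a point of $Z$ (this is the hyperk\"ahler version of the statement recalled in Section \ref{sec:hk moment map} that the fibers of $\bmu_I$ are exactly the $(S^1)^I$-orbits), and away from the zero sets of coordinates the map $\bmu_Z$ is a submersion onto a stratum. The condition that $\tilde\gamma'(t)$ be perpendicular to the tangent space of the $T_\R$-orbit, together with $\bmu_Z\circ\tilde\gamma=\gamma$, uniquely pins down $\tilde\gamma'(t)$ as a smooth vector field along the lift: at each point $p$ the differential $d\bmu_Z$ restricted to the orthogonal complement of the orbit tangent space is injective (its kernel is exactly the orbit direction), so $\tilde\gamma'(t)$ is the unique preimage of $\gamma'(t)$ lying in that orthogonal complement. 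This identifies the lifting procedure as the flow of a time-dependent vector field $\Xi(y,t,p)$ on $\M\times Z$ which is continuous in all of $(y,t,p)$ and smooth in $p$, by the second hypothesis and the smoothness of $\bmu_Z$ and the metric.

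The second step is to verify that this vector field is well-defined and has the stated regularity on the relevant open set. The subtlety is that the rank of the $T_\R$-orbit through $p$ jumps when coordinates $z_i,w_i$ vanish, so the orthogonal complement of the orbit tangent space is not a smooth distribution on all of $\M\times Z$. However, the hypothesis forces each $\gamma_y$ to lie in a single stratum $S_y\in\cS$, and since $\bmu_Z$ maps each stratum of $\M\times Z$ (the fine stratification by $T_\R$-stabilizer, crossed with $\cS$) submersively onto a stratum of $\bV\times Z$, the orbit type of $\tilde\gamma(t)$ is locally constant in $t$; on the subset of $\M\times Z$ of a fixed orbit type, the orbit tangent distribution and hence its orthogonal complement is smooth. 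So $\Xi$ restricts to a genuine continuous-in-parameters, smooth-in-space time-dependent vector field on each such locally closed submanifold, and its integral curves stay within it. I would phrase this carefully to avoid circularity — the existence of the lift path-by-path is already given, so what needs proof is only joint continuity in $y$.

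The final step is to apply the theorem on continuous dependence of solutions of ODEs on initial conditions and parameters (e.g.\ \cite{KS} or any standard reference): given a time-dependent vector field continuous in $(y,t,p)$ and locally Lipschitz in $p$ uniformly in $(y,t)$, the flow map $(y,t,p_0)\mapsto \tilde\gamma_y(t)$ is continuous. Since $p_0\colon Y\times\{0\}\to\M\times Z$ is continuous by hypothesis and $\bmu_Z\circ p_0=\gamma|_{Y\times\{0\}}$, composing gives the desired continuity of $\tilde\gamma\colon Y\times[0,1]\to\M\times Z$. I expect the main obstacle to be the bookkeeping in the second step: making precise that although the orthogonal complement of the orbit distribution is only piecewise smooth on $\M\times Z$, the constraint that each lifted path stays in a single orbit-type locus (which in turn follows from the stratum-preserving property of $\bmu_Z$ and the fact that the $\gamma_y$ lie in single strata of $\cS$) confines the ODE to a smooth submanifold, so the classical theorem genuinely applies. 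Everything else is a routine invocation of standard ODE theory.
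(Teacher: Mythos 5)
Your reduction of the path-by-path lifting to a time-dependent ODE works on the locus where the orbit type is locally constant in the parameter $y$, and there the classical continuous-dependence theorem does give continuity. But the lemma is only nontrivial at parameters $y_0$ where nearby paths $\gamma_y$ lie in strata of different orbit type --- concretely, after reducing to the model case $\M = T^*\C$, $\bV = \R^3$ (the lemma is stated with $\M = T^*\C^I$, and the lift of a product of paths is the product of the lifts), at parameters where $\gamma_{y_0}$ is the constant path at $0$ while nearby $\gamma_y$ live in $\R^3\setminus\{0\}$ and pass arbitrarily close to $0$. There the hypotheses of the continuous-dependence theorem fail: the horizontal vector field (the unique lift of $\gamma_y'(t)$ orthogonal to the orbit) is not jointly continuous, and is in fact unbounded, as the lifted path approaches the degenerate locus. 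Since $\bmu(z,w) = (\tfrac12(|z|^2-|w|^2), zw)$ is quadratic, $\|d\bmu\|$ scales like $|(z,w)|$, so the horizontal lift of a unit downstairs velocity has length of order $1/|(z,w)|$; no uniform Lipschitz bound in $p$ holds near the origin, and the ODE theorem cannot be applied across the stratum boundary in $Y$. Your observation that each individual lift is confined to a single orbit-type submanifold rescues existence and uniqueness (which are already given), not the joint continuity in $y$, which is the actual content of the lemma.

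The paper closes this gap with a different, non-ODE argument: it reduces to $|I|=1$ and then observes that $\bmu^{-1}$ of a basis of neighborhoods of $0\in\R^3$ is a basis of neighborhoods of $0\in T^*\C$ (because $|\bmu(z,w)| = \tfrac12(|z|^2+|w|^2)$). Hence if $\gamma_y(t)$ stays close to $0$ downstairs, then $\tilde\gamma_y(t)$ is automatically close to $0$ upstairs, regardless of how badly the horizontal vector field behaves along the way, and continuity of $\tilde\gamma$ at the degenerate parameters follows directly from continuity of $\gamma$. Some such metric or properness estimate is the missing ingredient in your argument; away from the degenerate locus your parallel-transport picture agrees with the paper's.
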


\begin{proof} It is enough to prove this when $\cS$ is the minimal stratification  where $\cV$ contains only the coordinate bundles $(\R^F\otimes \R^3) \times Z$ for all $F \subset I$.  In this case, $Z$ plays no role, so we can assume that $Z$ is a point. We can also assume that $|I| = 1$, i.e.\ $\M = T^*\C$ and $\bV = \R^3$, since the lift of a product of maps to $T^*\C$ is the product of lifts. 

Let $Y_0 = p_0^{-1}(0)$; then $\gamma$ sends all of $Y_0 \times [0,1]$ to $0$ and all of $(Y\setminus Y_0) \times [0,1]$ to $\R^3\setminus \{0\}$.  At points of the open set $(Y \setminus Y_0) \times [0,1]$ continuity of $\tilde\gamma$ is a standard result about parallel transport.  To complete the proof, note that $\tilde{\gamma}(Y_0 \times[0,1]) = \{0\}$, and  pulling back a basis of neighborhoods of $\{0\}$ in $\R^3$ gives a basis of neighborhoods of $\{0\} \in T^*\C$, so the continuity of $\tilde\gamma$ at points of $Y_0 \times [0,1]$ follows from the continuity of $\gamma$ there.
\end{proof}

We return to the proof of Proposition \ref{prop:parametric stratification}. 
Recall our assumption that
$\M = T^*\C^n$, $T_\R = (S^1)^I$, and $\bV = \R^I \otimes \R^3$.

 We begin by showing that projection makes $\M \times Z$ into a stratified fiber bundle over $Z$.  
Fix a point $z_0 \in Z$. 
We will find a neighborhood $U$ of $z_0$ in $Z$ and a homeomorphism
$\kappa\colon \bV \times U \to \bV \times U$ so that 
\begin{enumerate}
\item for every $z \in U$, $\kappa$ restricts to a homeomorphism $\bV \times \{z\} \to \bV\times \{z\}$,
which is the identity when $z = z_0$,
\item for each stratum $S \in \cS$, we have $\kappa^{-1}(S) = S_0 \times U$, where $S \cap (\bV \times \{z_0\}) = S_0 \times \{z_0\}$, and
\item for every $v \in \bV$, the map $\kappa_v(z) = \kappa(v, z)$ is a smooth map $U \to \bV \times U$,
and its derivative $d\kappa_v(z) \colon T_zU \to T_{(v, z)}(\bV \times U)$ is continuous as a function of $v$ and $z$.  
\end{enumerate}

(Note that this is similar to, but stronger than, the conclusion of Thom's first isotopy lemma \cite{Math}.  In that result, a local trivialization is obtained by integrating a controlled vector field, which need not be continuous, so $d\kappa_v(z)$ need not be continuous.)

Assuming that $\kappa$ exists for the moment, we will lift it to a $T_\R$-equivariant homeomorphism
$\tilde\kappa\colon \M\times U \to \M \times U$ so that $\mu_Z \circ \tilde\kappa = \kappa \circ \mu_Z|_{\M \times U}$.  This implies that 
\[\tilde\kappa^{-1}(\bmu_Z^{-1}(S)) = \bmu_Z^{-1}(\kappa^{-1}(S)) = \bmu_Z^{-1}(S_0 \times U) = \bmu^{-1}(S_0) \times U\]
for all $S \in \cS$, so $\tilde\kappa$ gives a local trivialization of the decomposition $\{\bmu_Z^{-1}(S)\}_{S\in \cS}$ of $\M \times Z$.  

To define this lift, let us assume without loss of generality that $Z$ is an open ball in $\R^m$ centered at $0$.  Then we can 
apply Lemma \ref{path lifting} with $Y = \M \times Z$ and the maps
\[\gamma\colon \M \times Z \times [0,1] \to \bV \times Z, \;\; \gamma(x,z,t) = \kappa(\bmu(x), tz)\]
with initial condition $\tilde\delta(x, z, 0) = (x, 0)$ to get a map 
$\tilde\gamma\colon \M \times \bV \times [0,1] \to \M \times \bV$.  We define $\tilde\kappa(x, z) = \tilde\gamma(x,v,1)$.  Then we have \[\bmu_Z(\tilde\kappa(x, z)) = \bmu_Z(\tilde\gamma(x,z,1)) = \gamma(x,z,1) = \kappa(\bmu(x),z) = \kappa(\bmu_Z(x,z)),\]
so $\tilde\kappa$ is a lift of $\kappa$.  Since $\tilde\kappa$ is $T_\R$-equivariant, it is a bijection $\M\times Z \to \M\times Z$.  Finally, the inverse function is continuous since it can be obtained by applying Lemma \ref{path lifting} to 
\[\delta(x,z,t) = \kappa(\lambda_{1-t}(\kappa^{-1}(\bmu_Z(x,z)))),\;\; \tilde\delta(x,z,0) = (x,z),\]
where $\lambda_t(x,z) = (x, tz)$.

We now turn to constructing the homeomorphism $\kappa$. 
The the closures of intersections of strata with $\bV \times \{z_0\}$ are polyhedral cones, so we can refine this decomposition to a simplicial fan $\Phi$ in $\bV \times \{z_0\}$ so that the closure of each stratum is a union of cones of the fan.  

For each one-dimensional cone $\tau \in \Phi$, let $S_\tau \in \cS$ be the unique stratum containing $\tau \setminus \{0\}$, and choose a smooth function $\nu_\tau\colon U_\tau \to \bV$, defined on a neighborhood of $z_0$, so that $\nu_\tau(z_0) \in \tau$ and $(\nu_\tau(z), z)$ is in $S_\tau$ for all $z \in U_\tau$.  Such a function exists because each stratum, and in particular $S_\tau$, is a fiber bundle over $Z$ with path-connected fibers. 

For a cone $\sigma \in \Phi$ spanned by the rays 
$\tau_1,\dots, \tau_r$, let the cone $\sigma(z) \subset \bV\times\{z\}$ be the sum of 
$\R_{\ge 0}\nu_{\tau_i}(z)$ for $1\le i\le r$.  Then there is a neighborhood $U \subset \bigcup_\tau U_\tau$ of $z_0$ so that
\[\Phi(z) = \{\sigma(z) \mid \sigma \in \Phi\}\]
is a simplicial fan for all  $z \in U$.  We then define \[\kappa(x, z) = \left(\sum_{\tau} a_\tau \nu_{\tau}(z), z\right),\]
where $x = \sum_\tau a_\tau \nu_\tau(z_0)$ and $a_\tau = 0$ unless $\tau$ is contained in the smallest cone of $\Phi$ containing $x$.  The required properties of $\kappa$ are now easy to check.

We have thus reduced the problem to the case where $Z = \{z_0\}$ is a single point; we will write $\M$, $\bV$ instead of $\M\times Z$, $\bV\times Z$.  
Take a point $s$ in a stratum $S \subset \bV$. 
Since our strata are open subsets of linear spaces, we can find a set $N$ which is the intersection of an affine space meeting $S$ transversely at $s$ and a ball around $s$, and 
a ball $W$ around $0$ in $T_sS$, with the property
that the addition map $N \times W \to N + W$ is a stratum-preserving homeomorphism, where $N$ and $N+W$ are given the induced stratifications and $W$ is given the trivial stratification.  

We can then use Lemma \ref{path lifting} to lift this to a $T_\R$-equivariant stratified 
homeomorphism $\bmu^{-1}(N + W) \cong \bmu^{-1}(N) \times W$: take $Y = \bmu^{-1}(N) \times W$, and define $\gamma\colon Y\times I \to N+W$ by $\gamma(x, w, t) = \bmu(x) + tw$.  Lifting this map using the initial condition $p_0\colon Y\times \{0\} \to \M$ given by 
$p_0(x, w, 0) = x$ gives a map $\tilde{\gamma}\colon Y \to \M$ whose restriction to $Y \times \{1\} \cong Y$ is the required homeomorphism.
 
Fix a point $\tilde s \in \bmu^{-1}(s)$.   
Denote the standard holomorphic coordinate functions on $T^*\C^I$ by $z_i, w_i$.  Let 
$J$ be the set of $i\in I$ for which either $z_i(\tilde s) \ne 0$ or $w_i(\tilde s) \ne 0$.  
The role of $z_i$ and $w_i$ is symmetric, so we can assume that $z_i(\tilde s) \ne 0$ for all $i \in J$. 
Shrinking $N$ if necessary, we can assume that 
$z_i$ doesn't vanish on $\bmu^{-1}(N)$ for all $i\in J$. 

Define a map $q \colon \bmu^{-1}(N) \to (S^1)^J$ by $q(x) = (z_i(x)/|z_i(x)|)_{i\in J}$.  It maps each fiber of $\bmu$ surjectively 
onto $(S^1)^J$, and
$q^{-1}(q(\tilde s))$ is invariant under $(T_\R)_{\tilde s} = (S^1)^{I\setminus J}$, so acting by $T_\R = (S^1)^I$
gives a stratified homeomorphism
\[\bmu^{-1}(N) \cong (S^1)^I \times_{(S^1)^{I\setminus J}} q^{-1}(q(\tilde s)).\]
Thus $q^{-1}(q(\tilde s))$ will be our normal slice to $\bmu^{-1}(S)$ at $\tilde s$.
We can show that $q^{-1}(q(\tilde s))$ is the cone over a $(T_\R)_{\tilde s}$-stratified space $L$ by lifting radial paths from points of $N$ to $s$; the details are left to the reader.

\subsection{Proof of Proposition \ref{prop:Omega is canonical}}\label{proof:Omega is canonical}
In order to prove Proposition \ref{prop:Omega is canonical}, we use an alternative description of the resolution $\M_\a$ as a hyperk\"ahler quotient.  
If we consider the character $\alpha$ of $K$ as an element of $\mathfrak k^*_\R$, then \cite[3.2]{Ko07} gives a natural homeomorphism $\M_\alpha \cong \bmu^{-1}_K(\alpha,0,0)/K_\R$, where we identify
$\mathfrak k^*_\R \otimes \R^3$ with $\mathfrak k^*_\R \oplus \mathfrak k^*_\R \oplus\mathfrak k^*_\R \cong \mathfrak k^*_\R \oplus \mathfrak k^*$.
In addition, the map 
$p_\a\colon \M_\a \to \M$ can be obtained by factoring the composition
\[\bmu_K^{-1}(\a,0,0)\hookrightarrow \mu_K^{-1}(0) \to  \mu_K^{-1}(0)\mod K = \M\]
through the projection $\bmu_K^{-1}(\a,0,0) \to \bmu_K^{-1}(\a,0,0)/K_\R$.  

We can view this as a fiber of a family over $\mathfrak k^*_\R\otimes \R^3$, in the following way. Let 
 $M = T^*\C^I/K_\R$, and let $\bar\bmu_K\colon M \to \mathfrak k^*_\R\otimes \R^3$ be the map induced by $\bmu_K$.
For any subset $Y \subset \mathfrak k^*_\R\otimes \R^3$, 
we put $M_Y = \bar{\bmu}_K^{-1}(Y)$.
Then if $\iota\colon \mathfrak k^*_\R \to \mathfrak k^*_\R \otimes \R^3$ is given by $\iota(\a) = (\a,0,0)$, the previous paragraph says that $\M_\a \cong M_{\iota(\alpha)}$.
 
 Let $M_\R = M_{\iota(\mathfrak k^*_\R)} = \mu_K^{-1}(0)/K_\R$.  As with our description of the map $p_\a$ above, we have a natural map 
\[M_\R = \mu^{-1}_K(0)/K_\R \to \mu^{-1}_K(0)\mod K = \M.\] 
Let $p\colon M_\R \to \mathfrak k^*_\R \times \M$ denote the product of the first coordinate of $\bar\bmu_K$ with  this map.  We can consider $p$ as a family of maps $p_{a} \colon M_{\iota(a)} \to \M$ for all $a\in \mathfrak k^*_\R$, and this notation agrees with the description of $p_\a$ above.  At the other extreme, $p_0\colon M_0 \to \M$ is a homeomorphism.

We call an element $a\in \mathfrak k^*_\R \otimes \R^3$ generic if any lift to $\R^I \otimes \R^3$ has at least 
$|I| - d$ nonzero entries, where we view $\R^I \otimes \R^3$ as $I$-tuples of elements of $\R^3$.  
Thus $\a\in \mathfrak{k}^*_\R$ is generic if and only
if $\iota(\a)$ is generic.  Then $M_a$ is smooth for all 
generic $a$.

Take $\a\in \mathfrak{k}^*_\R$ generic, and 
consider the ray $A = \R_{\geq 0} \cdot \iota(\a) \subset \mathfrak k^*_\R \otimes \R^3$ and the open ray $A^\circ = A \setminus \{0\}$.  We have a diagram of fiber squares:

\begin{equation*}
\xymatrix{
 M_{A^\circ} \ar[d]^{p_{A^\circ}} \ar[r]^{\tilde \jmath} & M_A \ar[d]^{p_A} & M_0 \ar[d]^{p_0}_{\sim} \ar[l]_{\tilde\imath} \\
 A^\circ \times \M \ar[r]^j & A \times \M & \{0\} \times \M \ar[l]_i
}
\end{equation*}
where the horizontal maps are inclusions and the vertical maps are restrictions of $p$.

 The map $p_{A^\circ}$ is locally trivial over $A^\circ$, 
 so there is a natural isomorphism 
 \[(p_{A^\circ})_* \uk_{M_{A^\circ}} \cong \uk_{A^\circ} \boxtimes (p_a)_* \uk_{M_{\iota(a)}}.\]  
It follows that \[i^* j_* (p_{A^\circ})_* \uk_{M_{A^\circ}} \cong i^*(\uk_{A} \boxtimes (p_a)_* \uk_{M_{\iota(a)}}) = (p_a)_* \uk_{M_{\iota(a)}}.\]
  Then by proper base change we get a canonical isomorphism
  \[(p_a)_* \uk_{M_{\iota(a)}} \cong p_{0*}{\tilde\imath}^* {\tilde\jmath}_* \uk_{M_{A^\circ}}. \]
Since $p_0$ is a homeomorphism, this allows us to regard the pushforward sheaf for $p_\a\colon \M_\a \to \M$ as a nearby cycles sheaf.

Now we allow $a$ to vary in the whole space of hyperk\"ahler parameters.   Let $Z \subset \mathfrak k^*_\R \otimes_\R \R^3$ be the set of generic points.  It is a complement of a union of  codimension 3 subspaces and hence is simply connected.

Define
\[\wh M = \{(m, a) \in M\times Z \mid \bar{\bmu}_K(m) \in \R_{\ge 0}a\}.\]
The fiber of $a \in Z$ under the projection
$\wh M \to Z$ is $M_{\R_{\ge 0}a}$.
Let $\wh{M}_{0} = M_0 \times Z \subset \wh M$,
and let $\wh{M}_{>0} = \wh{M} \setminus \wh{M}_0$. 
If $\hat\imath\colon \wh{M}_0 \to \wh{M}$ and $\hat\jmath\colon \wh{M}_{>0} \to \wh{M}$ are the inclusions, then 
we can consider the ``global nearby cycles sheaf" $\hat\imath^*\hat\jmath_*\uk_{\wh{M}_{>0}}$.  We claim that 
this sheaf is constructible with respect to the stratification of $M_0 \cong \M \times Z$ by the product strata
$\breve{S}_F \times Z$.
To do this, we will apply Proposition \ref{prop:parametric stratification}  
to construct a stratification of $M \times Z$ so that $\wh{M}$ and $\wh{M}_0$ are unions of strata, and the induced stratification of $\wh{M}_0$ is the desired product stratification.

As a first step, we construct a stratification of $T^*\C^I \times Z$, where we consider $T^*\C^I$ as a hypertoric variety with the moment map $\bmu_I \colon T^*\C^I \to \R^I \otimes \R^3$, using the set $\cV$ of subbundles of $(\R^I \otimes \R^3) \times Z$ consisting of all intersections of
\begin{itemize}
\item the tripled coordinate spaces $(\R^J \otimes \R^3) \times Z$, $J\subset I$,
\item  the trivial bundle $\bV \times Z$,
\item and the inverse image $W$ 
of the tautological line bundle
\[\{(tv,v) \in (\mathfrak k^*_\R \otimes \R^3) \times Z \mid t\in \R\}.\]
under the projection $(\R^I \otimes \R^3) \times Z \to (\mathfrak k^*_\R \otimes \R^3) \times Z$. 
\end{itemize}
Notice that $\bV \times Z$ is a subbundle of $W$ of codimension one, and it separates $W$ into two components; let $W_+$ denote the component corresponding to points where $t > 0$.

Taking the quotient of the resulting $(S^1)^I$-stratified space $T^*\C^I \times Z$ by $K_\R$, we obtain, by Lemma \ref{inheriting G-stratifications}, a $T_\R$-stratification of $M \times Z$.  If $\bmu_{I,Z} = \bmu_I \times \id_Z\colon T^*\C^I \times Z \to (\R^I \otimes \R^3)\times Z$, then
we have $\bmu_{I,Z}^{-1}(\bV \times Z)/K_\R = \wh{M}_0$, and $\bmu_{I,Z}^{-1}(W_+)/K_\R = \wh{M}$.  Hence both $\wh{M}$ and $\wh{M}_0$ are identified with unions of strata in $M \times Z$.  Moreover, since intersecting $\bV \times Z$ with any of the subbundles defined above gives a product bundle, the resulting stratification of $\M \times Z$ is a product stratification.

We can now apply Lemma \ref{lem:sheaf family} to deduce that the restrictions of
 $\hat\imath^*\hat\jmath_*\uk_{\wh{M}_{>0}}$ to the fibers over any two points of $Z$ are canonically isomorphic.  If $\alpha \in \mathfrak{k}^*_\R$ is generic and comes from a cocharacter of $K$, 
 the restriction of this complex to the fiber over $\iota(\a)$ is isomorphic to 
  the pushforward sheaf $(p_\a)_*\uk_{\M_a}$.  
  Proposition \ref{prop:Omega is canonical} follows.

\subsection{Proof of Proposition \ref{prop:vanishing cycles iso}}\label{sec:hyperloc equals vanishing}

In this section we show that our hyperbolic restriction functor is isomorphic to a ``real vanishing cycles" functor applied to the hyperk\"ahler moment map, which allows us to use hyperk\"ahler rotation to see that it is independent of the choice of a generic cocharacter.

 Given any point $\zeta \in \mathfrak t_\R \otimes \R^3 = \bV^*$, let $f_\zeta\colon\M \to \R$ be the composition of $\zeta$ with the hyperk\"ahler moment map $\bmu_T$.  
Let $\M^{\geq 0}_\zeta = f^{-1}_\zeta(\R_{\geq 0})$, $\M^0_\zeta = f_\zeta^{-1}(0)$, and $i\colon \M^0_\zeta \to \M^{\geq 0}_\zeta$ and $j\colon \M^{\geq 0}_\zeta \to \M$ be the closed inclusions.  Our  ``vanishing cycles" functor is the functor $\phi_\zeta \colon D^b_{T_\R}(\M) \to D^b_{T_\R}(\M^0_\zeta)$ given by 
$\phi_\zeta(S) = i^!j^* S$.

In what follows, we will view cocharacters of $T$ also as elements of $\mathfrak t_\R \otimes \R^3$ via the inclusion $V^* \hookrightarrow V^* \otimes_\Z \R^3$
which sends $\xi$ to $\xi\otimes (1,0,0)$. Thus for a cocharacter $\xi \in V$ we can take $\zeta = \xi$ and consider both $\Phi_\xi$ and $\phi_\xi$.

The fixed set $\M^\xi$ is contained in $\M^0_\xi$.  Let $h$ denote the inclusion.

\begin{proposition}\label{prop:hyperloc equals vanishing}
For any cocharacter $\xi$ of $T$ there is a natural isomorphism  
\[\phi_\xi \circ Res \simeq h_* \circ  Res \circ  \Phi_\xi,\] where $Res = Res_{T,T_\R}$ denotes the forgetful map from $T$-equivariant sheaves to $T_\R$-equivariant sheaves.
\end{proposition}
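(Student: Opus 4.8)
The plan is to prove the isomorphism on underlying (non-equivariant) complexes --- the forgetful functor $D^b_{T_\R}(\M^0_\xi)\to D^b(\M^0_\xi)$ is conservative --- and to work throughout with $A=Res\,S$ for $S$ constructible for the stratification $\cS$, which is the case needed downstream. For the geometric setup: in the hyperk\"ahler model $\M=\bmu_I^{-1}(\bV)/K_\R$, lifting $\xi$ to $\tilde\xi\in\Z^I$ we have $f_\xi=\sum_i\tilde\xi_i\tfrac12(|z_i|^2-|w_i|^2)$ and the $\xi(\C^*)$-action $(z_i,w_i)\mapsto(t^{\tilde\xi_i}z_i,t^{-\tilde\xi_i}w_i)$, so
\[\frac{d}{d\log t}\,f_\xi(\xi(t)\cdot x)=\sum_i\tilde\xi_i^{\,2}\big(t^{2\tilde\xi_i}|z_i|^2+t^{-2\tilde\xi_i}|w_i|^2\big)\geq 0,\]
with strict inequality off the fixed locus. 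Hence $f_\xi$ is non-decreasing along $\xi(\R_{>0})$-orbits, strictly so along non-constant ones; consequently $f_\xi$ is constant on the connected variety $\M^\xi$ (a union of closed hypertoric subvarieties, all containing the cone point $o$, on which $\xi(S^1)$ acts trivially), equal to $f_\xi(o)=0$, so $\M^\xi\subseteq\M^0_\xi$ and $h$ is defined; moreover $\M^+_\xi\subseteq\M^{\geq 0}_\xi$, the repelling set $\M^+_{-\xi}\subseteq\M^{\leq 0}_\xi:=f_\xi^{-1}(\R_{\leq 0})$ is closed in $\M$, and $\M^+_\xi\cap\M^0_\xi=\M^\xi=\M^+_{-\xi}\cap\M^0_\xi$. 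The conical $\C^*$-action $s\cdot(z_i,w_i)=(sz_i,sw_i)$ commutes with $\xi$, preserves $\cS$, contracts $\M$ onto $o$, and scales $f_\xi$ by $s^2$, so $\M^{\geq 0}_\xi$, $\M^{>0}_\xi$, $\M^0_\xi$, $\M^\xi$, $\M^+_\xi$, $\M^+_{-\xi}$ are all stable under it; and enlarging the collection $\cV$ of Proposition~\ref{prop:parametric stratification} by the linear subspace $\xi^\perp\subset\bV$ and its intersections with the $\bV^F$ gives a $T_\R$-stratification refining $\cS$ (preserved by the conical $\C^*$) for which all these subspaces are unions of strata, so $\phi_\xi A$ and $Res\,\Phi_\xi S$ are constructible.

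\emph{The comparison map and the easy stalks.} Write $g=j\circ k$ with $k\colon\M^+_\xi\hookrightarrow\M^{\geq 0}_\xi$, so $\Phi_\xi=f^!g^*=f^!k^*j^*$, and consider the Cartesian square with horizontal maps $f,i$ and vertical maps $h,k$ (Cartesian since $\M^\xi=\M^+_\xi\cap\M^0_\xi$). As all four maps are closed immersions, the unconditional base change $k^*i_!\simeq f_!h^*$ passes to right adjoints to give $i^!k_*\simeq h_*f^!$; composing with the unit $\id\to k_*k^*$ yields a natural transformation
\[\theta\colon \phi_\xi\circ Res=i^!j^*\circ Res\longrightarrow i^!k_*k^*j^*\circ Res\xrightarrow{\ \sim\ } h_*f^!k^*j^*\circ Res=h_*\circ Res\circ\Phi_\xi .\]
We check $\theta$ is an isomorphism on stalks. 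At $x_0\in\M^0_\xi\setminus\M^\xi$ the target has zero stalk, so we must show $(i^!j^*A)_{x_0}=0$: here $x_0$ is not $\xi$-fixed, so $\xi(\C^*)$ has finite stabilizers near $x_0$; writing a neighbourhood of $x_0$ as $\M_F\times D$ via Proposition~\ref{prop:slice to fine strata} with $\xi=(\xi_F,\bar\xi)$, $\bar\xi\neq 0$, the submersion $f_{\bar\xi}$ on $D$ and the substitution $t'=t+f^{\M_F}_{\xi_F}$ exhibit this neighbourhood $\cS$-compatibly as $Y\times\R$ with $\M^{\geq 0}_\xi=Y\times[0,\infty)$ and $\M^0_\xi=Y\times\{0\}$, with $A$ constructible for a product stratification in the $\R$-factor; such $A$ is a pullback from $Y$ (cf.\ Lemma~\ref{lem:sheaf family}), and then $i^!j^*A=0$ by the K\"unneth formula for $!$-pullback together with $\iota_0^!\,\uk_{[0,\infty)}=0$.

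\emph{The stalks at the fixed locus.} For $x_0\in\M^\xi$ the stalk of the target is that of $\Phi_\xi A$; by Proposition~\ref{prop:slice to fine strata} a neighbourhood of $x_0$ is a product of a disk with a normal slice --- a hypertoric variety on which $\xi$ acts fixing its cone point --- and every space and sheaf in sight is a product with the disk, so we may assume $x_0=o$. By the contracting lemma (Lemma~\ref{contracting lemma}) applied to the conical $\C^*$-action on the dilation-stable spaces above, $R\Gamma(\M;A)\xrightarrow{\ \sim\ }R\Gamma(\M^{\geq 0}_\xi;A)=A_o$, whence $(\phi_\xi A)_o=R\Gamma_{\M^0_\xi}(\M^{\geq 0}_\xi;A)=\mathrm{fib}\big(A_o\to R\Gamma(\M^{>0}_\xi;A)\big)$. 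On the other hand Braden's theorem \cite{Br03} gives $\Phi_\xi\simeq f^*_-g^!_-$ with $g_-\colon\M^+_{-\xi}\hookrightarrow\M$, so $(\Phi_\xi A)_o=(g^!_-A)_o=\mathrm{fib}\big(A_o\to R\Gamma(\M\setminus\M^+_{-\xi};A)\big)$. As $\M^{>0}_\xi\subseteq\M\setminus\M^+_{-\xi}$, it suffices to see that restriction $R\Gamma(\M\setminus\M^+_{-\xi};A)\to R\Gamma(\M^{>0}_\xi;A)$ is an isomorphism compatible with the maps from $A_o=R\Gamma(\M;A)$: forward $\xi(\R_{>0})$-flow (which increases $f_\xi$ monotonically to $+\infty$ off $\M^+_{-\xi}$, preserves $\cS$, and preserves both regions) $\cS$-deformation-retracts each of $\M^{>0}_\xi$ and $\M\setminus\M^+_{-\xi}$ onto $\{f_\xi\geq\epsilon\}$, compatibly, so both groups are identified with $R\Gamma(\{f_\xi\geq\epsilon\};A)$ and the restriction is the identity. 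Tracing $\theta$ through these identifications shows $\theta_o$ is an isomorphism.

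The main obstacle is precisely this last point: that forward $\xi(\R_{>0})$-flow really does yield $\cS$-stratified deformation retractions of the \emph{non-compact} sets $\M^{>0}_\xi$ and $\M\setminus\M^+_{-\xi}$ onto $\{f_\xi\geq\epsilon\}$. This rests on the explicit hyperk\"ahler formula for $f_\xi$ (so that $f_\xi\to+\infty$ along every orbit leaving the repelling set, making the retraction time finite and continuous) and on identifying $\M^+_{-\xi}$ with the locus of non-escaping orbits; it is where the hypertoric geometry genuinely enters. The remaining step --- checking that $\theta$ is compatible with the chain of canonical identifications, so that "isomorphism on stalks" upgrades to "isomorphism of functors" --- together with the base change and the contracting-lemma reductions is routine six-functor bookkeeping.
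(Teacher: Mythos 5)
Your route is genuinely different from the paper's. The paper reparametrizes the $\xi(\R_{>0})$-flow by the value of $f_\xi$ to produce a continuous $\R_{\ge 0}$-semigroup action $\ast$ contracting $\M^{\ge 0}_\xi$ onto $\M^0_\xi$ and $\M^+_\xi$ onto $\M^\xi$ (Lemmas \ref{lem:orbit structure} and \ref{lem:continuous action}); the contracting lemma then gives $\phi_\xi\cong q_!j^*$, proper base change over $x\notin\M^\xi$ exhibits the stalk as $\H^\bullet_c$ of a constant complex on a ray $[0,\infty)$ (hence zero), and a second base change over $\M^\xi$ gives $h^*\phi_\xi\cong f^!g^*$ directly. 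You instead build an explicit base-change transformation $\theta$ and verify it stalkwise, replacing the contraction by a local product structure transverse to $\M^0_\xi$ at non-fixed points, and at the cone point by matching conical global sections against Braden's $f_-^*g_-^!$ description via forward-flow retractions onto $\{f_\xi\ge\epsilon\}$. What this buys is an explicit natural map and the avoidance of the continuity of the reparametrized semigroup (the paper's hardest analytic point, Lemma \ref{lem:distance estimate}). What it costs: the two stalks at $o$ are reached through inequivalent chains of identifications (conical contraction on one side, \cite{Br03} plus two new stratified deformation retractions of non-compact sets on the other), so ``tracing $\theta$'' is a genuine verification rather than bookkeeping; and the escape statement $x\notin\M^+_{-\xi}\Rightarrow f_\xi(\xi(t)x)\to+\infty$, which both retractions need, is precisely the content of Lemma \ref{lem:orbit structure} (via the toric moment-polytope argument), which you assume rather than prove.

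There is one concrete error, and it sits under everything else: the displayed derivative computation is valid only for $K$ trivial, i.e.\ for $\M=T^*\C^I$ itself. On $\M\cong\bmu_K^{-1}(0)/K_\R$ the coordinate action of a lift $\tilde\xi(t)$ preserves $\mu_K^{-1}(0)$ but not the real moment-map condition, so it does not act on representatives in $\bmu_K^{-1}(0)$; to differentiate $f_\xi$ along the $T$-orbit one must correct the lift by a point-dependent element of $\mathfrak{k}_\R$ so that the velocity of $\bmu_I$ stays tangent to $\bV$. This is exactly Lemma \ref{lem:infinitesimal action}: the correct derivative is $\langle X,|\bmu_T(x)|\circ X\rangle$ for the particular lift $X\in\pi^{-1}(\xi)$ with $|\bmu_T(x)|\circ X\in\mathfrak{k}_\R^{\bot}$, and its nonnegativity (with equality only on $\M^\xi$) requires the positivity argument $(\R^I_{\ge0}\circ W)\cap W^\bot=\{0\}$, not a termwise sum of squares. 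The monotonicity you want is true, but your proof of it is not; since the identity $\M^+_\xi\cap\M^0_\xi=\M^\xi$ (hence the Cartesianness of your square), the vanishing off $\M^\xi$, and both deformation retractions all rest on it, this step must be repaired before the rest of the argument stands.
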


Before giving the proof of this result, we explain why it implies Proposition \ref{prop:vanishing cycles iso}.
For any flat $F$, let $\bV_F^* = (V_F)^*\otimes \R^3 = (\bV/\bV^F)^* \subset \bV^*$.  We say that $\zeta \in \bV_F^*$ is generic if the only tripled flats $\bV^E$ which are annihilated by $\zeta$ are those
with $E \le F$.  This agrees with our definition of genericity for cocharacters of $T$.

\begin{proof}[Proof of Proposition \ref{prop:vanishing cycles iso}.]

Let 
$Z \subset \bV_F^*$ be the set of generic covectors.  (Warning: this is not the set $Z$ from the previous section, which was a subset of $\bV$.)
Define $\M_Z^0, \M_Z^{\ge 0} \subset \M \times Z$ to be the locus of 
$(p, \zeta)$ for which $\langle\bmu(p), \zeta \rangle = 0$ or $\langle\bmu(p), \zeta \rangle \ge 0$, respectively.  

Consider the ``parametric vanishing cycles" functor $i^!_Zj^*_Z\vartheta^*$, where $i_Z\colon \M^0_Z \to \M^{\ge 0}_Z$ and $j_Z\colon \M^{\ge 0}_Z \to \M \times Z$ are the inclusions and $\vartheta\colon \M \times Z \to \M$ is the projection.  An argument using Proposition \ref{prop:parametric stratification} similar to the one in Section \ref{proof:Omega is canonical}
shows that this functor takes sheaves constructible with respect to the stratification by $\breve{S}_F$ to sheaves constructible with respect to the stratification by
$\breve{S}_F \times Z$.  Lemma \ref{lem:sheaf family} now implies that the restriction of this functor to 
$\M \times \{\zeta\} \cong \M$ does not depend on $\zeta \in Z$.  But this restriction is precisely the functor $\phi_\zeta$.  The result now follows from Proposition \ref{prop:hyperloc equals vanishing} and the fact that $h_*$ and $Res$ have left inverses.
\end{proof}

The main idea of the proof of Proposition \ref{prop:hyperloc equals vanishing} is to modify the action of $\xi(\R_{>0})$ to obtain an action which contracts $\M$ onto $\M^0_\xi$.  This is similar to arguments in symplectic geometry which use the gradient flow of the norm squared of the moment map.

\begin{lemma}\label{lem:orbit structure}
For any $x\in \M$, the map $\R_{>0} \to \R$ given by $t\mapsto f_\xi(\xi(t)x)$
is constant with value zero if $x$ is fixed by $\xi$, and otherwise it is strictly increasing, with image $\R, \R_{>0}$, or $\R_{<0}$.  If the image is $\R_{>0}$ or $\R_{<0}$, then the orbit $\xi(\R_{>0})x$ contains a fixed point in its closure.
\end{lemma}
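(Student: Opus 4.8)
The plan is to analyze the function $t \mapsto f_\xi(\xi(t)x)$ directly, using the explicit formula for the hyperk\"ahler moment map on $T^*\C^I$ and the fact that $\xi$ acts on $\M$ through its image in $T = (\C^*)^I/K$. First I would lift to $T^*\C^I$: pick a representative $(z_i,w_i)_{i\in I}$ of $x$ in $\bmu_K^{-1}(0)$, and lift the cocharacter $\xi$ of $T$ to a cocharacter $\tilde\xi$ of $(\C^*)^I$, say with weights $(n_i)_{i\in I}$. Then $\xi(t)$ acts on the representative by $z_i \mapsto t^{n_i}z_i$, $w_i \mapsto t^{-n_i}w_i$, and the real part $\mu_{I,\R}$ of the moment map has $i$th coordinate $\frac12(t^{2n_i}|z_i|^2 - t^{-2n_i}|w_i|^2)$. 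Pairing with $\xi$ (viewed in $\bV^* = V^*\otimes\R^3$ via $\xi \mapsto \xi\otimes(1,0,0)$), we get
\[
f_\xi(\xi(t)x) \;=\; \sum_{i\in I} n_i \cdot \tfrac12\bigl(t^{2n_i}|z_i|^2 - t^{-2n_i}|w_i|^2\bigr),
\]
since the complex part of the moment map is unchanged by the (real) compact torus but here we are using the specific cocharacter; one must check that this pairing is well-defined independent of the lift $\tilde\xi$, which holds because changing the lift adds an element of $\mathfrak{k}$, and $\mu_{I,\R}$ lands in the image that pairs to zero with $\mathfrak k$ after reduction — more carefully, the ambiguity is absorbed because $(z_i,w_i) \in \mu_K^{-1}(0)$.

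The key observation is that each summand $g_i(t) := n_i \cdot \frac12(t^{2n_i}|z_i|^2 - t^{-2n_i}|w_i|^2)$ is monotone: its derivative is $n_i^2(t^{2n_i-1}|z_i|^2 + t^{-2n_i-1}|w_i|^2) \ge 0$, and it is strictly positive unless $n_i = 0$ or $z_i = w_i = 0$. Hence $t \mapsto f_\xi(\xi(t)x)$ is nondecreasing, and it is constant precisely when $g_i'(t)\equiv 0$ for all $i$, i.e. when $n_i z_i = n_i w_i = 0$ for all $i$ — which says exactly that $\xi(t)$ fixes $x$ (the residual $\xi$-action on the point is trivial). In the constant case the value is $\sum n_i \cdot\frac12(|z_i|^2 - |w_i|^2)$; but $\xi$-fixedness forces this to vanish (each term vanishes), so the value is zero, giving the first assertion. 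When $x$ is not $\xi$-fixed, $f_\xi(\xi(t)x)$ is strictly increasing; its image is an open interval since the map is a strictly increasing continuous function of $t\in\R_{>0}$, and one reads off from the Laurent-polynomial form of $\sum g_i(t)$ that as $t\to 0$ and $t\to\infty$ the limits are each one of $\{-\infty, c, +\infty\}$ for finite $c$, so the image is $\R$, $\R_{>0}$, $\R_{<0}$, or $(a,b)$ with both endpoints finite — and the last case is excluded because a nonconstant Laurent polynomial cannot be bounded on both ends.

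For the final clause: suppose the image is $\R_{>0}$ (the case $\R_{<0}$ is symmetric under $\xi \leftrightarrow -\xi$). Then $\lim_{t\to 0} f_\xi(\xi(t)x) = 0$ and the limit is not attained, which in Laurent-polynomial terms means the lowest-degree terms of $\sum g_i(t)$ all cancel, i.e. the "small $t$" behavior degenerates; I would argue that $\lim_{t\to 0}\xi(t)x$ exists in $\M$ — equivalently, that the corresponding point lies in the attracting set $\M^+_\xi$ — because the coordinates that blow up or vanish do so in a way that stays in $\mu_K^{-1}(0)^{\mathrm{ss}}$ for the trivial linearization (which is everything), so the GIT limit is just the categorical-quotient limit, which exists since $\M$ is affine and we can check coordinate-ring generators converge. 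The limit point $x_0 = \lim_{t\to 0}\xi(t)x$ is automatically $\xi$-fixed and lies in the closure of $\xi(\R_{>0})x$. I expect the main obstacle to be this last step — rigorously producing the fixed point in the closure, i.e. showing $\lim_{t\to 0}\xi(t)x$ exists in the affine variety $\M$ precisely when the moment-map image is a half-line rather than all of $\R$. The cleanest route is probably to work upstairs on $T^*\C^I$: the condition that $f_\xi(\xi(t)x)$ stays bounded below as $t\to 0$ translates, after grouping the summands $g_i$ by sign of $n_i$, into the statement that $t^{n_i}z_i \to 0$ for $n_i > 0$ and $t^{-n_i}w_i \to 0$ for $n_i < 0$ along some subsequence, hence (by monotonicity) along the whole limit; together with $z_iw_i$ being $\xi$-invariant this shows the orbit converges in $T^*\C^I$, and one pushes the limit down to $\M$. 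I would double-check that this grouping argument correctly handles coordinates with $z_i \ne 0 \ne w_i$ and $n_i \ne 0$, which cannot occur for a point whose orbit has a bounded-below moment image, since such a coordinate contributes a genuinely two-sided unbounded term.
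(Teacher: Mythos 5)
Your central step---the identification
\[
f_\xi(\xi(t)x)\;=\;\sum_{i\in I} n_i\cdot\tfrac12\bigl(t^{2n_i}|z_i|^2-t^{-2n_i}|w_i|^2\bigr)
\]
---is not correct, and the gap is exactly the point you tried to wave away. The function $f_\xi$ is computed from the hyperk\"ahler moment map via the homeomorphism $\M\cong\bmu_K^{-1}(0)/K_\R$, so to evaluate $f_\xi(\xi(t)x)$ you need a representative of $\xi(t)x$ lying in $\bmu_K^{-1}(0)$, i.e.\ with $\mu_{I,\R}$-value in $V_\R$. The point $\tilde\xi(t)\hat x=(t^{n_i}z_i,t^{-n_i}w_i)$ stays in the \emph{complex} zero level $\mu_K^{-1}(0)$, but its \emph{real} moment map value $\bigl(\tfrac12(t^{2n_i}|z_i|^2-t^{-2n_i}|w_i|^2)\bigr)_i$ leaves $V_\R$ as soon as $t\ne 1$; one must flow back into $\bmu_K^{-1}(0)$ by an element $\exp(Y(t))$ with $Y(t)\in\mathfrak k_\R$ (depending implicitly on $t$), and this correction changes the function. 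Your formula is not even independent of the lift $(n_i)$: for $V=\Z(1,1)\subset\Z^2$, $\hat x=(1,0,1,0)$, the lifts $(1,0)$ and $(2,-1)$ of the same $\xi$ give $\tfrac12 t^2$ and $t^4-\tfrac12 t^{-2}$ respectively, while the true value (after correcting by $\exp(Y(t))$) is $\tfrac12 t$. Note that the second lift even predicts image $\R$ where the correct image is $\R_{>0}$. Consequently everything you deduce from the Laurent-polynomial structure---the classification of the image and the existence of the limiting fixed point---is unsupported: the actual function $t\mapsto f_\xi(\xi(t)x)$ is not a Laurent polynomial in $t$.

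The paper avoids this trap by splitting the lemma in two. Monotonicity is proved infinitesimally (Lemma \ref{lem:infinitesimal action}): differentiating through the correction $Y(t)$ shows $\frac{d}{dt}f_\xi=\langle X,|\bmu_T(x)|\circ X\rangle\ge 0$ for the unique lift $X$ of $\xi$ with $|\bmu_T(x)|\circ X\in V_\R$, with equality only at $\xi$-fixed points. The shape of the image and the fixed point in the closure are then obtained not from an explicit formula but from toric geometry: the closure of the orbit of $\hat x$ under $P=$ (preimage of $\xi(\C^*)$ in $(\C^*)^I$) is an affine toric variety, the relevant function is a symplectic moment map on it, its image on the closed orbit closure is a convex polyhedral cone whose relative interior is the image of the open orbit, and the boundary point $0$ (when it occurs) is hit by a fixed point in the orbit closure. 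If you want to repair your proof, you would need either to reproduce this convexity argument or to genuinely control the correction $Y(t)$, e.g.\ via the identification of closed $K$-orbits in $\mu_K^{-1}(0)$ with $K_\R$-orbits in $\bmu_K^{-1}(0)$; the naive coordinate computation alone cannot work.
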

 As a result,
we can reparametrize the action of $\xi(\R_{>0})$ to obtain an action which contracts $\M$ onto $\M^0_\xi$: for $t > 0$ and $x\in \M$, let $t \ast x$ be the unique point in $\xi(\R_{>0})x$ for which 
$f_\xi(t\ast x) = tf_\xi(x)$.  
This action extends to an action of the multiplicative semigroup $\R_{\ge 0}$: for every $x \in \M$, 
the closure $\overline{\xi(\R_{>0})x}$ contains a unique element of $\M^0_\xi$, and we let
$0 \ast x$ be this point. 
\begin{lemma}\label{lem:continuous action}
This action of $\R_{\ge 0}$ is continuous, and contracts $\M^+_\xi$ onto $\M^\xi$.
\end{lemma}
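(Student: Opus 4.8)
The plan is to establish the two claims — continuity of the $\R_{\ge 0}$-action and the contraction of $\M^+_\xi$ onto $\M^\xi$ — by reducing to the linear model $T^*\C^I$ and then exploiting the explicit form of the hyperk\"ahler moment map. First I would pull the statement back along the quotient map $\bmu_I^{-1}(\bV)/K_\R \cong \M$: the function $f_\xi$ is the composite of $\bmu_T$ with a linear covector, and $\bmu_T$ is induced from $\bmu_I$, so the reparametrized action on $\M$ lifts to the reparametrized $\xi$-action on $T^*\C^I$ (a lift $\tilde\xi$ of $\xi$ to a cocharacter of $(S^1)^I$, or rather of $(\C^*)^I$, can be chosen since $K$ is the kernel of a surjection). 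Because the action commutes with $K_\R$ and with the $T^*\C^I$ and $K_\R$ structures are all continuous, it suffices to prove continuity of the reparametrized semigroup action upstairs, where $T^*\C^I = \prod_i T^*\C$ and everything factors as a product over $i\in I$. On each $T^*\C$-factor the action of $\xi(t) = (t^{a_i})$ scales $(z_i,w_i) \mapsto (t^{a_i}z_i, t^{-a_i}w_i)$, and $f_\xi$ restricted to that factor is (a multiple of) $\tfrac12(|z_i|^2 - |w_i|^2)$; here one checks directly, using Lemma \ref{lem:orbit structure}, that the reparametrization $t\ast(z_i,w_i)$ is given by an explicit continuous (indeed real-analytic away from the zero locus, and continuous across it by a squeeze argument) formula, with $0\ast(z_i,w_i)$ the unique point of the orbit closure on which $|z_i|^2 = |w_i|^2$.

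The key point for continuity at $t = 0$ is exactly the argument already used in the proof of Lemma \ref{path lifting} and the contracting lemma: a neighborhood of $0\ast x$ in the orbit closure is controlled by $|f_\xi|$, so once we know $f_\xi(t\ast x) = t f_\xi(x) \to 0$ as $t\to 0$ and that the orbit closure is well-behaved, continuity of $(t,x)\mapsto t\ast x$ follows. On the factors where $z_i(x)$ and $w_i(x)$ are both nonzero the orbit map $\R_{>0}\to\R$, $t\mapsto f_\xi(\xi(t)x)$, is a strictly monotone analytic bijection onto its image by Lemma \ref{lem:orbit structure}, so its inverse is continuous and $t\ast x$ depends continuously on $(t,x)$ on the open locus; on the remaining locus one factor of $z_i$ or $w_i$ vanishes and the reparametrized orbit is constant or degenerates to the fixed point, where continuity is checked by the same neighborhood-basis argument as in Lemma \ref{lem:orbit structure} and in the contracting-lemma proof. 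Assembling the factors and descending through the $K_\R$-quotient then gives continuity of the $\R_{\ge 0}$-action on $\M$.

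For the second claim, that this action contracts $\M^+_\xi$ onto $\M^\xi$: a point $x \in \M^+_\xi$ is by definition one for which $\lim_{t\to 0}\xi(t)x$ exists and lies in $\M^\xi$. For such $x$, Lemma \ref{lem:orbit structure} forces $f_\xi$ to be constant equal to $0$ along the orbit closure direction realized by $\xi(t)\to 0$ — concretely, on each coordinate factor an attracting point has $w_i(x) = 0$ whenever $a_i > 0$ and $z_i(x) = 0$ whenever $a_i < 0$, so $|z_i|^2 - |w_i|^2$ does not increase to $+\infty$ and the limit point as $t\to 0$ already satisfies $f_\xi = 0$. Hence $0\ast x = \lim_{t\to 0}\xi(t)x \in \M^\xi$, and moreover $t\ast x = 0\ast x$ is independent of $t$ because $f_\xi$ vanishes identically on $\overline{\xi(\R_{>0})x}$ for $x\in\M^+_\xi$; thus the reparametrized action restricted to $\M^+_\xi$ is the constant projection onto $\M^\xi$. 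I expect the main obstacle to be the bookkeeping at the degenerate locus — points where some $z_i$ or $w_i$ vanishes but $x$ is not globally $\xi$-fixed — where one must carefully verify that the orbit-closure limit point is genuinely approached continuously; but this is precisely the situation handled by the squeeze/neighborhood-basis technique already deployed in Lemmas \ref{path lifting} and \ref{contracting lemma}, so it should go through without new ideas.
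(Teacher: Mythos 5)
There is a genuine gap in the continuity argument. The proposed reduction to a coordinatewise computation on $\prod_{i\in I}T^*\C$ does not work: the reparametrized point $t\ast x$ is defined by the single global condition $f_\xi(t\ast x)=tf_\xi(x)$, so even when $K$ is trivial the time change $s(t,x)$ with $t\ast x=\xi(s(t,x))\cdot x$ is determined by one equation coupling all the coordinates, and the $\ast$-action is not a product of actions on the factors $T^*\C$. For general $\M$ the situation is worse: the $\xi$-orbit of $x\in\bmu_K^{-1}(0)/K_\R$ lifts to $T^*\C^I$ as $t\mapsto\exp(Y(t)+tX)\hat x$ with a $t$-dependent correction $Y(t)\in\mathfrak k_\R$ needed to stay on $\bmu_K^{-1}(0)$ (this is exactly the content of Lemma \ref{lem:infinitesimal action}), so the flow being reparametrized does not even lift to the flow of a fixed one-parameter subgroup of $(\C^*)^I$ preserving $\bmu_K^{-1}(0)$. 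The real difficulty, which the appeal to a ``squeeze argument'' does not engage, is continuity of $(t,x)\mapsto t\ast x$ near the locus $f_\xi=0$, where $s(t,x)$ degenerates. The paper's proof handles this with two quantitative inputs that have no counterpart in your sketch: a closed-graph/local-boundedness argument giving continuity where $L_\xi\ne 0$, and the uniform estimate $d(0\ha v,v)\le C_0|L_\xi(v)|$ of Lemma \ref{lem:distance estimate}, whose proof needs a nontrivial angle bound between the cone $\R_{\ge 0}^I\circ\pi^{-1}(\R\xi)$ and the hyperplane $L_\xi=0$. Some such estimate is unavoidable, and there is the further step (handled in the paper via the embedding of $\M$ into $\bV\times(\Theta^I\times\Theta^I)/K_\R$ with $\Theta=\C/\R_{>0}$) of lifting continuity from $\bV$ back up to $\M$.

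Your description of the action on $\M^+_\xi$ is also wrong. For $x\in\M^+_\xi\setminus\M^\xi$, Lemma \ref{lem:orbit structure} shows that $t\mapsto f_\xi(\xi(t)x)$ is strictly increasing with image $\R_{>0}$ (its infimum is the value at the limiting fixed point, namely $0$); in particular $f_\xi(x)>0$ and $f_\xi$ does not vanish identically on $\overline{\xi(\R_{>0})x}$. Consequently $t\ast x$ is not independent of $t$: it moves along the orbit from $x$ at $t=1$ to $\lim_{t\to 0}\xi(t)x$ at $t=0$. The one fact actually needed --- that $0\ast x=\lim_{t\to 0}\xi(t)x\in\M^\xi$, by uniqueness of the point of $\M^0_\xi$ in the orbit closure --- is correctly identified, but the justification via constancy of the reparametrized action on $\M^+_\xi$ is false.
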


Before giving the proofs of lemmas \ref{lem:orbit structure} and \ref{lem:continuous action}, let us show how they imply Proposition \ref{prop:hyperloc equals vanishing} and thus Proposition \ref{prop:vanishing cycles iso}.
Consider the diagram
 \[
\xymatrix{
\M^+_\xi \ar[r]^k\ar@<.5ex>[d]^{q'} & \M^{\geq 0}_\xi \ar[r]^j\ar@<.5ex>[d]^{q} & \M\\
\M^\xi \ar[r]_h \ar@<.5ex>[u]^f & \M^0_\xi \ar@<.5ex>[u]^i &
}
\]
where $f$, $h$, $i$, $j$, $k$ are the inclusions and $q$, $q'$ send $x$ to $0 \ast x$. The contracting lemma (Lemma \ref{contracting lemma}) gives an isomorphism $\phi_\xi = i^!j^* \simeq q_!j^*$.  Let $A$ be a $T$-equivariant complex on $\M$, and take any $x \in \M^0_\xi \setminus \M^\xi$.  By proper base change, we have $(\phi_\xi A)_x \cong \H^\bullet_c(A|_{q^{-1}(x)})$.  But $A$ is constant on $q^{-1}(x) \cong [0,\infty)$, so this cohomology vanishes.  In other words, the sheaf $\phi_\xi A$ is supported on the fixed locus $\M^\xi$.  

Since $\M^+_\xi = q^{-1}(\M^\xi)$, another application of proper base change and the contracting lemma 
now implies that 
\[h^*(\phi_\xi A) \cong (q')_!k^*j^*A \cong f^!g^*A = \Phi_\xi(A).\]   
Proposition \ref{prop:hyperloc equals vanishing} follows.

We turn now to the proofs of Lemmas \ref{lem:orbit structure} and \ref{lem:continuous action}. 
Both the original action 
\[(t, p) \mapsto t \cdot p = \xi(t)p\]
of $\R_{>0}$ on $\M$ and the modified semigroup action $\ast$ of $\R_{\ge 0}$ 
on $\M$ commute with the action of the maximal compact torus $T_\R \subset T$, and so they 
induce actions ''downstairs" on $\bV$, which we denote by $\hcd$ and $\ha$, respectively.

As a first step to showing that $\ast$ is continuous, we will show that $\hat{\ast}$ is continuous.
To do this, we need to analyze how the moment map image of a point in $\M$ moves under the action of $\xi(\R_{>0})$.

Before treating a general hypertoric variety $\M$, we look at $\M=T^*\C^I$ and study the infinitesimal action of $\R^I = \operatorname{Lie}(\R_{>0})^I \subset (\C^*)^I$.  Take a point $p\in T^*\C^I$, and let $v = \bmu_I(p)$ be its image under the full hyperk\"ahler moment map.  Take any $X \in \operatorname{Lie}(\R_{>0})^I$. 
Since the action of $(\C^*)^I$ does not affect the complex part $\mu_I$ of $\bmu_I$, the vector 

\begin{equation*} 
X_p := \left.\frac d{dt} \bmu_I(\exp(tX)\cdot p)\,\right|_{t=0} \in T_v(\R^I \otimes \R^3) = \R^I \otimes \R^3
\end{equation*}
lies in the subspace $\R^I \otimes \R(1,0,0)$.  (We freely use the canonical isomorphism of a vector space with its tangent space at any point.)
 
 The $i$th coordinate $v_i = (\half (|z_i|^2 - |w_i|^2), z_iw_i) \in \R \times \C \cong \R^3$ of $v$ has Euclidean norm $|v_i| = \half(|z_i|^2 + |w_i|^2)$, so
 the $i$th coordinate of 
 $X_p$ has real part
 \[\left.\frac{d}{dt} \half \left( |e^{tX_i}z_i|^2 - |e^{-tX_i}w_i|^2\right)\right|_{t=0} = 2X_i |v_i|.\]
 In other words, if $|v| \in \R^I$ denotes the vector whose $i$th coordinate is $|v_i|$, and for $a, b \in \R^I$ we put $a \circ b = (a_ib_i)_{i\in I} \in \R^I$, then 
 we have
\begin{equation}\label{eq:n-torus action}
X_p = 2(|v| \circ X) \otimes (1,0,0).
\end{equation}
(Note that in this formula $X$ lies in $\operatorname{Lie}((\R_{>0})^I)$, while $|v| \circ X$ is naturally in the dual space $\operatorname{Lie}((\R_{>0})^I)^*$; both spaces are identified with $\R^I$ in the natural way.) In what follows, we extend this notation to subsets $A, B \subset \R^I$: we put $A \circ B = \{a\circ b\mid a\in A, b\in B\}$.

Now we turn to the case of a general hypertoric variety $\M$.  Consider the cocharacter $\xi$ as an element in $\t_\R = \R^I/\mathfrak{k}_\R$. 
(Note that we have been using $T_\R$, $K_\R$ to denote the maximal compact subgroups of $T, K$, so we have
$\operatorname{Lie}(T_\R) = \sqrt{-1}\,\mathfrak t_\R$, and $\exp(t\xi) = \xi(e^t)$ for $t \in \R$.)

Let $\pi\colon \R^I \to \t_\R$ denote the natural quotient map.  Let $L_\xi\colon \bV \to \R$ be the linear function induced by $\xi$, so $f_\xi = L_\xi \bmu_T$.

\begin{lemma}\label{lem:infinitesimal action}
For any $x \in \M$, the set 
\begin{equation}\label{eq:action set}
(2|\bmu_T(x)| \circ \pi^{-1}(\xi)) \cap V_\R.
\end{equation}
contains a unique point $\alpha$; 
 we have
\[\xi_x := \left.\frac{d}{dt} \bmu_T(\exp(t\xi)\cdot x)\right|_{t=0} = (\alpha, 0,0).\]

We always have $L_{\xi*}(\xi_x) \ge 0$; if $L_{\xi*}(\xi_x)=0$, then $f_\xi(x) = 0$.
\end{lemma}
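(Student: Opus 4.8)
The plan is to reduce to the linear model $T^*\C^I$ and apply formula \eqref{eq:n-torus action}. Given $x\in\M$, use the homeomorphism $\M\cong\bmu_K^{-1}(0)/K_\R$ of \cite[Proposition 3.3]{Ko07} to choose a representative $p\in\bmu_K^{-1}(0)\subset T^*\C^I$, so that $v:=\bmu_I(p)$ lies in $\bV$ and equals $\bmu_T(x)$; fix a lift $\tilde\xi\in\R^I$ of $\xi$ along $\pi\colon\R^I\to\t_\R$. The action of $\exp(t\xi)$ on $x$ lifts to the action of $\exp(t\tilde\xi)\in(\R_{>0})^I$ on $p\in\mu_K^{-1}(0)$; this need not preserve $\bmu_K^{-1}(0)$, but (since $\exp(t\tilde\xi)$ commutes with $K$ and carries the closed $K$-orbit of $p$ to the closed $K$-orbit of $\exp(t\tilde\xi)p$) there is a $k(t)\in K$ with $k(0)=1$, depending smoothly on $t$, such that $k(t)\exp(t\tilde\xi)\cdot p\in\bmu_K^{-1}(0)$; smoothness is part of the Kempf--Ness description of the K\"ahler quotient. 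Then $\bmu_T(\exp(t\xi)\cdot x)=\bmu_I(k(t)\exp(t\tilde\xi)\cdot p)$.

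Now I would differentiate at $t=0$. Writing $\dot k(0)=Y+\sqrt{-1}\,W$ with $Y,W\in\mathfrak k_\R\subset\R^I$, the curve $t\mapsto k(t)\exp(t\tilde\xi)$ in $(\C^*)^I$ has derivative $Y+\sqrt{-1}\,W+\tilde\xi$ at $t=0$; since $\sqrt{-1}\,W\in\operatorname{Lie}(K_\R)\subset\operatorname{Lie}((S^1)^I)$ acts trivially on $\bmu_I$, and the infinitesimal action is linear in the Lie algebra element,
\[
\xi_x=\left.\tfrac{d}{dt}\right|_{t=0}\bmu_I\bigl(k(t)\exp(t\tilde\xi)\cdot p\bigr)=(\tilde\xi+Y)_p,
\]
and $\tilde\xi+Y$ is again a lift of $\xi$ because $Y\in\mathfrak k_\R=\ker\pi$. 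Formula \eqref{eq:n-torus action} then gives $\xi_x=2\bigl(|v|\circ(\tilde\xi+Y)\bigr)\otimes(1,0,0)$, so $\xi_x=(\alpha,0,0)$ with $\alpha:=2|\bmu_T(x)|\circ(\tilde\xi+Y)\in 2|\bmu_T(x)|\circ\pi^{-1}(\xi)$. Since $\bmu_T$ takes values in $\bV=V_\R\otimes\R^3$ we have $\xi_x\in T_v\bV=\bV$, and comparing with $\xi_x=(\alpha,0,0)$ forces $\alpha\in V_\R$. For uniqueness, if $\alpha=2|v|\circ\eta$ and $\alpha'=2|v|\circ\eta'$ both lie in $V_\R$ with $\eta,\eta'\in\pi^{-1}(\xi)$, then $\eta-\eta'\in\mathfrak k_\R=V_\R^\perp$, so $0=\langle\alpha-\alpha',\eta-\eta'\rangle=2\sum_i|v_i|(\eta_i-\eta'_i)^2$; hence $\eta_i=\eta'_i$ whenever $v_i\neq0$, and therefore $\alpha=\alpha'$ coordinatewise.

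Finally, for the positivity statement I would evaluate $L_\xi$ — which on $V_\R\otimes(1,0,0)$ is pairing with any lift of $\xi$ — on $\xi_x$, using the lift $\eta:=\tilde\xi+Y$ with $\alpha=2|v|\circ\eta$:
\[
L_{\xi*}(\xi_x)=\langle\eta,\alpha\rangle=2\sum_i|v_i|\,\eta_i^2\ \ge\ 0 .
\]
If this vanishes then $\eta_i=0$ for every $i$ with $v_i\neq0$, which gives $\alpha=0$, and also
\[
f_\xi(x)=\langle\eta,\mu_{I,\R}(p)\rangle=\sum_i\eta_i\cdot\tfrac12\bigl(|z_i|^2-|w_i|^2\bigr)=0,
\]
since each term with $v_i\neq0$ has $\eta_i=0$ while each term with $v_i=0$ has $|z_i|=|w_i|$ (recall $|v_i|=\tfrac12(|z_i|^2+|w_i|^2)$). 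The main obstacle is the first paragraph: justifying rigorously that the derivative of $\bmu_T$ along the $\xi$-orbit equals the infinitesimal-action vector $(\tilde\xi+Y)_p$ on $T^*\C^I$ for a suitable $Y\in\mathfrak k_\R$. This is precisely where the Kempf--Ness correction $k(t)$ and the $(S^1)^I$-invariance of $\bmu_I$ enter; once this identification is set up carefully, everything else is a short computation with \eqref{eq:n-torus action}.
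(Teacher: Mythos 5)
Your proposal is correct and follows essentially the same route as the paper's proof: lift to $T^*\C^I$ via the Kempf--Ness identification $\M\cong\bmu_K^{-1}(0)/K_\R$, correct the one-parameter flow by a curve in $K$ (whose compact part acts trivially on $\bmu_I$), apply the infinitesimal-action formula \eqref{eq:n-torus action}, and prove uniqueness and positivity from the orthogonality $\mathfrak k_\R=V_\R^\perp$ together with $\sum_i|v_i|\eta_i^2\ge 0$. The differentiability point you flag is also glossed over in the paper (which simply asserts that the correction $Y(t)$ is unique modulo the stabilizer and then differentiates), so your treatment is at the same level of rigor; your explicit remark that $\alpha\in V_\R$ because $\xi_x$ is tangent to $\bV$ is a small clarification the paper leaves implicit.
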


\begin{proof}  
As noted before, the map $\bmu_K^{-1}(0)/K_\R \to \mu_K^{-1}(0)\mod K$ is a homeomorphism.  A more precise statement is the following (see \cite[Section 3.1]{Ko07}).  A $K$-orbit $O$ contained in $\mu_K^{-1}(0)$ is closed if and only if it meets $\bmu_K^{-1}(0)$, in which case the intersection is a single $K_\R$-orbit.  The map $O \mapsto O\cap \bmu_K^{-1}(0)$ gives a 
bijection between closed $K$-orbits in $\mu_K^{-1}(0)$ and $K_\R$-orbits in $\bmu_K^{-1}(0)$.

We use this to transfer the action of the $1$-parameter group $\exp(t\xi)$ from $\mu_K^{-1}(0)\mod K$ to $\bmu_K^{-1}(0)/K_\R$, so that we can apply the hyperk\"ahler moment map.
Let $r$ denote the projection
$\bmu_K^{-1}(0) \to \bmu_K^{-1}(0)/K_\R$,
and choose any lifts $\hat{x} \in r^{-1}(x)$ and 
$X \in \pi^{-1}(\xi)$.

For any $t\in\R$, the orbit
$K(\exp(tX) \cdot \hat x) = \exp(tX)(K\cdot \hat x)$ is closed, so we have
\[\exp(t\xi)\cdot x = r(K(\exp (tX)\cdot \hat x) \cap \bmu_K^{-1}(0)).\]  
It follows that there is a $Y(t) \in \mathfrak{k}_\R$ so that
$r(\exp(Y(t) + tX)\cdot \hat x) = \exp(t\xi) \cdot x$.  It is unique modulo the Lie algebra of the stabilizer $K_{\hat x}$, and so
applying $\bmu_T$ and differentiating, we see that $\xi_x$ and $X_{\hat{x}}$ differ by a vector of the form $Y_{\hat{x}}$, $Y \in \mathfrak{k}_\R$.
Since $\pi^{-1}(\xi) = X + \mathfrak{k}_\R$, equation \eqref{eq:n-torus action} shows that $\alpha$ lies in the set \eqref{eq:action set}.  

To see that there is a unique such $\alpha$, suppose that $\alpha, \alpha'$ are both in 
\eqref{eq:action set}.
Since $V_\R = \mathfrak{k}_\R^\bot$, we have $\alpha-\alpha' \in (|\bmu_T(x)| \circ \mathfrak k_\R) \cap \mathfrak k_\R^\bot$.  But for any subspace $W \subset \R^I$, the set
\[(\R_{\ge 0} \circ W) \cap W^\bot\]
contains only $\{0\}$: if $a \in \R_{\ge 0}$ and $w \in W$ satisfy $a \circ w \in W^\bot$, then
\[\langle a \circ w, w\rangle = \sum_i a_iw_i^2 = 0,\]
so $a_i \ne 0$ implies $w_i = 0$, giving 
$a \circ w = 0$.  It follows that $\alpha = \alpha'$.

For the last part, fix $X \in \pi^{-1}(\xi)$ so that $\alpha = |\bmu_T(x)| \circ X \in \mathfrak{k}_\R^\bot$.  Then we have
\[L_{\xi*}(\xi_x) = \xi(\alpha) = \langle X, |\bmu_T(x)| \circ X \rangle \ge 0.\]
Finally, if $L_{\xi*}(\xi_x) = 0$, then the $i$th coordinate of $\bmu_I(\hat{x})$ vanishes for every $i$ such that $X_i \ne 0$.  It follows that $f_\xi(x) = 0$ and $x \in \M^\xi$.
\end{proof}

\begin{proof}[Proof of Lemma \ref{lem:orbit structure}]
From the previous lemma, it follows immediately that $t \mapsto \phi(t) := f_\xi(\xi(t)x)$ is constant or strictly increasing, and that if $\phi$ is constant, we have $f_\xi(x) = 0$.  

To see the remaining statements,
let $P$ be the preimage of $\xi(\C^*)$ under the quotient $(\C^*)^I \to (\C^*)^I/K = T$, 
fix a lift $\hat{x} \in \bmu_K^{-1}(0) \subset T^*\C^I$ of $x$.  Then $O := P\hat{x}$ is the open orbit in the (possibly non-normal) affine toric variety $\overline{O}$ with torus $\bar{P} := P/P_{\hat{x}}$.

Let $\eta \colon \R^I \to \mathfrak{p}_\R^*$ be the quotient map induced by the inclusion $P_\R \subset (S^1)^I$. 
Then the image of $\phi$ can be identified with 
\[\eta(\mu_{I, \R}(O) \cap V_\R) = \eta(\mu_{I,\R}(O))\cap \eta(V_\R),\]
under the identification of $\eta(V_\R)$ with 
$\R = \operatorname{Lie}(S^1)^*$ induced by $\xi$.

The map $\eta\circ \mu_{I, \R}|_{X}$ is a symplectic moment map for $\overline{O}$:
i.e., it is a function $\overline{O} \to \operatorname{Lie}(\bar P_\R)^*$ of the form 
\[y \mapsto \sum_{\gamma \in \Gamma_0} |f_\gamma(y)|^2 \gamma\]
where $\Gamma_0$ is a finite set of generators for the semigroup $\Gamma$ of regular functions $f_\gamma\colon \overline{O} \to \C$ for which $\bar p \mapsto f_\gamma(\bar p\cdot \hat x)$ is a character of $\bar{P}$.
This is clear when the group $P$ is all of $(\C^*)^I$; the functions $f_\gamma$ can be taken to be the coordinates $z_i$, $w_i$ which are not constant on $O$.  In the general case, the functions 
$f_\gamma$ can be taken to be the restrictions of these functions to $\overline{O}$.

The image of such a map is contained in the convex polyhedral cone $\sum_{\gamma \in \Gamma} \R \gamma$, and the image of $O$ is the relative interior of this cone.   A proof of this fact can be adapted from the proof in the projective case; see \cite[12.2.5]{CLS11}.

It follows that the image of $\phi$ is 
$\R$, $\R_{\ge 0}$ or $\R_{\le 0}$, and that if $0$ is in the image, it is the image of a fixed point in the closure of $\xi(\R_{>0})x$, completing the proof of the Lemma.
\end{proof}

\begin{lemma}\label{lem:distance estimate}
There is a constant $C_0>0$ so that 
 \[d(0\ha v, v) \le C_0|L_\xi(v)|\]
 for any $v \in \bV$.
\end{lemma}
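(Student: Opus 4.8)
The plan is to bound $d(0\ha v, v)$ by the length, measured in $\bV$, of the arc joining $0\ha v$ to $v$ along the curve $C_v := \xi(\R_{>0})\hcd v$, and then to estimate that length using the explicit description of the infinitesimal action from Lemma~\ref{lem:infinitesimal action}. One reduces at once to the case $L_\xi(v) > 0$ with $v$ not $\xi$-fixed (otherwise $0\ha v = v$). By Lemma~\ref{lem:orbit structure} the function $f_\xi$ is strictly increasing along the orbits of $\xi(\R_{>0})$, so $L_\xi$ strictly increases along $C_v$ and the arc from $v$ to $0\ha v$ is parametrized bijectively by $s = L_\xi \in [0, L_\xi(v)]$; writing $w(s)$ for this parametrization, its velocity is a positive multiple of the infinitesimal action vector $\xi_{w(s)}$, normalized so that $L_\xi$ of it equals $1$, i.e.\ $w'(s) = \xi_{w(s)}/L_\xi(\xi_{w(s)})$ — the denominator being positive by Lemma~\ref{lem:infinitesimal action} since $w(s)$ is not fixed. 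Since $\epsilon\ha v \to 0\ha v$ as $\epsilon \to 0$ (again by Lemma~\ref{lem:orbit structure}), it then suffices to prove a uniform pointwise estimate
\[
\|\xi_w\| \;\le\; C_0\, L_\xi(\xi_w) \qquad \text{for all non-$\xi$-fixed } w \in \M,
\]
with $C_0$ depending only on $V$ and $\xi$; integrating over $s \in [0, L_\xi(v)]$ then gives $d(0\ha v,v)\le C_0\,|L_\xi(v)|$.

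The real content is this pointwise estimate, which I would extract directly from Lemma~\ref{lem:infinitesimal action}. Fix an integral lift $X_0 \in \Z^I$ of $\xi$, so $\pi^{-1}(\xi) = X_0 + \mathfrak k_\R$. Writing $\xi_w = (\alpha, 0, 0)$ with $\alpha \in V_\R = \mathfrak k_\R^\perp$, one has $L_\xi(\xi_w) = \langle X_0, \alpha\rangle$, and Lemma~\ref{lem:infinitesimal action} says $\alpha = 2\,|\bmu_T(x)| \circ X$ for some $X \in \pi^{-1}(\xi)$ (where $\bmu_T(x) = w$); in particular $\langle X_0,\alpha\rangle = \langle X,\alpha\rangle = 2\sum_i|\bmu_T(x)|_i X_i^2$, and $\alpha_i$ has the same sign as $X_i$ and vanishes when $X_i = 0$. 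The key observation is that the sign vectors $\sgn(X')$ realized on the affine subspace $\pi^{-1}(\xi)$ form a \emph{finite} set $E$, so $\alpha$ is confined to one of finitely many closed polyhedral cones $\mathcal C_\epsilon \subset V_\R$: for each $\epsilon \in E$ pick $X^{(\epsilon)} \in \pi^{-1}(\xi)$ with $\sgn(X^{(\epsilon)}) = \epsilon$ and let $\mathcal C_\epsilon = \{\beta \in V_\R : \beta_i X^{(\epsilon)}_i \ge 0 \text{ for all } i,\ \beta_i = 0 \text{ whenever } X^{(\epsilon)}_i = 0\}$. On $\mathcal C_\epsilon$ the functional $\langle X_0, \cdot\rangle$ agrees with $\langle X^{(\epsilon)}, \cdot\rangle = \sum_i X^{(\epsilon)}_i(\cdot)_i$, a sum of nonnegative terms vanishing only at the origin; hence by compactness of the unit sphere in $V_\R$ there is $\delta_\epsilon > 0$ with $\langle X_0, \beta\rangle \ge \delta_\epsilon\|\beta\|$ on $\mathcal C_\epsilon$, and $C_0 := 1/\min_{\epsilon\in E}\delta_\epsilon$ works, since $\|\xi_w\| = \|\alpha\| \le \delta_{\sgn(X)}^{-1}\langle X_0,\alpha\rangle \le C_0\, L_\xi(\xi_w)$.

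I expect the middle step — the pointwise velocity bound — to be the main obstacle. One cannot control $\|\xi_w\|/L_\xi(\xi_w)$ by a soft compactness argument, since both numerator and denominator degenerate near the $\xi$-fixed locus and near the coordinate hyperplanes, and the ratio could a priori blow up there (one must also be careful that the relevant distance is the one downstairs in $\bV$, where $\bmu_T$ is quadratic near a fixed point, rather than upstairs in $\M$). What rescues the situation is the precise formula $\xi_w = (2\,|\bmu_T(x)|\circ X, 0,0)$ of Lemma~\ref{lem:infinitesimal action} together with the elementary fact that an affine subspace of $\R^I$ meets only finitely many orthants: this pins $\xi_w$ inside finitely many cones on each of which $L_\xi$ restricts to a strictly positive linear functional. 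Once that estimate is in hand, the remaining ingredients — the arc-length bound and the convergence $\epsilon\ha v \to 0\ha v$ — are routine consequences of Lemma~\ref{lem:orbit structure}.
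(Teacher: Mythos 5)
Your proposal is correct and follows essentially the same route as the paper: reparametrize the orbit so that $L_\xi$ is the time variable, reduce to a uniform pointwise bound $\|\xi_w\|\le C_0\,L_{\xi*}(\xi_w)$, and obtain that bound from the formula of Lemma~\ref{lem:infinitesimal action} together with the finiteness of the sign-vector cones $\R_{\ge 0}^I\circ X$ met by a lift of $\xi$, plus a compactness argument on those closed cones. The only (cosmetic) difference is that the paper phrases the key estimate as an angle bound between the closed cone $\R_{\ge 0}^I\circ\pi^{-1}(\R\xi)$ and the hyperplane $\Xi^\bot$, whereas you bound the linear functional $\langle X_0,\cdot\rangle$ from below directly on each of the finitely many polyhedral cones attached to the affine slice $\pi^{-1}(\xi)$ --- the same inequality in different clothing.
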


\begin{proof}

Let $\Xi = \pi^{-1}(\R\xi)$. Then we have $L_\xi^{-1}(0) = \Xi^\bot \times V_\R \times V_\R$.  The vectors $\alpha$ which appear in Lemma \ref{lem:infinitesimal action} belong to $\R_{\ge 0}^I \circ \Xi$.  

In the proof of Lemma \ref{lem:infinitesimal action} we saw that $(\R_{\ge 0}^I \circ \Xi) \cap \Xi^\bot = \{0\}$. In fact the angle between nonzero vectors in $\R_{\ge 0}^I \circ \Xi$ and $\Xi^\bot$ is bounded away from $0$.  
To see this, note that 
for any $\xi \in \Xi$, the set $\R_{\ge 0}^I \circ \xi$ is closed, and as $\xi$ varies in $\Xi$, there are only finitely many different sets that appear.  Thus $\R_{\ge 0}^I \circ \Xi$ is closed in $\R^I$.  This together with the fact that $\Xi$ and $\R^I_{\ge 0} \circ \Xi$ are conical implies that the required angle bound exists.

If $x\in \bmu_T^{-1}(v)$, then for $t > 0$ we have 
\[\frac{d}{dt} t\ha v = 
\frac{d}{dt} \bmu_T(t \ast x) = \frac{|L_\xi(v)|}{|L_{\xi*}(\xi_x)|}\xi_x.
\]
Our angle bound gives a uniform upper bound on $|\xi_x|/|L_{\xi*}(\xi_x)|$, which in turn says that the length of 
tangent vectors to the curve $t \mapsto t \ha v$, $0 < t \le 1$, are at most a constant times $|L_\xi(v)|$.  The result follows.
\end{proof}

We can now show that our action $\ha$ on $\bV$ is continuous. 
We first prove that the action is continuous at points $(a, v)$ with $a > 0$ and $L_\xi(v) \ne 0$.  Without loss of generality we can assume $L_\xi(v) > 0$.  On the set $\R_{>0}\times L_\xi^{-1}(\R_{>0})$ the action is given by
\[(a,v) \mapsto s(aL_\xi(v), v) \hcd v,\]
where for any $t > 0$, $s(t,v)$ is the unique number in $\R_{>0}$ so that $L_\xi(s(t,v)\hcd v) = t$.  Therefore it is enough to show that $s$ is continuous.  But the graph of $s$ is the graph of $(a,v) \mapsto L_\xi(a \hcd v)$ restricted to $\R_{>0}\times L_\xi^{-1}(\R_{>0})$, so by the closed graph theorem  it is enough to show that $s$ is locally bounded.

Suppose that it is not locally bounded; then we can find compact sets $I \subset \R_{>0}$ and $C\subset L_\xi^{-1}(\R_{>0})$ so that $s(I\times C)$ is unbounded.  Thus there is a sequence $\{v_n\}$ in $C$ and an unbounded sequence $\{t_n\}$ in $\R$ such that 
$L_\xi(t_n\hcd v_n)$ lies in $I$ for all $n$.  Passing to a subsequence, we can assume that $\{v_n\}$ converges to  $v \in C$, and $L_\xi(t_n\hcd v_n)$ converges to $a \in I$.  But then we have $L_\xi(t_n\hcd v) \to a$, contradicting the unboundedness of $\{t_n\}$.  Thus $s$ is locally bounded, as desired.

To see that $\ha$ is continuous at a point $(a,v) \in \R_{>0} \times L_\xi^{-1}(0)$, we take  $u \in \bV$, $b \ge 0$.  We have
\begin{align*}
d(a \ha v, b\ha u) & = d(v, b\ha u) \le d(v,u) + d(u, 0\ha u) + d(0\ha u, b \ha u)\\
& \le d(v,u) + C_0(b+1)|L_\xi(u)|,
\end{align*}
using Lemma \ref{lem:distance estimate} twice.  
This can be made arbitrarily small by taking $(b,u)$ close to $(a,v)$.

Using this estimate, we see that $\ha$ is uniformly continuous on sets of the form $(0,a] \times C$, where $C \subset \bV$ is compact.
It follows that $\ha$ is continuous on all of $\R_{\ge 0} \times \bV$.

To see that the action ``upstairs" on $\M$ is continuous,
we use the fact that $\xi(\R_{>0})$ acts trivially on the angle coordinates.  Consider the non-Hausdorff space $\Theta = \C/\R_{>0}$; then we have a $(\C^*)^I$-equivariant embedding
\[T^*\C^I \hookrightarrow (\R^I \otimes \R^3) \times \Theta^I \times \Theta^I,\]
where the first coordinate is $\bmu_I$, and the second and third coordinates are the reductions of the $z_i$ and $w_i$ coordinates.  The compact torus $K_\R \subset K$ acts only on the second and third factors, so we have an embedding
\[\M \hookrightarrow \bV \times (\Theta^I \times \Theta^I)/K_\R.\]
Then the $\ast$-action of $\R_{>0}$ on $\M$ comes from the $\ha$-action on $\bV$ and the trivial action on $(\Theta^I \times \Theta^I)/K_\R$.  The continuity of the action at points of $\{0\} \times\M$ is easy to check.

This completes the proof of Lemma \ref{lem:continuous action} and thus of Proposition \ref{prop:hyperloc equals vanishing}.

\subsection{Relation to a conjecture of Finkelberg and Kubrak}

In this last section we show that Proposition \ref{prop:hyperloc equals vanishing} and the proof of Proposition \ref{prop:vanishing cycles iso} implies that a conjecture of Finkelberg and Kubrak \cite{FiKu} relating vanishing cycles and hyperbolic restriction is true for singular hypertoric varieties.

The Finkelberg-Kubrak conjecture applies to a class of Poisson singularities satisfying additional hypotheses.  It was noted without proof in \cite{FiKu} that affine hypertoric varieties satisfy these conditions; for completeness we will explain 
the details here.
 
Let $\M$ be the affine hypertoric variety defined by 
$V \subset \Z^I$.  Its coordinate ring is 
\[\C[\M]=\C[T^*\C^I]^K/\langle \mu_I^*(\mathfrak k_\C)\rangle,\]
where 
\[\mu_I^*\colon \C^I = \operatorname{Lie}(\C^*)^I \to \C[T^*\C^I], \; \mu_I^*(e_i)= z_iw_i,\]
is the pull-back by the 
moment map induced by the action of $(\C^*)^I$ on $T^*\C^I$.  This ring has a Poisson bracket induced by the bracket on $\C[T^*\C^I]=\C[z_i, w_i]_{i\in I}$ which satisfies $\{z_i,z_j\} = \{w_i, w_j\} = 0$, $\{z_i, w_j\} = \delta_{ij}$.

Under this bracket, $\M$ has finitely many Poisson leaves, namely the coarse strata $S_F$, $F \in \cF$.
Give $\M$ the $\C^*$-action induced by the action on $\C[T^*\C^I]$ for which $z_i$, $w_i$ have weight $1$ for all $i \in I$.  Under this action the Poisson bracket on  $R = \C[\M]$ has weight two, meaning that $\{R_k, R_\ell\} \subset R_{k+\ell-2}$.
Furthermore, $R$ is positively graded, and we have
$R_0 = \C$ and $R_1 = 0$.  

There is a Hamiltonian action of a group $G$ on $\M$, extending the action of $T$ and a moment map
\[\mu_G^*\colon \mathfrak{g} \to \C[\M]\]
whose image is the degree two part $\C[\M]_2$.  To see this, consider the decomposition
\[\C^I = V_1 \oplus V_2 \oplus \dots \oplus V_r\]
of $\C^I$ into its $K$-isotypic components.  Then 
$G = [GL(V_1) \times GL(V_2) \times \dots \times GL(V_r)]/K$ is the required group; the moment map 
$\mu^*_G$ is induced by the natural maps $\mathfrak{gl}(V_j) \to \Sym(V_j\oplus V_j^*) \subset \C[T^*\C^I]$. 

The proof of the main result of \cite{FiKu} shows that there is a $T$-invariant vector $f$ in the Zariski cotangent space $T^*_0\M$ which is regular, meaning that it represents a smooth point of the conormal variety to the stratification of $\M$ by Poisson leaves, under an embedding $\M \subset T_0\M$.  This means that for a perverse sheaf $P$ on 
$\M$ which is constructible with respect to the stratification, the vanishing cycles $\phi_f(\M)$ 
is a perverse sheaf supported at $0$ (at least in a neighborhood of $0$).  Thus $P \mapsto H^0(\phi_f(P)|_0)$ is an exact functor to vector spaces.  By \cite[Theorem 2.4]{FiKu}, the dimension of this vector space is the Euler characteristic of the stalk $P|_0$.  

It follows that $H^0(\phi_f(P)|_0)$ and $\Morse_I(P)$ have the same dimension.  But Proposition \ref{prop:hyperloc equals vanishing} gives a canonical isomorphism between these vector spaces, since by an easy calculation the $T$-invariant part of $T^*_0\M$ is spanned by $\mu^*_G(\mathfrak{t})$, so that $f$ is represented 
by a linear combination of the monomials $z_iw_i$, i.e.\ it is the complex moment map $\mu_T$ composed with a linear function.  This can be extended to an isomorphism between (trivial) local systems on the 
regular part of the image of $\mathfrak{t}$ in $T^*_0\M$.  (Note that the local system on the whole regular part of $T^*_0\M$ need not be trivial, however.)

\bibliography{./refs.bib}
\bibliographystyle{amsalpha}

\end{document}